\documentclass[pdf]{article}

\usepackage{pdfpages}
\usepackage[
	bookmarksopen=true,
	bookmarksdepth=2
]{hyperref}
\usepackage{xcolor}
\usepackage[utf8]{inputenc}
\usepackage{amsmath}
\usepackage{amsfonts}
\usepackage{amssymb}
\usepackage{amsthm}
\usepackage{array}
\usepackage{marginnote}
\usepackage{enumitem}
\usepackage{epsfig}
\usepackage{graphicx}
\usepackage[normalem]{ulem}
\usepackage{fancyhdr}

\usepackage{pst-node}
\usepackage{tikz-cd}
\usetikzlibrary{arrows.meta, positioning}
\usepackage{ulem}
\usepackage{cleveref}

\usepackage{accessibility}

\newcommand{\defstyle}[1]{\textbf{#1}}
\newcommand{\myprob}[1]{\mathbb P \left[ #1 \right]}
\newcommand{\probPalm}[2]{\mathbb P_{#1} \left[ #2 \right]}
\newcommand{\probCond}[2]{\mathbb P \left[ #1 \left| #2 \right. \right]}
\newcommand{\probPalmC}[3]{\mathbb P_{#1} \left[ #2 \left| #3 \right. \right]}
\newcommand{\probhat}[1]{\widehat{\mathbb P} \left[ #1 \right]}
\newcommand{\omidhat}[1]{\widehat{\mathbb E} \left[ #1 \right]}
\newcommand{\probhatC}[2]{\widehat{\mathbb P} \left[ #1 \left| #2\right. \right]}
\newcommand{\omidhatC}[2]{\widehat{\mathbb E} \left[ #1 \left| #2\right. \right]}

\newcommand{\omid}[1]{\mathbb E \left[ #1 \right]}
\newcommand{\omidPalm}[2]{\mathbb E_{#1} \left[ #2 \right]}
\newcommand{\omidCond}[2]{\mathbb E \left[ #1 \left| #2 \right. \right]}

\newcommand{\omidPalmC}[3]{\mathbb E_{#1} \left[ #2 \left| #3 \right. \right]}

\newcommand{\norm}[1]{\left| #1 \right|}

\newcommand{\tv}[1]{||#1||_{\mathrm{TV}}}
\newcommand{\identity}[1]{1_{#1}}
\newcommand{\bs}[1]{\boldsymbol{#1}}
\newcommand{\card}[1]{\left|#1\right|}
\DeclareMathOperator{\ave}{average}
\DeclareMathOperator{\wusf}{WUSF}
\DeclareMathOperator{\owusf}{OWUSF}

\newcommand{\del}[1]{}
\newcommand{\mar}[1]{\marginpar{\scriptsize  #1}}
\newcommand{\unwritten}[1]{}

\newcommand{\supp}{\mathrm{supp}}

\newcommand{\oball}[2]{B_{#1}(#2)}

\newcommand{\restrict}[2]{{
		\left.\kern-\nulldelimiterspace 
		#1 
		\vphantom{\big|} 
		\right|_{#2} 
}}

\DeclareMathOperator{\anc}{anc}

\DeclareMathOperator{\freq}{freq}

\DeclareMathOperator{\groperator}{Gr}
\newcommand{\gr}[1]{\groperator(#1)}

\newcommand{\sigfield}[3]{\mathcal{#2}^{\mathrm{#1}}_{\mathrm{#3}}}

\theoremstyle{theorem}
\newtheorem{theorem}{Theorem}[section]
\newtheorem{lemma}[theorem]{Lemma}
\newtheorem{proposition}[theorem]{Proposition}
\newtheorem{corollary}[theorem]{Corollary}

\newtheorem{problem}[theorem]{Problem}

\theoremstyle{definition}
\newtheorem{definition}[theorem]{Definition}

\newtheorem{assumption}[theorem]{Assumption}
\newtheorem{example}[theorem]{Example}
\theoremstyle{definition}
\newtheorem{remark}[theorem]{Remark}
\newtheorem{convention}[theorem]{Convention}

\newtheorem{model}{Model}

\crefname{model}{Model}{Models}
\crefname{submodel}{Model}{Models}
\crefname{algorithm}{Algorithm}{Algorithms}
\crefname{equation}{}{}
\crefname{problem}{Problem}{Problems}
\crefname{theorem}{Theorem}{Theorems}
\crefname{corollary}{Corollary}{Corollaries}

\theoremstyle{theorem}

\numberwithin{equation}{section}

\makeatletter
\let\orgdescriptionlabel\descriptionlabel
\renewcommand*{\descriptionlabel}[1]{%
	\let\orglabel\label
	\let\label\@gobble
	\phantomsection
	\edef\@currentlabel{#1}%
	\let\label\orglabel
	\orgdescriptionlabel{#1}%
}

\newlist{stepC}{enumerate}{1}
\setlist[stepC,1]{
	label=\textbf{Step C\arabic*},
	ref=C\arabic*,
	leftmargin=* 
}
\crefname{stepCi}{Step}{Steps}
\Crefname{stepCi}{Step}{Steps}

\newlist{stepCC}{enumerate}{1}
\setlist[stepCC,1]{
	label=\textbf{Step C\arabic{stepCi}.\arabic*},
	ref=C\arabic{stepCi}.\arabic*,
	leftmargin=* 
}
\crefname{stepCCi}{Step}{Steps}
\Crefname{stepCCi}{Step}{Steps}

\newlist{stepCCprime}{enumerate}{1}
\setlist[stepCCprime,1]{
	label=\textbf{Step C\arabic{stepCi}.\arabic*$'$},
	ref=C\arabic{stepCi}.\arabic*$'$,
	leftmargin=* 
}
\crefname{stepCCprimei}{Step}{Steps}
\Crefname{stepCCprimei}{Step}{Steps}

\newlist{stepL}{enumerate}{1}
\setlist[stepL,1]{
	label=\textbf{Step L\arabic*},
	ref=L\arabic*,
	leftmargin=* 
}
\crefname{stepLi}{Step}{Steps}
\Crefname{stepLi}{Step}{Steps}

\newlist{stepLL}{enumerate}{1}
\setlist[stepLL,1]{
	label=\textbf{Step L\arabic{stepLi}.\arabic*},
	ref=L\arabic{stepLi}.\arabic*,
	leftmargin=* 
}
\crefname{stepLLi}{Step}{Steps}
\Crefname{stepLLi}{Step}{Steps}

\makeatother

\begin{document}

\title{Indistinguishability in One-or-Two-Ended Forests on Unimodular Random Graphs}

\author{Francois Baccelli\footnote{Inria Paris, francois.baccelli@ens.fr}, Ali Khezeli \footnote{School of Mathematics, Institute for Research in Fundamental Sciences, alikhezeli@ipm.ir}}



\maketitle

\begin{abstract}
	Indistinguishability is a form of ergodicity, introduced by Lyons and Schramm for percolation clusters, that has become a fundamental notion in the theory of random infinite graphs. We prove the indistinguishability of connected components for a broad family of random one-ended or two-ended oriented forests on unimodular graphs using a new approach. We unify these models by introducing `coalescing Markov trajectories' (CMTs), which encompass a wide range of classical coalescing models, including river models and coalescing random walks. We also establish the indistinguishability of level-sets, a problem that has not previously been studied and lies beyond the scope of existing techniques. Using the latter, we prove that the clusters of the stationary voter model on a unimodular graph are indistinguishable.
	
	The core of the proof approach is the reduction of the indistinguishability of the components (respectively, level-sets) to the ergodicity (respectively, tail triviality) of the ancestry chain of the root. This shows a structural property that the ancestry chain is the fundamental object governing indistinguishability. For CMTs, the proof is completed by proving the tail triviality of Markov chains on unimodular random graphs, which is of independent interest. The flexibility of the approach is illustrated by further applications: It yields indistinguishability results for some point-map models on Bernoulli and Poisson point processes, including Howard's model and the strip point-map. It also yields a new and substantially simpler proof of indistinguishability for the wired uniform spanning forest, which is the only one-ended model for which indistinguishability was previously studied in the literature.
\end{abstract}

\noindent\textbf{Mathematics Subject Classification (2020).}
60B99,60G55,37A20,37A50.

\smallskip

\noindent\textbf{Keywords.}
Indistinguishability, coalescing Markov trajectories, unimodular random graph, uniform spanning forest, point process, coupling, ergodic theory, invariant sigma-field, tail sigma-field.

\setcounter{tocdepth}{1}
\tableofcontents

\section{Introduction}
\label{intro}

\subsection{Indistinguishability}
\label{intro:indistinguishability}

Following the seminal paper~\cite{LySc99}, the notion of indistinguishability naturally appears in models involving random graphs with more than one connected components. Informally, the general question is as follows: Can one find a property $A$ of the connected components (belonging to a specific class of invariant properties) such that, with positive probability, there exist some components that satisfy $A$, and some others that don't? If so, the components are called \textit{distinguishable}. 
For instance, is it possible that some components are transient while some others are recurrent? 
In~\cite{LySc99}, indistinguishability is proved for the infinite clusters of the Bernoulli percolation (or more generally, any insertion-tolerant percolation) on a unimodular transitive graph. More precisely, \cite{LySc99} proves that the properties which are measurable and automorphism-invariant cannot distinguish the infinite clusters of Bernoulli percolation (such properties should be defined without knowing where the origin is; e.g., transience or recurrence of the components).
The more recent paper~\cite{HuNa17indistinguishability} proves a similar statement for the \textit{(free or wired) uniform spanning forest} of a unimodular random graph or network, which is the only one-ended model whose indistinguishability is proved in the literature. The claim for the free uniform spanning forest was also proved independently in~\cite{Ti2018indistinguishability}, assuming that the free and wired versions are not identical.

The notion of unimodular random graphs has deep connections to stationary point processes and measure-preserving actions; see e.g., \cite{processes}. Also, when the underlying graph is ergodic, indistinguishability can be rephrased as ergodicity in various manners; e.g., the ergodicity of the lazy simple random walk on a connected component, or the ergodicity of the \textit{root-changing equivalence relation}. So, a proof of indistinguishability can be considered as a proof of ergodicity, which shows the importance of the subject from a dynamical-systems viewpoint. This will be discussed further in \Cref{subsec:indistinguishabilityGeneral}. 


\del{Some other works consider deterministic base graphs and other types of component-properties, like monotone properties \cite{HaPe99monotonicity}, \textit{robust} properties \cite{HaPeSch99}, tail component-properties or tail properties of $k$-tuples of connected components \cite{Hu20indistinguishability}. In this work, we will also need tail component-properties, which are discussed in \Cref{intro:method}.}

The approach of~\cite{LySc99} involves many novel ideas and results which are valuable on their own. In particular, it is proved that there are one or infinitely many infinite components, and in the second case, each component is transient and has infinitely many ends. Also, the proof uses \textit{pivotal edges}, the insertion of which changes the type of some components (assuming that the components are distinguishable), and leverages the random walk on a connected component. 
A similar idea is used in~\cite{HuNa17indistinguishability}, but due to the lack of insertion-tolerance, pivotal edges are replaced by \textit{pivotal updates}, where the latter are defined using the \textit{cycle-breaking dynamic}. 
However, the difficulty of one-ended models is that pivotal updates do not exist for \textit{tail component-properties}, which are discussed further in \Cref{intro:method}. Also, by a finite modification, one cannot merge two one-ended components into a single component. For such tail properties, \cite{HuNa17indistinguishability} uses another idea based on, roughly speaking, resampling $\wusf$ inside a large ball conditionally on what is seen outside the ball.\del{\footnote{More precisely, conditionally on the union of the ancestral lines of all points outside the ball.}} 
The recent work~\cite{AlPeTi25indistinguishability}, which was published in arXiv shortly before the completion of the present work, extends the idea of pivotal updates and proves indistinguishability for various two-ended and $\infty$-ended models.


\subsection{Models of Interest in this Work}
\label{intro:models}

\begin{figure}
	\begin{center}
		\includegraphics[height=.21\textheight,trim={4.5cm 5cm 8cm 5.5cm},clip]{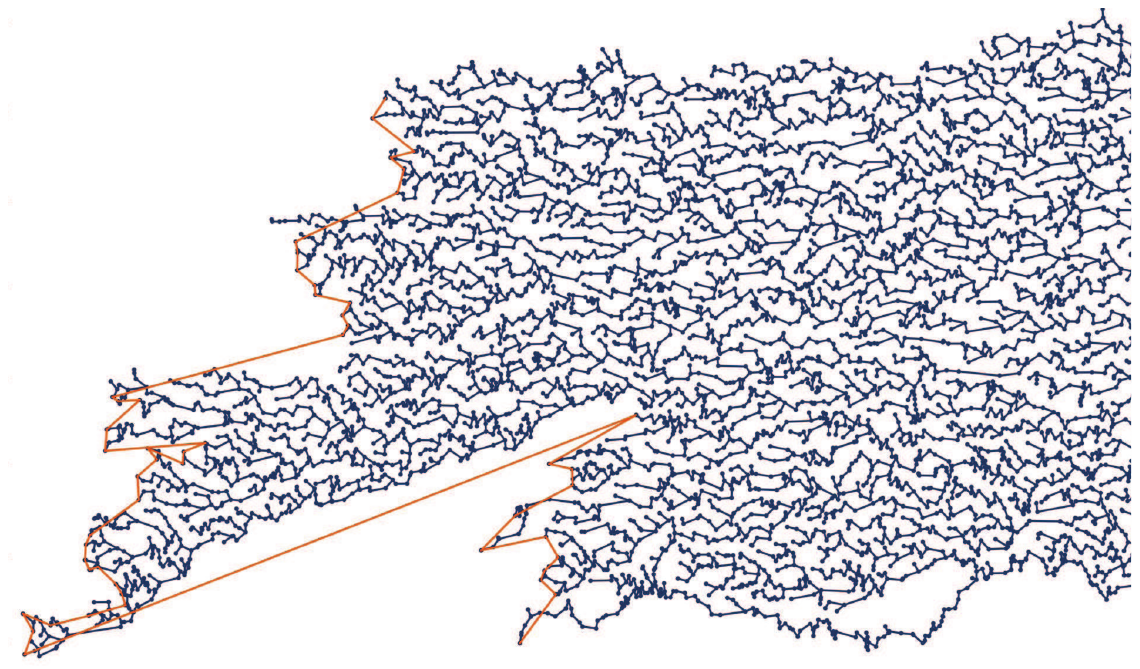}
		\includegraphics[height=.21\textheight,trim={2cm 8cm 1cm 7cm},clip]{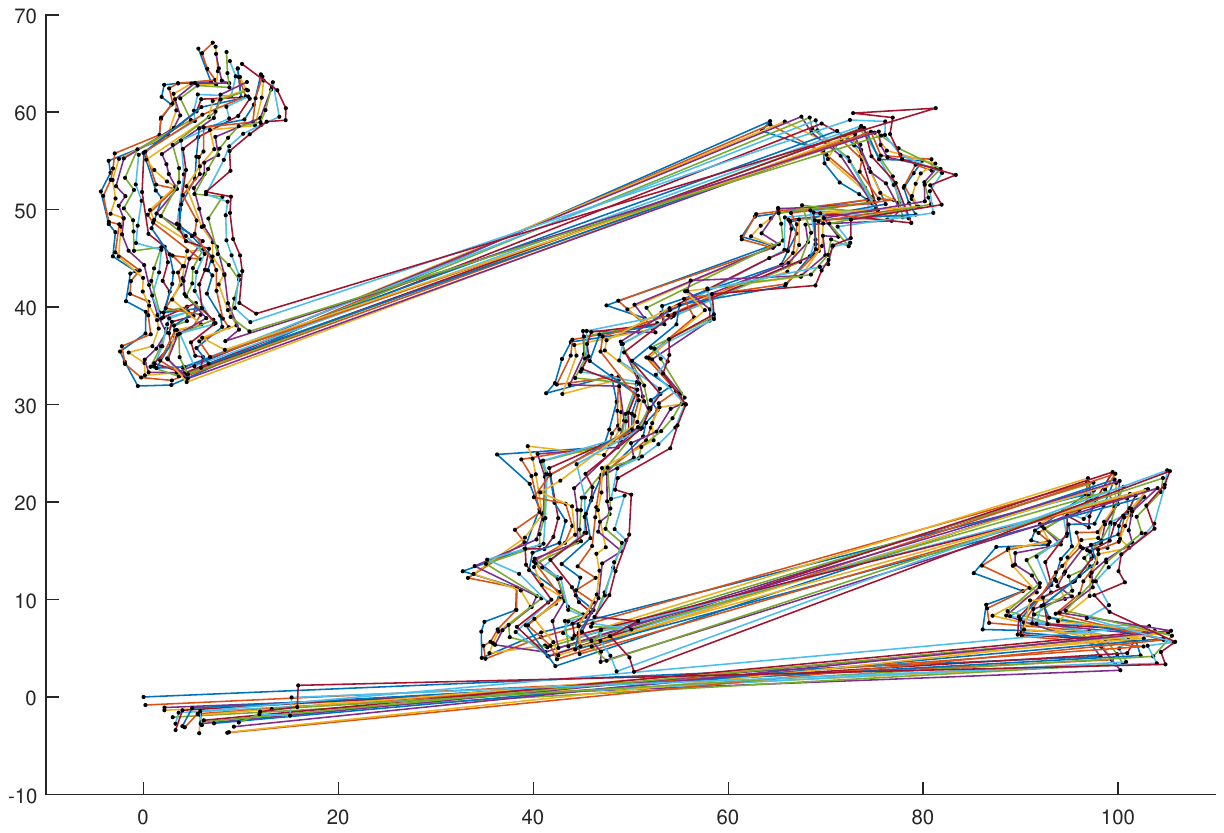}
		\alt{The level-sets in the strip point-map}
		\caption{The level-sets in the strip point-map on the Poisson point process in the plane (\Cref{ex:strip}). On the left, part of the graph is shown together with one level-set. On the right, only 20 consecutive level-sets are shown.}
		\label{fig:strip}
	\end{center}
\end{figure}

In this work, we present a novel approach and prove indistinguishability for various \textit{oriented forests}, where each vertex has exactly one out-going edge. Such forests are considered on a base space which is either a unimodular random graph, a unimodular discrete space, or the set of points of the Bernoulli/Poisson point process (we will also discuss deterministic base graphs later; see \Cref{main:cmt}). This includes all undirected forests with one-ended components (since there is a unique way to orient the edges towards the ends), and hence, such models are ubiquitous.
By considering the out-going edges, oriented forests are equivalent to cycle-free \defstyle{point-maps}, where the latter means a mapping $\bs F$ from the base space to itself, possibly using extra randomness (see \Cref{subsec:point-map}). Unimodularity implies that the connected components of such forests are automatically one-ended or two-ended almost surely (see \Cref{thm:classification}), which is stressed in the title of the paper.


The general approach is introduced in \Cref{main:method,intro:method} below. While the approach is intended for arbitrary oriented forest models, some of its steps are model-dependent. The main framework for which we establish indistinguishability is \defstyle{Coalescing Markov Trajectories (CMT)} on a unimodular random graph or discrete space.
Such models are defined by, roughly speaking, choosing the out-going edges of the vertices randomly and independently, under some conditions that will be specified in \Cref{model1,model2} in \Cref{subsec:cmt}. This unifies various models defined in the literature, e.g., river models, renewal forests defined on $\mathbb Z$, coalescing random walks, and coalescing Markov chains (see \Cref{subsec:examples}). This also implies that the clusters of the stationary voter model on unimodular graphs are indistinguishable (see \Cref{subsec:voter}). 
Beyond indistinguishability, we will also provide general criteria for connectedness and one-endedness of CMTs, which will be introduced in \Cref{main:qualitative}.
To show the flexibility of the approach, we extend \Cref{model1,model2}  and prove the indistinguishability in some models defined on the support of Bernoulli or Poisson point processes, including Howard's model, the strip point-map and some variations (see \Cref{model3,model4,model5,model6} in \Cref{sec:BernoulliPoisson}). 
As another application, we provide an alternative and simpler proof of the indistinguishability of connected components of $\wusf$.

The general approach is also used to prove the indistinguishability of \defstyle{level-sets}, where the latter correspond to generations if one regards each connected component of $\bs F$ as a family tree, where $\bs F(x)$ is considered as the parent of $x$ (see Figure~\ref{fig:strip} and also \Cref{subsec:classification} for the formal definition). This problem is novel and is beyond the scope of the previous techniques. The reason is that pivotal updates do not exist for tail component-properties (since a finite modification cannot merge two level-sets), and also the specific method of~\cite{HuNa17indistinguishability} cannot be applied to level-sets.
	We should note that the method of~\cite{HuNa17indistinguishability} can be used to prove the indistinguishability of the connected components of CMTs (although the proof is long), but not that of the level-sets. However, the reduction to the ancestral chain is new even for the indistinguishability of connected components, which might be useful for other models in the future.
	
Our initial motivation for studying the indistinguishability of level-sets was in the study of equivariant bijections between the level-sets in point-maps, particularly, in the strip point-map (see Figure~\ref{fig:strip}), stated as follows (see \Cref{subsec:bijection} for more formal definitions): 
\begin{proposition}[Bijection Between Level-Sets]
	\label{prop:bijection0}
	If $\bs F$ has indistinguishable level-sets, then there exists an equivariant bijection between consecutive level-sets of $\bs F$.
\end{proposition}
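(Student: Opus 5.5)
Write $L(x)$ for the level-set of $x$ and $L'(x)$ for the level-set of $\bs F(x)$, i.e. the next generation. The plan is to reduce the statement to a known result: equivariant matchings between two point sets exist as soon as the two sets have the same "intensity". Indistinguishability is what lets us pass from equality on average to equality component by component.

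\textbf{Step 1: equal intensities.} Work in the unimodular framework, with root $o$. Consider the two equivariant subsets $L(o)$ and $L'(o)$, and compare their sizes through the mass transport principle. Send unit mass from each $x$ to $\bs F(x)$, then normalize by the number of children. This shows that the "density" of $L$ seen from $L'$ equals $1$ in expectation. Level-sets are infinite in the interesting case: for one-ended or two-ended components, \Cref{thm:classification} gives infinite level-sets, and the finite case is trivial. So the comparison should be phrased with a relative intensity, as follows. Define the invariant quantity
\[
\rho(L) = \mathbb E\bigl[\#\bs F^{-1}(o) \,\big|\, \mathcal I_L\bigr],
\]
where $\mathcal I_L$ is the invariant $\sigma$-field of the equivalence relation whose classes are the level-sets, restricted to the level-set of the parent.

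\textbf{Step 2: use indistinguishability.} Indistinguishability of level-sets says that every invariant property of level-sets is trivial. In particular the conditional expectation above is almost surely constant. By mass transport this constant is $1$, because the mean number of children equals $1$ by unimodularity. So every level-set has, relative to the next one, average offspring exactly $1$ in the ergodic sense. This is the key input: without indistinguishability, some generations could be "denser" than their parents, and no bijection could exist.

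\textbf{Step 3: construct the bijection.} Apply a Hall-type or stable-matching construction between $L(o)$ and $L'(o)$ inside the union of two consecutive level-sets. The natural choice is a Gale--Shapley stable matching using graph distance, with random tie-breaking labels. Run it in rounds, with $x$ preferring points at smaller distance. Both sets have equal relative intensity and the relevant equivalence relation is ergodic. By the standard argument of Holroyd--Peres type, made unimodular via mass transport, the matching then leaves almost surely no unmatched points on either side. Concretely: the set of unmatched points on each side is invariant, and mass transport shows their intensities are equal. Stability then forces both to be empty, since an unmatched point of $L$ and an unmatched point of $L'$ would form an unstable pair once they are connected at finite distance. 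Matching points at finite distance is possible because the two level-sets lie in the same component of $\bs F$.

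\textbf{Main obstacle.} The main difficulty is the last stability step. Distances within a component can make the preference lists non-locally finite, so the stable matching may fail to terminate or may leave "infinite chains". I would first attempt an alternative route: apply the Feldman--Moore / Lyons--Nazarov type result that an ergodic, measure-preserving, two-sided relation with equal measures of two Borel transversals admits a measurable bijection, which follows from Hopf/Dye-type arguments. This would apply since indistinguishability gives ergodicity of the level-set relation.
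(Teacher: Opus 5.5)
Your Steps 1--2 coincide with the last paragraph of the paper's proof of \Cref{prop:bijection}. There, one reduces to the ergodic case, so $\omidCond{d_1(\bs o)}{\sigfield{level}{I}{}}$ is constant and equals $\omid{d_1(\bs o)}=1$. Your Step 3, however, is a genuinely different construction, and it works.

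The paper puts uniformly random total orders on the sibling sets $\bs F^{-1}(x)$ and extends them lexicographically along ancestral lines to a $\mathbb Z$-indexed order on each level-set. It then sets $\bs F'(x):=h_{\tau(x)}(\bs F(x))$. This is a right-stable balancing allocation between counting measure on $L(\bs F(x))$ and the weights $\card{\bs F^{-1}(\cdot)}$, and its perfectness is imported from Theorem~5.6 of Last--M\"orters--Thorisson. You instead use a stable matching for the forest metric, plus a self-contained stability-and-transport argument. The paper's allocation is explicit and one-dimensional, but it leans on that external theorem and on the induced order being of type $\mathbb Z$. Yours needs only local finiteness of $\gr{\bs F}$. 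As shown below, it uses only $\omidCond{d_1(\bs o)}{\sigfield{level}{I}{}}=1$, so it also recovers the ``if'' direction of \Cref{prop:bijection}.

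The obstacle you flag is not real, so you can drop the fallback.

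\textbf{Existence of the matching.} Run one-sided Gale--Shapley. Each $x$ proposes to the points of $L(\bs F(x))$ in increasing forest distance, with ties broken by i.i.d.\ labels, and each receiver keeps its best offer so far.
\begin{itemize}
\item Since $\gr{\bs F}$ is locally finite (\Cref{thm:classification}), every point prefers only finitely many candidates to any given one.
\item Hence each receiver's held offer changes finitely often, and each proposer is either eventually held forever or has been rejected by every candidate.
\item The limit $M$ is therefore a stable partial injection with $M(x)\in L(\bs F(x))$: if $x$ prefers $y$ to its outcome, it proposed to $y$ and was rejected, and $y$'s held offer only improves.
\end{itemize}
Descending chains play no role. $M$ is a measurable factor of $\bs F$ and the labels, hence equivariant.

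\textbf{No unmatched points.} Your claim that ``intensities are equal, so stability forces both sides empty'' needs a per-level version. Fix a level-set-property $A$. Transport $x\to\bs F(x)$, then $x\to M(x)$, using $\identity{A}(\bs F(x))=\identity{A}(M(x))$. Combined with $\omid{d_1(\bs o)\identity{A}}=\myprob{A}$, this gives
\[
\omid{\card{\bs F^{-1}(\bs o)\setminus\mathrm{dom}\,M}\,\identity{A}}=\myprob{\bs o\notin\mathrm{range}\,M,\ A}.
\]
Take $A=\{L(\bs o)\not\subseteq\mathrm{range}\,M\}$. Stability makes the left side vanish, so $\myprob{\bs o\notin\mathrm{range}\,M}=0$. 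Now take $A$ to be the sure event; one more transport gives $\myprob{\bs o\notin\mathrm{dom}\,M}=0$. \Cref{lem:happensAtRoot} finishes.

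\textbf{Minor corrections.}
\begin{itemize}
\item Two-ended components have \emph{finite} level-sets (\Cref{thm:classification}). The same argument, or a uniform random bijection, handles that case.
\item ``Almost surely constant'' in Step 2 needs the reduction to the ergodic case (\Cref{lem:indistinguishability}).
\item The unmatched sets are equivariant, not invariant.
\item Your fallback is a real lemma. In an ergodic measure-preserving countable Borel equivalence relation, two Borel sets of equal measure are matched by a partial Borel injection inside the relation, by greedy exhaustion along a Feldman--Moore enumeration. But ``transversals'', Lyons--Nazarov and Dye are not the relevant notions. To use it you would have to build a relation in which $L$ and $L'$ are disjoint pieces of one class, e.g.\ on two copies of $\mathcal G'_*$ with $(x,0)\sim(y,1)$ iff $y\in L(\bs F(x))$. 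You would also need i.i.d.\ labels so that the mass transport principle becomes measure preservation. Its proof is the same mechanism as above, so it is not needed.
\end{itemize}
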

\Cref{prop:bijection} proves an extended version of this result using the ideas of equivariant matching between point processes, which have been of great interest in the last two decades.
An example where such a bijection does not exist is the canopy tree (see~\cite{processes}), where, intuitively, each level-set has \textit{twice more points} than the next level-set. This is formalized in \Cref{subsec:bijection} as \textit{relative intensity}, and is used in \Cref{prop:bijection}.

As a second application, we define a discrete-time version of the voter model in \Cref{ex:voter-discrete} and show that its clusters are just the level-sets of the dual coalescing Markov chain model. o, our results imply:

\begin{proposition}[Voter Model]
	\label{prop:voter}
	The clusters of the finest stationary multi-type voter model on a unimodular graph or discrete space (under the conditions mentioned in \Cref{ex:voter-discrete,ex:voter-continuous}) are indistinguishable.
\end{proposition}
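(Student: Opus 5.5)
The plan is to use duality: realize the clusters of the stationary voter model as the level-sets of a coalescing Markov trajectory model (a CMT in the sense of \Cref{model1,model2}), and then invoke the main theorem on indistinguishability of level-sets of CMTs.

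\textbf{Step 1 (discrete time).} Fix the unimodular graph or discrete space $[\bs G,\bs o]$. For each time $n\in\mathbb Z$ and each vertex $x$, sample independently a vertex $\bs F_n(x)$ distributed as the voter-model transition kernel from $x$. The vertex $x$ at time $n$ copies the opinion of $\bs F_n(x)$ at time $n-1$. On the space-time graph $\bs G\times\mathbb Z$, define the map $\bs F(x,n):=(\bs F_n(x),n-1)$. This is a cycle-free point-map, since time strictly decreases. Its trajectories are coalescing Markov chains run backward in time, so $\bs F$ is a CMT on the unimodular space $\bs G\times\mathbb Z$ (or on the time-slices viewed suitably). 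The conditions in \Cref{ex:voter-discrete} are exactly what is needed for \Cref{model1,model2}.

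\textbf{Step 2 (finest stationary voter model).} Give each connected component of $\bs F$ its own type. The finest stationary voter model assigns to $(x,n)$ the type of the component containing $(x,n)$, so its clusters at a fixed time are grouped by components. I would check that these clusters are precisely the intersections of level-sets of $\bs F$ with a time-slice. If each component is one-ended, two vertices at the same time lie in the same component iff they coalesce. Since time is a global height, the components' level-sets are the intersections of components with time-slices. The subtle point is whether the clusters at time $0$ are the level-sets restricted to time $0$, or whole level-sets. Because time is a global height function, a level-set of $\bs F$ inside a component lies within one time-slice, so the two notions coincide.

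\textbf{Step 3 (conclusion).} Apply the level-set indistinguishability theorem for CMTs. Any invariant property of voter clusters, defined on the $\bs G$ slice, lifts to an invariant property of level-sets of $\bs F$ on $\bs G\times\mathbb Z$. This transfer uses the unimodularity of the product and the fact that time-shift invariance carries over. Indistinguishability of level-sets therefore gives indistinguishability of voter clusters.

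\textbf{Step 4 (continuous time).} For \Cref{ex:voter-continuous}, use Poisson clocks: the dual is coalescing continuous-time walks. I would either discretize by taking the jump-chain embedding, producing a point-map on the Poisson space-time points, or reduce to the discrete case through a CMT on the space-time Poisson process.

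\textbf{Main obstacle.} The main obstacle is verifying that the space-time construction fits the hypotheses of \Cref{model1,model2}, in particular the unimodularity or stationarity of the product space and the independence requirements. A second difficulty is checking, in Step 3, that lifting invariant cluster properties to level-set properties preserves measurability and invariance.
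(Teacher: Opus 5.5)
Your proof has a genuine gap at Steps 1 and 3: the space-time model does not satisfy \Cref{model2}, so Step 3 cannot invoke the level-set theorem for CMTs. Any kernel on $\bs G\times\mathbb Z$ changes time by exactly one unit per step. So $K^n(u,\cdot)$ is supported on a single time-slice determined by $n$, and $K^n(u,v)>0$ and $K^{n+d}(u,v)>0$ can never hold together for $d\geq 1$. The set \eqref{eq:aperiodic} is therefore empty, and weak aperiodicity fails for every space-time CMT (part (iii) of \Cref{lem:crw}; \Cref{rem:model1}). Part (ii) of \Cref{thm:indistinguishability} thus does not apply to the level-sets of the dual forest. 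Your argument never uses any aperiodicity input, yet the conclusion depends on one (compare the parity obstruction in \Cref{prop:crw} and \Cref{rem:periodic}). By comparison, the issues you flag as the main obstacle (unimodularity of the product, measurability of lifting cluster properties) are routine. Your Step 2 identification of clusters with level-sets is fine.

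The missing step is a direct proof of Step~\ref{step3foil}. Steps~\ref{step1foil} and~\ref{step2foil} (\Cref{thm:I in T 2}, \Cref{thm:tailDrainage}) do not use weak aperiodicity. So by \Cref{thm:general} it suffices to show that the ancestral chain $((\bs Y_n,n))_n$ of the root is tail trivial. The time coordinate is deterministic and configurations are taken up to time shifts, so this chain carries no information beyond $(\bs Y_n)_n$. Its tail triviality is therefore equivalent to that of $[\bs G,\bs Y_n;(\bs Y_{n+i})_{i\geq 0}]$ on $\bs G$ itself. There you can apply \Cref{thm:pathTail}, which does not require cycle-freeness. The backward kernel $p$ is balanced by $b_0$ and weakly irreducible. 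It is also weakly aperiodic, because in the setting of \Cref{ex:voter-discrete} it is lazy: a vertex keeps its opinion when no trigger into it is activated. Hence $p(x,x)>0$ and $p^2(x,x)\geq p(x,x)^2>0$, which puts $1$ in the set \eqref{eq:aperiodic} for $p$. This is the paper's route: \Cref{prop:voter} follows from \Cref{prop:cmc} with $d=1$, via the trick in the proof of \Cref{prop:crw}.

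Your Step 4 also fails as written. Level-sets of the jump-chain point-map on the Poisson clock points count jumps. A voter cluster at time $t$ is instead a horizontal section of the real-time dual forest, and two sites of one cluster typically reach their coalescence point after different numbers of jumps. You need the continuous-time ancestry chain on $\bs G\times\mathbb R$, with tail triviality proved as in \Cref{thm:pathErgodic-poisson} plus the same reduction to the spatial chain (\Cref{ex:voter-continuous}).
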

The discrete-time and continuous-time cases are proved in \Cref{ex:voter-discrete,ex:voter-continuous} respectively.

As another motivation, the indistinguishability of level-sets implies that the probability of every event can be approximated by Birkhoff-type averages on a level-set, stated as follows, although the latter is very thin and occupies an asymptotically zero fraction of the base graph. Below, the \textit{$n$-cousins} of $\bs o$ are the points of the level-set of $\bs o$ that share the same $n$'th ancestor with $\bs o$.

\begin{lemma}
	\label{lem:average}
	If $[\bs G, \bs o]$ is ergodic and the level-sets of $\bs F$ are indistinguishable, then for every event $A$, $\myprob{A}$ is equal to the a.s. limit of the empirical average of $\identity{A}([\bs G, x; \bs F])$, where $x$ ranges over the $n$-cousins of $\bs o$, 
	as $n\to\infty$.
\end{lemma}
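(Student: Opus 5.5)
The argument is a backward martingale convergence on a decreasing family of $\sigma$-fields. Write $D_n(\bs o)$ for the set of $n$-cousins of $\bs o$, i.e.\ the descendants of $\bs F^n(\bs o)$ at depth $n$, and set
\[ Y_n:=\frac{1}{\card{D_n(\bs o)}}\sum_{x\in D_n(\bs o)}\identity{A}([\bs G,x;\bs F]). \]
I will assume for now that $D_n(\bs o)$ is finite; the infinite case is discussed at the end.

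\textbf{Step 1: $Y_n$ is a conditional expectation.} Let $\mathcal{A}_n$ be the $\sigma$-field of events invariant under moving the root to an $n$-cousin. Concretely, it is generated by the unrooted space $[\bs G,D_n(\bs o);\bs F]$ with the cousin class marked but no distinguished point. I claim $Y_n=\omidCond{\identity{A}}{\mathcal{A}_n}$. To see this, take $B\in\mathcal{A}_n$ and apply the mass transport principle with the transport
\[ g(x,y)=\identity{A}([\bs G,y])\,\identity{B}([\bs G,x])\,\frac{\identity{y\in D_n(x)}}{\card{D_n(x)}}. \]
Two facts make this work. The cousin relation is symmetric, so $D_n(x)=D_n(y)$ for cousins $x,y$. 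And $B$ is invariant under cousin re-rooting. The mass transport principle then gives $\omid{Y_n\identity{B}}=\omid{\identity{A}\identity{B}}$, and $Y_n$ is $\mathcal{A}_n$-measurable by construction.

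\textbf{Step 2: the fields decrease.} If $x$ and $y$ are $n$-cousins, they share the same $n$'th ancestor and hence the same $(n+1)$'th ancestor. So $D_n(\bs o)\subseteq D_{n+1}(\bs o)$ and $\mathcal{A}_{n+1}\subseteq\mathcal{A}_n$. Lévy's downward theorem then gives
\[ Y_n\to\omidCond{\identity{A}}{\mathcal{A}_\infty}\quad\text{a.s.},\qquad \mathcal{A}_\infty=\bigcap_n\mathcal{A}_n. \]

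\textbf{Step 3: $\mathcal{A}_\infty$ is trivial (main obstacle).} An event in $\mathcal{A}_\infty$ is invariant under moving the root to any $n$-cousin for every $n$. That means it is invariant within the level-set of $\bs o$, since the level-set is exactly the union of the $n$-cousin classes over $n$. Such an event is therefore a level-set property: it is determined by the level-set of $\bs o$ as a subset of the unrooted marked space, without reference to the root. This must match the class of properties used in the paper's definition of indistinguishability of level-sets, including whatever extra randomness $\bs F$ carries.

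Indistinguishability then says that a.s.\ either all level-sets have the property or none do. So the event coincides a.s.\ with an event invariant under arbitrary re-rooting in $\bs G$. Ergodicity of $[\bs G,\bs o]$, applied jointly with $\bs F$ (I expect the paper's definition to make this joint ergodicity part of the hypotheses), gives probability $0$ or $1$. Hence the limit equals $\myprob{A}$.

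The delicate point in this step is measurability: identifying $\bigcap_n\mathcal{A}_n$, up to null sets, with the level-set invariant $\sigma$-field. The shortcut I would try first is to express $\mathcal{A}_\infty$-events as limits of $\mathcal{A}_n$-measurable functions, exactly as in Step 1. That shows each such event is a Borel function of the level-set class and hence qualifies as a level-set property.

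\textbf{If $D_n(\bs o)$ is infinite.} In this case I would instead replace the uniform average by the limit of averages over balls, or note that unimodularity forces finiteness. In the one-ended case the classification theorem gives finite descendant sets, and in the two-ended case the level-sets are handled analogously.
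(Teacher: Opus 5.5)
Your proposal is correct and is essentially the paper's proof: the paper applies \Cref{lem:nested} to the nested partitions $\Pi_n=\{D_n(x):x\in\bs G\}$ into $n$-cousin classes, and then invokes \Cref{lem:indistinguishability}. The proof of \Cref{lem:nested} is exactly your mass-transport identification of the averages as conditional expectations followed by backward martingale convergence, and \Cref{lem:indistinguishability}, as you suspected, presumes ergodicity of $[\bs G,\bs o;\bs F]$ rather than only of $[\bs G,\bs o]$.

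Two of your worries are unnecessary. The cousin classes are finite a.s.\ in every case simply because the graph of $\bs F$ is locally finite (\Cref{thm:classification}). And $\bigcap_n\mathcal A_n$ equals $\sigfield{level}{I}{}$ exactly, not just up to null sets, since $L(\bs o)=\bigcup_n D_n(\bs F^{(n)}(\bs o))$.
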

The proof is given in \Cref{subsec:bijection}.

\subsection{Main Results}
\label{intro:results}

\del{(later: mention subsections in order) This subsection introduces the main results of the paper. We state the indistinguishability results on components and level-sets in various models in \Cref{main:cmt,main:wusf} using the general approach mentioned in the previous subsection. Also, \Cref{main:deterministic} states analogous results on deterministic base graphs. \Cref{main:markov} provides a result of independent interest regarding Markov chains on unimodular graphs. Also, \Cref{main:qualitative} provides further qualitative properties of the connected components of CMTs.}

\subsubsection{A General Approach for Proving Indistinguishability in Oriented Forests on Unimodular Graphs}
\label{main:method}

The general approach consists of three steps. To avoid technicality at this stage, we describe these steps roughly here, and postpone the motivation and details to \Cref{intro:method} below. In the first step (Steps~\ref{step1comp} and~\ref{step1foil}), roughly speaking, we prove that every property of components/level-sets is equivalent to some \textit{tail} component/level-set property. 
In fact, tail component-properties should be replaced by \textit{tail branch-properties}, see \Cref{intro:method}.
The core of the approach is the second step (Steps~\ref{step2comp} and~\ref{step2foil}), which reduces the indistinguishability of components (resp. level-sets) to the ergodicity (resp. tail-triviality) of the \textit{ancestry chain} of the root. This step is proved for CMTs, WUSF and some other models later, thereby, it shows that the ancestry chain governs the indistinguishability of components/level-sets in these models. The last step (Steps~\ref{step3comp} and~\ref{step3foil}) is proving the ergodicity (resp. tail-triviality) of the ancestry chain.

This approach does not rely on pivotal updates. It leverages some measure-theoretic lemmas on the completion of sub-sigma-fields (applied to various types of invariant or tail sigma-fields), which are developed in \Cref{ap:measurability}, are of independent interest, and play important roles in the new proof method. Many of the results are also proved with coupling techniques.

The following theorem summarizes the general approach discussed above:

\begin{theorem}[Indistinguishability in General Point-Maps]
	\label{thm:general}
	Let $\bs F$ be an equivariant point-map on a unimodular graph or discrete space $[\bs G, \bs o]$.
	\begin{enumerate}[label=(\roman*)]
		\item \label{thm:general:comp} If $\bs F$ is cycle-free a.s., then it satisfies the claim of Step~\ref{step1comp} , which rules out non-tail properties 
		(see \Cref{thm:I in T 2} for the formal statement). If $\bs F$ satisfies the claims of Steps~\ref{step2comp} and~\ref{step3comp} as well, then it has indistinguishable connected components. 
		\item \label{thm:general:level} If every component of $\bs F$ is one-ended a.s., then it satisfies the claim of Step~\ref{step1foil} (see \Cref{thm:I in T 2} for the formal statement). If $\bs F$ satisfies the claims of Steps~\ref{step2foil} and~\ref{step3foil} as well, then it has indistinguishable level-sets. 
	\end{enumerate}
\end{theorem}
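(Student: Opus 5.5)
The theorem chains three reductions, and the plan is to compose them with the precise statements of the three steps. I work in the space of doubly-rooted marked graphs, where a component property (resp. level-set property) is an event $A$ that is invariant under moving the root within the component (resp. level-set) of $\bs F$. Indistinguishability means $\myprob{A \mid \mathcal I}\in\{0,1\}$ a.s., where $\mathcal I$ is the invariant sigma-field of the base $[\bs G,\bs o]$. Equivalently, the component-invariant (resp. level-set-invariant) sigma-field coincides with $\mathcal I$ modulo null sets.

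\textbf{Step 1.} Fix a component property $A$ and show it agrees a.s. with a tail branch-property. Cycle-freeness together with \Cref{thm:classification} gives that each component is one- or two-ended, so the ancestral line $\bs o,\bs F(\bs o),\bs F^2(\bs o),\dots$ is well defined and infinite. I would approximate $\identity A$ in $L^1$ by an event $A_r$ that depends only on a finite neighbourhood of the root. Then I would use the mass transport principle to move the root to its $n$-th ancestor. Since $A$ is component-invariant, $\identity A$ evaluated at $\bs F^n(\bs o)$ equals $\identity A$ at $\bs o$. The event $A_r$ evaluated at $\bs F^n(\bs o)$ depends only on the part of the graph seen from the $n$-th ancestor, namely its descendants branch truncated at distance $r$. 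By unimodularity these events are each $L^1$-close to $\identity A$, uniformly in $n$.

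Letting $n\to\infty$, this shows that $A$ is measurable with respect to the completion of the intersection over $n$ of the sigma-fields generated by the branch beyond $\bs F^n(\bs o)$. The completion issues here are handled by the measure-theoretic lemmas of \Cref{ap:measurability}. This is the content of \Cref{thm:I in T 2}.

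For level-sets the same argument applies, but it needs one-endedness. Then all points of a level-set eventually share ancestors, so a level-set-invariant event is also invariant under the cousin relations, and the tail is taken along the ancestry of the level-set.

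\textbf{Steps 2 and 3.} By the claim of Step~\ref{step2comp} (resp. Step~\ref{step2foil}), any such tail branch-property is a.s. equal to a function of the ancestry chain of $\bs o$. More precisely, it lies in the invariant sigma-field (resp. tail sigma-field) of that chain, completed and modulo $\mathcal I$. The claim of Step~\ref{step3comp} (resp. Step~\ref{step3foil}) says that this invariant (resp. tail) sigma-field is trivial conditionally on $\mathcal I$. Combining the two gives $\myprob{A\mid\mathcal I}\in\{0,1\}$, which is indistinguishability.

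The part needing care is the exact form in which Step 1's output feeds into Step 2. Step 1 produces measurability with respect to a completed tail sigma-field, not equality of sigma-fields. So I must check that the Step 2 claim is stated for events measurable modulo null sets, and that completion commutes with the decreasing intersection. The latter can fail in general, and this is exactly where the appendix lemmas are needed.

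The main obstacle is the uniform-in-$n$ approximation in Step 1. The transported local event must be controlled along the whole ancestral line. I would get this from the equality in law of the rooted space at $\bs o$ and at $\bs F^n(\bs o)$, weighted by the number of $n$-th descendants. That number has mean $1$ by the mass transport principle, and this is what allows the transfer.
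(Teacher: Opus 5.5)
Your closing composition (Step C1 as $\sigfield{comp}{I}{}=\sigfield{branch}{T}{}$ mod $\mathbb P$, then Steps C2 and C3) is exactly how the paper finishes. However, your proof of Step C1, the only part of the theorem that is not assumed, breaks at the uniform-in-$n$ approximation.

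By the mass transport principle, the $L^1$ distance between $\identity{A}$ and $\identity{A_r}[\bs G,\bs F^{(n)}(\bs o);\bs F]$ equals $\omid{d_n(\bs o)\identity{A\Delta A_r}}$, where $d_n(\bs o)=\card{D_n(\bs o)}$. The fact $\omid{d_n(\bs o)}=1$ controls this only for each fixed $n$. A bound uniform in $n$ needs uniform integrability of $(d_n(\bs o))_n$, and this fails exactly in the one-ended case: there $D(\bs o)$ is finite, so $d_n(\bs o)\to 0$ a.s. while keeping mean $1$.

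For a concrete failure, take the canopy tree with $\bs F$ the parent map and one fair coin written on all leaves, and let $A=\{\text{coin}=1\}$. Let $A_r$ be the event that some leaf in $\oball{r}{\bs o}$ shows $1$; then $\myprob{A\Delta A_r}\le 2^{-r-1}$. But $\bs F^{(n)}(\bs o)$ lies in layer at least $n$. So for $n>r$, every event of the marked $r$-ball around it is independent of the coin and is at $L^1$ distance exactly $1/2$ from $\identity{A}$. The information distinguishing a component can sit far below the ancestral line rather than along it.

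Even granting a uniform bound, three further problems remain. Your transported events depend on the whole path $\bs o,\dots,\bs F^{(n)}(\bs o)$, not just on the branch beyond $\bs F^{(n)}(\bs o)$. The sigma-fields you intersect are not nested. And you never show that the limit sigma-field is contained in $\sigfield{branch}{T}{}$, i.e. invariant under finite modifications that keep an ancestral branch but shift the indexing of $\anc(\bs o)$, which is the input Step C2 requires. The same objections apply to your level-set version.

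The paper never transports along the ancestral line. It sets $\alpha=\probCond{A'}{\sigfield{comp}{I}{}}$, which is $L^1$-close to $\identity{A}$ by contraction of conditional expectation, since $A\in\sigfield{comp}{I}{}$. It then realizes $\alpha$ explicitly as a limit of averages of $\identity{A'}$ over equivariant nested finite sets exhausting $C(\bs o)$:
\begin{itemize}
\item in the one-ended case, clusters of Bernoulli bond percolation with parameters $1-2^{-i}$;
\item in the two-ended case, unions of the bushes $\tau^{-1}(Y_j)$ hanging off the bi-infinite path;
\item for level-sets, the sets $D_i(\bs F^{(i)}(\bs o))$.
\end{itemize}
Convergence comes from the backward-martingale argument of \Cref{lem:nested}. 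A finite modification preserving an ancestral branch alters only finitely many terms of these growing averages, so the limit is invariant under the countable Borel equivalence relation of \Cref{lem:cber}. \Cref{lem:completion-I(R),lem:completion-g} then conclude. Because the tail sigma-fields are defined as $\mathcal I(\mathcal R)$ rather than as decreasing intersections, the completion issue you flag never arises.
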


It should be noted that the claims of the three steps are necessary for indistinguishability.
The full generality of Steps~\ref{step1comp} and~\ref{step1foil} rules out non-tail properties in any model. 
We will also prove Steps~\ref{step2comp} and~\ref{step3comp} for CMTs and WUSF using coupling techniques, as well as Steps~\ref{step2foil} and~\ref{step3foil} for CMTs. The formal results in these cases are presented in the next subsection.

The proof idea is roughly described in \Cref{intro:method}, but the formal proof is postponed to the end of \Cref{subsec:drainage-ergodic}, where the required notations are developed and Steps~\ref{step1comp} and~\ref{step1foil} are proved (\Cref{thm:I in T 2}).


\subsubsection{Indistinguishability in CMTs on Unimodular Graphs or on Bernoulli/Poisson Point Processes}
\label{main:cmt}

As a main application of the general approach, we apply this method to various models as follows:

\begin{theorem}[Indistinguishability in CMTs and Similar Models]
	\label{thm:cmt-all}
	One has:
	\begin{enumerate}[label=(\roman*)]
		\item In the following models, which are alluded to in \Cref{intro:models} (see also \Cref{subsec:examples,subsec:BernoulliExamples} for precise definitions), the connected components are indistinguishable: Nguyen's river model and its variants, renewal forests on $\mathbb Z$, coalescing random walks and coalescing Markov chains on unimodular graphs, Howard's model, strip point map and its variants.
		\item In these models, in the cases where the components are one-ended a.s., the level-sets are also indistinguishable
		\item More generally, these claims hold for general cases of CMTs under the conditions in \Cref{model1,model2,model3,model4,model5,model6}, which unify the last examples (see \Cref{thm:indistinguishability} for a more precise statement).
	\end{enumerate}
\end{theorem}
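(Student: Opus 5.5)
The plan is to derive \Cref{thm:cmt-all} from \Cref{thm:general}. For each model we verify the three steps, and the work is concentrated in Steps~\ref{step2comp}/\ref{step2foil} and \ref{step3comp}/\ref{step3foil}, since Steps~\ref{step1comp} and \ref{step1foil} hold in full generality. Part (iii) is the core statement. Parts (i) and (ii) then follow once we check that each listed example (Nguyen's river model and its variants, renewal forests on $\mathbb Z$, coalescing random walks and Markov chains, Howard's model, the strip point-map) is an instance of one of \Cref{model1,model2,model3,model4,model5,model6}. This check is a matter of unpacking definitions: each example specifies out-going edges by independent randomness attached to vertices, or to Bernoulli/Poisson points, with equivariant transition rules. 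Cycle-freeness, needed for \Cref{thm:general}\ref{thm:general:comp}, holds in these models almost surely, for instance by a mass-transport argument or by the monotone structure in the point-process models. In (ii), one-endedness is assumed, so \Cref{thm:general}\ref{thm:general:level} applies directly.

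\textbf{Step 2 for CMTs.} The goal is to show that a tail branch-property $A$ of the component or level-set of $\bs o$ is measurable, modulo null sets, with respect to the invariant (resp. tail) sigma-field of the ancestry chain $(\bs F^n(\bs o))_n$ together with the base graph. I would use coupling. Condition on the ancestral line of $\bs o$. By independence in the CMT construction, the out-going edges of vertices off this line are fresh randomness; they determine which subtrees hang off the line, and the line is coalesced into. A tail branch-property depends only on the far portion of the branch, so it is a function of the ancestral line from some index on and of independent decorations attached to it. A Hewitt--Savage/Kolmogorov zero-one type argument on the decorations then removes them, using the completion-of-sub-sigma-field lemmas of \Cref{ap:measurability}. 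Shift invariance along the line, coming from the fact that the property is a property of the whole branch, gives measurability with respect to the invariant sigma-field. For level-sets, the property is invariant only under shifts that preserve the generation index, which yields the tail sigma-field instead.

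\textbf{Step 3 for CMTs.} The remaining task is to prove that the ancestry chain is ergodic, and respectively tail-trivial. The ancestry chain of $\bs o$ in a CMT is exactly a Markov chain on the unimodular random graph, viewed from the moving root. I would prove tail triviality of such chains, which implies ergodicity. The approach is to couple two copies started from $\bs o$ and from a neighbor, or from time-shifted positions. Alternatively, one can use the unimodular mass-transport principle to show the chain is stationary for the environment seen from the walker and apply a zero-one law for the tail using the independence of the steps, that is, the Orey/Derriennic criterion that the tail is trivial if the distributions of $X_n$ and $X_{n+1}$ coalesce in total variation when averaged over the environment. This is the step I expect to be the main obstacle, since tail triviality can genuinely fail for Markov chains on graphs: consider periodicity, or chains with nontrivial Poisson boundary. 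The conditions built into \Cref{model1,model2} (laziness or aperiodicity, and the way the kernel is attached) must be exactly what rules this out. The hard part is showing that unimodularity forces the tail to coincide with the invariant sigma-field, modulo the environment, so that the invariant sigma-field is the only one that needs to be handled.

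For \Cref{model3,model4,model5,model6} on Bernoulli/Poisson processes the choices are not independent per point: in Howard's model and the strip map, the out-going edge of a point depends on the point configuration in a region ahead of it. Here I would exploit the fact that these regions are disjoint along the ancestral line in a directed sense. The ancestry chain, together with the revealed region, is then a Markov chain in a spatial direction with renewal structure. Steps 2 and 3 can be redone with the exploration filtration replacing vertex independence, and tail triviality follows from the regeneration times.
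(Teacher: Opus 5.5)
Your overall architecture is the paper's: \Cref{thm:general}, with Steps~1 taken as general. Your Step~2 sketch is essentially \Cref{thm:tailDrainage}: condition on $\anc(\bs o)$, apply a zero-one law to the independent jumps off the line, then show the conditional probability is invariant under shift-coupling (resp.\ eventual coincidence) of the conditioning path. There are, however, two genuine gaps.

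\textbf{Parts (i)--(ii) do not follow by unpacking definitions.} Several listed examples are \emph{not} instances of \Cref{model1,model2,model3,model4,model5,model6}.
\begin{itemize}
\item Nguyen's original model, coalescing random walks and Markov chains, and Howard's model all violate weak aperiodicity, so (ii) cannot be obtained from \Cref{model2,model4}.
\item Coalescing walks on bipartite graphs, and renewal forests with lattice support, violate weak irreducibility, so even (i) needs more.
\end{itemize}
The paper supplies separate arguments. For lattice CMTs it uses the shift-couplings and successful couplings of \Cref{lem:coupling} together with transitivity (\Cref{prop:latticeIndistinguishability}). For coalescing walks and Howard's model it reduces tail triviality of the space-time chain $(\bs Y_n,n)$ to that of the spatial chain $(\bs Y_n)_n$. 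In the bipartite case it splits $\bs G\times\mathbb Z$ into two weakly irreducible halves (\Cref{prop:crw}). There the level-sets can actually be distinguished by parity, so (ii) requires non-bipartiteness, which a pure model check would miss.

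\textbf{Step~3, the core ingredient, has no working argument.}
\begin{itemize}
\item Coupling chains from $\bs o$ and from a neighbour on a fixed sample would prove a Liouville property. That is false in general, e.g.\ for coalescing walks on a regular tree, where the paper notes shift-coupling fails. Triviality holds only modulo isomorphisms.
\item Deriving ergodicity from tail triviality cannot serve (i). \Cref{model1} assumes no aperiodicity, and the tail can be nontrivial (\Cref{ex:pathProperty-3reg}).
\item The TV criterion must be quenched, $\tv{K^{n+k}(v,\cdot)-K^n(v,\cdot)}\to 0$ for every $v$. Even then it only gives steadiness, not triviality.
\end{itemize}

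The missing idea for ergodicity (\Cref{thm:pathErgodic}) goes as follows.
\begin{enumerate}
\item Under the $b$-biased law, the environment seen from the walker is stationary (\Cref{lem:stationary}); this is where balance enters.
\item Extend to a two-sided stationary sequence whose past and future are conditionally independent given $(\bs G,\bs o)$.
\item Approximate an invariant event $A$ by a finite-window event and translate the window into the past. This gives $\probhatC{A}{\bs G,\bs o}\in\{0,1\}$.
\item Weak irreducibility propagates this value along $K$-edges. Hence it defines an invariant event of $[\bs G,\bs o]$, which is trivial by ergodicity.
\end{enumerate}

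For tails the route is:
\begin{enumerate}
\item Strengthen weak aperiodicity to every $d_0$ using ergodicity (\Cref{lem:aperiodicity}).
\item Couple path segments of lengths $l$ and $l+k$ at the infinitely many visits to $S_{l,\epsilon}$. This gives the quenched TV convergence (\Cref{lem:pathTV}).
\item Deduce steadiness via space-time harmonic functions, then conclude with ergodicity.
\end{enumerate}

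The same issue affects \Cref{model3,model4,model5,model6}: regeneration on a fixed sample cannot give triviality. The paper instead uses \Cref{thm:stopping} and disjointness of stopping sets to make $\anc(\bs o)$ a Markov chain with kernel $K$ (\Cref{lem:Markov}), and assumes balance and weak irreducibility for that $K$. It then reruns Steps~1--2 with tail properties redefined through finite modifications of $(\Phi,\bs m)$, which is justified by \Cref{lem:shadow}.
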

\begin{proof}
	The last part will be proved in \Cref{thm:indistinguishability} and in \Cref{subsec:bernoulli,subsec:poisson}. The first two parts are derived as special cases in \Cref{subsec:examples,sec:BernoulliPoisson}.
\end{proof}

We will also prove the one-endedness of the models mentioned in this theorem (under mild conditions) in \Cref{subsec:examples,sec:BernoulliPoisson} based on the qualitative results of \Cref{main:qualitative}. As a result, we deduce the existence of equivariant bijections between consecutive level-sets (\Cref{prop:bijection}).

In the next paragraphs, we briefly describe \Cref{model1,model2,model3,model4,model5,model6}, alluded to in \Cref{thm:cmt-all}. A variant of the last theorem is also provided on deterministic base graphs, which will be presented in \Cref{thm:fixed-comp,thm:fixed-foil}.

\textbf{CMTs on Unimodular Graphs.}
First, CMTs on unimodular graphs are defined in \Cref{def:cmt}. Roughly speaking, $\bs F:\bs G\to\bs G$ is a random function such that, given $\bs G$, the values $\bs F(v)$ for $v\in \bs G$ are chosen randomly and independently. \Cref{model1,model2}, defined in \Cref{subsec:cmt}, impose some general conditions on such CMTs, which can be summarized as follows: In \Cref{model1}, it is assumed that the CMT is cycle-free, satisfies a \defstyle{balance} condition and a \defstyle{weak irreducibility} condition. The latter is a relaxation of the notion of irreducibility for Markov chains. In \Cref{model2}, for the indistinguishability of level-sets, it is additionally assumed that the components are one-ended and that a \defstyle{weak aperiodicity} condition is satisfied. The latter is a relaxation of aperiodicity of Markov chains (i.e., the gcd of the cycle lenghts is 1), but since no directed cycle is assumed to exist, another type of cycles is considered. See also \Cref{rem:model1} regarding these conditions.

\textbf{Models on Bernoulli/Poisson Point Processes.}
\Cref{model3,model4}, defined in \Cref{sec:BernoulliPoisson}, are analogous models on the Bernoulli point process on the vertices (i.e., each vertex of the base space is retained or deleted randomly and independently). Here, the inherent randomness of the point process may guarantee variants of the weak irreducibility and weak aperiodicity conditions, even if no randomness is used in the definition of $\bs F(\cdot)$ (except maybe for breaking ties). This includes Howard's model. Some extra conditions are needed, in particular, $\bs F(\cdot)$ should depend only on a \textit{stopping set} with some conditions. This is needed to guarantee that the ancestral line is a Markov chain; see \Cref{sec:BernoulliPoisson} for more details. Also, \Cref{model5,model6} define analogous models on the Poisson point process, which includes the strip point-map. Here, the underlying Poisson point process is assumed to be defined on $\mathbb R^d$, $\mathbb H^d$, or more generally, on a \textit{unimodular random measured metric space}.

\textbf{Models with Non-Independent Jumps.}
\Cref{model3,model4} also extend \Cref{model1,model2} in another way by letting the jumps be non-independent (even if the Bernoulli point process contains all vertices; i.e., has parameter 1).
As mentioned, we need that $\bs F$ is a factor of i.i.d. marks and that, for all samples $(G,o)$, the value of $\bs F(o)$‌ is determined by the marks in a finite {stopping set}. In most examples, this is guaranteed by constructing $\bs F(o)$‌ by revealing the marks one by one (based on the previously revealed marks), and deciding whether to stop or not by a stopping rule (i.e., by a decision tree), which gives a rich flexibility in defining the point-map. Under the further assumptions of \Cref{model3,model4}, indistinguishability of components/level-sets hold in these models as well.


\subsubsection{Ergodicity and Tail Triviality of Markov Chains on Unimodular Graphs}
\label{main:markov}

We prove the ergodicity and tail triviality for Markov chains on unimodular graphs, which are results of independent interest. This establishes the last step in the proof of indistinguishability in CMTs (see  Steps~\ref{step3comp} and~\ref{step3foil} in \Cref{intro:method}), but it should be noted that no CMT is assumed in these results. The ergodicity result is a slight generalization of Theorem~4.6 of~\cite{processes}, but the tail triviality result is new to the best of our knowledge.  

Let $[\bs G, \bs o]$ be a unimodular random graph. Assume that, for each realization $G$ of the graph and for every $x\in G$, a probability measure $K(x,\cdot)$ on $G$ is given. Let $(\bs X_n)_{n\geq 0}$ denote the Markov chain on $G$ with kernel $K$. If $K$ satisfies some isomorphism-invariance and measurability condition, then $[\bs G, \bs o; (\bs X_n)_n]$ is a well defined random object; see \Cref{subsec:stationary} for the formal definition.
Note that there is no CMT at this stage, but if we define a CMT with the same kernel $K$, then $(\bs X_n)_n$ has the same distribution as $\anc(\bs o)$, before the first self-intersection (given $\bs G$ and $\bs o$). Also, the conditions of \Cref{model1} (namely, cycle-free, balance, and weak irreducibility) are still meaningful for $(\bs X_n)_n$. 

For $n\geq 0$, by choosing $\bs X_n$ as the new root and forgetting $\bs X_0,\ldots, \bs X_{n-1}$, a random object $[\bs G, \bs X_n; (\bs X_{n+i})_{i\geq 0}]$ is obtained, which defines another Markov chain; see \Cref{subsec:stationary} for more details (in fact, this is a dynamical system since the shift requires no additional randomness).
Note that two different Markov chains, $(\bs X_n)_n$ and $\left([\bs G, \bs X_n; (\bs X_{n+i})_{i\geq 0}]\right)_n$, are considered here. We will clarify which Markov chains is meant each time.

\begin{theorem}[Steps~\ref{step3comp} and~\ref{step3foil}: Ergodicity and Tail Triviality of Markov Chains on Unimodular Graphs]
	\label{thm:pathIntro}
	Let $[\bs G, \bs o]$ be an ergodic unimodular graph or discrete space and $(\bs X_n)_n$ be a Markov chain on $\bs G$ started from $\bs o$. 
	\begin{enumerate}[label=(\roman*)]
		\item \label{thm:pathIntro:ergodic} If the conditions of balance and weak irreducibility of \Cref{model1} hold, then the Markov chain $\left([\bs G, \bs X_n; (\bs X_{n+i})_{i\geq 0}]\right)_n$ is ergodic.
		\item \label{thm:pathIntro:tail} If, in addition, the weak aperiodicity condition of \Cref{model2} is satisfied, then the Markov chain of Part~\ref{thm:pathIntro:ergodic} is also tail trivial.
	\end{enumerate}
\end{theorem}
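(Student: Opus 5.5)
The plan for both parts is to use the balance condition and unimodularity to make the environment seen from the walker stationary. I then work with quenched conditional probabilities, which are bounded harmonic (resp. space-time harmonic) functions on $\bs G$. An $L^2$ identity obtained from stationarity forces these functions to be constant along transitions of $K$. Finally, weak irreducibility (resp. weak aperiodicity) together with the ergodicity of $[\bs G,\bs o]$ makes them constant.

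\textbf{Step 0 (stationarity).} Apply the mass transport principle to the transport $g(G,x,y):=K(x,y)\,\identity{B}([G,y])$. The balance condition says that the total mass $\sum_x K(x,\cdot)$ received by each vertex equals $1$ in the unimodular sense. With it, the mass transport principle should give that $[\bs G,\bs X_1]$ has the same law as $[\bs G,\bs o]$. By the Markov property, the shifted object $[\bs G,\bs X_1;(\bs X_{1+i})_i]$ then has the same law as $[\bs G,\bs o;(\bs X_i)_i]$. Hence the chain $\left([\bs G,\bs X_n;(\bs X_{n+i})_{i}]\right)_n$ is stationary, and ergodicity and tail triviality are meaningful.

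\textbf{Part (i).} Let $A$ be a shift-invariant event and set $h(G,x):=\mathbb P[A\,|\,\bs G=G,\ \bs X_0=x]$. Invariance and the Markov property give:
\begin{itemize}
\item $h$ is $K$-harmonic, $h(x)=\sum_y K(x,y)h(y)$;
\item $h(\bs X_n)=\mathbb P[A\,|\,\bs G,\bs X_0,\dots,\bs X_n]$, which tends to $\identity{A}$ a.s. by L\'evy's $0$-$1$ law.
\end{itemize}
By stationarity $\omid{h(\bs X_1)^2}=\omid{h(\bs o)^2}$. Since $h(\bs o)=\omidCond{h(\bs X_1)}{\bs G,\bs o}$, this gives $\omid{(h(\bs X_1)-h(\bs o))^2}=0$. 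So $h(x)=h(y)$ whenever $K(x,y)>0$, for almost every rooted sample (a.s. statements at the root transfer to all vertices by unimodularity).

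Weak irreducibility should then make $h$ constant on all of $\bs G$. This is because any two vertices are linked by chains of positive-probability transitions, possibly running in opposite directions to a common point, and along such chains $h$ is unchanged. Thus $h(\bs G,\bs o)$ is a root-invariant function of $[\bs G,\bs o]$, hence a.s. constant by ergodicity of $[\bs G,\bs o]$. Since $h(\bs X_n)\to\identity{A}$, $\identity{A}$ is a.s. constant, i.e. $\mathbb P[A]\in\{0,1\}$.

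\textbf{Part (ii).} Let $A$ be a tail event and put $g_n(G,x):=\mathbb P[A\,|\,\bs G=G,\ \bs X_n=x]$. This family is space-time harmonic, $g_n(x)=\sum_y K(x,y)g_{n+1}(y)$, and $g_n(\bs X_n)\to\identity{A}$ a.s. Using stationarity as above:
\begin{itemize}
\item $\omid{g_{n+1}(\bs o)^2}=\omid{g_{n+1}(\bs X_1)^2}\ge \omid{g_n(\bs o)^2}$, so this sequence is nondecreasing and bounded, hence convergent;
\item the increments $\omid{(g_{n+1}(\bs X_1)-g_n(\bs o))^2}$ are differences of consecutive terms, so they tend to $0$.
\end{itemize}
Consequently $g_n(x)\approx g_{n+1}(y)$ in $L^2$ along transitions $x\to y$, and iterating, along transitions of any fixed length $k$.

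Weak aperiodicity supplies configurations of two positive-probability paths from comparable starting points to a common vertex whose lengths differ by $1$ (after gcd considerations), in a mass-transport-measurable way. Comparing the two paths yields $g_n\approx g_{n+1}$ at the same vertex, in $L^2$, as $n\to\infty$. Passing to an $L^2$-weak subsequential limit $g$ of $g_n$ should then give a genuinely harmonic function with $g(\bs X_n)$ tracking $\identity{A}$. Equivalently, $A$ coincides mod null sets with a shift-invariant event, and Part (i) applies.

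\textbf{Main obstacle.} I expect the hardest step to be this last one: turning the asymptotic, $L^2$-approximate relations $g_n\approx g_{n+1}$ into exact invariance. The difficulty is that weak aperiodicity involves non-directed cycles, so the relevant comparisons must be set up through the mass transport principle and not by following the chain forward. The first thing to try is a coupling of two copies of the chain started at $\bs o$ with a time delay of one step. The two copies would be made to meet along the weakly aperiodic cycles, and the estimate would be $\omid{|g_n(\bs X_n)-g_{n+1}(\bs X'_{n+1})|}\to0$, with the $L^2$ monotonicity argument above supplying the needed uniformity.
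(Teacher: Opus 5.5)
\textbf{Part (i) and Step 0.} Your Part (i) is correct once Step 0 is repaired, and it takes a different route from the paper. Step 0 as written is false. Balance means $\sum_x b(x)K(x,y)=b(y)$, not $\sum_x K(x,y)=1$. Consequently $[\bs G,\bs X_1]$ has the law of $[\bs G,\bs o]$ only under $\widehat{\mathbb P}$, the law biased by $b(\bs o)$ (\Cref{lem:stationary}); unbiased stationarity fails, e.g., for simple random walk with non-constant degrees, where $b=\deg$. The repair is harmless. Since $b>0$, $\widehat{\mathbb P}$ and $\mathbb P$ have the same null sets, and the bias depends only on $(\bs G,\bs o)$, so every $L^2$ identity you use holds with $\widehat{\mathbb E}$ in place of $\mathbb E$. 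With this change, your identity $\omidhat{(h(\bs X_1)-h(\bs o))^2}=0$ shows directly that bounded $K$-harmonic functions are constant along transitions. Weak irreducibility and ergodicity then make them constant, and L\'evy's $0$--$1$ law finishes. The paper instead extends the chain to a two-sided stationary sequence and uses conditional independence of past and future given $(\bs G,\bs o)$. Combined with a finite-window approximation of $A$, this gives $\probhatC{A}{\bs G,\bs o}\in\{0,1\}$, and the harmonic-function statement is deduced afterwards. Your route is shorter and avoids the time reversal.

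\textbf{Part (ii): the gap.} Your setup is correct: $g_n$ is space-time harmonic, $\omidhat{g_n(\bs o)^2}$ is nondecreasing, and the increments tend to $0$. The gap is exactly where you suspect.

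First, the weak-limit step cannot work. By stationarity, $\omidhat{g_n(\bs o)}=\omidhat{g_n(\bs X_n)}=\probhat{A}$ for every $n$. Even granting $g_n\approx g_{n+1}$, a weak limit of $g_{n_j}$ is a bounded harmonic function, hence constant by your own Part (i) argument. So it equals $\probhat{A}$ whether or not $A$ is trivial, and it never tracks $\identity{A}$. What you need is exact equality $g_{n+1}=g_n$. Then $g_0$ is harmonic with $g_0(\bs X_n)=g_n(\bs X_n)\to\identity{A}$, and Part (i) applies. Exactness is cheap once $\delta_n:=\omidhat{(g_{n+1}(\bs o)-g_n(\bs o))^2}\to0$: the difference $u_k:=g_{k+1}-g_k$ is itself space-time harmonic, so Jensen and stationarity give $\delta_k\le\omidhat{u_{k+n}(\bs X_n)^2}=\delta_{k+n}\to0$.

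The real missing content is $\delta_n\to0$. Your $L^2$ control lives along $K$-transitions weighted by $K^m$. So comparing two paths of lengths $m$ and $m+d$ from $x$ to $w$ only controls $g_N(w)-g_{N+d}(w)$ on the set of such endpoints $w$, after a mass transport with weight $\sum_x b(x)\,(K^m(x,w)\wedge K^{m+d}(x,w))$. Two things remain.
\begin{itemize}
\item You must propagate this bound to all vertices, e.g.\ by optional stopping for the bounded martingale $u_{N+t}(\bs X_t)$ at the hitting time of that set, which is a.s.\ finite by Part (i).
\item Weak aperiodicity only gives differences $d$ with gcd $1$. You therefore also need either a period argument (if $n\mapsto g_n$ has each such $d$ as a period, it has period $1$) or the strengthening in \Cref{lem:aperiodicity}.
\end{itemize}

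The paper works quenched instead. \Cref{lem:pathTV} proves $\tv{K^{n+k}(v,\cdot)-K^n(v,\cdot)}\to0$ for every $v$. It uses an Ornstein-type coupling that inserts blocks of length $l$ or $l+k$ at visits to $S_{l,\epsilon}$, which are visited infinitely often by Part (i). Then $\norm{g_0(v)-g_k(v)}\le 2\tv{K^{n+k}(v,\cdot)-K^n(v,\cdot)}\to0$ gives exact equality directly. Your proposed delayed coupling is essentially this. However, constructing it is the substance of the proof, and the proposal does not do it.
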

Note that the shift-invariant or tail events considered in this theorem are invariant under rooted-graph isomorphisms as well. For instance, it is known that one cannot select an end of $\bs G$ in a measurable way, and hence, one cannot define an invariant event based on the convergence of the random walk to a specific end.

\Cref{thm:pathErgodic-poisson} provides a similar result for Markov chains on unimodular continuum spaces. Also, \Cref{thm:pathI=T-intro} provides a version of this theorem for deterministic graphs. It states that, on a typical sample of the graph, the tail sigma-field of the Markov chain is equivalent to the invariant sigma-field.

It should be noted that $\left([\bs G, \bs X_n; (\bs X_{n+i})_{i\geq 0}]\right)_n$ is not necessarily stationary. 
Nevertheless, by the balance condition, it becomes stationary after \textit{biasing} the probability measure suitably; see \Cref{lem:stationary}. This might not be true for general point-maps, but ergodicity still makes sense (see \Cref{rem:quasi}).

Part~\ref{thm:pathIntro:ergodic} is in fact a slight generalization of Theorem~4.6 of~\cite{processes}, which in turn generalizes a result of~\cite{LySc99} (we just weaken the condition $K>0$ to weak irreducibility). We provide a restatement of this claim in \Cref{thm:pathErgodic} in \Cref{subsec:pathErgodic}, mentioning the connection with harmonic functions. We will also provide a self-contained proof. This serves as \Cref{step3comp} in the proof of the indistinguishability of components of CMTs.

Part~\ref{thm:pathIntro:tail} is a new result. Here, as in the classical theory of Markov chains, tail triviality means that the probability of every \textit{tail event} is either 0 or 1, where being a tail event means, roughly speaking, not depending on the first $n$ elements of the chain for every $n$.
We will restate and prove this claim in \Cref{thm:pathTail} which serves as \Cref{step3foil} in the proof of the indistinguishability of level-sets of CMTs.

\subsubsection{Qualitative Properties of the Connected Components in CMTs}
\label{main:qualitative}

We prove several qualitative properties of CMTs regarding connectedness and one-endedness, which are described below. 
In addition, as an application of indistinguishability, we prove a \textit{connectivity decay} property in \Cref{prop:decay}.
These results are useful for verifying one-endedness in order to use \Cref{model2}. In particular, they are used in \Cref{subsec:examples} to prove easily that all models alluded to in \Cref{thm:cmt-all} have one-ended components under mild conditions, and mostly have infinitely many connected components. The proofs of all of these results are given in \Cref{sec:one-ended} using coupling techniques.

We start by showing that the cycle-free assumption is necessary in the study of indistinguishability:

\begin{proposition}
	\label{prop:finiteComps}
	Assume $[\bs G, \bs o; \bs F]$ satisfies the balance and weak irreducibility conditions of \Cref{model1}. Then, almost surely, if the cycle-free condition fails, then all components of $\bs F$ are finite. 
\end{proposition}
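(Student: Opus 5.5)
We have not yet seen the precise statements of the balance and weak irreducibility conditions of \Cref{model1}. We therefore work with the following informal reading. \emph{Balance} means the kernel $K$ admits an equivariant positive weight that makes the shifted chain stationary after biasing (\Cref{lem:stationary}). \emph{Weak irreducibility} means that, with positive conditional probability, the chain started from $\bs o$ can reach any prescribed part of the graph. With that reading, the plan is to show that the event
\[
C:=\{\text{some component of } \bs F \text{ contains a cycle}\}
\]
forces, almost surely, every component to be finite.

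Since $\bs F$ is a point-map, each component contains at most one cycle. A component containing a cycle is finite exactly when it has finitely many vertices, which happens iff no vertex has an infinite descendant tree. Infinite components that contain a cycle carry a distinguished finite subset, namely the cycle itself.

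The key steps are as follows.

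\textbf{Step 1: no infinite component contains a cycle.} Assume some infinite component contains a cycle. Mark each vertex of the cycle and let each vertex of the component send mass $1$ to one chosen cycle vertex (say uniformly among them). Each vertex sends total mass $1$, but the finitely many cycle vertices receive infinite total mass. This contradicts the mass transport principle applied to the unimodular law of $[\bs G,\bs o]$ together with the (equivariant) CMT. Hence, almost surely, every component containing a cycle is finite. We also use the fact that every component of a cycle-free point-map is infinite.

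\textbf{Step 2: a cycle somewhere forces cycles everywhere.} It remains to show that, almost surely on $C$, every component contains a cycle. Define $A$ as the event that the component of $\bs o$ is infinite (equivalently, cycle-free) while some other component contains a cycle. Suppose $\myprob{A}>0$. Conditionally on $\bs G$, the jumps $\bs F(v)$ are independent with law $K(v,\cdot)$, so $\anc(\bs o)$ is a Markov chain run until its first self-intersection. On $A$, the root's ancestral line never self-intersects, yet finite cyclic components exist in the graph. Weak irreducibility should say that, with positive probability, the chain from $\bs o$ revisits its own past, or hits a finite cyclic component, in a bounded number of steps. To use this, resample the finitely many jumps along an initial segment of $\anc(\bs o)$. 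By independence, this changes the law only by a bounded density on an event of positive probability. This creates, with positive probability, a configuration in which the root's component merges into a cycle. The resulting contradiction must be made invariant, i.e.\ phrased through a mass transport or the invariant sigma-field, as follows.

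The cleanest route is via the ergodicity of the ancestry chain. On each realization, the indicator that the chain eventually closes a cycle is an invariant event for $\left([\bs G,\bs X_n;(\bs X_{n+i})_{i\ge0}]\right)_n$, or can be made so after viewing the chain up to its first self-intersection. We then apply \Cref{thm:pathIntro}\ref{thm:pathIntro:ergodic}, which needs only balance and weak irreducibility, after conditioning on the invariant sigma-field of $[\bs G,\bs o]$ by ergodic decomposition. This shows that the probability that $\anc(\bs o)$ self-intersects is, almost surely, either $0$ or $1$ as a function of the ergodic component. On ergodic components where cycles occur with positive probability, this probability is positive, hence equals $1$. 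So every vertex lies in a cyclic component, and, combined with Step 1, all components are finite.

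The main obstacle is the previous step. Self-intersection of the chain is not literally shift-invariant, because after the shift the earlier points are forgotten. So one must either formulate it through the kernel, as the harmonic function $h(x)=\mathbb P_x[\text{chain from } x \text{ under the CMT hits a cycle}]$, or handle the non-Markovian coupling between the CMT and the chain. I would first try the harmonic-function approach: show that $h$ is $K$-harmonic along the chain, and then invoke the restatement \Cref{thm:pathErgodic}, which identifies ergodicity with the constancy of bounded harmonic functions, to conclude that $h$ is constant on components of the ergodic decomposition.
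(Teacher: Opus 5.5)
Your Step 1 is fine (it is part of \Cref{thm:classification}, which the paper simply invokes). Step 2, however, carries all the content and has a genuine gap.

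The event $E_0:=\{\exists\, i<j:\ \bs X_i=\bs X_j\}$ is not shift-invariant, so \Cref{thm:pathErgodic} gives no $0$--$1$ law for it. Your proposed repair also fails as stated. Conditioning on $\bs F(x)=y$ lets $\anc(y)$ close a loop by returning to $x$, so the function $h(x)=\mathbb P_G[\anc(x)\text{ contains a cycle}]$ satisfies $h(x)=\sum_y K(x,y)\,\mathbb P_y[\text{hit }x\text{ or self-intersect}]\ge\sum_y K(x,y)h(y)$. It is therefore only superharmonic, not harmonic.

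Even if you show $h$ is essentially constant (stationarity under $\widehat{\mathbb P}$ does give this), a constant $c\in(0,1)$ is not ruled out. You would still need a further argument, for example L\'evy's $0$--$1$ law applied to $\mathbb P[E_0\mid \bs X_0,\dots,\bs X_n]$, which is at least $h(\bs X_n)$ on the event of no self-intersection by time $n$. As written, ``positive, hence equals $1$'' is unsupported.

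Note also that weak irreducibility is connectivity of the undirected graph $\{(x,y):K(x,y)+K(y,x)>0\}$, not reachability from $\bs o$. The resampling sketch at the start of Step 2 therefore relies on a property the model does not provide.

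The idea you are missing is to apply ergodicity to a set the chain visits, rather than to $E_0$ itself. The paper proceeds as follows.
\begin{itemize}
\item Since the cycle-free condition fails, the $K$-graph has a directed cycle. So one can fix $l,\epsilon$ with $\bs S:=\{y:K^l(y,y)\ge\epsilon\}\neq\emptyset$, almost surely by ergodicity.
\item Visiting $\bs S$ infinitely often is an invariant path event. By stationarity and \Cref{thm:pathErgodic}, it holds almost surely.
\item At successive visits $\tau_i$ spaced at least $l$ apart, $\mathbb P[\bs X_{\tau_i+l}=\bs X_{\tau_i}\mid \bs X_0,\dots,\bs X_{\tau_i}]\ge\epsilon$. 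Hence the chain, and therefore $\anc(\bs o)$, closes a cycle almost surely.
\item \Cref{thm:classification} together with \Cref{lem:happensAtRoot} then finishes the proof.
\end{itemize}

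Alternatively, your $0$--$1$ claim can be rescued directly. Set $E_n:=\sigma^{-n}E_0=\{\exists\, n\le i<j:\bs X_i=\bs X_j\}$. Then \Cref{lem:stationary} gives $\widehat{\mathbb P}[E_n]=\widehat{\mathbb P}[E_0]$, while the $E_n$ decrease. So $E_0$ agrees almost surely with the shift-invariant event $\bigcap_n E_n$, whose probability is $0$ or $1$.
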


Also, we show that the balance condition cannot be dropped; see \Cref{subsec:distinguishable} for examples with distinguishable components or level-sets. It is also easy to see that the weak irreducibility condition cannot be dropped (see coalescing Markov chains in \Cref{subsec:crw}).

In the rest of the subsection, we assume that $\bs F$ is a CMT on a unimodular graph or discrete space $[\bs G, \bs o]$ that satisfies the conditions of \Cref{model1}. For a realization $G$ of the graph, we let $\mathbb P_G$ be the distribution of the oriented forest on the base graph $G$. We also let $K(x,y):=\probPalm{G}{\bs F(x)=y}$.
The indistinguishability of components (\Cref{thm:cmt-all}) implies that either all components are one-ended or all are two-ended.  We strengthen this in the following results, which is of independent interest and is proved in without using indistinguishability.

\begin{theorem}[Disconnectedness Implies One-Endedness]
	\label{thm:one-ended-or-connected}
	In \Cref{model1}, almost surely, if the graph of $\bs F$ is disconnected, then all components are one-ended. In other words, the following dichotomy holds for almost every sample of $[\bs G, \bs o; \bs F]$:
	\begin{enumerate}[label=(\roman*)]
		\item Either the graph of $\bs F$ is connected (and it is either one-ended or two-ended),
		\item Or the graph of $\bs F$ is disconnected and all its components are one-ended.
	\end{enumerate}
\end{theorem}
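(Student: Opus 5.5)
We argue by contradiction: assume that, with positive probability, $\bs F$ is disconnected and has a two-ended component. By \Cref{thm:classification} every component is one-ended or two-ended, and the event that there is a two-ended component is isomorphism-invariant. So we may condition on it, or use ergodic decomposition, and assume it has positive probability. In a two-ended component of an oriented forest there is a unique bi-infinite path, the \emph{trunk}, and every vertex of the component eventually merges into it. The mass transport principle then gives a key structural fact: the trunk vertices are not a null set. Intuitively, each trunk vertex carries only finitely many descendants not on the trunk. Concretely, transport unit mass from each vertex of a two-ended component to the first trunk vertex on its ancestral line. Unimodularity forces the expected mass received to be finite, so the root lies on a trunk with positive probability.

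The core step uses the independence of the jumps in a CMT together with weak irreducibility. Given $\bs G$, the ancestral line $\anc(x)$ of any vertex is a Markov chain with kernel $K$ run until its first self-intersection, and by the cycle-free assumption there is none. Two vertices $x,y$ lie in the same component exactly when their ancestral lines meet. Once they meet, they coincide, by the coalescing construction. This suggests a coupling argument. Fix two vertices $x,y$ in different components. Since the root is on a trunk with positive probability, stationarity (via the biasing of \Cref{lem:stationary}) and ergodicity of the chain $\left([\bs G, \bs X_n; (\bs X_{n+i})_{i\geq 0}]\right)_n$ from \Cref{thm:pathIntro} show the following. Along $\anc(x)$, the walk visits infinitely often vertices that are on a trunk and satisfy a fixed local configuration $C$, which witnesses weak irreducibility. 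The same holds along $\anc(y)$, and the two visits happen at vertices within bounded distance of each other.

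At such a pair of visits, I would resample the out-edges of finitely many vertices and apply a Borel–Cantelli / Lévy $0$-$1$ argument. Weak irreducibility gives, with conditional probability bounded below by some $\epsilon>0$, a modification in which the ancestral line of $y$ hits $\anc(x)$. This conditional bound holds given the past of both lines, because the jumps at not-yet-visited vertices are independent of everything revealed so far. With infinitely many independent trials, the two lines merge almost surely. This contradicts the assumption that $x,y$ lie in different components. More precisely, it shows that if some component is two-ended, then every other ancestral line eventually merges with its trunk, so the graph is connected. The case of two distinct two-ended components is included, since both trunks then merge.

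The main obstacle is making "independent trials" rigorous. The future jumps of $\anc(y)$ near a revisited region may already have been revealed by $\anc(x)$, so the trials are not literally independent. I would handle this by exploring both lines alternately. Each trial is performed only at the first visit of $\anc(y)$ to a fresh vertex whose configuration gives a positive merging probability. For this to work, I need to show that fresh such vertices occur infinitely often. Here the two-endedness is used: $\anc(x)$ is a trunk, which has positive density in a unimodular sense, and the balance condition prevents $\anc(y)$ from escaping it. Making this density and escape statement precise is the delicate part, and it may require the tail triviality of \Cref{thm:pathIntro}\ref{thm:pathIntro:tail} or a direct mass-transport estimate.
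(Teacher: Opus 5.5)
\textbf{There is a genuine gap: the merging mechanism in your core step is not supported by the hypotheses.}

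First, you cannot use \Cref{thm:pathIntro} to conclude that $\anc(y)$ visits trunk vertices infinitely often. Being on a trunk is a property of $\bs F$, and $\anc(y)$ is itself a functional of $\bs F$. The stationarity of \Cref{lem:stationary} and the ergodicity of \Cref{thm:pathIntro} apply to a chain run independently of $\bs F$, with $\bs F$ kept as a decoration. By contrast, $([\bs G,\bs F^{(n)}(\bs o);\bs F])_n$ is not stationary in general. In fact the claim is false. If $C(y)$ is one-ended, then $\anc(y)\subseteq C(y)$ never meets a bi-infinite path. If $C(y)$ is another two-ended component, then $\anc(y)$ ends up on a trunk disjoint from that of $C(x)$. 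Nothing forces the two lines to come within bounded distance of each other infinitely often either.

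Second, even for nearby vertices, weak irreducibility only says that the undirected graph $\{K(u,v)+K(v,u)>0\}$ is connected. It gives no forward $K$-path from the current vertex of $\anc(y)$ into $\anc(x)$, so there is no uniform $\epsilon$ of merging. \Cref{rem:model1} notes that positive forward-intersection probability is strictly stronger than weak irreducibility; in \Cref{ex:free}, neighbouring vertices can have disjoint forward cones. Exhibiting a modified forest in which the lines merge says nothing about $\bs F$ itself. So the ``escape'' statement you leave open is essentially the whole theorem. Your argument also never uses the balance condition in an essential way, although the result depends on it.

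\textbf{The missing ideas.} Finite modifications should be used to transfer almost-sure structural constraints, not to force merging in $\bs F$.

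By update-tolerance (\Cref{lem:update-tolerance}), almost surely every admissible finite modification of $\bs F$ still satisfies \Cref{thm:classification}. Admissible maps are automatically cycle-free, since the support of $K$ has no directed cycle. Suppose $x$ lies on a bi-infinite path and its forward cone $\{\cdot\geq x\}$ meets another component. Rerouting $x$ along an admissible path into that component would create either a three-ended component or a two-ended component with infinite level-sets, both impossible. Hence $\{\cdot\geq x\}\subseteq C(x)$.

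Now let $\bs H$ be the set of $u$ whose forward cone lies inside a two-ended component. This set is nonempty and upward closed. Take $\bs X\sim K(\bs o,\cdot)$ independent of $\bs F$. \Cref{lem:stationary} gives $\hat{\mathbb P}[\bs X\in\bs H]=\hat{\mathbb P}[\bs o\in\bs H]$, while monotonicity gives $\{\bs o\in\bs H\}\subseteq\{\bs X\in\bs H\}$. Since $b>0$, no $K$-edge enters $\bs H$ from outside, and \Cref{lem:happensAtRoot} globalizes this to all vertices. No $K$-edge leaves $\bs H$ or joins two different components inside $\bs H$. Weak irreducibility then forces the whole graph to be a single two-ended component, which is a contradiction.
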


It is easy to see that the connectedness of $\bs F$ is equivalent to the condition that, starting from any two points of $\bs G$, the trajectories of two independent instances of the Markov chain on $\bs G$ intersect a.s. We weaken this condition as follows, and also show that there are either one or infinitely many components. 

\begin{definition}[Intersection Property]
	\label{def:colision}
	In \Cref{model1}, we say that a sample $(G,o)$ satisfies the \defstyle{path-intersection property} (resp. \defstyle{infinite path-intersection property}) if the paths of two independent copies $(\bs X_n)_n$ and $(\bs X'_n)_n$ of the Markov chain on $G$ started from $o$ and with kernel $K$ intersect (resp. intersect infinitely many times) a.s., where an intersection means $(m,n)\in\mathbb N^2$ such that $\bs X_m=\bs X'_n$ (note that the starting points are excluded).
\end{definition}

\begin{theorem}[Number of Components]
	\label{thm:infComps}
	In \Cref{model1}, the number of connected components of $\bs F$ is either 1 or $\infty$ a.s. Additionally, if $[\bs G, \bs o]$ is ergodic, then:
	\begin{enumerate}[label=(\roman*)]
		\item \label{thm:infComps:1} If almost every sample of $[\bs G, \bs o]$ has the path-intersection property, then $\bs F$ is connected a.s.
		\item \label{thm:infComps:2}Otherwise, $\bs F$ has infinitely many connected components a.s.
	\end{enumerate}
\end{theorem}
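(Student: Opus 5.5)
We will prove the first claim (the number of components is $1$ or $\infty$) by a mass-transport argument, and then use ergodicity together with a zero-one argument for the dichotomy.

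\textbf{Number of components is $1$ or $\infty$.} Suppose, with positive probability, $\bs F$ has finitely many components, say $k\ge 2$. The event that there are exactly $k$ components is invariant under changing the root, so we may condition on it and keep unimodularity. If some component is finite, then by unimodularity and cycle-freeness we get a contradiction: a finite component of a cycle-free point-map is impossible, since every vertex has an out-edge. So all components are infinite. By \Cref{thm:one-ended-or-connected}, disconnectedness forces all components to be one-ended.

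Next we use the classification in \Cref{thm:classification}. In a one-ended oriented component, every level-set and every foil is finite. Among finitely many components, one cannot equivariantly choose one of them. We make this precise by a mass transport in which each vertex sends unit mass spread uniformly over $k$ "representative" choices. The cleanest route is the following. With finitely many components, the vertex $x$ and the events "$x$ lies in a given component" produce a finite partition of $\bs G$ into infinite classes. Unimodularity then forbids a partition of an infinite unimodular graph into finitely many classes unless it is invariant under root-change. That alone is not contradictory, so I would instead use the CMT structure. Take two independent Markov trajectories from $\bs o$. Their intersection behaviour is a sample property of $(G,\bs o)$, and whether two points lie in the same component equals the probability that the two trajectories eventually coalesce. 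By the balance condition and the weak irreducibility condition, the function $h(x,y)=\probPalm{G}{x,y \text{ in same component}}$ is harmonic in each variable for the chain with kernel $K$. Ergodicity of the chain (\Cref{thm:pathIntro}\ref{thm:pathIntro:ergodic}) then makes it constant along trajectories, so it is $\mathbb P_G$-a.s.\ constant on $G$. If there are finitely many components, $k\ge 2$, pick $x,y$ far apart. The components are determined by the independent jumps, and a finite-energy (resampling) argument — changing $\bs F$ at finitely many vertices merges two one-ended components (redirecting a vertex of one into the other) — increases $h$ with positive probability, while keeping the count finite. Comparing this with the constancy given by a $0$-$1$ law on the tail of independent jumps, the number of components is a.s.\ constant given $G$ and lies in $\{1,\infty\}$. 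Concretely: the number of components $N$ is a tail-type function of the independent family $(\bs F(v))_v$; modifying finitely many jumps changes $N$ by a bounded amount, and $N$ is $\mathbb P_G$-a.s.\ constant by Kolmogorov's zero-one law applied on an exhaustion. Since merging reduces $N$ with positive conditional probability when $2\le N<\infty$ (weak irreducibility ensures a reachable vertex of another component), finite $N\ge2$ would contradict a.s.\ constancy.

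\textbf{Dichotomy.} Under ergodicity, $N\in\{1,\infty\}$ is root-invariant, hence a.s.\ constant. For part~\ref{thm:infComps:1}, the path-intersection property at $\bs o$, transported by unimodularity to all vertices, gives that trajectories from $x$ and from $\bs F$-neighbours intersect. The idea is to show that $\anc(x)$ and $\anc(y)$ meet a.s.\ for all $x,y$. Coupling $\anc(x)$ and $\anc(y)$ by independent trajectories until their first intersection (after which they coalesce) shows that, for $x$ and a point $y$ of the trajectory from $x$, the two ancestral lines merge. Weak irreducibility then propagates this to all pairs, so $N=1$. For part~\ref{thm:infComps:2}, failure of the property on a positive-measure set (hence, by ergodicity, a.s.) gives $N\ge 2$ with positive probability, hence $N=\infty$.

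The main obstacle is the finite-modification step: showing rigorously that for $2\le N<\infty$, resampling finitely many jumps changes $N$ with positive probability without breaking cycle-freeness. Weak irreducibility must supply a jump from one component into another, and this needs care.
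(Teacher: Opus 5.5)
Your part (i) is essentially fine, and a little more direct than the paper's route through infinite path-intersection (\Cref{lem:infiniteCollision}) and the relation $\sim$. Take $K(x,y)>0$ and condition one of two independent chains from $x$ on its first step being $y$. This shows that independent chains from $x$ and $y$ meet a.s., hence $y\in C(x)$ a.s., and weak irreducibility plus countability finish.

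The genuine gap is the ``$1$ or $\infty$'' claim, on which your part (ii) entirely rests, and the mechanism you propose for it cannot work. In a cycle-free oriented forest, a finite modification $f'$ of $f$ that is still cycle-free never changes the number of components. An $f'$-ancestral line visits each point of the finite set $\{f\neq f'\}$ at most once, so its tail is an $f$-ancestral ray avoiding that set. Since two vertices lie in the same component iff their ancestral lines meet, this gives a bijection between the components of $f$ and those of $f'$. This is why, as the introduction notes, a finite modification cannot merge two one-ended components.

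So no resampling of finitely many jumps merges components when $2\le N<\infty$. The very invariance that makes $N$ a $\mathbb P_G$-a.s.\ constant tail quantity is what rules out any contradiction. Your auxiliary claims also fail:
\begin{itemize}
\item The function $h(x,y)=\probPalm{G}{y\in C(x)}$ has $h(x,x)=1$, so its constancy on $G$ would \emph{be} connectedness.
\item \Cref{thm:pathErgodic} only controls isomorphism-invariant harmonic functions on $\mathcal G_*$, not harmonic functions on a fixed sample.
\item Level-sets of one-ended components are infinite, not finite (\Cref{thm:classification}).
\end{itemize}

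What is missing is a quantitative non-intersection statement. Knowing that some pair of chains avoids each other with positive probability only gives $N\ge 2$. The paper proves (\Cref{lem:finiteCollision}) that, in the ergodic case where path-intersection fails, two independent chains from \emph{any} two points intersect only finitely often a.s. The argument has two ingredients:
\begin{itemize}
\item By \Cref{thm:pathErgodic}, the chain visits infinitely often the set $\bs S$ of points from which two independent chains avoid each other with probability at least $\epsilon$.
\item By \Cref{lem:collisionIneq}, the non-intersection probability $q$ satisfies $q(x,y)\ge q(x,x)/3$. So after each visit to $\bs S$ that follows an intersection, there is conditional probability at least $\epsilon/3$ of no further intersection.
\end{itemize}
One then builds the CMT from $k$ independent chains started at $\bs o$, with later chains overwriting earlier ones on overlaps. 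Eventual disjointness of the $k$ paths puts $\bs X^1_N,\dots,\bs X^k_N$ in $k$ distinct components, for every $k$. The ``$1$ or $\infty$'' statement is then a consequence of (i) and (ii) via ergodic decomposition, not a preliminary step.
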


As an application of indistinguishability, we leverage the last theorem to prove the following result, which is similar to Theorem~4.1 of~\cite{LySc99}. The proof uses indistinguishability and also uses \textit{cluster frequencies}, which will be discussed in \Cref{lem:freq}.

\begin{proposition}[Connectivity Decay]
	\label{prop:decay}
	In \Cref{model1}, almost every sample $(G,o)$ satisfies the following connectivity decay property: If $\bs F$ is disconnected, then 
	\[\inf \{\probPalm{G}{x\in C(o)}: x\in G \}=0,\]
	where $\probPalm{G}{x\in C(o)}$ is the probability that $x$ and $o$ are in the same connected component of $\bs F$.
\end{proposition}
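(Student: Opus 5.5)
We argue by contradiction, in the spirit of Theorem~4.1 of~\cite{LySc99}, combining indistinguishability of components (\Cref{thm:cmt-all}) with cluster frequencies (\Cref{lem:freq}). By ergodic decomposition we may assume $[\bs G,\bs o]$ is ergodic. Then, by \Cref{thm:infComps}, on the event that $\bs F$ is disconnected it has infinitely many components a.s. Let $A$ be the set of samples $(G,o)$ for which $\bs F$ is disconnected with positive $\mathbb P_G$-probability and
\[
\delta(G,o):=\inf_{x\in G}\probPalm{G}{x\in C(o)}>0 .
\]
The goal is to show $\myprob{A}=0$. Note that $A$ is a property of $(G,o)$ only, not of $\bs F$.

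The key steps are as follows.

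\emph{Step 1: a lower bound on frequencies.} For $x\in G$, the quantity $\probPalm{G}{x\in C(o)}$ is the expectation of the indicator that $x$ lies in the cluster of $o$. If $\delta(G,o)>0$, averaging over any finite or random exhaustion of $G$ shows that the cluster of $o$ occupies, in expectation, at least a $\delta$-fraction of the vertices. To make this precise we use the cluster frequency of \Cref{lem:freq}. It should assert that each component $C$ has a well-defined frequency $\freq(C)$, for instance $\probCond{\bs o\in C}{\ldots}$ defined through the mass transport principle or through averages of a stationary random walk, and that the mass transport principle gives $\omid{\freq(C(\bs o))}\geq \omid{\delta(\bs G,\bs o)}$ on $A$. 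We also use that $\delta$ is insensitive to the choice of root within $G$, up to a multiplicative constant, via $x\mapsto o$. Hence $\freq(C(\bs o))>0$ with positive probability on $A$.

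\emph{Step 2: indistinguishability.} The event $\{\freq(C)>0\}$ and the value of $\freq(C)$ are invariant component-properties. By indistinguishability of components (\Cref{thm:cmt-all}, \Cref{model1} suffices for components), a.s. either all components have zero frequency or all have the same positive frequency $c$.

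\emph{Step 3: contradiction.} The frequencies of disjoint components sum to at most $1$. On the disconnected event there are infinitely many components, so a common positive value $c$ is impossible. Hence every component has zero frequency, which contradicts Step~1 on $A$. Therefore $\myprob{A}=0$, proving the proposition.

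The main obstacle is Step~1: converting the pointwise quenched lower bound $\probPalm{G}{x\in C(o)}\geq\delta$ into a positive frequency of the random cluster. Frequencies are defined through averages over $G$, while $\delta$ controls only annealed connection probabilities. I would handle this with Fatou's lemma on the averaging sequence used to define $\freq$: the expected average of $\identity{\{x\in C(o)\}}$ is at least $\delta$, and since averages are bounded by $1$, the limit is positive with probability at least $\delta$. Some care is needed to check that \Cref{lem:freq} uses averages along which the limit exists a.s.
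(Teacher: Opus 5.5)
Your proposal is correct and follows essentially the paper's own argument, modeled on Theorem~4.1 of~\cite{LySc99}: cluster frequencies from \Cref{lem:freq}; indistinguishability plus infinitely many components forcing every component to have zero frequency; and a dominated-convergence bound $\omidPalm{\bs G}{\alpha[\bs G,\bs o;C(\bs o)]}\geq\delta$, obtained by averaging $\probPalm{\bs G}{\bs X_n\in C(\bs o)}\geq\delta$ along a walk $(\bs X_n)_n$ independent of $\bs F$. The one point to fix is the choice of walk, and the paper fixes it as follows: it uses a strictly positive symmetric factor kernel $h$ (Lemma~3.10 of~\cite{Kh23unimodular}), which balances $b'\equiv 1$ and trivially gives positive path-intersection probabilities, so part~(ii) of \Cref{lem:freq} applies even when $\bs G$ is a discrete space without $\omid{\deg(\bs o)}<\infty$ (the CMT kernel $K$ itself need not qualify); your claimed root-insensitivity of $\delta$ is not needed, since $\myprob{A}>0$ already gives the contradiction.
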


Whether or not $\bs F$ is connected, one still needs one-endedness (as assumed in \Cref{model2}) in order to study the indistinguishability of level-sets (see the examples of \Cref{subsec:examples}).
The next theorem characterizes one-endedness using the \defstyle{Green function} of $K$, which is defined by the following classical formula: 
\begin{equation}
	\label{eq:green}
	g_K(G,x,y):=\sum_{n=0}^{\infty} K^n(x,y) = \probPalm{G}{y\in \anc(x)},
\end{equation}
where the last equality follows from the cycle-free condition.

\begin{theorem}[One-Endedness]
	\label{thm:one-ended}
	In \Cref{model1}, the connected components of $\bs F$ are one-ended a.s. if and only if 
	\begin{equation}
		\label{eq:greendecay}
		\lim_n g_K(\bs G,\bs o, \bs X_n)=0 \text{ in probability},
	\end{equation}
	where $(\bs X_n)_n$ is the Markov chain on $\bs G$ starting from $\bs o$ and with kernel $K$. In particular, this holds if, almost surely, $\lim_{y}g_K(\bs G, \bs o,y)=0$, where the limit is over $y\in \bs G$ when $d(\bs o, y)\to \infty$.
\end{theorem}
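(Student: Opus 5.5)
Write $\anc(x)=(x,\bs F(x),\bs F^2(x),\dots)$ for the ancestral line of $x$; by the cycle-free condition it is a self-avoiding path. Given $G$, $\anc(\bs o)$ has the law of the Markov chain $(\bs X_n)_n$ started at $\bs o$ with kernel $K$. In a cycle-free oriented forest every component contains the ancestral line of each of its vertices, so each component has at least one end. Hence a component fails to be one-ended exactly when it contains a bi-infinite path that is not an ancestral line. Such a path consists of the ancestral line of some vertex together with an infinite backward chain of descendants. Equivalently, some vertex has infinitely many descendants.

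By the classification theorem (\Cref{thm:classification}) and mass transport, the components are one-ended a.s. if and only if every vertex has finitely many descendants a.s. Since ancestral lines are self-avoiding, it suffices to show this for the root: $D(\bs o):=\{x: \bs o\in\anc(x)\}$ is finite a.s.

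The plan is to relate $D(\bs o)$ to the Green function through mass transport. Consider $\probPalm{G}{\bs X_n\text{'s descendant set contains } \bs o}$, or more directly $\probPalm{G}{\bs o\in\anc(y)}=g_K(G,y,\bs o)$. Send mass $\probPalm{G}{x\in\anc(y)}$-type quantities from $y$ to $x$ and use the balance condition to make the chain stationary after biasing, as in \Cref{lem:stationary}. This should give the identity
\[
\omid{ g_K(\bs G,\bs o,\bs X_n)}=\myprob{\text{the $n$-th ancestor of a typical point is also an ancestor of an independent copy}}.
\]
More useful is a coupling-style statement: $\probPalm{G}{\card{D(\bs o)}\ge 1 \text{ at depth } n}$ is comparable to the event that a fresh walk started at $\bs X_n$ (the ancestor at depth $n$) merges into $\anc(\bs o)$ at a prescribed point.

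Concretely, by unimodularity (mass transport from $x$ to its $n$-th ancestor) I would show
\[
\myprob{\bs o \text{ has a descendant at generation } n}=\omid{\text{biased } \probPalm{G}{\cdot}}.
\]
Here the expectation concerns the event that the root lies on the ancestral line of a point $n$ generations back. I would then use the CMT independence: given the ancestor line $\anc(\bs o)=(\bs X_n)$, the descendants at $\bs o$ are independent of the forward path. Reversing roles via mass transport, the event that $\bs o$ has infinitely many descendants corresponds to a two-ended component. In a two-ended component there is a unique bi-infinite ``trunk'' and every trunk vertex is an ancestor of vertices arbitrarily far back.

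Key steps in order:
\begin{enumerate}
\item Two-endedness holds iff the probability that $\bs X_n$ lies on the ancestral line of ``many'' points starting near $\bs o$ does not vanish. Concretely, two-endedness implies $\liminf_n\omid{g_K(\bs G,\bs o,\bs X_n)}>0$. In a two-ended component the trunk has positive density by mass transport, and $\anc(\bs o)$ merges into the trunk. So with positive probability $\bs o$ lies on the trunk. Then for the point $\bs X_n$, some fixed vertex's line passes through it with probability bounded below, which gives a lower bound in probability on $g_K(\bs G,\bs o,\bs X_n)$ after reversing via unimodularity.
\item Conversely, if $g_K(\bs G,\bs o,\bs X_n)\not\to0$ in probability, then with positive probability an independent walk from $\bs o$ hits $\bs X_n$ for infinitely many $n$. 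By coalescence it merges with $\anc(\bs o)$, which is the same line. Using weak irreducibility and transport, I would build descendants at arbitrary depth, hence a two-ended component; by \Cref{thm:one-ended-or-connected} it is then connected.
\end{enumerate}
The final claim follows because $\bs X_n$ leaves every ball: the path is self-avoiding and $G$ is locally finite. So pointwise decay of $g_K$ gives convergence in probability.

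The main obstacle is step 1, the reversal. It converts ``many descendants of $\bs o$'' into a statement about forward Green function values along the chain, and it must carefully use balance-biased stationarity (\Cref{lem:stationary}).
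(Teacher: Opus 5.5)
Your reduction (one-endedness a.s.\ iff $D(\bs o)$ is finite a.s.) and the closing ``in particular'' remark are fine. However, neither direction of the equivalence is actually established, and the displayed ``identities'' in your plan are not well-formed statements.

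\textbf{Step~2 has a real gap.} From $g_K(\bs G,\bs o,\bs X_n)\not\to 0$ you only extract that an independent walk from $\bs o$ meets $(\bs X_n)_n$ infinitely often with positive probability. This cannot produce a second end, because it also holds in connected one-ended models such as Nguyen's model (\Cref{ex:nguyen}) with $d=2$: two walks from $\bs o$ are at the same height at every time, and their horizontal difference is a recurrent walk. ``Building descendants at arbitrary depth'' is where all the content would have to be. The missing idea is the time reversal you only allude to, and it belongs to this direction. Take $(\bs X_n)_n$ independent of $\bs F$ and extend it to negative times, so that under $\hat{\mathbb P}$ (biasing by $b(\bs o)$) the two-sided path, with $\bs F$ as a decoration, is shift-stationary by \Cref{lem:stationary}. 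Then $\omidhat{g_K(\bs o,\bs X_n)}=\omidhat{g_K(\bs X_{-n},\bs o)}=\probhat{\bs X_{-n}\in D(\bs o)}$. If the components are one-ended, $D(\bs o)$ is finite. The reversed chain, with kernel $b(y)K(y,x)/b(x)$, is self-avoiding, so bounded convergence gives $\omidhat{g_K(\bs o,\bs X_n)}\to 0$. Since $g_K\in[0,1]$ and $b>0$, this is convergence in probability under $\mathbb P$.

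\textbf{Step~1 is also unsupported.} The claim that ``some fixed vertex's line passes through $\bs X_n$ with probability bounded below'' has no mechanism behind it, and the reversal does not supply one. What works is a forward ergodic argument:
\begin{enumerate}
\item After ergodic decomposition, \Cref{thm:one-ended-or-connected} reduces to $\bs F$ connected with a unique bi-infinite path $\bs H$.
\item Apply \Cref{thm:pathErgodic} and Birkhoff's theorem to the independent chain, keeping $\bs F$ as a decoration. Then $(\bs X_n)_n$ visits $\bs H$ with asymptotic frequency $\lambda=\probhat{\bs o\in\bs H}>0$.
\item The chain only moves upward in the reachability order. So once it is at a point of $\bs H$, it can never visit the part of $\bs H$ below that point. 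Hence all but finitely many of its visits to $\bs H$ are visits to $\anc(\bs o)$.
\item Bounded convergence then gives $\frac1n\sum_{i=1}^n\omid{g_K(\bs o,\bs X_i)}\to\lambda>0$, which rules out $g_K(\bs o,\bs X_n)\to0$ in probability.
\end{enumerate}
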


Both criteria of convergence in this theorem will be used in the examples of \Cref{subsec:examples}.

\subsubsection{Alternative Proof for WUSF}
\label{main:wusf}

As an application of the general approach, we provide in \Cref{sec:wusf} an alternative simpler proof for the following result of~\cite{HuNa17indistinguishability}. As already mentioned, when $\wusf$ is disconnected, the graph is transient and Wilson's algorithm provides an oriented version of $\wusf$ (see \Cref{sec:wusf}). In this case, $\wusf$ can be regarded as a point-map (which is not a CMT), and hence, \Cref{step1comp,step2comp,step3comp} make sense.

\begin{theorem}[\cite{HuNa17indistinguishability}, Theorem~1.1]
	\label{thm:usf-indistinguishability}
	In every unimodular graph $[\bs G, \bs o]$ in which $\omid{\mathrm{deg}(\bs o)}<\infty$, the connected components of $\wusf(\bs G)$ are indistinguishable.
\end{theorem}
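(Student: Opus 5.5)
We follow the three-step scheme of \Cref{thm:general}\ref{thm:general:comp}. If $\wusf(\bs G)$ is connected a.s. there is nothing to prove. Otherwise $\bs G$ is transient and we orient $\wusf$ towards infinity via Wilson's algorithm rooted at infinity, which gives an equivariant point-map $\bs F$ on $[\bs G,\bs o]$. Since $\omid{\mathrm{deg}(\bs o)}<\infty$, we bias by the degree to reduce to the setting of the general theorem. We also assume $[\bs G,\bs o]$ ergodic, passing to the ergodic decomposition if necessary.

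\textbf{Step~\ref{step1comp}.} $\bs F$ is cycle-free, and its components are one-ended a.s. by the known one-endedness of $\wusf$ components on unimodular graphs. Hence \Cref{thm:I in T 2} applies, and every component-property is equivalent to a tail branch-property. This step is free.

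\textbf{Step~\ref{step3comp}.} Wilson's algorithm started at $\bs o$ shows that the ancestral line $\anc(\bs o)$ is the loop-erasure of a simple random walk $(\bs X_n)_n$ from $\bs o$. Ergodicity of the degree-biased walk environment $([\bs G,\bs X_n;(\bs X_{n+i})_i])_n$ holds by \Cref{thm:pathIntro}\ref{thm:pathIntro:ergodic}. The balance condition holds after degree biasing, because the simple random walk is reversible. Weak irreducibility holds trivially, since $K>0$ on edges. We must then transfer ergodicity from the random walk to the ancestry chain of the loop-erased path. Take a shift-invariant event for the loop-erased trajectory (rerooted along the path). We will express it as a shift-invariant event for the walk, using the fact that, on a transient graph, the loop-erasure of the walk after time $n$ eventually agrees with the tail of the full loop-erasure. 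The measurability lemmas of \Cref{ap:measurability} on completions of sub-sigma-fields then show that the invariant sigma-field of the ancestry chain is contained, mod null sets, in that of the walk, which is trivial.

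\textbf{Step~\ref{step2comp}.} We will show that every tail branch-property of the component of $\bs o$ is measurable, mod null sets, with respect to the invariant sigma-field of the ancestry chain. The plan is a coupling argument based on Wilson's algorithm:
\begin{enumerate}[label=(\alph*)]
\item Sample $\anc(\bs o)$ first.
\item Run the remaining walks with $\anc(\bs o)$ treated as part of the boundary.
\end{enumerate}
Given the ancestral line, the rest of the forest is then an independent $\wusf$-type sample on the complement, wired to the line and to infinity. A tail branch-property of the branch hanging beyond $\bs F^n(\bs o)$ depends on the trees attached to $\anc(\bs o)$ far out. We compare two runs with the same ancestral line tail but independent walks for the attached subtrees. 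By a Kolmogorov-type 0-1 argument for the conditionally independent attached pieces, together with the invariance (under shifting along $\anc(\bs o)$) of the branch property, the property becomes a function of the ancestry chain alone, and in fact an invariant one. The key technical point is to control the dependence between the far-out bushes and the beginning of the path. Wilson's algorithm lets us reorder the walks and start from far points, so that changing the initial segment of the line affects only finitely many bushes with high probability.

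\textbf{Main obstacle.} I expect Step~\ref{step2comp} to be the hardest, specifically the conditional-independence/decoupling claim. Attached bushes are not exactly independent given the line, since they share the complement. What I would try first is to build bushes from far-away starting points, using Wilson's algorithm with stack-of-arrows (cycle-popping) coupling. This should show that the tail of the forest's branch structure is measurable with respect to the tail of the line together with independent randomness. The independent randomness would then be eliminated by the measurability lemmas of \Cref{ap:measurability}.
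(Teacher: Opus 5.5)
Your Steps~\ref{step1comp} and~\ref{step3comp} match the paper. Step~\ref{step1comp} is \Cref{thm:I in T 2}. Step~\ref{step3comp} lifts an invariant event of the loop-erased path to an invariant event of the simple random walk and applies \Cref{thm:pathErgodic} with $b=\deg$. Degree-biasing is needed only there, for stationarity of the walk, not to ``reduce to the general theorem''.

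The genuine gap is Step~\ref{step2comp}, which is the heart of the argument and which your proposal does not carry out. Your Kolmogorov-type 0-1 argument needs the bushes hanging off $\anc(\bs o)$ to be conditionally independent given the line. That is false, as you concede: in Wilson's algorithm rooted at $P\cup\{\infty\}$, later walks stop on earlier bushes. Your proposed repair by a stack/cycle-popping coupling is only asserted. If you change the root set (the initial segment of the line), or resample near $\bs o$, while keeping the same stacks, different cycles get popped. Nothing in your sketch controls how far the resulting changes of stack tops propagate, so nothing shows that only finitely many edges of the forest change. Yet a finite symmetric difference, with the ancestral branch preserved, is exactly what you need to use the defining invariance of a tail branch-property. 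The measurability lemmas of \Cref{ap:measurability} cannot supply this. \Cref{lem:completion-cond} presupposes both the conditional 0-1 law and an invariant version of the conditional probability; it does not ``eliminate independent randomness'' by itself.

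The missing idea is the generalized stochastic covering property, \Cref{lem:usf-covering}. Condition $\wusf(G)$ on two different admissible patterns on a finite edge set $S$. The two conditioned forests can be coupled to differ in at most $2\card{S}$ edges. The proof uses Rayleigh monotonicity on $G/\oball{n}{o}^c$, induction on the number of edges where the patterns differ, and weak limits. Via \Cref{lem:usf-anc}, the same holds for the oriented forests additionally conditioned on $\anc(o)=P$. With this, Step~\ref{step2comp} reduces to two couplings.
\begin{enumerate}[label=(\roman*)]
\item \Cref{lem:usf-coupling}: sample $P=\anc(o)$, sample the restrictions of two copies to $\oball{R}{o}$ independently given $P$, and complete them by the covering coupling. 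Both copies then have ancestral line $P$, $\bs F_1$ is independent of $\restrict{\bs F_2}{\oball{R}{o}}$ given $P$, and $\card{\bs F_1\Delta\bs F_2}<\infty$. So a tail branch-property $A$ takes the same value on both copies a.s. Hence $A$ is conditionally independent of $\restrict{\bs F}{\oball{R}{\bs o}}$ for every $R$, and $\probCond{A}{[\bs G,\bs o;\anc(\bs o)]}\in\{0,1\}$.
\item \Cref{lem:usf-measurable}: for two paths that shift-couple, let $S$ be the finitely many edges on which they differ, and couple the two conditioned forests with finite symmetric difference. The explicit version of the conditional probability is then constant on shift-coupling classes within a full-measure invariant set. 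By \Cref{lem:cber,lem:completion-I(R)} it is $\sigfield{path}{I}{}$-measurable mod $\mathbb P$, and \Cref{lem:completion-cond} gives $\sigfield{branch}{T}{}=\sigfield{path}{I}{}$ mod $\mathbb P$.
\end{enumerate}
Neither independence of bushes nor any cycle-popping analysis is needed.
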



We will also prove the analogous result on a deterministic base graph (\Cref{thm:fixed-comp}).
Also, our method gives a simple proof of the ergodicity of $\wusf$ on unimodular graphs and the tail triviality of $\wusf$ on deterministic graphs, which is provided in \Cref{thm:usf-ergodic}.

The proofs are given in \Cref{sec:wusf} and do not rely on the \textit{cycle-breaking dynamics} of~\cite{HuNa17indistinguishability}. Verifying \Cref{step2comp} is achieved by a coupling method based on a generalization of the \textit{stochastic covering property} of $\wusf$~\cite{PePe14}, which is provided in \Cref{lem:usf-covering}.

However, we could not establish \Cref{step2foil,step3foil} for $\wusf$ (we prove \Cref{step2foil} in the transient case).
A natural problem is whether the level-sets of $\wusf$ on a unimodular non-bipartite graph are indistinguishable or not (\Cref{prob:lerw}). In the transient case, only \Cref{step3foil} is missing, which is the tail-triviality of the loop-erased random walk.  

\subsubsection{Results on Deterministic Base Graphs}
\label{main:deterministic}

Theorem~1.2 of~\cite{Hu20indistinguishability} provides an analogous indistinguishability result for $\wusf$ on a deterministic base graph, assuming one-endedness and the \textit{Liouville property} (the latter means that every bounded harmonic function is constant). This result extends Theorem~4.5 of~\cite{BeKePeSc11}, which deals with the special case $\mathbb Z^d$. The result of~\cite{Hu20indistinguishability} also deals with $k$-tuples of components. We provide alternative proofs of these results and similar results for CMTs, and extend them to non-Liouville or non-one-ended cases.

In contrast to the setting of unimodular random base graphs, when the base graph is deterministic, we neither fix a root nor consider equivalence under isomorphisms. This gives more flexibility to distinguish the components; e.g., one can color the components based on the inclusion of any given arbitrary point. In~\cite{HuNa17indistinguishability}, indistinguishability is proved only for tail component-properties, assuming one-endedness and the Liouville property. In fact, the latter is necessary (otherwise, the \textit{Poisson boundary} is nontrivial and the \textit{boundary points} of the components distinguish them). We restate this result as follows and study the non-Liouville and non-one-ended cases as well. For the latter, we use the \textit{oriented} version of $\wusf$ given by Wilson's algorithm rooted at infinity (see \Cref{subsec:usf-defs}).

\begin{theorem}[Components in a Deterministic Graph]
	\label{thm:fixed-comp}
	Let $\bs F$ be the oriented wired uniform spanning forest on a transient locally-finite graph $G$ (resp. a CMT on a countable set $G$).
	\begin{enumerate}[label=(\roman*)]
		\item If $G$ (resp. the Markov kernel corresponding to the CMT) has the Liouville property, then the components of $\bs F$ are indistinguishable by tail branch-properties.
		\item Otherwise, any tail branch-property is equivalent to some measurable subset of the Poisson boundary; in other words, the Poisson boundary gives all the ways to distinguish the components by tail branch-properties. In particular, for every tail branch-property $A$, the components of any two points such that their ancestral lines have the same boundary points  (given an arbitrary realization of the Poisson boundary) are not distinguished by $A$ a.s.
	\end{enumerate}
	In addition, similar claims hold for $k$-tuples of distinct components.
\end{theorem}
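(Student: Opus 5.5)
The plan is to reduce tail branch-properties to the tail of the ancestral line, and then identify that tail with the Poisson boundary of the relevant chain. A \emph{tail branch-property} $A$ is a measurable set of pairs (forest, vertex $x$) with two features. First, it is determined by the branch of $x$ (the descendants of $\bs F^n(x)$ for all $n$). Second, it is invariant under replacing $x$ by any vertex in the same component, or equivalently by $\bs F(x)$. I will carry out the following steps, treating the CMT case first and then WUSF.

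\textbf{Step 1 (conditioning on the ancestral line).} For a vertex $x$, let $\anc(x)=(x,\bs F(x),\bs F^2(x),\ldots)$. For a CMT, given $\anc(x)$, the rest of the forest is obtained by letting every vertex off the line run its own independent trajectory until it hits the line. So conditionally on $\anc(x)$, the law of the branch depends only on the set $\anc(x)$ together with its ordering.

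For the oriented WUSF I would use Wilson's algorithm rooted at infinity, starting from $x$. Then $\anc(x)$ is the loop-erased random walk from $x$. The remaining forest is again generated by Wilson's algorithm with $\anc(x)$ wired to infinity, so the conditional structure has the same form.

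This lets me define $h_A(\gamma):=\mathbb P[(\bs F,x)\in A\mid \anc(x)=\gamma]$. Invariance under $x\mapsto \bs F(x)$ shows that $h_A(\gamma)$ depends only on the shifted path, and hence is measurable with respect to the tail (indeed the invariant) $\sigma$-field of $\anc(x)$.

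\textbf{Step 2 ($A$ is a.s.\ a function of $\anc(x)$).} Here I apply a martingale argument. $A$ is determined by the forest, and the forest outside the line is built from independent randomness. Restricted to finite windows, that randomness should be asymptotically independent of the tail event. Concretely, consider the conditional probability of $A$ given the first $n$ vertices of $\anc(x)$ and the trees hanging from them. Since $A$ is a tail branch-property, this conditional probability equals the conditional probability given only $\bs F^n(x)$ and its descendants. Letting $n\to\infty$, a 0-1 type argument should then give $\identity{A}=h_A(\anc(x))$ a.s., using the measure-theoretic completion lemmas of the appendix. I expect this to be the main obstacle, because the branch beyond level $n$ depends on the hanging trees through coalescence.

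The first thing I would try is to use the Step~\ref{step2comp} coupling, which the paper establishes for CMTs and WUSF by coupling. I would run two forests that share $\anc(x)$ and differ only in the independent off-line randomness. In the WUSF case this coupling is supplied by the stochastic covering lemma. The goal is to show that the two forests agree beyond level $n$ with probability tending to $1$, for a fixed finite window of hanging trees.

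\textbf{Step 3 (tail $=$ Poisson boundary).} For a CMT, $\anc(x)$ is the Markov chain with kernel $K$, since the cycle-free condition prevents repeats. By \Cref{thm:pathI=T-intro}, its tail $\sigma$-field coincides with its invariant $\sigma$-field. The invariant $\sigma$-field is the Poisson boundary, and it is trivial under the Liouville property.

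For WUSF, $\anc(x)$ is the loop-erasure of a transient random walk. Here I would use the known fact that the loop-erasure and the walk have the same invariant $\sigma$-field, because they share the same tail up to finite modifications. This identifies the relevant boundary with that of $G$.

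Hence $A$ corresponds to a measurable subset $B$ of the boundary, with $\identity{A}(x)=\identity{B}(\text{boundary point of }\anc(x))$ a.s. This proves part (ii). Part (i) follows because, in the Liouville case, $B$ is null or conull uniformly over the starting point, since a harmonic function that is constant for one start is constant for all. For $k$-tuples I would apply the same argument to the joint ancestral lines of $k$ vertices in distinct components, conditioning on all $k$ lines simultaneously.
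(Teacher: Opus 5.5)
Your architecture matches the paper's. On the fixed graph you reduce tail branch-properties to shift-invariant events of $\anc(x)$ (this is \Cref{prop:tail-fixed}(ii) for CMTs and its $\wusf$ counterpart, i.e.\ $\sigfield{branch}{T}{G}=\sigfield{path}{I}{G}$ mod $Q_G$), then read invariant path events off the Poisson boundary. The gap is your Step~2, which is the entire content of that reduction: you leave it open, and the route you sketch would not close it.

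Part of the trouble is your starting definition. ``Determined by the descendants of $\bs F^n(x)$ for all $n$'' only says determined by $C(x)$, which is neither the definition nor enough: on a fixed graph, ``$v_0\in C(x)$'' has both of your features and distinguishes components. The operative feature is invariance under finite modifications $f'$ of $f$ for which $C_{f'}(x)$ and $C_f(x)$ share an ancestral branch, and $A$ may depend on the whole forest.

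With that definition, the missing idea is an \emph{exact} resampling, not an asymptotic one. Given $\anc(x)$, the jumps of a CMT off the line are independent with kernel $K$. Resample them on $W\setminus\anc(x)$ for a finite $W\subseteq G$. The new forest has the same conditional law, differs from $\bs F$ at finitely many vertices, and has the same ancestral line, so $\identity{A}$ is unchanged a.s. Hence $A$ is conditionally independent of $\restrict{\bs F}{W}$ given $\anc(x)$ for every finite $W$, hence of itself, i.e.\ $\probCond{A}{\anc(x)}\in\{0,1\}$. Nothing has to agree ``beyond level $n$ with probability tending to $1$'', and coalescence plays no role. Conditioning on hanging trees is not a finite-window operation, since they are random sets determined by jumps everywhere.

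For $\wusf$ this role is played by \Cref{lem:usf-coupling}, which rests on the multi-edge covering \Cref{lem:usf-covering}. It gives the same ancestral line, symmetric difference at most twice the number of edges in $\oball{R}{o}$, and $\bs F_1$ independent of $\restrict{\bs F_2}{\oball{R}{o}}$ given $\anc(o)$. Your Step~1 also needs such a coupling, beyond invariance under $x\mapsto\bs F(x)$. Conditioning on $\anc(x)=\gamma$ pins down $\bs F(x)$, so comparing with the conditioning on the shifted path uses the tail property at one vertex. For $\wusf$, running Wilson's algorithm from $\gamma$ and from its shift with common walks does not give forests differing in finitely many edges; the paper uses the covering coupling here (\Cref{lem:usf-measurable}).

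Step~3 contains two further errors. For CMTs, \Cref{thm:pathI=T-intro} is not available: the statement covers an arbitrary CMT on a countable set, with neither weak aperiodicity nor \eqref{eq:aperiodic2}. Steadiness genuinely fails in general. For coalescing random walks (\Cref{ex:crw}) on a fixed $G\times\mathbb Z$, the tail $\sigma$-field of the ancestral line records the starting time, which no invariant event does modulo $Q_{G\times\mathbb Z}$. It is also unnecessary, since the reduction for branch-properties lands directly in $\sigfield{path}{I}{G}$; the tail $\sigma$-field matters only for level-sets (\Cref{thm:fixed-foil}).

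For $\wusf$, the loop-erased walk is not in general a finite modification of the walk, because the walk keeps making loops that get erased. So ``they share the same tail up to finite modifications'' is false, and the claimed equality of invariant $\sigma$-fields is unsupported; the paper does not claim it. You only need the pullback used in \Cref{lem:usf-trivial I}. An invariant event $A'$ of the loop-erased walk yields the event $\{\mathrm{LE}(\bs Y)\in A'\}$, which is invariant for the simple random walk $\bs Y$. It is therefore trivial under the Liouville property, and in general a measurable subset of the walk's Poisson boundary.

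With these repairs, or simply by citing \Cref{prop:tail-fixed}(ii) and its $\wusf$ counterpart, your argument becomes the paper's short proof. Defining $h_A$ as a function of the path is the right way to get one event for all roots, i.e.\ equivalence mod $Q_G$. Your Liouville deduction and $k$-tuple extension then go through.
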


The required definitions and the proof are given in \Cref{subsec:tail-fixed,sec:wusf}. See also \Cref{ex:nonLiouville} for a non-Liouville example. The core of the proof is that \Cref{step2comp} for CMTs, proved in \Cref{thm:tailDrainage}, does not require unimodularity and works for deterministic graphs as well. \Cref{step1comp} is not needed since we already consider tail branch-properties. \Cref{step3comp} is also equivalent to the Liouville property in the setting of deterministic base graphs.

For the level-sets of CMTs, we first prove a variant of \Cref{step3foil} in \Cref{thm:pathI=T-intro} below. 
Despite \Cref{thm:pathIntro}, the invariant and tail sigma-fields of a Markov chain on $G$ might be nontrivial. 
The equality or inequality of these two sigma-fields is of great importance in the literature:
\begin{definition}
	A Markov chain with initial distribution $\mathbb P$ is called \defstyle{steady} if its invariant and tail sigma-fields agree mod $\mathbb P$.
\end{definition}
We prove steadiness
under the following conditions, which is of independent interest:

\begin{theorem}[Steadiness of Markov Chains on Deterministic Graphs]
	\label{thm:pathI=T-intro}
	In Part~\ref{thm:pathIntro:tail} of \Cref{thm:pathIntro}, almost every sample $(G,o)$ of $[\bs G, \bs o]$ satisfies the following property:
	For every probability measure $\mu$ on $G$, the Markov chain $(\bs X_i)_{i\geq 0}$ on $G$ with initial distribution $\mu$ is steady.
	In particular, in the Liouville case, $(\bs X_n)_n$ is tail-trivial.
	\\
	In addition, the same holds for any deterministic graph or discrete space $G$ that satisfies the following:
	\begin{equation}
		\label{eq:aperiodic2}
		\exists l\in\mathbb N, \exists \epsilon>0: \forall x\in G: \tv{K^l(x,\cdot)\wedge K^{l+1}(x,\cdot)}>\epsilon.
	\end{equation}
\end{theorem}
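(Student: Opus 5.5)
Fix a sample $(G,o)$ and an initial distribution $\mu$. The approach is the classical criterion for steadiness, combined with a uniform overlap estimate between $K^l(x,\cdot)$ and $K^{l+1}(x,\cdot)$. The criterion (Derriennic's $0$--$2$ law, in the form going back to Blackwell--Freedman and Ornstein) reads as follows: a Markov chain is steady for every initial distribution if the space-time (tail) harmonic functions reduce to ordinary harmonic ones. Equivalently, one can show that for every bounded tail random variable $Z$, the function $h_n(x)=\mathbb E[Z\mid \bs X_n=x]$ does not depend on $n$ in an appropriate sense. Then $Z$ is measurable with respect to the invariant sigma-field mod $\mathbb P_\mu$.

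\textbf{Step 1: the deterministic statement.} Assume \eqref{eq:aperiodic2}. I will use the coupling form of the $0$--$2$ law. The tail and invariant sigma-fields agree mod $\mathbb P_\mu$ if and only if, for every $x$, the chain started at $x$ and the chain started at $x$ but delayed by one step can be successfully shift-coupled with time-lag exactly $1$. Quantitatively, the requirement is
\[
\lim_n \tv{\mu K^{n}-\mu K^{n+1}}
\]
along the tail, conditioned on the invariant sigma-field, being zero.

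I plan a Markovian coupling of $(\bs X_n)$ and $(\bs Y_n)$ in blocks. At each block, from the current pair of positions (which coincide up to a lag of one step), I use the maximal coupling of $K^l(x,\cdot)$ and $K^{l+1}(x,\cdot)$. This succeeds with probability at least $\epsilon$ independently of $x$, and otherwise the two chains are run further. Uniformity in $x$ gives success a.s. by a Borel--Cantelli/geometric argument.

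A subtlety is that after a failure the two chains sit at different points, so the trial must be restarted from a common point. To handle this, I will couple only the lag. I run one chain, and at each block boundary toss an $\epsilon$-coin deciding whether to insert an extra step using the overlap decomposition
\[
K^{l}=\epsilon\nu_x+(1-\epsilon)R_x,\qquad K^{l+1}=\epsilon\nu_x+(1-\epsilon)R'_x .
\]
This produces a random time change by $\pm1$ that is almost surely realized. This is the standard proof that a uniform overlap implies that tail-measurable functions are invariant under a time shift by one, hence tail equals invariant.

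\textbf{Step 2: the unimodular case.} Under the weak aperiodicity condition of \Cref{model2}, I do not expect a uniform $\epsilon$ over all $x$. Instead I will argue that for a.e. $(G,o)$ and every $x\in G$ there exist $l$ and a positive overlap at $x$, and that the chain visits points with overlap at least a fixed $\epsilon$ infinitely often. The latter should follow from the stationarity of the biased measure (\Cref{lem:stationary}) together with the ergodicity of \Cref{thm:pathIntro}\ref{thm:pathIntro:ergodic}, via a Poincaré recurrence or Birkhoff argument. Since the event of $\mu$-a.s. steadiness is root-invariant, unimodularity (the mass transport principle) transfers the statement from the root to all starting points $x$, and hence to all $\mu$. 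The coupling of Step 1 then only needs to perform its trials at these good visits, which occur infinitely often, so success again occurs a.s.

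\textbf{Step 3: the Liouville consequence.} If $G$ is Liouville, the invariant sigma-field is trivial, so steadiness gives tail triviality.

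The main obstacle is Step 2. Weak aperiodicity in this paper concerns non-directed cycles rather than directed return loops, so translating it into a positive $\tv{K^l(x,\cdot)\wedge K^{l+1}(x,\cdot)}$ at a positive-density set of points may require using the balance condition and reversed steps. This is also where the proof of \Cref{thm:pathIntro}\ref{thm:pathIntro:tail} would be reused.
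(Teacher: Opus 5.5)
Your architecture is the paper's. A block coupling with block lengths $l$ and $l+1$ produces a lag of exactly one (\Cref{lem:pathTV}). Stationarity and \Cref{thm:pathErgodic} give recurrence to the good set. Finally, $\tv{K^{n+1}(x,\cdot)-K^n(x,\cdot)}\to 0$ is converted into equality of the tail and invariant sigma-fields. Step~1 is fine provided each chain's block gets a uniformly random length in $\{l,l+1\}$. Then each marginal is still a $K$-chain and the lag is a lazy symmetric walk that hits $+1$ a.s.

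\textbf{The gap is in Step~2, where you flag it.} Everything there presupposes that, with positive probability, $\tv{K^l(\bs o,\cdot)\wedge K^{l+1}(\bs o,\cdot)}>0$ for some $l$. Weak aperiodicity does not give this: it only provides lags $d_1,\dots,d_m$ with greatest common divisor $1$, each realized at \emph{some} pair of points, and these need not include lag $1$. You leave this unproved, and your suggested remedy (the balance condition and reversed steps) is not what is needed.

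The missing idea is \Cref{lem:aperiodicity}:
\begin{enumerate}
\item Write $1=\sum_i a_id_i$ with $a_i\in\mathbb Z$.
\item By \Cref{thm:pathErgodic} and \Cref{lem:happensAtRoot}, the chain from $\bs o$ enters $\{x:\exists n,y:\ K^n(x,y)>0,\ K^{n+d_i}(x,y)>0\}$ infinitely often. It also infinitely often realizes a detour, i.e.\ $K^{n+d_i}(\bs X_t,\bs X_{t+n})>0$.
\item Choose $|a_i|$ disjoint such windows for each $i$, and a time $T$ after all windows. Along the trajectory up to $T$, put the long detours with $a_i>0$ into one path and those with $a_i<0$ into the other.
\item This yields $K^{T+c}(\bs o,\bs X_T)>0$ and $K^{T+c+1}(\bs o,\bs X_T)>0$ with $c=\sum_{a_i<0}|a_i|d_i$.
\end{enumerate}
No backward steps are involved, and the balance condition enters only through stationarity and ergodicity. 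Without this lemma, Step~2 has no good set to recur to. A substitute would also work, for instance a coupling with lag increments $\pm d_i$ plus a proof that the inhomogeneous lag process hits exactly $1$, but you supply neither.

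\textbf{Smaller points.}
\begin{itemize}
\item \emph{Transfer to all $\mu$.} Your sentence ``transfers the statement from the root to all starting points $x$, and hence to all $\mu$'' is valid only if what is transferred is the total-variation statement at every $x$. Steadiness from each $\delta_x$ separately does not give steadiness for mixtures. Simple random walk on $\mathbb Z$ with $\mu=\frac{1}{2}(\delta_0+\delta_1)$ is a counterexample. The paper handles this by building a single invariant version $\tilde f=\lim_n\lambda(G,\bs X_n)$, independent of the starting point, which gives \eqref{eq:pathI=T} modulo $\sum_v\mathbb P_{(G,v)}$.
\item \emph{The $0$--$2$ criterion.} Your ``if and only if'' fails in the ``only if'' direction for a fixed start (same walk, $\mu=\delta_0$), and the ``quantitative'' version does not make precise sense as written. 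Only one implication is needed: $\tv{K^{n+1}(x,\cdot)-K^n(x,\cdot)}\to0$ for every $x$ implies steadiness for every $\mu$. The paper proves this via the space-time harmonic function $\Lambda$ and L\'evy's zero-one law.
\item \emph{Sampled chain.} Trials happen at block boundaries. So it is the sampled chain with kernel $\frac{1}{2}(K^l+K^{l+1})$, not $(\bs X_n)_n$, that must visit the good set infinitely often. The paper gets this by applying ergodicity to that chain, which is balanced by $b$ and weakly irreducible.
\end{itemize}
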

Here, $\tv{\cdot}$ denotes the total variation norm of measures. Note that~\eqref{eq:aperiodic2} does not necessarily hold in \Cref{model2}.
This theorem is restated and strengthened in \Cref{thm:pathI=T} and is proved in \Cref{subsec:pathTail}. 


\begin{theorem}[Level-Sets in a Deterministic Graph]
	\label{thm:fixed-foil}
	Consider a cycle-free CMT on a deterministic base graph $G$ such that all of its level-sets are infinite a.s. 
	Then, the tail properties of the ancestral line are the only properties (up to equivalence) that can distinguish the level-sets. In particular, if the Markov chain on $G$ is steady 
	(e.g., for typical realizations of the graph in \Cref{model2}, see \Cref{thm:pathI=T-intro}), then:
	\begin{enumerate}[label=(\roman*)]
		\item \label{thm:fixed-foil-1} If the Liouville property holds, then the level-sets of $\bs F$ are indistinguishable by tail level-set-properties.
		\item \label{thm:fixed-foil-2} Otherwise, the Poisson boundary gives all the ways to distinguish the level-sets by tail level-set-properties. In particular, in each component, the level-sets are indistinguishable by tail level-set-properties.
	\end{enumerate}
\end{theorem}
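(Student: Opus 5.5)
We follow the deterministic-graph analogue of the three-step approach of \Cref{thm:general}, Part~\ref{thm:general:level}. Fix a tail level-set-property $A$; for a point $x\in G$ it is an event about the level-set of $x$ that is unchanged by any modification of $\bs F$ that only alters the level-set below a finite generation (equivalently, it depends on the level-set only through $n$-cousins for arbitrarily large $n$). The first step is to show that $\identity{A}$ evaluated at the level-set of $x$ agrees a.s.\ with a function that is measurable with respect to the tail sigma-field of the ancestral line $\anc(x)=(x,\bs F(x),\bs F^2(x),\ldots)$. This is the analogue of Step~\ref{step2foil}, and we prove it by a coupling argument. Given $\anc(x)$, the CMT is built by running independent Markov trajectories from the other points until they hit previously revealed trajectories. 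We resample everything except the tail $(\bs F^k(x))_{k\ge n}$ and show that the conditional law of $\identity{A}$ does not depend on the first $n$ steps.

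The key coupling is as follows. Take two initial segments of the ancestral line that merge at generation $n$. The $n$-cousins and the deeper cousins are generated by trajectories that coalesce into the common tail, and the portion of the forest below generation $n$ can be coupled by a finite modification. Tail-ness of $A$ then makes the two values equal. The infinite level-sets hypothesis is used here: it ensures the level-set of $x$ is determined, up to finite modification, by the structure hanging off the tail, so that $A$ can be expressed through the ancestral tail.

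The main obstacle is making this conditioning rigorous. Level-sets can be defined only up to the tail, and we must show the resulting function is genuinely tail-measurable rather than merely invariant. For this I would use the completion lemmas on sub-sigma-fields from \Cref{ap:measurability}: show $\identity{A}$ is measurable with respect to the intersection over $n$ of the completions of $\sigma(\bs F^k(x):k\ge n)$ joined with the independent extra randomness, and then remove the independent part with a Kolmogorov-type zero-one argument.

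Once the tail reduction is established, the dichotomy follows from steadiness. If the chain is steady, every tail event of $\anc(x)$ is, mod null sets, invariant, and invariant events correspond to measurable subsets of the Poisson boundary. In case~\ref{thm:fixed-foil-1}, Liouville makes the Poisson boundary trivial. Hence $A$ has conditional probability $0$ or $1$ independently of $x$, and a countable union over $x\in G$ gives indistinguishability. In case~\ref{thm:fixed-foil-2}, $A$ is a function of the boundary point of the ancestral line. All points in one component share the same ancestral tail up to shift, hence the same boundary point. Therefore the level-sets within a component agree on $A$ a.s.
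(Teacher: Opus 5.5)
Your route is the paper's. First you reduce tail level-set-properties to tail properties of the ancestral line; this is \Cref{prop:tail-fixed}, proved as in \Cref{thm:tailDrainage} by two ingredients:
\begin{itemize}
\item a conditional zero-one law for the randomness off $\anc(x)$;
\item a same-randomness coupling showing that the conditional probability $g$ of $A$ given $\anc(x)$ is tail-measurable.
\end{itemize}
Then you use steadiness and the Poisson boundary. Your intersection-of-completions bookkeeping can be made to work, because the fresh randomness off the ancestral line is independent of $\anc(x)$. Applying \Cref{lem:completion-cond} directly to $g$ is shorter.

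\textbf{The missing step is uniformity in the root, and both conclusions need it.} As you set it up, step one gives, for each $x$ separately, a tail function $\psi_x$ with $\identity{A}(x,\bs F)=\psi_x(\anc(x))$ a.s. (your completions of $\sigma(\bs F^k(x):k\ge n)$ are per root). Steadiness and Liouville then give only $\identity{A}(x,\bs F)=c_x$ a.s. for some $c_x\in\{0,1\}$. Nothing in your argument forces $c_x=c_y$, since no irreducibility is available on a deterministic $G$. Similarly, in (ii) two points of one component have the same boundary point, but $\psi_u$ and $\psi_v$ may be different functions of it. You need one tail function $\psi$ on $G^{\mathbb N\cup\{0\}}$, not depending on $x_0$, serving all roots at once. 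That is, you need equivalence mod $Q_G$, together with steadiness mod $\sum_v\mathbb P_{(G,v)}$, which follows from steadiness for a full-support initial law.

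\textbf{This is where the infinite level-set hypothesis is used, not where you placed it.} For a fixed root $x$, take two ancestral paths from $x$ that agree from step $n$ on. The coupled forests $\bs F_1,\bs F'_1$ differ at finitely many points. Local finiteness (assumed in \Cref{prop:tail-fixed}) gives $\card{L_{\bs F_1}(x)\Delta L_{\bs F'_1}(x)}<\infty$, so the tail property applies directly and infinite level-sets play no role.

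To compare paths from different starting points $x_0\neq x'_0$ with $x_i=x'_i$ for $i\ge n$, you must move both roots to a common point $z\in L_{\bs F_1}(x_0)\cap L_{\bs F'_1}(x'_0)$. Such a $z$ exists because the symmetric difference is finite and the level-sets are infinite. Then level-set invariance and the fixed-root tail property give $\identity{A}(x_0,\bs F_1)=\identity{A}(z,\bs F_1)=\identity{A}(z,\bs F'_1)=\identity{A}(x'_0,\bs F'_1)$.

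With this, $g$ is a single function invariant under eventual coincidence, including changes of $x_0$. Your cases (i) and (ii) then go through as you wrote them.
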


This will be proved in \Cref{subsec:tail-fixed}. In fact, every tail level-set-property is equivalent to some tail property of the ancestral line (see \Cref{prop:tail-fixed}). Also, the first paragraph of the theorem means that the \textit{tail boundary} (see, e.g., \cite{Ka92boundary}) gives all the ways to distinguish the level-sets by tail level-set-properties.



\subsection{Introduction to the General Approach}
\label{intro:method}

In this subsection, we outline the general proof approach. Two variants are provided, one for connected components and one for level-sets. 
We also describe how this method can be applied to CMTs and $\wusf$. 
Note that $\wusf$ can be regarded as an oriented forest by Wilson's algorithm rooted at infinity if the base graph is transient (see \Cref{subsec:usf-defs}).

\subsubsection{Proving Indistinguishability of Connected Components}
\label{intro:method-comp}
Assume we have a random triple $[\bs G, \bs o;\bs F]$, where $[\bs G, \bs o]$ is a unimodular graph, $\bs o$ is the root and $\bs F$ is an equivariant random oriented forest on $\bs G$. As mentioned in \Cref{intro:models}, one may regard $\bs F$ as a function on $\bs G$ by letting $\bs F(v)$ be the endpoint of the unique out-going edge from $v$.
Let $\anc(v)$ be the infinite path in $\bs F$ started from $v$ using only out-going edges, which we call the \textit{ancestral line} of $v$ {(it is called the \textit{future} of $v$ in~\cite{HuNa17indistinguishability})}.\del{\footnote{The ancestral line of $v$ is called the \textit{future} of $v$ in~\cite{HuNa17indistinguishability}. One can imagine two opposite directions of time; one by imagining moving particles (where the particles coalesce), and another by the \textit{family tree} viewpoint (where $\bs F(v)$ is regarded as the parent of $v$). We will use the family tree viewpoint throughout the paper.}} 
As in the previous works, a component-property is formalized as an event $A$, on the space of all samples $[G,o;f]$, that is invariant under changing the root in the same connected component (in the forest defined by $f$). Then, indistinguishability is equivalent to $\myprob{A}=\myprob{[\bs G, \bs o; \bs F]\in A}\in\{0,1\}$ for all component-properties $A$, assuming that $[\bs G, \bs o]$ is ergodic {(see \Cref{subsec:ergodic,subsec:indistinguishabilityGeneral} for more details).}

Our method is based on the following simple but essential observation (see \Cref{lem:basicInclusion}):

\begin{equation}
	\label{obs:comp}
	\begin{minipage}{0.75\textwidth}
		\textit{Every property $B$ which depends only on $[\bs G, \bs o; \anc(\bs o)]$ and is shift-invariant, is a component-property. In addition, it is a tail component-property.}
	\end{minipage}
\end{equation}

Here, as in Markov chain theory, the shift means moving one step forward in the path and forgetting the previous point; i.e., $\sigma[G,x_0;(x_0,x_1,\ldots)]:=[G,x_1;(x_1,x_2,\ldots)]$ (see \Cref{eq:shift}). 
The first part of the observation is clear: For a forest $f$, if $u,v\in G$ belong to the same component, then $\anc(u)$ and $\anc(v)$ shift-couple (i.e., are identical after removing some initial segments of possibly different lengths from each path). Thus, for $B$ as above, $\identity{B}[G,u;f]=\identity{B}[G,v;f]$. That is, $B$ is a component-property.
Also, $f$ satisfies the definition of \textit{tail component-properties} (see \Cref{subsec:drainage-events} for the precise definition); i.e., if we modify $f$ at finitely many points in such a way that the component of $u$ in $f$ is changed at finitely many points, then $\identity{B}[G, u;f]$ is not changed since, under these assumptions, the new ancestral line of $u$ shift-couples with the initial one. 

Moreover, in the last paragraph, it is enough that the new component of $u$ and the initial one share an infinite \textit{branch} that contains the ancestry chain of $u$, where a branch means a connected component obtained after removing finitely many edges. Hence, we call $B$ a \defstyle{tail branch-property}, see \Cref{def:super-tail-comp} for the formal definition (the latter is not needed in the one-ended cases and, in this case, the reader might replace it with tail component-properties throughout the paper).

In fact, we need a converse to Observation~\eqref{obs:comp}. More precisely, a proof of indistinguishability of components can be obtained by the following steps:

\begin{stepC}
	\item 
	\label{step1comp} 
	Proving that every component-property is equivalent to some  tail branch-property, where two events $A_1$ and $A_2$ are called equivalent if $\myprob{A_1\Delta A_2}=0$.
	\item 
	\label{step2comp} 
	Proving that every  tail branch-property of $[\bs G, \bs o; \bs F]$ is equivalent to some shift-invariant property of $[\bs G, \bs o; \anc(\bs o)]$.
	\item 
	\label{step3comp} 
	Proving that, if $[\bs G, \bs o]$ is ergodic, then every shift-invariant property of $[\bs G, \bs o; \anc(\bs o)]$ is trivial {(i.e., has probability 0 or 1)}; i.e, $[\bs G, \bs o; \anc(\bs o)]$ is ergodic.
\end{stepC}
Note that in the last step, $[\bs G, \bs o; \anc(\bs o)]$ is not necessarily stationary under shift, but ergodicity still makes sense. This will be discussed in \Cref{rem:quasi}.

\Cref{step1comp} rules out non-tail properties and reduces the problem to indistinguishability by tail branch-properties.
We prove this step in full generality in \Cref{thm:I in T 2}. 
This was proved in~\cite{HuNa17indistinguishability} in the special case of the $\wusf$, leveraging the ideas of~\cite{LySc99} including \textit{pivotal updates}. With the help of some measure theoretic lemmas that we provide in \Cref{ap:measurability}, we give a proof for the general case, which is much simpler, involves almost no $\epsilon,\delta$ calculations and does not use pivotal updates.

\Cref{step2comp} is the converse to \Cref{obs:comp} alluded above, and is the core idea of this proof method. It connects the properties of $\bs F$ to those of the path $\anc(\bs o)$ (in the latter, the forest is deleted and only $\anc(\bs o)$ is kept).
We will prove \Cref{step2comp} for CMTs and $\wusf$ (in \Cref{thm:tailDrainage} and \Cref{cor:usf-sigfield} respectively) without requiring unimodularity, by a coupling method briefly described below.
In~\cite{HuNa17indistinguishability}, tail component-properties are studied by, roughly speaking, resampling $\bs F$ conditionally on what is seen outside a large ball. In the present work, we just condition on $\bs G, \bs o$ and $\anc(\bs o)$, which results in a much simpler proof of indistinguishability. 
\Cref{step2comp} can be proved for a given model by showing the following claims (which are implicit in the proof of \Cref{thm:tailDrainage} for CMTs):
\begin{stepCC}
	\setcounter{stepCi}{2}
	\item 
	\label{step2comp-1}
	Proving that every  tail branch-property $A$ of $[\bs G, \bs o; \bs F]$ is equivalent to some event $A'$ that depends only on $[\bs G, \bs o; \anc(\bs o)]$.
	\item 
	\label{step2comp-2}
	Proving that $A'$ can be chosen to be a shift-invariant event.
\end{stepCC}
To prove \Cref{step2comp-1}, we will use the following simple observation:
\begin{equation}
	\label{obs:knowingAnc}
	\begin{minipage}{0.75\textwidth}
		\textit{{If $A$ is a tail branch-property, then $\identity{A}[G,o;f]$ is invariant by all finite modifications of $f$ which do not modify $\anc_f(o)$.}}
	\end{minipage}
\end{equation}
This is true for general models because, by such a finite modification of $f$, the new component of $o$ shares an infinite branch with the initial one.
By this observation, \Cref{step2comp-1} is reduced to the following, where \textit{tail triviality} has the classical meaning that every tail event has probability zero or one:
\begin{stepCCprime}
	\setcounter{stepCi}{2} 
	\item  \label{step2comp-1-second}
	Proving the tail triviality of $\bs F$ conditionally on knowing $\bs G, \bs o$ and $\anc(\bs o)$ (for a suitable version of the conditional distribution).
\end{stepCCprime}
{We will prove this step for CMTs and $\wusf$ (\Cref{thm:tailDrainage} and \Cref{lem:usf-conditionalIndependence})} using a coupling method to show that $A$ is independent of $\restrict{\bs F}{\oball{n}{\bs o}}$ conditionally on $\bs G, \bs o$ and $\anc(\bs o)$ (where $\oball{n}{\bs o}$ is the ball of radius $n$ centered at $\bs o$). This also gives an intuitive proof of the tail triviality of the (wired or free) uniform spanning forest of a given graph, which seems to be a new proof (\Cref{thm:usf-ergodic}).
We will then prove \Cref{step2comp-2} {(\Cref{thm:tailDrainage} and \Cref{lem:usf-measurable})} using another coupling method for CMTs and for $\wusf$. 

%
%

\Cref{step3comp} should be proved for each model separately. For CMTs, $\anc(\bs o)$ is just a Markov chain on a unimodular graph. {Such Markov chains satisfy an ergodicity property, which} is implied by slightly generalizing a result of~\cite{processes}, which we prove in \Cref{thm:pathIntro,thm:pathErgodic}. For $\wusf$, in the transient case, $\anc(\bs o)$ is the loop-erased simple random walk and the claim is quickly implied (in \Cref{lem:usf-trivial I}) by the ergodicity of the simple random walk on unimodular graphs.


\subsubsection{Proving Indistinguishability of Level-Sets}
\label{intro:method-foil}

As mentioned in \Cref{intro:models}, one may regard each component of $\bs F$ as a family tree by regarding $\bs F(v)$ as the parent of $v$. We define the \textit{level-sets} of $\bs F$ as the generations in this family tree (see \Cref{subsec:classification}).
In~\cite{eft}, it is proved that, in the one-ended case, all level-sets are infinite a.s. (see \Cref{subsec:classification}). Level-set-properties and  tail level-set-properties are defined similarly in the present paper (see \Cref{def:invariantDrainage,def:tailDrainage}). It seems that the method of~\cite{HuNa17indistinguishability} cannot be applied for the indistinguishability of level-sets. 
Our method is based on the following variant of Observation~\eqref{obs:comp}, which is again model-free:

\begin{equation}
	\label{obs:foil}
	\begin{minipage}{0.75\textwidth}
		\textit{On the event that $\bs F$ is locally-finite, one-ended, and has infinite level-sets, every property $B$ which depends only on $[\bs G, \bs o; \anc(\bs o)]$ and is a tail event (under shifts) is a level-set-property. In addition, it is a tail level-set-property.}
	\end{minipage}
\end{equation}
In this observation, the notion of tail properties of $[\bs G, \bs o; \anc(\bs o)]$ is a special case of tail properties of Markov chains (see \Cref{def:path-tail}). 
The observation holds
because, if two ancestral lines join after an equal number of steps, then the property $B$ holds for either both or none of them (See \Cref{lem:basicInclusion}). With the hint of Observation~\eqref{obs:foil} and the need for its converse, a proof of indistinguishability of level-sets can be obtained in a one-ended model by the following steps:
\begin{stepL}
	\item 
	\label{step1foil}
	Proving that every level-set-property is equivalent to some  tail level-set-property. 
	\item 
	\label{step2foil}
	Proving that every  tail level-set-property is equivalent to some tail property (under shift) of $[\bs G, \bs o; \anc(\bs o)]$.
	\item 
	\label{step3foil}
	Proving that $[\bs G, \bs o; \anc(\bs o)]$ is tail-trivial (under shift) if $[\bs G, \bs o]$ is ergodic.
\end{stepL}
\Cref{step1foil} is again proved in full generality in \Cref{thm:I in T 2}. 
For proving \Cref{step2foil}, the idea is that Observation~\eqref{obs:knowingAnc} is valid for any tail level-set-property $A$ as well. So, if \Cref{step2comp-1-second} is proved, then $A$ is equivalent to an event $A'$ that depends only on $[\bs G, \bs o; \anc(\bs o)]$. Then, it remains to show that $A'$ can be chosen as a tail property under shift. We will prove the latter for CMTs and for $\wusf$ on transitive graphs by a coupling method (see \Cref{thm:tailDrainage,cor:usf-sigfield}).


To prove \Cref{step3foil} for CMTs, we prove that Markov chains on unimodular graphs are tail trivial under mild conditions, which is a novel result (\Cref{thm:pathIntro,thm:pathTail}). We stress that everything is considered up to graph-isomorphisms here, which makes it difficult to define tail events. Recall also that \Cref{thm:pathI=T-intro,thm:pathI=T} prove a version of this result on deterministic base graphs. 

\subsection{Structure of the Paper}
\label{intro:structure}

\begin{figure}
	\begin{center}
		\begin{tikzpicture}[
			node distance=.5cm and 1cm,
			chapter/.style={rectangle, draw, rounded corners, thick, minimum width=1cm, minimum height=.5cm},
			arrow/.style={-Stealth, thick},
			dashedarrow/.style={-Stealth, thick, dotted}
			]
			
			\node[chapter] (def) {\ref{sec:basic} Definitions};
			\node[chapter, below right=of def] (main) {\ref{sec:cmt} CMTs};
			\node[chapter, below=of def] (warmup) {\ref{sec:warmUp} Lattice CMTs};
			\node[chapter, right=of def] (c3) {\ref{sec:Markov} C3, L3};
			\node[chapter, above=of def] (intro) {\ref{intro} Introduction};
			\node[chapter, right=of c3] (c1) {\ref{sec:drainage} C1, L1};
			\node[chapter, below=of c1] (c2) {\ref{sec:cmt-proof} C2, L2};
			\node[chapter, above=of c3] (wusf) {\ref{sec:wusf} WUSF};
			\node[chapter, below right=of c1] (poisson) {\ref{sec:BernoulliPoisson} Bernoulli, Poisson};
			\node[chapter, above right=of c3] (comps) {\ref{sec:one-ended} Qualitative Properties};
			
			\draw[arrow] (intro) -- (def);
			\draw[arrow] (def) -- (main);
			\draw[arrow] (main) -- (warmup);
			\draw[arrow] (main) -- (c3);
			\draw[arrow] (main) -- (c2);
			\draw[dashedarrow] (c3) -- (c2);
			\draw[arrow] (c3) -- (c1);
			\draw[arrow] (c3) -- (comps);
			\draw[dashedarrow] (c1) -- (wusf);
			\draw[arrow] (c1) -- (poisson);
			\draw[dashedarrow] (c3) -- (wusf);
			\draw[arrow] (c2) -- (poisson);
			\draw[dashedarrow] (c1) -- (c2);
			\draw[arrow] (def) -- (wusf);
		\end{tikzpicture}
	\end{center}
	\alt{The dependency structure of the sections}
	\caption{The dependency structure of the sections. A dotted arrow means that only the definitions of the sigma-fields (see Table~\ref{tab:symbols}) are used from the preceding section.}
	\label{fig:dependency}
\end{figure}

\begin{table}[htbp]
	\centering
	\caption{\textbf{Table of Symbols.}}
	\label{tab:symbols}
	\begin{tabular}{l @{\quad} l @{\quad} l}
		{Symbol} & {Description} & {Defined in}  \\
		\hline
		$\mathcal G_*$ & The space  of rooted graphs or discrete spaces & Def \ref{def:gstar}\\
		& $\quad$ $[G,o]$, possibly having marks.&\\
		$\mathcal G'_*$ & The space  of $[G,o;f]$, where $f:G\to G$. & Def \ref{def:gprime}\\
		$\mathcal G_{\infty}$ & The space of $[G,x_0;(x_i)_{i\geq 0}]$, where $x_i\in G$. & Sec \ref{subsec:stationary}\\
		$\bs F$ & A point-map or CMT. & Def~\ref{def:pointmap}, \ref{def:cmt}\\
		$K$ & a factor Markov kernel. & Sec \ref{subsec:cmt}\\
		$b$ & The function used in the balance condition.& \Cref{model1}\\
		$C(o)$ & The connected component containing $o$. & Def \ref{def:component}\\
		$L(o)$ & The level-set containing $o$. & Def \ref{def:component}\\
		$\anc(o)$ & The ancestor line of $o$. &‌Sec\ref{subsec:classification}\\
		$D(o), D_n(o)$ & The descendants of $o$. & Sec \ref{subsec:classification}\\
		$\sigfield{}{I}{}$  & Invariant sigma-field on $\mathcal G_*$. & \Cref{subsec:ergodic}\\
		$\sigfield{pm}{I}{}$  & Sigma-field of invariant point-map-properties. & Def \ref{def:invariantDrainage}\\
		$\sigfield{path}{I}{}$  & Sigma-field of invariant path properties. & Def \ref{def:path-invariant}\\
		$\sigfield{comp}{I}{}$ & Sigma-field of component-properties. & Def \ref{def:invariantDrainage}\\ 
		$\sigfield{level}{I}{}$ & Sigma-field of level-set-properties. & Def \ref{def:invariantDrainage}\\
		$\sigfield{pm}{T}{}$  & Sigma-field of tail point-map-properties. & Def \ref{def:tailDrainage}\\
		$\sigfield{path}{T}{}$  & Sigma-field of tail path properties. & Def \ref{def:path-tail}\\
		$\sigfield{branch}{T}{}$ & Sigma-field of tail branch-properties. & Def \ref{def:super-tail-comp}\\ 
		$\sigfield{level}{T}{}$ & Sigma-field of tail level-set-properties. & Def \ref{def:tailDrainage}\\ 
		$\sigfield{path}{I}{G},\ldots$ & Analogous sigma-fields when a deterministic & Sec~\ref{subsec:pathErgodic}, \ref{subsec:tail-fixed}\\
		& \quad base graph $G$ is fixed. & 
	\end{tabular}
\end{table}

\begin{figure}
	\begin{center}
		\begin{tikzcd}[column sep= small, row sep=scriptsize]
			\hyperref[def:invariantDrainage]{\sigfield{pm}{I}{}}\arrow[rrr]\arrow[ddd, "{\ref{thm:I in T 2}}", shift left] &&& \hyperref[def:invariantDrainage]{\sigfield{comp}{I}{}}\arrow[rrr]\arrow[ddd, "{\ref{thm:I in T 2}}", shift left] &&& \hyperref[def:invariantDrainage]{\sigfield{level}{I}{}}\arrow[ddd, "{\ref{thm:I in T 2}}", shift left]\\
			&&&&&&\\
			&&&&&&\\
			%
			\hyperref[def:tailDrainage]{\sigfield{pm}{T}{}}\arrow[uuu,shift left]\arrow[rrr, "\ref{lem:basicInclusion0}"]\arrow[ddd, "{\ref{thm:tailDrainage}}", shift left, dashrightarrow] &&& \hyperref[def:super-tail-comp]{\sigfield{branch}{T}{}}\arrow[uuu,shift left]\arrow[rrr, "\ref{lem:basicInclusion0}"]\arrow[ddd, "{\ref{thm:tailDrainage}}", shift left, dashrightarrow] &&& \hyperref[def:tailDrainage]{\sigfield{level}{T}{}}\arrow[uuu,shift left]\arrow[ddd, "{\ref{thm:tailDrainage}}", shift left, dashrightarrow]\\
			&&&&&&\\
			&&&&&&\\
			%
			\hyperref[subsec:ergodic]{\sigfield{}{I}{}}\arrow[uuu,shift left]\arrow[rrr,shift left] &&& \hyperref[def:path-invariant]{\sigfield{path}{I}{}}\arrow[lll, "{\ref{thm:pathIntro},\ref{thm:pathErgodic}}", shift left, dashrightarrow]\arrow[uuu, "\ref{lem:basicInclusion}", shift left]\arrow[rrr,shift left] &&& \hyperref[def:path-tail]{\sigfield{path}{T}{}}\arrow[lll,"{\ref{thm:pathIntro}, \ref{thm:pathTail}}", shift left, dashrightarrow]\arrow[uuu, "\ref{lem:basicInclusion}",shift left]
			%
		\end{tikzcd}
		\alt{The relations between the sigma-fields used in this paper}
		\caption{The relations between the sigma-fields mentioned in \Cref{tab:symbols} (see \Cref{subsec:pathErgodic,subsec:tail-fixed} for analogous sigma-fields for a fixed underlying graph). An arrow, say from $\mathcal F$ to $\mathcal F'$, means $\mathcal F\subseteq\mathcal F'$ mod $\mathbb P$ (\Cref{def:augmentation}), possibly with some additional assumptions. Only $\sigfield{comp}{I}{}, \sigfield{branch}{T}{}, \sigfield{path}{I}{}$ and $\sigfield{}{I}{}$ are needed for the proof of indistinguishability of components. 
			The label of an arrow navigates to the result which proves the inclusion (those without label are straightforward). 
			Solid arrows are valid for all models, but the dashed arrows are proved only for Markov chains or CMTs in \Cref{sec:Markov,sec:drainage,sec:cmt-proof} (some are also proved for $\wusf$ in \Cref{sec:wusf}).}
		\label{fig:sigfields}
	\end{center}
\end{figure}

To help the reader, Figure~\ref{fig:dependency} describes the dependency structure of the sections. Also, Table~\ref{tab:symbols} lists the frequently used symbols, in particular, the various invariant and tail sigma-fields used throughout the paper. The relations between these sigma-fields are also depicted in Figure~\ref{fig:sigfields}.

In \Cref{main:method,intro:method}, we introduced the general approach of proving indistinguishability in oriented forests on unimodular graphs. We also summarized the main results in \Cref{intro:results}. Basic definitions are provided in \Cref{sec:basic}, including unimodularity, point-maps, ergodicity and indistinguishability. \Cref{sec:cmt} provides the formal definition of CMTs and specifies \Cref{model1,model2}. In particular, \Cref{subsec:examples} discusses how these models unify various models of the literature and studies their indistinguishability properties. The three steps of proving indistinguishability for CMTs are given in \Cref{sec:Markov,sec:drainage,sec:cmt-proof}, where \Cref{sec:Markov} proves ergodicity and tail triviality of Markov chains on unimodular graphs.
Before that, \Cref{sec:warmUp} provides a significantly simpler proof of indistinguishability for CMTs on Euclidean lattices and similar models. This proof does not cover general CMTs, but contains many important ingredients of the ideas.
\Cref{sec:one-ended} studies the qualitative properties of CMTs regarding connectedness and one-endedness.
The alternative proof for the indistinguishability in $\wusf$ is given in \Cref{sec:wusf}. The reader may jump to this section right after \Cref{sec:basic}, but the arguments for $\wusf$ are more compressed than those of CMTs.  \Cref{sec:BernoulliPoisson} provides the analogous versions of CMTs on Bernoulli or Poisson point processes (\Cref{model3,model4,model5,model6}), that generalize Howard's model and the strip point-map, and proves their indistinguishability properties. Open problems are listed in \Cref{sec:problems}.
\Cref{ap:measurability} includes measure-theoretic lemmas on the completion of sub-sigma-fields, which are essential ingredients in the proofs.
Finally, some further developments of the notion of stopping sets are also given in \Cref{ap:stopping}, which are used in \Cref{sec:BernoulliPoisson}.

\section{Definitions}
\label{sec:basic}

\subsection{Notation}
\label{sec:notation}

If $G$ is a graph or a metric space, the distance function on $G$ is always denoted by $d$. For $r\geq 0$ and $v\in G$, let $\oball{r}{v}$ denote the closed ball $\oball{r}{v}:=\{w\in G: d(v,w)\leq r\}$.  
In this paper, we regard graphs merely as metric spaces, and the edges of $G$ are considered as a marking of pairs of points. More generally:
\begin{convention}[Marks]
	\label{conv:marks}
	When we talk about a graph or a discrete space $G$, we presume that $G$ has some given marking of the points and/or the pairs of points, where a \defstyle{marking} is a function $m:G\to M$ or $m:G\times G\to M$ for some given complete separable mark space $M$ (this generalizes the notion of \textit{networks} \cite{processes}). Note that we may assume multiple marks as well.
	\\
	We will sometimes distinguish one or more markings of $G$ by using semicolon. For instance, when studying (random) forests on $G$, we deal with objects of the form $(G;f)$, where $f$ is a forest on $G$, while allowing $G$ to have other markings as well (e.g., edges or i.i.d. marks described later in \Cref{subsec:marking}).
\end{convention}

 A graph is \defstyle{locally-finite} if the degree of every vertex is finite. More generally, a discrete space $G$ is \defstyle{boundedly-finite} if every bounded subset of $G$ is finite.
A \defstyle{rooted graph or discrete space} is a pair $(G,o)$, where $G$ is a graph or discrete (metric) space and $o\in G$. The notion of \defstyle{doubly-rooted} spaces is defined similarly and is denoted by $(G,o_1,o_2)$. We say that $(G_1,o_1)$ is \defstyle{isomorphic} to $(G_2,o_2)$ if there is a mark-preserving invertible isometry $\rho:G_1\to G_2$ such that $\rho(o_1)=o_2$. This is an equivalence relation and the equivalence class of $(G,o)$ is denoted by $[G,o]$.

\begin{definition}
	\label{def:gstar}
	Let $\mathcal G_*$ be the set of equivalence classes of rooted boundedly-finite graphs or discrete spaces (recall that graphs or discrete spaces are allowed to have marks as above, but we do not use a symbol for the presumed mark space to avoid too long notations). Define $\mathcal G_{**}$ similarly for doubly-rooted graphs or discrete spaces. 
\end{definition}

For the case of graphs, the Benjamini-Schramm metric is defined on $\mathcal G_*$ and $\mathcal G_{**}$ and makes them Polish spaces (see~\cite{processes}).
For the more general case of discrete metric spaces, \cite{I} defines GHP-type metrics on $\mathcal G_*$ and $\mathcal G_{**}$ and it is shown that they are Borel subsets of some Polish spaces. 
\begin{convention}
	\label{conv:forgettingG}
	If $v\in G$ and $g$ is a function on $\mathcal G_*$, we use the notation $g(v)$ rather than $g([G,v])$ by an abuse of notation, if $G$ is clearly known. Similarly, $h(u,v)$ stands for $h([G,u,v])$ for a function $h$ on $\mathcal G_{**}$.
\end{convention}

The symbol $A\Delta B$ denotes the symmetric difference of the sets $A$ and $B$.
If $G$ is a graph, a \defstyle{branch} of $G$ is a connected component of $G\setminus U$ for some finite set $U\subseteq G$ {(equipped with the induced graph structure on $G\setminus U$)}. Two branches $B_1$ and $B_2$ are called equivalent if $\card{B_1\Delta B_2}<\infty$, where $\card{\cdot}$ denotes the cardinal of a set. The \defstyle{ends} of $G$ are the equivalence classes of the infinite branches of $G$.

We usually denote the random objects by bold symbols for the ease of reading. If $\bs X$ and $\bs Y$ are random objects on the same state space $E$, then $\bs X\sim\bs Y$ means that $\bs X$ and $\bs Y$ have the same distribution (we mostly do not refer to the probability space here). By a \defstyle{sample} or a \defstyle{realization} of $\bs X$, we mean a point $x\in E$ in the state space of $\bs X$. The random object $\bs X$ is \defstyle{essentially constant} if $\bs X=x$, a.s., for some sample $x$. If $b:E\to\mathbb R^{\geq 0}$ is a measurable function such that $\lambda:=\omid{b(\bs X)}\in (0,\infty)$, we let $\widehat{\mathbb P}$ be the probability measure obtained by \defstyle{biasing by $b(\bs X)$} {(defined on the same probability space as $\mathbb P$)}; i.e., $\probhat{A}:=\frac 1 {\lambda}\omid{b(\bs X)\identity{A}}$ for every event $A$. 


We say that two sequences $(x_i)_i$ and $(x'_i)_i$ \defstyle{eventually coincide} if $\exists N: \forall i\geq N: x_i=x'_i$. Also, we say that $(x_i)$ and $(x'_i)_i$ \defstyle{shift-couple} if $\exists N,N': \forall i\geq 0: x_{N+i} = x'_{N'+i}$. 

\begin{definition}[Null-Event-Augmentation]
	\label{def:augmentation}
	In a probability space $(\Omega,\mathcal F, \mathbb P)$, two events $A$ and $B$ are \defstyle{equivalent mod $\mathbb P$} if $\myprob{A\Delta B}=0$. Given a sub-sigma-field $\mathcal F_1\subseteq\mathcal F$, the \defstyle{null-event-augmentation} of $\mathcal F_1$ is the sigma-field
	\begin{equation*}
		\hat{\mathcal F}_1:=\{A\in\mathcal F: \exists A_1\in\mathcal F_1: \myprob{A\Delta A_1}=0 \}.
	\end{equation*}
	If $\mathcal F_2\subseteq\mathcal F$ is another sub-sigma-field, we say 
	\begin{eqnarray*}
		\text{$\mathcal F_1\subseteq\mathcal F_2$ mod $\mathbb P$ (resp., $\mathcal F_1=\mathcal F_2$ mod $\mathbb P$)}
	\end{eqnarray*}
	 if $\mathcal F_1\subseteq\hat{\mathcal F}_2$, or equivalently, $\hat{\mathcal F_1}\subseteq \hat{\mathcal F}_2$ (resp. $\hat{\mathcal F_1}=\hat{\mathcal F}_2$). In particular, $\mathcal F_1$ is \defstyle{trivial} if it is equivalent to $\{\emptyset,\Omega\}$ mod $\mathbb P$; i.e., $\forall A\in\mathcal F_1: \myprob{A}\in\{0,1\}$.
\end{definition}
It is straightforward to see that $\hat{\mathcal F}_1$ is the sigma-field generated by $\mathcal F_1$ and all null events of $\mathcal F$. 
\Cref{ap:measurability} provides various lemmas regarding null-event-augmentation, which are essential ingredients in the proofs of indistinguishability in this work.

\subsection{Unimodular Graphs and Discrete Spaces}

A \defstyle{random rooted graph or discrete space} is a random object $[\bs G, \bs o]$ in $\mathcal G_*$ (recall that we allow $\bs G$ to have a marking). Its distribution is a probability measure on $\mathcal G_*$. We used brackets here because every sample of $[\bs G, \bs o]$ is an equivalence class of rooted spaces. However, we sometimes say \textit{`let $(G,o)$ be a sample of $[\bs G, \bs o]$'}, meaning that it is a representative of an equivalence class.

In addition, $[\bs G, \bs o]$ is called \defstyle{unimodular} if it satisfies the following \defstyle{mass transport principle (MTP)}:
\begin{equation}
	\label{eq:mtp}
	\omid{\sum_{x\in \bs G} g(\bs G, \bs o, x)} = \omid{\sum_{x\in \bs G} g(\bs G, x, \bs o)},
\end{equation}
for all measurable functions $g:\mathcal G_{**}\to\mathbb R^{\geq 0}$. See~\cite{processes,I}.

The following lemma is well known about unimodular graphs (see e.g., Lemma~2.15	of~\cite{I}).

\begin{lemma}[Everything Happens at the Root]
	\label{lem:happensAtRoot}
	Let $[\bs G, \bs o]$ be a unimodular graph or discrete space and $A\subseteq\mathcal G_*$ be an event. The \defstyle{equivariant {(factor)} subset} $\bs S:=\bs S(\bs G)\subseteq \bs G$ defined by $\bs S(G):=\{v\in G: [G,v]\in A\}$ satisfies the following: $\myprob{\bs o\in \bs S}>0$ if and only if $\myprob{\bs S\neq\emptyset}>0$. Also, $\myprob{\bs o\in \bs S}=1$ if and only if $\myprob{\bs S=\bs G}=1$.
\end{lemma}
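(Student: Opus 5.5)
The plan is to prove both parts with a single mass transport. For the first part, fix the event $A$ and put $\bs S(G):=\{v\in G: [G,v]\in A\}$. Set $g(G,u,v):=\identity{\{v\in \bs S(G)\}}\,w(G,u,v)$, where $w(G,u,\cdot)$ is a probability measure on $G$ depending equivariantly on $(G,u)$. Choose $w$ so that $w(G,u,v)>0$ for all $v\in G$; for instance, fix a summable sequence $(c_r)_{r\geq0}$ of positive weights and spread mass $c_r$ uniformly over the sphere $\{v: d(u,v)=r\}$ (or over a finite shell in the discrete-space case, using bounded finiteness). This choice is a measurable function on $\mathcal G_{**}$ and is invariant under isomorphisms. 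Then
\[
\omid{\sum_x g(\bs G,\bs o,x)} = \omid{w(\bs G,\bs o,\bs S)}, \qquad \omid{\sum_x g(\bs G,x,\bs o)}=\omid{\identity{\{\bs o\in\bs S\}}\sum_x w(\bs G,x,\bs o)}.
\]
By the mass transport principle \eqref{eq:mtp} these two quantities are equal. If $\myprob{\bs o\in\bs S}=0$, the right-hand side is $0$. The left-hand side is then also $0$, so $w(\bs G,\bs o,\bs S)=0$ a.s. Because $w$ charges every point, this gives $\bs S=\emptyset$ a.s. The converse direction is trivial, since $\bs o\in\bs S$ implies $\bs S\neq\emptyset$.

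For the second part, apply the first part to the complement event $A^c$. Its equivariant subset is $\bs G\setminus\bs S$. Then $\myprob{\bs o\notin\bs S}=0$ holds iff $\myprob{\bs G\setminus\bs S\neq\emptyset}=0$, that is, $\myprob{\bs o\in \bs S}=1$ iff $\bs S=\bs G$ a.s.

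The main technical point is building an equivariant, everywhere-positive weight $w$ that is a measurable function on $\mathcal G_{**}$. Spheres are finite because the space is boundedly finite, so normalizing uniformly over each sphere is measurable. If some spheres are empty, I would renormalize over the nonempty ones. One more point is that the right-hand side may be infinite, but this does no harm: we only use the implication "right side $=0\Rightarrow$ left side $=0$", and both sides are expectations of nonnegative quantities.
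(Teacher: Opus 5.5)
Your proof is correct. The transport $g=\identity{\{v\in\bs S\}}\,w$ with an equivariant full-support weight gives $\omid{w(\bs G,\bs o,\bs S)}=\omid{\identity{\{\bs o\in\bs S\}}\sum_x w(\bs G,x,\bs o)}$, which forces $\bs S=\emptyset$ a.s.\ when $\myprob{\bs o\in\bs S}=0$, and the second part follows by passing to $A^c$. The paper gives no proof and only cites Lemma~2.15 of~\cite{I}. Your argument is the standard mass-transport proof of this lemma. A common variant has each point send unit mass split equally among its nearest points of $\bs S$ (finitely many, by bounded finiteness), which spares you the construction of $w$.
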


\subsection{Point Processes}
\label{subsec:pointProcess}

Roughly speaking, a \defstyle{point process} $\Phi$ on $\mathbb R^d$ (or $\mathbb Z^d$) is a random discrete subset of $\mathbb R^d$. The point process $\Phi$ is \defstyle{stationary} if $\Phi-t\sim \Phi, \forall t\in\mathbb R^d$. One might regard $\Phi$ as a random discrete metric space. It is well known that the \defstyle{Palm version} of $\Phi$, which is heuristically obtained by conditioning on $0\in \Phi$, is a unimodular discrete space (see e.g., Example~2.7 of~\cite{I}). 

Since we do not assume rotation invariance, it is convenient to add marks to $\Phi$ that determine the direction of the primary axes. For instance, one might let $m(u,v):=v-u\in\mathbb R^d$ with mark space $\mathbb R^d$. If this marking is used, then every measurable property of $\Phi$ corresponds to some measurable subset of $\mathcal G_*$.


\subsection{Random Forests and Point-Maps}

\subsubsection{Factor Marking and Equivariant Marking}
\label{subsec:marking}
Let $[\bs G, \bs o]$ be a unimodular random graph or discrete space (possibly having some marking). One might assign additional marks to the points or pairs of points of $\bs G$. For instance, if $m_0:\mathcal G_{**}\to\Xi$ is a measurable function, where $\Xi$ is a complete separable (new) mark space, then one can let $m(u,v):=m_0[\bs G, u, v]$ for $u,v\in \bs G$ (for marking single vertices, one can restrict attention to $m(u,u)$ or repeat the definition for $m_0:\mathcal G_*\to\Xi$). This is called a \defstyle{factor marking}. One obtains that $[\bs G, \bs o; m]$ is unimodular.

To allow extra randomness, we use the term \defstyle{equivariant marking} or \defstyle{equivariant process} defined in Definition~2.9 of~\cite{I} (the term \textit{equivariant} is usually used in the literature without allowing extra randomness, but we use the term \textit{factor} in that case). Formally, for every sample $G$, the space $\Xi^{G\times G}$ is the state space of the markings on $G$. An equivariant marking is a map that assigns to every sample $G$ a random marking $\bs m_G(\cdot,\cdot)$ of $G$ (or equivalently, a probability measure on $\Xi^{G\times G}$) such that: (1) For every isomorphism $\rho:G\to G'$, the random marking $\bs m_G(\rho^{-1}(\cdot), \rho^{-1}(\cdot))$ of $G'$ has the same distribution as $\bs m_{G'}$, (2) For every event $A$ depending on $G,o$ and a marking of $G$, the function $[G,o]\mapsto \myprob{[G,o; \bs m_G]\in A}$ is measurable. By Lemma~2.12 of~\cite{I}, the random object $[\bs G, \bs o; \bs m_{\bs G}]$ is unimodular. 

As an example, one might assign i.i.d. marks with the uniform distribution on $[0,1]$ to the points (or pairs of points) of $\bs G$.

\subsubsection{Point-Maps}
\label{subsec:point-map}

Let $[\bs G, \bs o]$ be a random rooted graph or discrete space. 
Roughly speaking, a point-map on $\bs G$ is a random function $\bs F:\bs G\to\bs G$. This is defined below using the notion of equivariant marking mentioned above.

\begin{definition}[$\mathcal G'_*$]
	\label{def:gprime}
	Let $\mathcal G'_*$ be the set of equivalence classes of tuples of the form $(G,o;f)$, where $(G,o)$ is a rooted graph or discrete space and $f:G\to G$ 
	(recall that $G$ is allowed to have some other markings by \Cref{conv:marks}).
	The equivalence class of $(G,o;f)$ is denoted by $[G,o,f]$.
\end{definition}

We are interested in the \textit{graph of $f$} obtained by connecting $u$ to $f(u)$ for every $u\in G$. Note that the edges of $G$ (if any) are not used in defining connected components and may only be used in defining $f$. Also, the graph of $f$ is not necessarily a subgraph of $G$.

Note that a function $f:G\to G$ can be considered as a marking of $G$ by $(u,v)\mapsto\identity{\{f(u)=v\}}$. Therefore, $\mathcal G'_*$ is a subset of another instance of $\mathcal G_*$ for a larger mark space. In particular, $\mathcal G'_*$ is a Borel subset of some Polish space. Note also that there is a natural projection from $\mathcal G'_*$ to $\mathcal G_*$ defined by $[G,o;f]\mapsto [G,o]$.


\begin{definition}[Point-Map]
	\label{def:pointmap}
	An \defstyle{(equivariant) point-map}\del{\footnote{Point-maps are called \textit{point-shifts} in~\cite{eft}, but we preferred to use the term point-map here because it is more reminiscent of a map from $G$ to itself.}} is a map that assigns to every graph or discrete space $G$ a probability measure $\nu_G$ on $G^G$ such that it is equivariant under isomorphisms (i.e., for every isomorphism $\rho:G\to G'$, the map $f\mapsto \rho\circ f\circ \rho^{-1}$ pushes $\nu_G$ forward to $\nu_{G'}$) and 
	for every Borel set $A\subseteq \mathcal G'_*$, the function $[G,o]\mapsto \nu_G(\{f: [G,o;f]\in A\})$ is measurable on $\mathcal G_*$. If $\bs F=\bs F(G,\cdot):G\to G$ is a random function chosen with distribution $\nu_G$ (defined for all $G$), then $\bs F$ is also called an (equivariant) point-map.
%
\end{definition}

Note that in the definition of $\bs F(G,\cdot)$, no common probability space is assumed for different graphs $G$. A special case of the definition is when $\bs F$ is defined as a function of an i.i.d. marking of $G$ (in addition to the marks already on $G$, if any). In this case, $\bs F$ is called a \textit{factor of i.i.d.} in the literature.

\Cref{def:pointmap} extends the definitions of point-maps and vertex-maps defined in~\cite{HeLa05characterization} and~\cite{eft} respectively, by allowing extra randomness {(they are called \textit{point-shifts} in these references, but we preferred to use \textit{point-maps})}. This is also a special case of \textit{equivariant processes} defined in~\cite{I}. As mentioned in \Cref{subsec:marking}, if $[\bs G, \bs o]$ is unimodular, then $[\bs G, \bs o; \bs F]$ (which is a well defined random element of $\mathcal G'_*$) is also unimodular.



\begin{example}[Random Forests as Point-Maps]
	Let $[\bs G, \bs o]$ be a unimodular graph and let $\bs T$ be an equivariant random subgraph of $\bs G$ which is a forest a.s. If every connected component of $\bs T$ has at most two ends, then $\bs T$ might be considered as the set of connected components of a point-map $\bs F$ as follows. 
	To define $\bs F$, it is enough to orient the edges such that the out-degrees of all nodes are at most one (let $\bs F(v)=v$ if the out-degree of $v$ is zero). In every one-ended component $C$ of $\bs T$, orient the edges towards the end of $C$. In every two-ended component $C$ of $\bs T$, choose one of the two ends by tossing a coin and orient the edges towards that end. In every finite component $C$, choose a random point $v_C\in C$ uniformly, and orient the edges towards $v_C$.
\end{example}

\subsubsection{Classification of Components and Level-Sets}
\label{subsec:classification}

The following definition and theorem are borrowed from~\cite{eft}.

\begin{definition}[Components and Level-Sets]
	\label{def:component}
	Let $G$ be a graph or discrete space and $f:G\to G$. 
	The graph of $f$ is the graph $\gr{f}$ with vertex set $G$ and the directed edges $(v,f(v))$ for all $v\in G$. 
	The \defstyle{component of $v$ in $f$} and the \defstyle{level-set} of $v$ in $f$ are defined by
	\begin{align*}
		C(v):=C_f(v)&:= \text{ the connected component of $\gr{f}$ that contains $v$},\\
		L(v):= L_f(v)&:= \{w\in G: \exists n\geq 0: f^{(n)}(w)=f^{(n)}(v)\}\subseteq C(v).
	\end{align*}
\end{definition}

In each cycle-free connected component, one can define a \defstyle{height function} $h$, which is defined (uniquely up to addition by a constant) by the condition $h(f(v))=h(v)-1$. The level-sets of the height function are exactly the level-sets of $f$.


\begin{theorem}[Classification of Point-Maps, \cite{eft}]
	\label{thm:classification}
	Let $[\bs G, \bs o]$ be a unimodular graph of discrete space and $\bs F$ be a point-map. Then, almost surely, the graph of $\bs F$ is locally-finite and every connected component $C$ of $\bs F$ on $\bs G$ is of one of the following types:
	\begin{enumerate}[label=(\roman*)]
		\item $C$ is finite and has a unique cycle,
		\item $C$ is two-ended and every level-set in $C$ is finite,
		\item $C$ is one-ended and every level-set in $C$ is infinite.
	\end{enumerate}
\end{theorem}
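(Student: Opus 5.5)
The plan is to prove the trichotomy via the mass transport principle, transporting mass along the point-map and using Lemma~\ref{lem:happensAtRoot} to pass from properties that hold at the root to properties that hold everywhere. The order of steps is: local finiteness of $\gr{\bs F}$, then the structure of each component (at most one cycle, at most two ends), then the size of the level-sets in each case.

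\textbf{Local finiteness.} Send unit mass from each $v$ to $\bs F(v)$. Since $\bs F$ carries extra randomness, I would apply \eqref{eq:mtp} to the unimodular triple $[\bs G,\bs o;\bs F]$. This gives $\omid{\card{\bs F^{-1}(\bs o)}}=1$. Hence the in-degree of the root is finite a.s., and Lemma~\ref{lem:happensAtRoot} extends this to all vertices. Out-degrees equal one, so $\gr{\bs F}$ is locally finite.

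\textbf{Cycles, finite components and ends.} Each component contains at most one cycle, since every vertex has out-degree one. If a component has a cycle, every point of it has an ancestral line that eventually enters the cycle. If such a component were infinite, I would send unit mass from each point of it to the first cycle point on its ancestral line. Each vertex sends mass at most one, but some cycle point receives infinite mass, which contradicts MTP together with Lemma~\ref{lem:happensAtRoot}. So infinite components are cycle-free. In a cycle-free infinite component, the ancestral line $\anc(v)$ determines one end, and any other end must come from infinite descendant sets $D(w)$.

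\textbf{Level-sets.} For each $v$, consider $D_n(v)$, the descendants of generation $n$, and $L(v)$. There are two cases.
\begin{enumerate}[label=(\alph*)]
\item Suppose $L(v)$ is finite. Send mass $1/\card{L(v)}$ from each $w\in L(v)$ to the unique point $u\in L(v)\cap\anc$ that is ``extremal'' along a canonical rule. A simpler route is to use the height function: finite level-sets sliced by height give a two-ended structure, because the component is a union of finite layers $L_k$ ordered by $h$. Finiteness of the layers then forces exactly two ends, one given by $\anc$ and the other by the descending layers. Showing that the layers stay nonempty in the downward direction requires the mass transport argument below.
\item Suppose $L(v)$ is infinite. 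I would show the component is one-ended. If there were two disjoint infinite branches in the descendants, the set of points $w$ with $D(w)$ infinite (``bi-infinite spine'' points) would form a subtree with more than one infinite path. In that subtree, send mass from each spine point to its parent. Branching points then receive mass $\geq2$ while each sends $1$, and combining this with the MTP balance $\omid{\text{in}}=\omid{\text{out}}$ on the spine subforest gives a contradiction.
\end{enumerate}
The same spine argument handles the case of finite level-sets. There the spine is a single bi-infinite path (two-ended), and there are no branching points.

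The main obstacle is the mixed situation: excluding components in which some level-sets are finite and others infinite, and excluding two-ended components with infinite level-sets. For the first, I would transport mass from $w$ to $\bs F(w)$ restricted to level-sets. The expected count $\omid{\card{\bs F^{-1}(\bs o)}}=1$ applied to level-sets implies that, on the event where the root's level-set is finite with size $k$, the parent level has size $k$ as well in expectation. I would turn this into an exact statement by weighting with $1/\card{L}$: send $1/\card{L(w)}$ from $w$ to every point of $L(\bs F(w))$. This should force a.s. equality of consecutive level-set sizes, and hence either all level-sets are finite or all are infinite. For the second, a two-ended component has a bi-infinite spine, and each level-set meets the spine exactly once. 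I would send mass $1$ from each $w$ to the spine point of $L(w)$; infinite level-sets would give infinite received mass against finite sent mass, contradicting MTP.
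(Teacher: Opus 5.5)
You have the right overall strategy: mass transport on the unimodular triple $[\bs G,\bs o;\bs F]$ combined with Lemma~\ref{lem:happensAtRoot}, which is the no-infinite/finite-inclusion principle used in \cite{eft}. Several steps are fine as written:
\begin{itemize}
\item local finiteness from $\omid{\card{\bs F^{-1}(\bs o)}}=1$;
\item finiteness of components containing a cycle;
\item the branching argument on $S=\{w: D(w)\text{ infinite}\}$, which also uses that every $w\in S$ has a child in $S$ (true by local finiteness);
\item the transport to the spine point, showing that two-ended components have finite level-sets.
\end{itemize}

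\textbf{The gap.} Nothing in your proposal excludes an infinite cycle-free component whose level-sets are all finite but which has a lowest nonempty level-set. The prototype is a half-line $n\mapsto n+1$. Such a component has $S\cap C=\emptyset$, so it is one-ended with finite level-sets, which is exactly what item (iii) forbids. In (a) you defer ``the layers stay nonempty in the downward direction'' to ``the mass transport argument below'', but none of the later arguments supplies it:
\begin{itemize}
\item the spine argument only shows that $S\cap C$ is empty or a single bi-infinite path, and your sentence ``the same spine argument handles the case of finite level-sets'' presupposes $S\cap C\neq\emptyset$;
\item the spine-point transport needs a spine to exist;
\item the $1/\card{L}$ transport only compares consecutive levels.
\end{itemize}

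\textbf{How to close it.} Suppose $C$ is infinite with all level-sets and all $D(w)$ finite. Then for $w\in C$ the sets $\bigcup_{u\in L(w)}D_k(u)$ are empty for large $k$. So $C$ has a lowest nonempty level-set $B$, which is finite and determined by $C$. Let every vertex of $C$ send mass $1/\card{B}$ to each point of $B$. Every vertex sends mass $1$, while each point of $B$ receives $\card{C}/\card{B}=\infty$, contradicting the mass transport principle via Lemma~\ref{lem:happensAtRoot}. Hence, if the level-sets are finite, the levels below any $w$ are nonempty at every depth. Since $L(w)$ is finite, some $u\in L(w)$ has $D(u)$ infinite. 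So $S\cap C\neq\emptyset$, and your spine argument then gives exactly two ends.

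\textbf{A secondary error.} Your $1/\card{L}$ transport does \emph{not} force equality of consecutive level-set sizes. A directed bi-infinite path with a leaf attached to every other vertex (randomly shifted to be unimodular) has level-sets alternating between sizes $1$ and $2$. What the transport does give, and all you need, is this: a finite $L(w)$ cannot have an infinite parent level-set $L(\bs F(w))$. On that event $w$ sends mass $\card{L(\bs F(w))}/\card{L(w)}=\infty$, whereas every vertex receives at most $1$. Local finiteness gives the converse, since an infinite level-set cannot map into a finite one. Together these show that within a component the level-sets are all finite or all infinite.

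If you normalize instead, sending $1/(\card{L(w)}\,\card{L(\bs F(w))})$ to each point of $L(\bs F(w))$, then every vertex $u$ sends total mass $1/\card{L(u)}$. It also receives $1/\card{L(u)}$, unless $L(u)$ is a lowest level-set, in which case it receives $0$. This variant yields the missing bottom-level exclusion directly.
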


In $\gr{f}$, a cycle-free component can be regarded as a (unisex) family tree by regarding $f(v)$ as the parent of $v$ for all $v\in G$. These trees are called \textit{eternal family trees} in~\cite{eft}, since there is no node which is the ancestor of all other nodes. The {\defstyle{ancestor line}} and the set of \defstyle{descendants} of $v$ are defined by ($f$ is removed from the notation whenever it is clear what $f$ is):
\begin{align*}
	\anc(v)&:=\anc_f(v):= (f^{(n)}(v))_{n\geq 0},\\
	D_n(v)&:= D_{n,f}(v) := \{u\in G: f^{(n)}(u)=v\},\\
	D(v)&:=D_f(v):= \{u\in G: v\in\anc(u)\} = \cup_n D_n(v).
\end{align*}
So, one has $L(v)=\cup_n D_n(f^{(n)}(v))$.

Note that the distribution of $[\bs G, \bs F(\bs o); \bs F]$ (obtained by moving the root to its parent) is not equivalent to that of $[\bs G, \bs o; \bs F]$ in general (for instance, the new root $\bs F(\bs o)$ always has a child, but this does not necessarily hold for $\bs o$). However, we can prove the following weaker result: 
\begin{lemma}
	\label{lem:shiftToF(o)}
	If $\bs F$ is an equivariant point-map on a unimodular graph or discrete space $[\bs G, \bs o]$, then the distribution of $[\bs G, \bs F(\bs o); \bs F]$ is obtained by biasing that of $[\bs G, \bs o; \bs F]$ by $d_1(\bs o):=\card{\bs F^{-1}(\bs o)}$. So, the distribution of $\sigma\left([\bs G, \bs o; \anc(\bs o)]\right)$ is obtained by biasing that of $[\bs G, \bs o; \anc(\bs o)]$ by $\omidCond{d_1(\bs o)}{[\bs G, \bs o; \anc(\bs o)]}$.
\end{lemma}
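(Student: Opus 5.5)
The first claim follows from the mass transport principle \eqref{eq:mtp}, applied to the unimodular triple $[\bs G, \bs o; \bs F]$ (unimodular by \Cref{subsec:marking}). Fix a measurable event $A\subseteq\mathcal G'_*$. Define the transport
\[
g(G,x,y;f):=\identity{\{f(x)=y\}}\,\identity{A}[G,y;f],
\]
so that each point $x$ sends mass one to its image $f(x)$ if $[G,f(x);f]\in A$.

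The outgoing mass from $\bs o$ is $\identity{A}[\bs G,\bs F(\bs o);\bs F]$, which is nonzero only for $y=\bs F(\bs o)$. The incoming mass to $\bs o$ is $\sum_{x}\identity{\{\bs F(x)=\bs o\}}\identity{A}[\bs G,\bs o;\bs F]=d_1(\bs o)\identity{A}[\bs G,\bs o;\bs F]$. The MTP therefore gives
\[
\myprob{[\bs G,\bs F(\bs o);\bs F]\in A}=\omid{d_1(\bs o)\identity{A}[\bs G,\bs o;\bs F]}.
\]
Taking $A$ to be the whole space gives $\omid{d_1(\bs o)}=1$. So no normalizing constant is needed, and the identity above is exactly the biasing statement.

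There is one technical point. The MTP must be applied to a function on the space of doubly-rooted marked objects (the $\mathcal G_{**}$ analogue of $\mathcal G'_*$), and $g$ is measurable there. Since $\bs F$ carries extra randomness, I would either work directly with the unimodular marked object $[\bs G,\bs o;\bs F]$ or average over $\nu_G$. In the latter case, measurability comes from condition (2) of \Cref{def:pointmap}.

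For the second claim, note that $\sigma([\bs G,\bs o;\anc(\bs o)])=[\bs G,\bs F(\bs o);\anc(\bs F(\bs o))]$, because $\anc(\bs F(\bs o))$ is $\anc(\bs o)$ with its first element removed. This is the image of $[\bs G,\bs F(\bs o);\bs F]$ under the measurable map $\pi:[G,v;f]\mapsto[G,v;\anc_f(v)]$. Let $B$ be an event of $\mathcal G_\infty$ and apply the first part with $A=\pi^{-1}(B)$:
\[
\myprob{\sigma[\bs G,\bs o;\anc(\bs o)]\in B}=\omid{d_1(\bs o)\identity{B}[\bs G,\bs o;\anc(\bs o)]}=\omid{\omidCond{d_1(\bs o)}{[\bs G,\bs o;\anc(\bs o)]}\identity{B}[\bs G,\bs o;\anc(\bs o)]},
\]
where the last step conditions on $[\bs G,\bs o;\anc(\bs o)]$. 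This is exactly the claimed biasing by the conditional expectation.

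The main obstacle is not computational. It is checking that $\pi$ and the transport function are well defined and measurable on equivalence classes, i.e., that $\anc$ commutes with isomorphisms. This is routine: isomorphisms conjugate $f$, so they map ancestral lines to ancestral lines.
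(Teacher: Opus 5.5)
Your proof is correct and matches the paper's argument: the mass transport principle applied to the transport sending unit mass from each $x$ to $\bs F(x)$, weighted by $\identity{A}$, gives $\myprob{[\bs G,\bs F(\bs o);\bs F]\in A}=\omid{d_1(\bs o)\identity{A}}$, and the second claim follows by conditioning on $[\bs G,\bs o;\anc(\bs o)]$. Your extra remarks, namely that $\omid{d_1(\bs o)}=1$ so no normalization is needed and that the projection onto ancestral lines is measurable and isomorphism-invariant, are details the paper leaves implicit.
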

\begin{proof}
	The mass transport principle implies $\myprob{[\bs G, \bs F(\bs o);\bs F]\in A} = \omid{d_1(\bs o)\identity{A}}$ for every event $A$. This implies the first claim. The second claim is obtained by conditioning on $[\bs G, \bs o; \anc(\bs o)]$.
\end{proof}

\subsection{Ergodicity}
\label{subsec:ergodic}

A unimodular graph or discrete space $[\bs G, \bs o]$ is \defstyle{ergodic} if the \defstyle{invariant sigma-field} $\sigfield{}{I}{}$ is trivial. The latter is the set of events $A\subseteq \mathcal G_*$ which are invariant under changing the root; i.e., $\identity{A}[G,o]=\identity{A}[G,v]$ for all $(G,o)$ and $v\in G$. If $\bs F$ is a point-map, the ergodicity of $[\bs G, \bs o; \bs F]$ is defined similarly using the sigma-field of \textit{invariant point-map properties} defined in \Cref{def:invariantDrainage} below. 

\del{The\mar{\ali{Papers usually cite ergodic decomposition without such an explanation. This is adapted to a book.}} \textit{ergodic decomposition theorem} states that every unimodular probability measure $\mathbb P$ on $\mathcal G_*$ can be written as a mixture (informally, a convex combination in the integral form) of ergodic unimodular probability measures, {which are called the \textit{ergodic components} of $\mathbb P$}. For proving statements in which the assumptions and the claims are of the form `\textit{some invariant property holds a.s.}' (which is the case in this paper; see, e.g., \Cref{thm:indistinguishability}), one may use the ergodic decomposition and assume safely that $[\bs G, \bs o]$ is ergodic from the beginning.}

\subsection{Indistinguishability in General Oriented Forests and Point-Maps}
\label{subsec:indistinguishabilityGeneral}

In order to define  indistinguishability formally, we first provide the definition of invariant properties (which is classical), component-properties (as in~\cite{HuNa17indistinguishability}) and level-set-properties, in what follows.

Recall that $\mathcal G'_*$ is the space of all $[G,o;f]$, where $G$ is a graph or discrete space, $o\in G$ and $f:G\to G$.
In the following definitions, we use the notation $[G,o;f]$ for a variable element of $\mathcal G'_*$.

\begin{definition}[Component-Properties and Level-Set-Properties]
	\label{def:invariantDrainage}
	Let $A\subseteq\mathcal G'_*$ be an event. 
	\begin{enumerate}[label=(\roman*)]
		\item $A$ is called an \defstyle{invariant point-map-property} if it is invariant under changing the root; i.e., if $[G,o;f]\in A$, then $[G,v;f]\in A$ for all $v\in G$. 
		Let $\sigfield{pm}{I}{}$ denote the sigma-field of invariant point-map properties (not to be confused with the sigma-field $\mathcal I$ of invariant properties defined in \Cref{subsec:ergodic}).
		
		\item $A$ is called a \defstyle{component-property} if it is invariant under changing the root in its own component; i.e., if $[G,o;f]\in A$, then $[G,v;f]\in A$ for all $v\in C_f(o)$ (see \Cref{def:component}). Let $\sigfield{comp}{I}{}$ denote the sigma-field of  component-properties.
		
		\item $A$ is called a \defstyle{level-set-property} if it is invariant under changing the root in its own level-set; i.e., if $[G,o;f]\in A$, then $[G,v;f]\in A$ for all $v\in L_f(o)$. Let $\sigfield{level}{I}{}$ denote the sigma-field of  level-set-properties.
	\end{enumerate}
	Given a component-property (resp. level-set-property) $A$, a connected component (resp. level-set) $C$ of the graph of $f$ is called of \defstyle{type} $A$ if $\forall x\in C: [G,x;f]\in A$. Otherwise, $C$ is of type $\neg A$.
\end{definition}

We clearly have the following inclusions: $\sigfield{pm}{I}{}\subseteq \sigfield{comp}{I}{}\subseteq \sigfield{level}{I}{}$.

\begin{definition}[Indistinguishability]
	\label{def:indistinguishability}
	A point-map $\bs F$ on a random rooted graph $[\bs G, \bs o]$ has \defstyle{indistinguishable connected components (resp. level-sets)} if, for every {component-property} (resp. {level-set-property}) $A$, almost surely, either all components (resp. level-sets) are of type $A$ or all are of type $\neg A$.
	In other words, every {factor coloring} of the points of $\bs G$ with two colors (where the coloring may depend on $\bs F$ as well) satisfies the following condition almost surely: If every connected component (resp. level-set) of $\bs F$ is uni-color, then all points of $\bs G$ have the same color.
\end{definition}



Note that ergodicity is not needed in \Cref{def:indistinguishability}. However, to prove or disprove the indistinguishability of a model, by ergodic decomposition, one may safely assume that $[\bs G, \bs o; \bs F]$ is ergodic (we will assume this frequently in the proofs). Indeed, 
by ergodic decomposition and \Cref{lem:happensAtRoot}, one can rewrite indistinguishability in the following equivalent form:

\begin{lemma}[Indistinguishability in the Ergodic Case]
	\label{lem:indistinguishability}
	Let $[\bs G, \bs o]$ be a unimodular graph or discrete space and $\bs F$ be a point-map.
	\begin{enumerate}[label=(\roman*)]
		\item $\bs F$ has indistinguishable components (resp. level-sets) if and only if the same holds in almost every ergodic component of $[\bs G, \bs o; \bs F]$\del{ (see \Cref{subsec:ergodic})}.
		\item In the case where $[\bs G, \bs o; \bs F]$ is ergodic, $\bs F$ has indistinguishable components (resp. level-sets) if and only if every component-property (resp. level-set-property) $A$ is trivial. In the non-ergodic case, indistinguishability is equivalent to $\sigfield{comp}{I}{}=\sigfield{}{I}{}$ mod $\mathbb P$ (resp. $\sigfield{level}{I}{}=\sigfield{}{I}{}$ mod $\mathbb P$).
	\end{enumerate}
\end{lemma}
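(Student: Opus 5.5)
Both parts of the lemma reduce to the ergodic decomposition of the unimodular law of $[\bs G, \bs o; \bs F]$, combined with \Cref{lem:happensAtRoot}. I would prove part (ii) first, since part (i) follows from it.

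\textbf{Ergodic case of (ii).} Assume $[\bs G, \bs o; \bs F]$ is ergodic and let $A$ be a component-property (resp. level-set-property).

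Suppose first that $\bs F$ has indistinguishable components. Consider the event $E$ that all components are of type $A$. It does not depend on the choice of root, so $E\in\sigfield{pm}{I}{}$, and ergodicity gives $\myprob{E}\in\{0,1\}$. The same holds for the event $E'$ that all components are of type $\neg A$. Indistinguishability says $\myprob{E\cup E'}=1$. Since $A$ is a component-property, $E\subseteq\{[\bs G,\bs o;\bs F]\in A\}$ and $E'\subseteq\{[\bs G,\bs o;\bs F]\notin A\}$. Hence $\myprob{A}=\myprob{E}\in\{0,1\}$.

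Conversely, suppose $\myprob{A}\in\{0,1\}$, say $\myprob{A}=1$. Apply \Cref{lem:happensAtRoot} to the unimodular object $[\bs G,\bs o;\bs F]$ and the equivariant subset $\bs S=\{v:[\bs G,v;\bs F]\in A\}$. This gives $\bs S=\bs G$ a.s., so every component is of type $A$. The case $\myprob{A}=0$ is symmetric, using $\neg A$.

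\textbf{Non-ergodic case of (ii).} The inclusion $\sigfield{}{I}{}\subseteq\sigfield{comp}{I}{}$ is trivial. Here $\sigfield{}{I}{}$ is identified with $\sigfield{pm}{I}{}$, or more precisely with the invariant point-map sigma-field; I would note that the statement should be read with $\sigfield{pm}{I}{}$, since $A$ may depend on $\bs F$.

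For the reverse inclusion under indistinguishability, let $A$ be a component-property and let $E$ be the invariant event that all components are of type $A$. Then $A\Delta E\subseteq (E\cup E')^c$, which is null. So $A$ is equivalent mod $\mathbb P$ to an invariant event.

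Conversely, suppose $A$ is equivalent to an invariant event $B$. The subset $\{v:[\bs G,v;\bs F]\in A\Delta B\}$ is equivariant and the root lies in it with probability $0$, so by \Cref{lem:happensAtRoot} it is empty a.s. Hence a.s. every point has $\identity{A}=\identity{B}$. Since $\identity{B}$ is constant over all points, all points agree on $A$, which is indistinguishability. The level-set version is identical.

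\textbf{Part (i).} Write $\mathbb P=\int \mathbb P_\omega\,d\mu(\omega)$ as a mixture of ergodic unimodular laws. The event ``for every component, all points agree on $A$, and all components agree on $A$'' is a single invariant event $E_A$ for each fixed $A$. Then $\myprob{E_A}=1$ if and only if $\mathbb P_\omega(E_A)=1$ for $\mu$-a.e. $\omega$.

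The main obstacle is the order of quantifiers. Indistinguishability in each ergodic component must hold for all $A$ simultaneously, and there are uncountably many $A$, so one cannot just swap ``for all $A$'' with ``for a.e. $\omega$.'' I would handle this through the sigma-field formulation of (ii) as follows.

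For the direction from $\mathbb P$ to a.e. $\mathbb P_\omega$: if $A$ is a component-property, then by (ii) $A$ agrees $\mathbb P$-a.s. with an invariant $B$. Hence $\mathbb P_\omega(A\Delta B)=0$ for a.e. $\omega$, and $\mathbb P_\omega(B)\in\{0,1\}$.

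To get a single null set of $\omega$ working for every $A$, I would use countable generation. The sigma-field $\sigfield{comp}{I}{}$ is a sub-sigma-field of a Borel sigma-field, so I would use the measure-theoretic lemmas of \Cref{ap:measurability} to reduce to a countable family of events that determines triviality mod $\mathbb P_\omega$.

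For the converse direction: if a.e. $\mathbb P_\omega$ is indistinguishable, then $\mathbb P_\omega(E_A)=1$ for each fixed $A$ and a.e. $\omega$. Integrating gives $\myprob{E_A}=1$, with no uniformity needed.
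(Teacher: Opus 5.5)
Your argument for part (ii), and for the ``if'' half of part (i), is correct. It follows the route the paper indicates; the paper justifies the lemma only by ergodic decomposition and \Cref{lem:happensAtRoot}. Your reading of $\sigfield{}{I}{}$ as $\sigfield{pm}{I}{}$ in the non-ergodic statement is also the right one. With the base-graph field the ``only if'' fails, e.g.\ on direction-marked $\mathbb Z$ with $\bs F(x)=x+1$ or $x+2$ chosen by one global fair coin.

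The gap is in the ``only if'' half of (i), exactly at the step you flag, and your proposed fix does not work.
\begin{itemize}
\item $\sigfield{comp}{I}{}$ is the invariant $\sigma$-field of a countable Borel equivalence relation, and it is in general not countably generated. It is countably generated only modulo $\mathbb P$.
\item A countable $\mathbb P$-dense subfamily tells you nothing under the ergodic components $\mathbb P_\omega$, which are typically singular to $\mathbb P$.
\item The lemmas of \Cref{ap:measurability} all concern a single fixed measure, so they do not help.
\end{itemize}
Pure measure theory cannot close this. On $[0,1]^2$ with Lebesgue measure $\mathbb P$, let $\mathcal F_1=\sigma(x)$, and let $\mathcal F_2$ be generated by $\mathcal F_1$ and the null sets $\{c\}\times[0,\frac12]$. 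Then $\mathcal F_2=\mathcal F_1$ mod $\mathbb P$. Yet under every conditional law $\delta_c\otimes\mathrm{Leb}$ given $\mathcal F_1$, some set of $\mathcal F_2$ has mass $\frac12$.

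What you need is a countable family chosen \emph{independently of the measure}. Fix a countable algebra $\mathcal A_0$ generating the Borel sets of $\mathcal G'_*$. For each $A\in\mathcal A_0$, take one Borel function $\alpha_A$ that is a version of $\mathbb E_\nu[\identity{A}\mid\sigfield{comp}{I}{}]$ simultaneously for every unimodular law $\nu$. Such functions exist: average $\identity{A}$ over the component when it is finite, and otherwise take limits of averages over nested percolation clusters or hanging branches, as in the proof of \Cref{thm:I in T 2} via \Cref{lem:nested}. Then argue as follows.
\begin{enumerate}
\item A monotone-class argument shows that an ergodic unimodular $\nu$ has indistinguishable components iff each $\alpha_A$, $A\in\mathcal A_0$, is $\nu$-a.s.\ constant.
\item If $\mathbb P$ is indistinguishable, each $\alpha_A$ agrees $\mathbb P$-a.s.\ with some $\sigfield{pm}{I}{}$-measurable $\beta_A$. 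Hence it agrees $\mathbb P_\omega$-a.s.\ for a.e.\ $\omega$, and $\beta_A$ is $\mathbb P_\omega$-a.s.\ constant because $\mathbb P_\omega$ is ergodic.
\item Discarding countably many null sets of $\omega$ completes the proof.
\end{enumerate}
The same works for level-sets, averaging over $D_n(\bs F^{(n)}(\bs o))$.
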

In the case where $\myprob{A}=1$, all components (resp. level-sets) are of type $A$ a.s., and in the case where $\myprob{A}=0$, all are of type $\neg A$ a.s.

Note that all of the steps mentioned in \Cref{intro:method} are necessary for indistinguishability. This immediately implies the next corollary, which is of independent interest.
	
	\begin{corollary}[Ergodicity of the Ancestral Line]
		Let $\bs F$ be an equivariant point-map on a unimodular graph $[\bs G, \bs o]$ that either has a single connected component, or has indistinguishably components. Then, if $(\bs X_n)_{n\geq 0}$ denotes the ancestral chain of $\bs o$, then $[\bs G, \bs X_0; (\bs X_n)_{n\geq 0}]$ is ergodic under shift (see \Cref{rem:quasi} below). 
	\end{corollary}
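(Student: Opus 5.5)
Throughout we assume that $[\bs G, \bs o]$ is ergodic, and we read ergodicity of $[\bs G,\bs X_0;(\bs X_n)_n]$ as in \Cref{rem:quasi}: every shift-invariant event of the path has probability $0$ or $1$. This assumption cannot be dropped. If $[\bs G,\bs o]$ is not ergodic, a nontrivial $A\in\sigfield{}{I}{}$ gives a nontrivial shift-invariant path event, since it depends only on $[\bs G,\bs X_0]$ and is invariant under moving the root. The natural statement in the non-ergodic case is $\sigfield{path}{I}{}=\sigfield{}{I}{}$ mod $\mathbb P$, and we aim for that form. The proof is essentially the observation \eqref{obs:comp} together with \Cref{lem:indistinguishability}.

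\textbf{Step 1.} Let $B$ be a shift-invariant event on the path space $\mathcal G_\infty$. Define the event $\tilde B\subseteq\mathcal G'_*$ by $[G,o;f]\in\tilde B$ iff $[G,o;\anc_f(o)]\in B$. This is measurable, because $[G,o;f]\mapsto[G,o;\anc_f(o)]$ is a measurable map $\mathcal G'_*\to\mathcal G_\infty$.

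\textbf{Step 2.} We check that $\tilde B$ is a component-property. Take $v\in C_f(o)$. Then $o$ and $v$ are joined by an undirected path in $\gr{f}$. Each edge $(u,f(u))$ satisfies $\anc(f(u))=\sigma$-shift of $\anc(u)$, so, inducting along the path, $\anc(o)$ and $\anc(v)$ shift-couple; that is, $f^{(N)}(o)=f^{(N')}(v)$ for some $N,N'$. Since $B$ is shift-invariant, $\identity{B}[G,o;\anc(o)]=\identity{B}\bigl(\sigma^N[G,o;\anc(o)]\bigr)=\identity{B}\bigl(\sigma^{N'}[G,v;\anc(v)]\bigr)=\identity{B}[G,v;\anc(v)]$. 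This is exactly the first half of \Cref{lem:basicInclusion}. It needs no cycle-freeness, because shift-coupling also holds inside a finite component with a cycle.

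\textbf{Step 3.} If $\bs F$ has indistinguishable components, \Cref{lem:indistinguishability} gives $\tilde B\in\sigfield{comp}{I}{}=\sigfield{}{I}{}$ mod $\mathbb P$. So $\identity{B}[\bs G,\bs o;\anc(\bs o)]$ agrees a.s. with a function of $[\bs G,\bs o]$ that is invariant under re-rooting, and under ergodicity of $[\bs G,\bs o]$ it is a.s. constant.

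If instead $\bs F$ has a single connected component a.s., every component-property is an invariant point-map property, so $\tilde B\in\sigfield{pm}{I}{}$. It remains to pass from $\sigfield{pm}{I}{}$ down to $\sigfield{}{I}{}$, which needs triviality of $\sigfield{pm}{I}{}$, i.e. ergodicity of $[\bs G,\bs o;\bs F]$.

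\textbf{Expected obstacle.} The main delicacy is this last point: ergodicity of $[\bs G,\bs o]$ does not by itself give ergodicity of $[\bs G,\bs o;\bs F]$ when $\bs F$ uses extra randomness. I would handle it by interpreting the corollary under the standing convention of \Cref{subsec:indistinguishabilityGeneral}, namely that by ergodic decomposition one may assume $[\bs G,\bs o;\bs F]$ ergodic. Then $\sigfield{pm}{I}{}$ is trivial, and both cases conclude that $\myprob{\tilde B}\in\{0,1\}$.

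Since $\myprob{\tilde B}=\myprob{[\bs G,\bs o;\anc(\bs o)]\in B}$, and $(\bs X_n)_n=\anc(\bs o)$, every shift-invariant event of $[\bs G,\bs X_0;(\bs X_n)_n]$ has probability $0$ or $1$. This is the asserted ergodicity in the sense of \Cref{rem:quasi}, which requires no stationarity.
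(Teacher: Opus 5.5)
Your proof is correct and matches the paper's argument: the paper gets the corollary in one line from the necessity of \Cref{step3comp}, i.e.\ from Observation~\eqref{obs:comp} (\Cref{lem:basicInclusion}) combined with \Cref{lem:indistinguishability}, which is exactly your Steps~1--3. One adjustment is needed: the ergodicity of $[\bs G,\bs o;\bs F]$, which you invoke only in the single-component case, is needed in the multi-component case too, and your non-ergodic target $\sigfield{path}{I}{}=\sigfield{}{I}{}$ mod $\mathbb P$ should read $\sigfield{path}{I}{}\subseteq\sigfield{pm}{I}{}$ mod $\mathbb P$. To see this, on $\mathbb Z^2$ with direction marks let a single fair coin decide whether $\bs F(x)=x+e_1$ for all $x$ or $\bs F(x)=x-e_1$ for all $x$; then $[\bs G,\bs o]$ is ergodic and the infinitely many components are indistinguishable, yet the shift-invariant event that the steps of $\anc(\bs o)$ are eventually $+e_1$ has probability $1/2$. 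So in Step~3 you must use the ergodic-case clause of \Cref{lem:indistinguishability} rather than $\sigfield{comp}{I}{}=\sigfield{}{I}{}$, which tacitly assumes $\sigfield{pm}{I}{}=\sigfield{}{I}{}$ mod $\mathbb P$ (true for CMTs, not for general point-maps); your final paragraph already effectively does this.
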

	In particular, the loop-erased random walk on a unimodular transient random graph with $\omid{d(\bs o)}<\infty$ is ergodic (see also \Cref{prob:lerw} for its tail-triviality). We will prove this in \Cref{lem:usf-trivial I} and use it in the proof of \Cref{thm:usf-indistinguishability}.
	
	\begin{remark}
		\label{rem:quasi}
		The shift $\sigma$ does not necessarily preserve the distribution of $[\bs G, \bs o; \anc(\bs o)]$. However, one may still define ergodicity since the distribution of $\sigma[\bs G, \bs o; \anc(\bs o)]$ is absolutely continuous with respect to that of $[\bs G, \bs o; \anc(\bs o)]$ (see \Cref{lem:shiftToF(o)}). If the latter is also absolutely continuous with respect to the former, then this notion is a special case of ergodicity for measure-class-preserving dynamical systems.
	\end{remark}
	
As alluded to in the introduction, when the underlying graph is ergodic, indistinguishability can be rephrased as ergodicity in various manners. In~\cite{LySc99}, it is shown that indistinguishability is equivalent to the ergodicity of the lazy simple random walk on $C(\bs o)$.\del{As a result, the probability of every event can be approximated by Birkhoff averages on $C(\bs o)$, although $C(\bs o)$ may be very thin and may occupy an asymptotically zero fraction of the base graph (see \Cref{subsec:bijection}).} In more recent language, if one regards $C(\bs o)$ as a unimodular random graph or discrete space (while keeping the information of the rest of the forest as a decoration), indistinguishability is equivalent to the ergodicity of $C(\bs o)$, see Proposition~5 of~\cite{GaLy09}.\del{(the term \textit{discrete space} is used here because the notion of indistinguishability still makes sense even if $C(\bs o)$ is not equipped with a graph structure and just comes from a partitioning of the vertices; e.g., the partition by level-sets of a CMT)}
Also, indistinguishability can be rephrased as the ergodicity of a certain \textit{measured equivalence relation}: We say that two configurations are equivalent if the second one can be obtained from the first one by changing the root to a new root in the same component. These interpretations are valid for the indistinguishability of \textit{level-sets} as well, as discussed in \Cref{intro:models}.

\subsection{Applications of the Indistinguishability of Level-Sets}
\label{subsec:bijection}

In this subsection, we prove \Cref{lem:average} and an extended version of \Cref{prop:bijection0}.
Let $\bs F$ be an equivariant point-map on a unimodular graph or discrete space $[\bs G, \bs o]$. 
As mentioned in \Cref{intro:indistinguishability}, when the base graph or discrete space is ergodic, the indistinguishability of components (resp. level-sets) can be interpreted as the ergodicity of $C(\bs o)$ (resp. $L(\bs o)$) in some sense. This is a direct consequence of \Cref{lem:indistinguishability}. As a result, the probability of every event $A$ can be approximated by ergodic averages on $C(\bs o)$ (resp. $L(\bs o)$). For instance, if one performs a random walk $(\bs X_n)_{n\geq 0}$ on $C(\bs o)$, one has $\myprob{A} = \lim_n \frac 1n \sum_{i=1}^n \identity{A}([\bs G, \bs X_i; \bs F])$ (see stationarity and ergodicity of random walks in \Cref{lem:stationary,thm:pathIntro}). For level-sets, \Cref{lem:average} states a more intrinsic result. To prove this result, we need the following lemma, where $\ave\{u(x):x\in V\}:=\left(\sum_{x\in V}u(x)\right)/\card{V}$. 
\begin{lemma}[Nested Partitions, \cite{Kh23unimodular}]
	\label{lem:nested}
	Let $[\bs G, \bs o]$ be a unimodular graph or discrete space and let $(\Pi_n)_n$ be a factor nested sequence of finite partitions, where \emph{finite} and \emph{nested} mean that $\card{\Pi_n(u)}<\infty, \Pi_n(u)\subseteq \Pi_{n+1}(u), \forall n\in\mathbb N, \forall u\in \bs G$ a.s. Let $\Pi(u):=\cup_n \Pi_n(u)$. Then, for an arbitrary function $g\in L_1(\mathcal G_*)$, the limit of the averages over $\Pi_n(\bs o)$ defined by
	\[
	\tilde g:=\lim_n \ave\left\{g[\bs G, x]: x\in \Pi_n(\bs o) \right\}
	\]
	exists and is equal to $\omidCond{g}{\sigfield{\Pi}{I}{}}$ a.s. The convergence also holds in $L_1(\mathcal G_*)$.
\end{lemma}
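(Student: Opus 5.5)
The plan is to identify the averages over $\Pi_n(\bs o)$ as a \emph{reverse martingale}. For each $n$, let $\sigfield{\Pi_n}{I}{}$ be the sigma-field of events $B\subseteq\mathcal G_*$ that are invariant under moving the root inside its own cell of $\Pi_n$. This means that $\identity{B}[G,o]=\identity{B}[G,x]$ for all $x\in\Pi_n(o)$. Define $\sigfield{\Pi}{I}{}$ in the same way using $\Pi(o)$. Set $A_n:=\ave\{g[\bs G,x]:x\in\Pi_n(\bs o)\}$. I will show that $A_n=\omidCond{g}{\sigfield{\Pi_n}{I}{}}$ a.s. for every $n$. I will also show that these sigma-fields decrease to $\sigfield{\Pi}{I}{}$. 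Lévy's downward (reverse martingale) convergence theorem then gives $A_n\to\omidCond{g}{\cap_n\sigfield{\Pi_n}{I}{}}$ both a.s. and in $L_1$, which is the claim.

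First, measurability. Since the partitions are factor and $g$ is measurable, $A_n$ is a measurable function on $\mathcal G_*$. It is $\sigfield{\Pi_n}{I}{}$-measurable because the cells form a partition: for $x\in\Pi_n(o)$ we have $\Pi_n(x)=\Pi_n(o)$, so $A_n$ takes the same value at every root in the cell.

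Second, the conditional-expectation identity, which follows from the mass transport principle~\eqref{eq:mtp}. Fix $B\in\sigfield{\Pi_n}{I}{}$, and assume first that $g\geq 0$. Use the transport
\[
h(G,o,x):=\frac{g[G,o]\,\identity{B}[G,o]}{\card{\Pi_n(o)}}\,\identity{\{x\in\Pi_n(o)\}}.
\]
The mass sent out from $\bs o$ is $g(\bs o)\identity{B}(\bs o)$. The mass received at $\bs o$ is
\[
\sum_{x\in\Pi_n(\bs o)}\frac{g(x)\identity{B}(x)}{\card{\Pi_n(x)}}=\identity{B}(\bs o)\,A_n.
\]
Here I use $\Pi_n(x)=\Pi_n(\bs o)$ and the invariance of $B$ within the cell. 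The MTP therefore gives $\omid{g\identity{B}}=\omid{A_n\identity{B}}$. Taking $B$ to be the whole space with $\card{g}$ in place of $g$ shows that $A_n$ is integrable. For general $g\in L_1$, I split $g=g^+-g^-$. Together these give $A_n=\omidCond{g}{\sigfield{\Pi_n}{I}{}}$.

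Third, the structure of the filtration. Because $\Pi_n(u)\subseteq\Pi_{n+1}(u)$, an event invariant within $\Pi_{n+1}$-cells is invariant within $\Pi_n$-cells, so $\sigfield{\Pi_{n+1}}{I}{}\subseteq\sigfield{\Pi_n}{I}{}$. Moreover $\cap_n\sigfield{\Pi_n}{I}{}=\sigfield{\Pi}{I}{}$ exactly, not just mod $\mathbb P$. One inclusion is clear. For the other, any $x\in\Pi(o)$ lies in some $\Pi_n(o)$, so invariance under every $\Pi_n$ implies invariance under $\Pi$.

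With these three facts, the reverse martingale convergence theorem completes the proof. The only delicate point I expect is the MTP computation in the second step. There one must use both that the cells form a genuine partition, so the incoming mass is a clean average, and the positivity or integrability bookkeeping needed to apply~\eqref{eq:mtp}. The remaining steps are routine.
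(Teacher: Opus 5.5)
Your proposal is correct and matches the paper's proof: you identify the cell averages as $\mathbb E[g\mid\mathcal I^{\Pi_n}]$ via the mass transport principle, note that nestedness makes these sigma-fields decreasing, and conclude with the backward martingale convergence theorem. You also make explicit two points the paper leaves implicit, namely the identification $\bigcap_n \mathcal I^{\Pi_n}=\mathcal I^{\Pi}$ and the integrability bookkeeping via $g=g^+-g^-$.
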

The claim is a special case of Lemma~8.1 of~\cite{Kh23unimodular}, which is included for self-containedness.

\begin{proof}
	Let $g_n$ be the average of $g[\bs G, x]$ over $x\in \Pi_n(\bs o)$. We claim that $g_n=\omidCond{g}{\sigfield{\Pi_n}{I}{}}$ a.s. Assuming this, the nested property implies that $(g_n)_n$ is a backward martingale. Then, the claim is implied by the backward martingale convergence theorem. So, it remains to prove that $g_n=\omidCond{g}{\sigfield{\Pi_n}{I}{}}$ a.s.	
	It is clear that $g_n$ is $\sigfield{\Pi_n}{I}{}$-measurable. Also, for every event $B\in\sigfield{\Pi_n}{I}{}$, one has
	\begin{align*}
		\omid{g_n\identity{B}} &= \omid{\frac 1{\card{\Pi_n(\bs o)}}\sum_{x\in \Pi_n(\bs o)} g(x)\identity{B}(\bs o)}\\
		&= \omid{\sum_{x\in \bs G} \frac 1{\card{\Pi_n(\bs o)}} g(x)\identity{B}(x)\identity{\{x\in \Pi_n(\bs o)\}}}\\
		&= \omid{\sum_{x\in \bs G} \frac 1{\card{\Pi_n(x)}} g(\bs o)\identity{B}(\bs o)\identity{\{\bs o\in \Pi_n(x)\}}}\\
		&= \omid{g(\bs o)\identity{B}(\bs o)},
	\end{align*}
	where we have used the facts $\identity{B}(x)=\identity{B}(\bs o)$ and $\Pi_n(x)=\Pi_n(\bs o)$ for all $x\in\Pi_n(\bs o)$ and the mass transport principle. Now, the claim is implied by the definition of conditional probability.
\end{proof}

\begin{proof}[Proof of \Cref{lem:average}]
	By letting $\Pi_n$ be the partition $\{D_n(x): x\in \bs G\}$,
	the claim is directly implied by \Cref{lem:indistinguishability,lem:nested}.
\end{proof}

To extend and prove \Cref{prop:bijection0}, we need to define
an \defstyle{equivariant bijection between consecutive level-sets of $\bs F$}. By such a bijection, we mean another equivariant point-map $\bs F'$ (which may depend on $\bs F$ as well) such that $\bs F'$ is injective a.s. (as a mapping on $\bs G$) and that $\bs F'(\bs o)\in L_{\bs F}(\bs F(\bs o))$ a.s. By the mass transport principle, this implies that $\bs F'$ is surjective as well, and $\forall x\in \bs G: \bs F'(x)\in L_{\bs F}(\bs F(x))$ a.s. As a result, $\bs F'$ defines a bijection between consecutive level-sets of $\bs F$ indeed.

\begin{proposition} [Bijection Between Level-Sets (Extended Version)]
	\label{prop:bijection}
	Let $\bs F$ be an equivariant point-map on a unimodular graph or discrete space $[\bs G, \bs o]$. 
	Then, there exists an equivariant bijection between consecutive level-sets of $\bs F$ if and only if $\omidCond{d_1(\bs o)}{\sigfield{level}{I}{}}=1$ a.s., where $d_1(\bs o):=\card{\bs F^{-1}(\bs o)}$. In addition, this holds if $\bs F$ has indistinguishable level-sets.
\end{proposition}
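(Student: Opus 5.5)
Necessity comes from the mass transport principle. Sufficiency needs an equivariant matching between consecutive level-sets, for which I would use a stable-marriage/mass-transport construction. The last claim is a quick corollary of the first.

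\textbf{Necessity.} Suppose $\bs F'$ is an equivariant bijection between consecutive level-sets. Fix a level-set-property $B$ and transport mass $1$ from $x$ to $\bs F'(x)$ when $x\in B$. Since $\bs F'$ is a bijection, every point receives exactly one unit, so the MTP gives nothing new. Instead, compare with the transport $x\mapsto \bs F(x)$, which sends mass $\identity{B}(x)$ from $x$ to $\bs F(x)$. The level-set of $\bs F(x)$ is $L(\bs F(x))$, which is the same for $x$ and for $\bs F'(x)$'s preimage. So consider the level-set-property $B'$ defined by $[G,v;f]\in B'$ iff $[G,w;f]\in B$ for $w$ in the preceding level-set. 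This is well defined in the cycle-free case, since all of $f^{-1}(L(v))$ lies in one level-set. Applying the MTP to both transports gives $\myprob{B}=\omid{d_1(\bs o)\identity{B'}}$ and $\myprob{B}=\myprob{B'}$. Since $B\mapsto B'$ ranges over all level-set-properties, up to handling finite components, where the level-set coincides with the component, this yields $\omid{(d_1-1)\identity{B'}}=0$ for all $B'\in\sigfield{level}{I}{}$. That is exactly $\omidCond{d_1(\bs o)}{\sigfield{level}{I}{}}=1$.

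\textbf{Sufficiency.} Assume $\omidCond{d_1}{\sigfield{level}{I}{}}=1$. I would construct $\bs F'$ in stages. If the level-sets are finite (the two-ended case, \Cref{thm:classification}), then $\sum_{x\in L}d_1(x)=\card{L}$ for consecutive level-sets, so the MTP together with \Cref{lem:happensAtRoot} gives equal cardinalities a.s. One can then match them by a factor rule: choose an equivariant ordering using extra i.i.d. marks, and pair the points in order. The finite-cycle components are handled similarly.

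In the one-ended case, the level-sets are infinite and the task is an equivariant perfect matching between $L(v)$ and $L(\bs F(v))$. I would use the nested partitions $\Pi_n=\{D_n(x)\}$. Within each $D_n(x)$, the ``children'' in $L(v)\cap D_n(x)$ and the ``parents'' in $L(\bs F(v))\cap D_n(x)$ are finite sets. Run a greedy or Gale--Shapley matching using i.i.d. marks and extend it consistently as $n$ grows, keeping earlier matches fixed, as in equivariant matching constructions for point processes. \Cref{lem:nested} gives that the ratio $\card{\text{children}}/\card{\text{parents}}$ over $D_n$ converges to $\omidCond{d_1}{\sigfield{level}{I}{}}=1$. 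Hence the proportion of unmatched points tends to zero, and a mass-transport argument shows that every point is eventually matched a.s. This limiting step is the main obstacle: density $1$ only controls unmatched fractions, not unmatched individual points. I would handle it with a stable-matching argument, where an unmatched point in each class would yield, via the MTP, positive density of unmatched points, a contradiction. The injectivity and surjectivity remarks before the proposition then make $\bs F'$ a bijection.

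\textbf{Final claim.} If the level-sets are indistinguishable, \Cref{lem:indistinguishability} gives $\sigfield{level}{I}{}=\sigfield{}{I}{}$ mod $\mathbb P$. So the conditional expectation becomes $\omidCond{d_1(\bs o)}{\sigfield{}{I}{}}$, which equals $1$ by the MTP: send mass $1$ from $x$ to $\bs F(x)$ and condition on invariant events.
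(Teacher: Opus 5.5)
Your proposal is correct. Necessity and the final claim match the paper in substance. The paper computes directly, for $A\in\sigfield{level}{I}{}$, $\omid{d_1(\bs o)\identity{A}}=\myprob{[\bs G,\bs F(\bs o);\bs F]\in A}=\myprob{[\bs G,\bs F'(\bs o);\bs F]\in A}=\myprob{A}$; this is your pair of transports without the auxiliary property $B'$. Two small corrections to your write-up:
\begin{itemize}
\item Your opening remark that the $\bs F'$-transport ``gives nothing new'' is contradicted by your later, correct use of it.
\item The cycle-free caveat is unnecessary, since $\bs F^{-1}(L(v))$ always lies in one level-set. Moreover, finite components do not have a single level-set unless the cycle is a fixed point.
\end{itemize}

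For sufficiency you take a genuinely different route. The paper orders each $\bs F^{-1}(x)$ uniformly at random, which induces a linear order on every level-set. It then takes $\bs F'$ to be the right-stable balancing allocation between the counting measure on $L(\bs F(x))$ and $\card{\bs F^{-1}(\cdot)}$, and imports its properties from Theorem~5.6 of~\cite{LaMoTh14}.

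You instead match hierarchically in nested blocks, children $D_n(w)$ against parents $D_{n-1}(w)$. Your ``$L(v)\cap D_n(x)$'' has to be read this way, since $D_n(x)$ lies in a single level-set. This works, and the step you flag as the main obstacle needs no stable-matching argument. If leftovers are matched maximally at each stage, exactly $(\card{D_n(w)}-\card{D_{n-1}(w)})^+$ children of the block stay unmatched. Let $U_n$ be the equivariant set of points unmatched at stage $n$ and $\Pi_n(\bs o)=D_n(\bs F^{(n)}(\bs o))$. The proof of \Cref{lem:nested} gives $\myprob{\bs o\in U_n}=\omid{\card{U_n\cap\Pi_n(\bs o)}/\card{\Pi_n(\bs o)}}$. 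The ratio convergence and bounded convergence then give $\myprob{\bs o\in U_n}\to 0$. Since $U_n$ is decreasing, $\bigcap_n U_n=\emptyset$ a.s.\ by \Cref{lem:happensAtRoot}. Injectivity plus the mass transport principle then yields surjectivity.

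Your route needs only \Cref{lem:nested} and produces genealogically local matches. The paper's route yields a specific stable allocation along a one-dimensional order, at the cost of relying on an external result.
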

Here, by \Cref{lem:nested}, if $\bs S$ denote the previous level-set of $L_{\bs F}(\bs o)$, then $\omidCond{d_1(\bs o)}{\sigfield{level}{I}{}}$ can be interpreted as the \defstyle{relative intensity} of $\bs S$ with respect to $L_{\bs F}(\bs o)$; i.e., the average number of points of $\bs S$ per points of $L_{\bs F}(\bs o)$. For instance, for the canopy tree, this quantity is either 0 or 2 depending on whether $\bs o$ is in the \textit{first} level-set or not. 

\begin{proof}[Proof of \Cref{prop:bijection}]
	First, assume that $\bs F'$ exists. To deduce the equality $\omidCond{d_1(\bs o)}{\sigfield{level}{I}{}}=1$, we should prove that $\omid{d_1(\bs o)\identity{A}}=\myprob{A}$ for every $A\in\sigfield{level}{I}{}$. One has
	\begin{align*}
		\myprob{d_1(\bs o)\identity{A}}&= \omid{\sum_{x\in \bs G} \identity{A} \identity{\{\bs F(x)=\bs o\}}}\\
		&= \myprob{[\bs G, \bs F(\bs o); \bs F]\in A}\\
		&= \myprob{[\bs G, \bs F'(\bs o); \bs F]\in A}\\
		&= \myprob{A}.
	\end{align*}
	In this equation, the second equality holds by the mass transport principle, the third equality holds since $A\in\sigfield{level}{I}{}$, and the last equality holds because $\bs F'$ is bijective, which implies that moving the root to $\bs F'(\bs o)$ preserves the distribution (\Cref{lem:shiftToF(o)}). So, we proved that $\omidCond{d_1(\bs o)}{\sigfield{level}{I}{}}=1$.
	
	Conversely, assume $\omidCond{d_1(\bs o)}{\sigfield{level}{I}{}}=1$. For every $x\in \bs G$, choose a total order on $\bs F^{-1}(x)$, randomly and uniformly (if $[\bs G, \bs o; \bs F]$ has no nontrivial automorphisms a.s., one can choose this order without extra randomness as well). By the forest structure of $\bs F$, this induces a unique total order on every level-set of $\bs F$ such that $x\leq y$ if and only if $\bs F(x)\leq \bs F(y)$ (except when $\bs F(x)=\bs F(y)$). For every $i\in\mathbb Z$, let $h_i(x)$ be the $i$-th point after $x$ in this total order. Let $\tau(x)$ be the smallest $i> 0$ that satisfies $\bs F(h_i(x))\geq h_i(\bs F(x))$. Finally, let $\bs F'(x):=h_{\tau(x)}(\bs F(x))$. This is similar to the right-stable balancing allocation studied in~\cite{LaMoTh14} (between the counting measure on $L_{\bs F}(\bs F(x))$ and the measure $\xi(x):=\card{\bs F^{-1}(x)}$). So, by Theorem~5.6 of~\cite{LaMoTh14} and leveraging unimodularity of the level-set containing the origin, one can show that $\bs F'$ satisfies the claim.
	
	Finally, assuming that the level-sets of $\bs F$ are indistinguishable, we should prove that $\omidCond{d_1(\bs o)}{\sigfield{level}{I}{}}=1$ a.s. By ergodic decomposition, we may assume that $[\bs G, \bs o; \bs F]$ is ergodic. So, \Cref{lem:indistinguishability} implies that $\sigfield{level}{I}{}$ is trivial. Since $\omidCond{d_1(\bs o)}{\sigfield{level}{I}{}}$ is $\sigfield{level}{I}{}$-measurable, it is essentially constant. Since $\omid{d_1(\bs o)}=1$, the constant is equal to 1, and the claim is proved.
\end{proof}

\section{Coalescing Markov Trajectories (CMT) on Unimodular Graphs}
\label{sec:cmt}

In this section, we define the notion of coalescing Markov trajectories (in \Cref{subsec:cmt}), alluded to in \Cref{intro:models}. We define our main models of interest of CMTs (\Cref{model1,model2}) and state the indistinguishability result for them (\Cref{thm:indistinguishability}). The proof of the theorem will be given in \Cref{sec:warmUp,sec:Markov,sec:drainage,sec:cmt-proof}. Before that, \Cref{subsec:examples} discusses how CMTs generalize various models in the literature and studies their indistinguishability and qualitative properties.


\subsection{Definition and Main Models of CMTs}
\label{subsec:cmt}

Let $K:\mathcal G_{**}\to\mathbb R^{\geq 0}$ be a measurable function which satisfies $\sum_v K(G,u,v)=1$ for all $(G,u)$. For every graph or discrete space $G$ and every $u\in G$, one obtains a probability measure $K(G,u,\cdot)$ on $G$, which will be called \textit{the jump distribution} in what follows. So, $K(G,\cdot,\cdot)$ is a Markov kernel on $G$ for every $G$. Note that $K$ is invariant under the isomorphisms of doubly-rooted graphs (since it is well-defined on $\mathcal G_{**}$). We call $K$ a \defstyle{factor Markov kernel}. 

\begin{definition}
	\label{def:cmt}
	Let $K$ be a factor Markov kernel, as defined in the previous paragraph. Define the corresponding \defstyle{Coalescing Markov Trajectories (CMT)} model $\bs F$ as follows: For every graph or discrete space $G$, define a random function $\bs F:=\bs F(G,\cdot): G\to G$ such that $\bs F(u):=\bs F(G,u)$ has distribution $K(G,u,\cdot)$, for every $u\in G$, and $(\bs F(u))_{u\in G}$ are independent.
\end{definition}

The terminology of CMT is chosen because, for every $x\in G$, the path $(\bs F^{(n)}(x))_{n\geq 0}$ until the first self-intersection (if any) is the trajectory of a Markov chain on $G$. Also, two such trajectories on $G$ coalesce and continue together as soon as they intersect ({not necessarily after the same number of steps}). This is not to be confused with coalescing Markov \textit{chains}, which are defined on space-time (see \Cref{subsec:crw}).

It can be seen that a CMT $\bs F$ satisfies the measurability condition in \Cref{def:pointmap}, and is hence a point-map. Conversely, a point map is a CMT if and only if, for every graph or discrete space $G$, the random points $(\bs F(G,u))_{u\in G}$  of $G$ are independent.
We use the notation
\begin{equation}
	K^n(G,u,v):=\myprob{\bs F^{(n)}(u)=v}
\end{equation}
for the $n$-fold iterate of $K$. 

\del{\begin{remark}
	Note that the nontrivial automorphisms of graphs impose restrictions on CMTs. For instance, if $\rho:G\to G$ is an automorphism and $\rho(u)=u$, then one should have $\myprob{\bs F(G,u)=v} = \myprob{\bs F(G,u)=\rho(v)}, \forall v$ (since $K$ is well defined on $\mathcal G_{**}$). To get rid of the automorphisms, one might add some equivariant marking to $G$ from the beginning. For instance, i.i.d. marks in $[0,1]$, or the marking described in \Cref{subsec:pointProcess} in the case of point processes.
\end{remark}
}

%
%
%



In this paper, we consider CMT‌ models together with a unimodular graph or discrete space $[\bs G, \bs o]$.
Recall from \Cref{subsec:point-map} that, if $[\bs G, \bs o]$ is unimodular, then so is $[\bs G, \bs o; \bs F]$. 
In addition, if $[\bs G, \bs o]$ is ergodic, then $[\bs G, \bs o; \bs F]$ is also ergodic (we leave it to the reader to build $\bs F$ using i.i.d. marks on $\bs G\times \bs G$; i.e., $\bs F$ is a factor of i.i.d.). A formal proof of this fact will also be included in \Cref{thm:tailDrainage}.

At first reading, the reader might focus on the case where $\bs G$ is a deterministic Cayley graph or a transitive graph with a unimodular automorphism group. In particular,
throughout this subsection, we use the following simple-to-state but important example to illustrate the definitions (see e.g., \Cref{ex:nguyen_variant} for an explicit special case). This example generalizes several river models (also called \textit{drainage networks}) and renewal forests in the literature; see \Cref{subsec:examples} for discussion and further examples. 

\begin{example}[Lattice CMTs]
	\label{ex:lattice}
	Let $\Phi$ be a lattice in $\mathbb R^d$ (i.e., a discrete subgroup of $\mathbb R^d$) and let $\mu$ be an arbitrary probability distribution on $\Phi$ (called \textit{the jump distribution}). For every $x\in\Phi$, let $\bs F(x):=x+U(x)$, where $U(x)\in \Phi$ is a random point with distribution $\mu$ chosen independently for all $x$. In this case, a CMT is obtained.
\end{example}

To prove indistinguishability of components, we will require the following assumptions, which constitute one of the main models studied in this paper. 

\begin{model}
	\label{model1}
	Let $\bs F$ be a CMT on a unimodular graph or discrete space $[\bs G, \bs o]$, as in \Cref{def:cmt}, and let $K$ be the underlying factor Markov kernel. For every graph or discrete space $G$, let $\mathbb P_{G}$ denote the distribution of $(\bs F(G,x))_{x\in G}$, which is a probability measure on $G^G$. Let also $b:\mathcal G_*\to\mathbb R^{\geq 0}$ be a given measurable function such that $\omid{b(\bs o)}<\infty$, where $b(\bs o):=b[\bs G, \bs o]$ by \Cref{conv:forgettingG}. In this model, it is also assumed that almost every sample $(G,o)$ of $[\bs G, \bs o]$ satisfies the following conditions (we call $(G,o)$ a \textit{typical sample} in this case):
	\begin{enumerate}[label=(\roman*)]
		\item \label{model1-cycle} (Cycle-free) There is no cycle in the graph of $\bs F$ on $G$, $\mathbb P_G$-a.s.
		\item \label{model1-collision} ({Weak Irreducibility}) The graph with vertex set $G$ and edge set $\{(x,y): K(x,y)+K(y,x)>0 \}$ is connected.
		\item \label{model1-balance} ({Balance}) For the function $b$ assumed above, one has {$b(G,\cdot)>0$} and $\sum_{x\in G} b(x)K(x,y)=b(y), \forall y\in G$. In other words, $b(G,\cdot)$ is an invariant measure for the kernel $K(G,\cdot,\cdot)$.
	\end{enumerate}
\end{model}

For instance, in \Cref{ex:lattice}, the balance condition holds for $b\equiv 1$. In the examples in this paper, we usually have $b\equiv 1$, except in the coalescing simple random walks and Markov chains (\Cref{ex:crw,ex:cmc}). The lattice CMT is cycle-free if and only if $\mu$ is supported in an {open} side of some linear hyperplane. Also, one can show that weak irreducibility holds if and only if the lattice generated by the support of $\mu$ is equal to $\Phi$. See \Cref{subsec:lattice}.

It should be noted that \textit{detailed balance} cannot hold in \Cref{model1}, since the cycle-free condition and $K(x,y)>0$ imply $K(y,x)=0$. Also, in general, $b(G,\cdot)$ is a measure with infinite mass. Indeed, the Markov kernel $K(G,\cdot,\cdot)$ is transient because of the cycle-free condition.

For indistinguishability of level-sets, we will need the following additional assumptions.

\begin{model}
	\label{model2}
	Assume $[\bs G, \bs o; \bs F], K$ and $b$ satisfy the assumptions of \Cref{model1}. Assume in addition that almost every sample $(G,o)$ of $[\bs G, \bs o]$ satisfies the following conditions: 
	\begin{enumerate}[label=(\roman*)]
		\setcounter{enumi}{3}
		\item\label{model2-aper} ({Weak aperiodicity})
		The set  
		\begin{equation}
			\label{eq:aperiodic}
			\{d\in\mathbb N:\; \exists n\in\mathbb N, \exists x,y\in G: K^n(x,y)>0, K^{n+d}(x,y)>0\}
		\end{equation}
		is nonempty and its elements have no common divisor larger than 1.
		\item (One-Endedness) All components of $\bs F$ on $G$ are one-ended and have infinite level-sets, $\mathbb P_G$-a.s. (see also \Cref{thm:classification}).
	\end{enumerate}
\end{model}

One-endedness and weak aperiodicity of \Cref{ex:lattice} will be studied in \Cref{subsec:lattice}.
Note that the conditions of \Cref{model1,model2} are meaningful for a deterministic base graph $G$. 

The following is the main indistinguishability result for CMTs.

\begin{theorem}[Indistinguishability in CMTs on Unimodular Graphs]
	\label{thm:indistinguishability}
	One has
	\begin{enumerate}[label=(\roman*)]
		\item \label{thm:indistinguishability-comp} In \Cref{model1}, $\bs F$ has indistinguishable connected components. 
		\item \label{thm:indistinguishability-foil} In \Cref{model2}, $\bs F$ has indistinguishable level-sets.
	\end{enumerate}
\end{theorem}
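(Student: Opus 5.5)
We follow the three-step scheme of \Cref{thm:general}, and use ergodic decomposition (\Cref{lem:indistinguishability}) to assume throughout that $[\bs G, \bs o]$ is ergodic. Then $[\bs G, \bs o; \bs F]$ is also ergodic, because $\bs F$ is a factor of i.i.d. marks. It therefore suffices to show that every component-property (resp. level-set-property) is trivial. For \ref{thm:indistinguishability-comp}, \Cref{prop:finiteComps} is not needed, since the cycle-free condition is assumed. By \Cref{thm:I in T 2} (Step~\ref{step1comp}), every component-property $A$ is equivalent mod $\mathbb P$ to a tail branch-property $A_1$. For \ref{thm:indistinguishability-foil}, one-endedness is assumed, so the same theorem (Step~\ref{step1foil}) replaces a level-set-property by a tail level-set-property.

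The core is Step~\ref{step2comp} (resp. \ref{step2foil}): $A_1$ is equivalent to a shift-invariant (resp. tail) event of $[\bs G, \bs o; \anc(\bs o)]$. We would proceed in two stages, conditioning on $(G,o)$ and on the path $\anc(o)=(x_n)_n$.

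\textbf{Conditional tail triviality (\ref{step2comp-1-second}).} Given $\anc(o)$, the remaining values $\bs F(v)$ for $v\notin\anc(o)$ are still independent. Each has law $K(v,\cdot)$ conditioned on the event that the trajectory of $v$ does not close a cycle; by the cycle-free assumption this event has full probability, so the conditional law is essentially a product measure. By Observation~\eqref{obs:knowingAnc}, $\identity{A_1}$ does not change under finite modifications of $\bs F$ off $\anc(o)$. Kolmogorov's 0-1 law applied to this conditional product measure then makes $\identity{A_1}$ a.s. equal to a function of $[G,o;\anc(o)]$, giving an event $A'$. The delicate point is that modifying $\bs F$ at a vertex $v\notin\anc(o)$ only changes the ancestral line of $v$, so the modification does not affect $\anc(o)$; this keeps the argument clean.

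\textbf{Choosing $A'$ shift-invariant or tail (\ref{step2comp-2}).} For the component case, take $x_1=\bs F(o)$. The event $A_1$ is a component-property, so $\identity{A_1}[G,o;\bs F]=\identity{A_1}[G,x_1;\bs F]$, and the right side is a.s. a function of $\anc(x_1)=\sigma(\anc(o))$. Hence $A'$ agrees with $\sigma^{-1}A'$ a.s. under the law of the path. Since the shifted law is absolutely continuous with respect to the original one (\Cref{lem:shiftToF(o)}, \Cref{rem:quasi}), the standard $\limsup$ construction upgrades this to a strictly invariant event.

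For level-sets we need more, namely invariance under replacing the first $n$ steps of the path by any other path of length $n$ that meets $\anc(o)$ at the same time. We would use a coupling: pick $y$ in the level-set of $o$ with $\bs F^{(n)}(y)=x_n$. Such a $y$ exists because the level-sets are infinite and the components one-ended. The descendants $D_n(x_n)$, together with the independent resampling of $\bs F$ on the first $n$ steps, show that $A'$ depends only on $(x_k)_{k\ge n}$ up to null sets. This requires controlling the Radon--Nikodym factors that appear when exchanging roots within $D_n(x_n)$; we expect to handle them through the mass transport principle and the balance function $b$. \textbf{This coupling step for the tail version is the main obstacle.}

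Finally, for Step~\ref{step3comp} (resp. \ref{step3foil}), $\anc(\bs o)$ is exactly the Markov chain with kernel $K$ started at $\bs o$, because it never self-intersects. So \Cref{thm:pathIntro}\ref{thm:pathIntro:ergodic}, using balance and weak irreducibility, gives triviality of the invariant event $A'$. Likewise \Cref{thm:pathIntro}\ref{thm:pathIntro:tail}, adding weak aperiodicity, gives triviality of the tail event. Hence $\myprob{A}=\myprob{A'}\in\{0,1\}$, which completes both parts by \Cref{lem:indistinguishability}.
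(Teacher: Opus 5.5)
Your argument for part~(i) is correct, and at Step~\ref{step2comp-2} it follows a different route from the paper. You move the a.s.\ identity $\identity{A_1}=h[\bs G,\bs o;\anc(\bs o)]$ to the root $\bs F(\bs o)$ using the absolute continuity in \Cref{lem:shiftToF(o)}. Since $A_1$ is a component-property, this gives $h=h\circ\sigma$ a.s., and $\limsup_n h\circ\sigma^n$ then gives a strictly invariant version. The paper instead writes down an explicit version $g$ of the conditional probability on all eventually-simple sequences. It then shows, by a coupling with shared randomness, that $g$ agrees on any two shift-coupled sequences. That argument is pointwise in $G$, needs no unimodularity, and is reused for deterministic base graphs in \Cref{prop:tail-fixed}; yours is shorter but relies on the mass transport principle. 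A minor point: conditioning on $\anc(o)$ imposes no constraint on $\bs F(v)$ for $v\notin\anc(o)$. The conditional law off the path is exactly the product of the $K(v,\cdot)$, with no conditioning on avoiding cycles.

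Part~(ii) has a genuine gap, exactly at the step you flag. Tail-measurability of $h$ means $h$ is insensitive to replacing $(x_0,\ldots,x_{n-1})$ by an \emph{arbitrary} initial segment that reaches $x_n$ at time $n$. A single map $\sigma$ no longer generates this. Root changes inside one realization of the forest only compare initial segments that actually coexist in $D_n(x_n)$, and such pairs are constrained. Two segments visiting a common vertex at different times can never coexist, and $D_n(x_n)$ may equal $\{o\}$. You would therefore still need a chaining and absolute-continuity argument, which you do not supply. Mass transport and the balance function $b$ do not provide it; balance plays no role in Step~\ref{step2foil} at all.

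The missing idea is to define the version of $\probCond{A_1}{\bs G,\bs o;\anc(\bs o)}$ on every sequence, not only on realized ones.
\begin{itemize}
\item For $(x_i)_i$ with $(x_i)_{i\geq N}$ distinct, set $\bs F_1(x_j):=x_{j+1}$ for $j\geq N$, use independent $K$-jumps elsewhere, and put $g[G,x_0;(x_i)_i]:=\probPalm{G}{[G,x_0;\bs F_1]\in A_1}$.
\item If $(x'_i)_i$ eventually coincides with $(x_i)_i$, build $\bs F'_1$ from the same off-path jumps. Then $\bs F_1$ and $\bs F'_1$ differ at finitely many vertices.
\item Using local finiteness and infinite level-sets, which hold for a.e.\ forced path, one gets $\card{L_{\bs F_1}(x_0)\Delta L_{\bs F'_1}(x'_0)}<\infty$.
\item Move the root to a distant common point of the two level-sets and apply the tail level-set-property. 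This gives $\identity{A_1}[G,x_0;\bs F_1]=\identity{A_1}[G,x'_0;\bs F'_1]$ a.s., so $g$ takes the same value on both sequences.
\end{itemize}
Hence $g$ is invariant under the ``eventually coincide'' relation on a full-measure tail event. By \Cref{lem:cber,lem:completion-I(R)} it is $\sigfield{path}{\hat T}{}$-measurable. Combined with your conditional $0$--$1$ law, \Cref{lem:completion-cond} gives Step~\ref{step2foil}, and your Step~\ref{step3foil} argument then finishes the proof.
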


\begin{remark}
	\label{rem:model1}
	Here are some remarks about the conditions of \Cref{model1,model2}.
	\begin{enumerate}[label=(\roman*)]
		\item The cycle-free condition can be solely rephrased  in terms of $K$: The graph with edge set $\{(u,v): K(u,v)>0\}$ contains no directed cycle a.s. We will use this fact in \Cref{sec:Markov}.
		
		\item In all the examples of \Cref{model1} given in this paper, the cycle-free condition is ensured by the fact that the jumps $x\mapsto \bs F(x)$ decrease some function $h=h[G,o,x]$ (see \Cref{prob:lyapunov} for the converse). This implies the cycle-free condition if $h$ is a \defstyle{Lyapunov cocycle} which we define as follows: $h$ is a measurable function on $\mathcal G_{**}$ such that $h[G,x,y]+h[G,y,z]=h[G,x,z]$. The value of $h[G,x,y]$ may be regarded as the \textit{height} of $y$ relative to $x$. For instance, in \Cref{ex:lattice}, one might let $h[\Phi,x,y]:=v\cdot(y-x)$ for some unit vector $v\in\mathbb R^d$. Such a function exists naturally when $\bs F$ describes the movement of particles in space-time in a coalescing particle system (see \Cref{ex:crw,ex:cmc}). See \Cref{subsec:examples} for more examples.
		
		\item 
		Weak irreducibility and the balance condition {are not needed for Steps~\ref{step1comp}, \ref{step2comp}, \ref{step1foil} and~\ref{step2foil}. They are only used in Steps~\ref{step3comp} and~\ref{step3foil} and in \Cref{sec:one-ended}. The same applies to the weak aperiodicity condition. So, for indistinguishability results,} one may remove these assumptions and assume that Steps~\ref{step3comp} and/or~\ref{step3foil} hold. {This allows one to have more than one two-ended component; e.g., in \Cref{ex:lattice} (see also \Cref{subsec:lattice}).}
		
		\item 
		The Markov chain with kernel $K(G,\cdot,\cdot)$ on $G$ cannot be irreducible in the classical sense since no two states can be mutually reachable.
		Also, without weak irreducibility, the model can have two completely different behaviors in two or more unrelated parts of the graph (e.g., in the checkerboard lattice, one can consider a point-map on the even sublattice and another one on the odd sublattice).
		
		\item
		A sufficient condition for weak irreducibility is the following variant of the path-intersection property {(compare with \Cref{def:colision})}: $\forall (G,x,y):\probPalm{G}{\anc(x)\cap\anc(y)\neq\emptyset}>0$. But this condition is too strong, see e.g., \Cref{ex:free,ex:dl}. 
		
		\item 
		The balance condition {will be used to show} that $[\bs G, \bs o; \anc(\bs o)]$ (after forgetting the rest of $\bs F$) satisfies a stationarity property spelled out in \Cref{lem:stationary}. This is not just a proof technique, as we will provide examples where indistinguishability fails without the balance condition (see \Cref{ex:canopy,ex:canopy2}). See also \Cref{prob:bias}. 
		
		\item 
		The balance condition for Markov kernels on unimodular graphs is actually studied in~\cite{processes}, where $K$ is called an \emph{environment} and $b$ is called an \emph{initial bias}. The balance condition is called \textit{stationarity} in~\cite{processes}. {Further discussion is provided in \Cref{sec:Markov}.}
		
		
		\item Condition~\ref{model2-aper} is called \textit{weak} aperiodicity since classical aperiodicity makes sense only for irreducible Markov chains. 
		We will strengthen this condition later in \Cref{lem:aperiodicity} and show that the set~\eqref{eq:aperiodic} contains all natural numbers a.s. if all assumptions of \Cref{model2} are satisfied.	
		
		\item Some natural examples do not satisfy the weak aperiodicity condition; e.g., \Cref{ex:nguyen,ex:crw,ex:cmc,ex:free,ex:dl}. 
		In some of these examples, we will prove the indistinguishability of level-sets with another idea. 
		Without weak aperiodicity, the only missing part of the proof is \Cref{step3foil} (see \Cref{rem:periodic2}).
		See also \Cref{rem:periodic,prob:aperiodic,prob:aperiodic2} for the general cases where this condition fails or some weaker conditions hold.
		
		\item Without the one-endedness condition, \Cref{thm:one-ended-or-connected} implies that all level-sets of $\bs F$ are finite a.s., and hence, there is nothing to prove regarding their indistinguishability. Recall that \Cref{thm:one-ended} provides some criteria for one-endedness.
		
	\end{enumerate}
\end{remark}

\subsection{Examples of CMTs}
\label{subsec:examples}

In this subsection, we show that CMTs generalize various models in the literature. We study their indistibuishability properties by \Cref{thm:indistinguishability} and their qualitative properties using the results of \Cref{main:qualitative}.

\subsubsection{Lattice CMTs}
	\label{subsec:lattice}

	To apply \Cref{thm:indistinguishability} to lattice CMTs, we first verify the conditions of \Cref{model1,model2} in the following lemma. The main result is \Cref{prop:latticeIndistinguishability} below, whose proof is postponed to \Cref{sec:warmUp}.
	\begin{lemma}
		\label{lem:lattice}
		Let $\bs F$ be a lattice CMT as described in \Cref{ex:lattice}.
		\begin{enumerate}[label=(\roman*)]
			\item \label{lem:lattice:1} $\bs F$ satisfies the assumptions of \Cref{model1} for $b\equiv 1$ if and only if the support of $\mu$ generates the lattice and is included in an open side of some linear hyperplane.
			\item \label{lem:lattice:2} Assuming the last condition, if either $d\geq 2$ or the jump distribution has infinite mean, then every connected component of $\bs F$ is one-ended.
			\item \label{lem:lattice:3} Assuming the condition mentioned in~\ref{lem:lattice:1}, the weak aperiodicity condition is equivalent to the condition that $\{u-v: u,v\in\supp(\mu)\}$ generates $\Phi$.
		\end{enumerate}
	\end{lemma}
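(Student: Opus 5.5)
Part~\ref{lem:lattice:1} amounts to checking the three conditions of \Cref{model1} for the translation-invariant kernel $K(x,y)=\mu(y-x)$ with $b\equiv 1$. Balance is automatic, since $\sum_x K(x,y)=\sum_x\mu(y-x)=1$. Weak irreducibility means that the undirected graph with edges $\{x,x+s\}$, $s\in\supp(\mu)$, is connected, which happens exactly when the group generated by $\supp(\mu)$ is $\Phi$. For the cycle-free condition, I would use \Cref{rem:model1}: it is equivalent to the absence of directed cycles in the graph $\{(u,v):K(u,v)>0\}$, i.e., to the statement that no nontrivial nonnegative integer combination of elements of $\supp(\mu)$ vanishes. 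If $\supp(\mu)$ lies in an open half-space $\{v\cdot s>0\}$, then such a combination has positive inner product with $v$, so it is nonzero. Conversely, if no such half-space exists, then $0$ lies in the closed convex hull of $\supp(\mu)$, which should give a vanishing positive integer combination and hence a cycle. For infinite support I would first reduce to a finite subset: by a compactness/Gordan-alternative argument, failure of strict separation gives $0$ as a nonnegative rational (hence integer) combination of finitely many support points. This Farkas-type step, including rationality (the support points lie in a lattice, so the relevant linear system has rational coefficients), is the main obstacle in Part~\ref{lem:lattice:1}.

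For Part~\ref{lem:lattice:2}, I would use the Green-function criterion of \Cref{thm:one-ended}: it suffices to show $g_K(0,y)\to 0$ as $|y|\to\infty$. Here $g_K(0,y)=\probPalm{}{y\in\anc(0)}$ is the renewal measure of the random walk $S_n$ with steps distributed as $\mu$. Project onto the direction $v$. The heights $v\cdot S_n$ are strictly increasing with i.i.d. positive increments, so each point is visited at most once, and $g_K(0,y)\le\sup_n\myprob{S_n=y}$ restricted to the times $n$ at which the height $v\cdot y$ can be reached.

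When $d\ge2$, the walk is genuinely $d$-dimensional because the support generates the lattice. I would use a local-CLT or concentration-type bound (Kolmogorov–Rogozin) on the transverse component to show that the probability of hitting the specific point $y$ tends to $0$. This works whether $|y|\to\infty$ along the height direction or transversally; the transversal case is trivial when the height stays bounded, since only finitely many steps are then possible and their distribution is tight.

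When $d=1$, $g(0,y)$ is the classical renewal function. By the renewal theorem with lattice span, it tends to $1/\omid{\text{step}}$, which equals $0$ exactly when the mean is infinite.

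The delicate part of Part~\ref{lem:lattice:2} is a uniform small-ball estimate in $d\ge2$ when the step distribution has heavy tails. If a direct bound fails, I would fall back on condition~\eqref{eq:greendecay}: take an independent walk $\bs X_n$, and show that $g(0,\bs X_n)=g(0,S'_n)$ tends to $0$ in probability, using that the difference of two independent walks is a symmetric nondegenerate walk in $d\ge2$.

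For Part~\ref{lem:lattice:3}, I would note that $K^n(x,y)>0$ iff $y-x\in n\cdot\supp(\mu)$, the set of sums of $n$ support elements. Hence $d$ lies in the set~\eqref{eq:aperiodic} iff some $z$ is both an $n$-sum and an $(n+d)$-sum. Let $\Lambda$ be the group generated by differences of support points, and fix $s_0\in\supp(\mu)$. Every $n$-sum lies in $ns_0+\Lambda$, and $\Phi/\Lambda$ is cyclic, generated by $s_0$. If $n s_0\equiv (n+d)s_0$ modulo $\Lambda$, then $d s_0\in\Lambda$, so $d$ is a multiple of the order of $\Phi/\Lambda$. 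Conversely, whenever $ds_0\in\Lambda$, one can realize $z$ as both an $n$-sum and an $(n+d)$-sum for large $n$, by writing $ds_0$ as a difference of sums of support elements and padding. So the gcd of the set equals $[\Phi:\Lambda]$, which is $1$ iff $\Lambda=\Phi$.
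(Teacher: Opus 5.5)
\textbf{Part (i), converse direction.} The step ``failure of strict separation gives $0$ as a nonnegative integer combination of finitely many support points'' fails when $\supp(\mu)$ is infinite. Applying Gordan to finite subsets and taking a limit of unit normals only yields some $v\neq 0$ with $v\cdot s\ge 0$ for all $s\in\supp(\mu)$. Knowing that $0$ lies in the \emph{closed} convex hull does not give a finite vanishing combination.

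In fact the ``only if'' claim is false for $d\ge 2$. Take $\Phi=\mathbb Z^2$ and $\supp(\mu)=\{(1,0)\}\cup\{(-n,1):n\ge1\}$. The support generates $\mathbb Z^2$, and balance holds with $b\equiv 1$. There is no cycle, because $(x_2,x_1)$ increases strictly in lexicographic order along every ancestral line. So \Cref{model1} holds. Yet an open half-space would need $v_1>0$ and $v_2>nv_1$ for all $n$, which is impossible. Note also that $\frac{n}{n+1}(1,0)+\frac{1}{n+1}(-n,1)\to 0$, so $0$ is in the closed hull but not in the hull of any finite subset.

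Your Gordan-plus-rationality argument is correct for finitely supported $\mu$. In general, the right condition is that every finite subset of $\supp(\mu)$ lies in an open half-space. The paper does not prove this direction either; it is only asserted after \Cref{model1}. Parts (ii) and (iii) take the half-space condition as a hypothesis, so they are unaffected.

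\textbf{Part (ii), $d\ge 2$.} Here your proposal does not reach the conclusion. The bound $g_K(0,y)\le\sup_n\myprob{S_n=y}$ is wrong. The events $\{S_n=y\}$ are disjoint, so $g_K(0,y)=\sum_n\myprob{S_n=y}$, summed over all times $n$ at which height $v\cdot y$ is attainable, and there are typically unboundedly many such $n$. Fixed-time concentration bounds therefore do not control $g_K$; this applies to Kolmogorov--Rogozin, and a local CLT is unavailable for heavy tails anyway. Your fallback via \eqref{eq:greendecay} has the same problem: $\omid{g_K(0,\bs X_n)}=\sum_m\myprob{S_m=\bs X_n}$, and the symmetrized walk only handles the term $m=n$.

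The missing input is the result the paper cites (Spitzer, T2 of Section~24): for a transient walk in $d\ge2$ whose support generates the lattice, the Green function vanishes at infinity. A self-contained proof runs as follows.
\begin{enumerate}
\item For $|x_k|\to\infty$, extract a subsequence with $g_K(0,x_k+z)\to\phi(z)$ for all $z$.
\item The last-step identity $g_K(0,x)=\identity{\{x=0\}}+\sum_s\mu(s)g_K(0,x-s)$ makes $\phi$ bounded harmonic for the walk with steps $-s$, so $\phi$ is constant by Choquet--Deny.
\item The heights $v\cdot S_n$ form a renewal sequence with positive increments, so the expected number of $n$ with $v\cdot S_n\in[t,t+1)$ is bounded uniformly in $t$.
\item For $d\ge 2$ the set $\{z\in\Phi:0\le v\cdot z<1\}$ is infinite, so the bounded slab sum forces the constant to be $0$.
\end{enumerate}

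Your $d=1$ argument via Blackwell matches the paper. Your part (iii) is correct and is essentially the paper's argument, phrased through the cyclic quotient $\Phi/\Lambda$. One small correction: if $[\Phi:\Lambda]=\infty$, the set \eqref{eq:aperiodic} is empty rather than having gcd equal to the index. This still gives failure of weak aperiodicity, so your conclusion stands.
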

	For $d=1$, $\bs F$ can be either one-ended or two-ended; see e.g., \Cref{ex:renewalEFT}.
	\begin{proof}
		\ref{lem:lattice:1} is already shown after the definition of \Cref{model1}. 
		
		\ref{lem:lattice:2}. By \Cref{thm:one-ended}, it is enough to show that the Green function $g(0,y)$ vanishes as $y$ goes to infinity. This holds more generally for transient random walks on $\mathbb Z^d$; see e.g., Theorem~T2 in Section~24 of~\cite{bookSpitzer76} (Blackwell's theorem also proves the 1-dimensional case with infinite mean). This proves the claim.
				
		\ref{lem:lattice:3}. Let $\Phi'$ be the lattice generated by $\{u-v: u,v\in\supp(\mu)\}$. If $\Phi'=\Phi$, then for arbitrary $w\in\supp(\mu)$, one can write $w=\sum_i c_i(u_i-v_i)$, where $c_i\in \mathbb Z$ and $u_i,v_i\in\supp(\mu)$. This implies that the set~\eqref{eq:aperiodic} contains 1, and weak aperiodicity holds. Conversely, assume that weak aperiodicity holds. For $d$ in the set~\eqref{eq:aperiodic}, one can find $n\in\mathbb N$ and sequences $u_1,\ldots,u_n$ and $v_1,\ldots,v_{n+d}$ in $\supp(\mu)$ such that $\sum_i u_i = \sum_j v_j$. For every $w_1,\ldots,w_d\in\supp(\mu)$, one obtains $\sum_k w_k = (\sum_k w_k + \sum_i u_i) - \sum_j v_j \in \Phi'$. In words, the sum of any $d$ elements of $\supp(\mu)$ is in $\Phi'$. Since such $d$'s have largest common divisor 1, one can obtain that $\supp(\mu)\subseteq\Phi$, and hence, $\Phi=\Phi'$.
	\end{proof}
	
	If the conditions of the last lemma hold, then \Cref{thm:indistinguishability} implies that $\bs F$ has indistinguishable components and level-sets. More generally, the next result shows that some of the assumptions may be dropped. 
	
	\begin{proposition}[Indistinguishability in Lattice CMTs]
		\label{prop:latticeIndistinguishability}
		For lattice CMTs of \Cref{ex:lattice}, the claims of \Cref{thm:indistinguishability} hold even without the weak irreducibility and weak aperiodicity assumptions. More precisely,
		\begin{enumerate}[label=(\roman*)]
			\item If the cycle-free condition holds, then the components are indistinguishable.
			\item If, in addition, the components are one-ended a.s. (see \Cref{lem:lattice}), then the level-sets are indistinguishable.
		\end{enumerate}
	\end{proposition}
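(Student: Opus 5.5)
The plan is to run the three-step scheme of \Cref{thm:general} and observe that, on a lattice, only the third step (\Cref{step3comp}, resp. \Cref{step3foil}) uses the dropped hypotheses, and that it becomes elementary there. By \Cref{rem:model1}, weak irreducibility, balance and weak aperiodicity are used only in Steps~\ref{step3comp} and~\ref{step3foil}. Steps~\ref{step1comp} and~\ref{step1foil} hold for any cycle-free point-map (resp. any point-map with one-ended components), by \Cref{thm:I in T 2}. Steps~\ref{step2comp} and~\ref{step2foil} hold for any cycle-free CMT, by \Cref{thm:tailDrainage}. The lattice CMT is cycle-free by assumption, and one-ended in the second part, so these steps apply. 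It remains to show that $[\Phi,0;\anc(0)]$ is ergodic under the shift, and tail-trivial under the shift in the one-ended case.

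The key observation is that, on a lattice, the rooted path object is a function of an i.i.d. sequence. We equip $\Phi$ with the marks $m(u,v)=v-u$, as in \Cref{subsec:pointProcess}, so that rooted isomorphisms are exactly translations. Then the class $[\Phi,X_n;(X_{n+i})_{i\ge 0}]$ is determined by, and determines, the increment sequence $(U_n,U_{n+1},\dots)$ with $U_k:=X_{k+1}-X_k$. Before the first self-intersection, and hence always since there is no cycle, the increments $U_k$ are i.i.d. with law $\mu$, because the $\bs F(x)$ are independent and $\anc(0)$ visits distinct points. Under this identification, the path shift $\sigma$ becomes the left shift on $\Phi^{\mathbb N}$ with product measure $\mu^{\otimes\mathbb N}$. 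In particular $\sigma$ preserves the law, consistent with $b\equiv1$ in \Cref{lem:shiftToF(o)}.

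With this identification, a tail path property in the sense of \Cref{def:path-tail} is an event measurable with respect to $[\Phi,X_n;(X_{n+i})_i]$ for every $n$. Such an event lies in $\bigcap_n\sigma(U_n,U_{n+1},\dots)$, so it is trivial by Kolmogorov's $0$--$1$ law. This gives \Cref{step3foil}, with no aperiodicity needed: because the root position is forgotten up to translation, the tail of the path is the tail of the increments rather than of the positions. A shift-invariant event $B=\sigma^{-1}B$ satisfies $B=\sigma^{-n}B$ for every $n$, so it is in particular a tail event and hence also trivial. This gives \Cref{step3comp}, with no weak irreducibility needed. Ergodicity of the base space is automatic, since $[\Phi,0]$ is deterministic. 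Combining these steps with \Cref{thm:general} and \Cref{lem:indistinguishability} yields both parts.

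I expect the main obstacle to be bookkeeping rather than a new idea. One must check that \Cref{thm:tailDrainage} genuinely does not invoke weak irreducibility or balance. One must also make the identification of $\sigfield{path}{T}{}$ with the increment tail rigorous. This requires that the marking kills all nontrivial rooted automorphisms, so that the map from $\mathcal G_\infty$-classes to increment sequences is a measurable bijection on the relevant set.
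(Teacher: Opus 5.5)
Your proof is correct, and it follows a genuinely different route from the paper. The paper does not invoke \Cref{thm:tailDrainage} here. It reruns the warm-up coupling of \Cref{prop:simpler}: it resamples $\bs F$ in $\oball{n}{o}$ and successfully re-couples the new ancestral line of $o$ with the old one. Because both lines start at the same vertex, they lie in the same coset of $\Phi'$ at equal times, so the successful coupling of \cite{Mu19coupling} is available without either generation hypothesis of \Cref{lem:coupling}. This yields $h\in\{0,1\}$. Transitivity of $\Phi$ then replaces the connectivity of $E_C,E_L$ that weak irreducibility and \Cref{lem:foil-connectivity} would otherwise provide.

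You instead keep Steps C1/L1 and C2/L2 from \Cref{thm:I in T 2} and \Cref{thm:tailDrainage}, which, as \Cref{rem:model1} confirms, use none of the dropped hypotheses. You then make \Cref{step3comp} and \Cref{step3foil} elementary. Translations are mark-preserving isomorphisms, so $\sigma^n[\Phi,0;\anc(0)]$ is a measurable function of $(U_n,U_{n+1},\dots)$. Hence $\sigfield{path}{T}{}$, and with it $\sigfield{path}{I}{}\subseteq\sigfield{path}{T}{}$, is trivial by Kolmogorov's $0$--$1$ law. Only this direction is needed; the bijectivity you flag as an obstacle is not required.

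What each route buys:
\begin{itemize}
\item The paper's route stays within the warm-up section, avoids the conditional-measurability machinery behind Step 2, and is the template it reuses for $\wusf(\mathbb Z^d)$. The price is an external Ornstein-type coupling theorem.
\item Your route needs no random-walk coupling at all. It explains why periodicity and reducibility are invisible here: once the absolute position of the root is forgotten, the path tail is just the tail of an i.i.d.\ sequence.
\item Your route also applies verbatim whenever the rooted ancestral line is, up to isomorphism, a function of i.i.d.\ increments. One instance is the labeled Cayley graph of \Cref{ex:free}, where weak aperiodicity fails and the paper does not settle the level-set question.
\end{itemize}
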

	
	The proof is postponed to \Cref{sec:warmUp}.
	In \Cref{subsec:nguyen,subsec:renewal,subsec:crw} below, we will see some instances of lattice CMTs with different characteristics.
	
\subsubsection{Nguyen's River Model and its Variant}
\label{subsec:nguyen}

\begin{example}[Nguyen's River Model]
	\label{ex:nguyen}
	Let $\Phi$ be the set of points $x=(x_1,\ldots,x_d)$ of $\mathbb Z^{d}$ such that $\sum_i x_i$ is even. Let $\bs F(x):=(\bs f_0(x),x_d - 1)$, where $\bs f_0(x)$ is one of the neighbors of $(x_1,\ldots,x_{d-1})$ in the lattice $\mathbb Z^{d-1}$, chosen uniformly at random and independently for all $x\in\Phi$. 
\end{example}

This is an instance of \Cref{model1} with $b\equiv 1$. It generalizes Nguyen's model in~\cite{Ng90}, which is the case $d=2$. For any $d\geq 2$, \Cref{thm:one-ended} easily implies that the connected components are one-ended a.s.
In the case $d=2$ (resp. $d=3$), there is only one connected component and the level-sets are just the horizontal lines (resp. planes) in the lattice. So, for $d\in\{2,3\}$, the indistinguishability of level-sets (\Cref{thm:indistinguishability}) easily follows  from the mixing property of the lattice. 

For $d\geq 4$, there are infinitely many components. This example is an instance of \Cref{model1}, and hence, has indistinguishable components by \Cref{thm:indistinguishability}. 
Despite the fact that it does not fit into \Cref{model2} due to the lack of weak aperiodicity, \Cref{prop:latticeIndistinguishability} implies that the level-sets of this example are indistinguishable.

For following the proof of \Cref{thm:indistinguishability}, a nice example to keep in mind is the variant of Nguyen's model, defined below. This example directly satisfies the assumptions of \Cref{model2} and no further trick is needed.


\begin{example}[Variant of Nguyen's Model]
	\label{ex:nguyen_variant}
	Consider the even lattice $\Phi\subseteq\mathbb Z^2$, as in \Cref{ex:nguyen}. For each $x\in \Phi$, let $\bs F(x)$ be one of $x+(-1,-1), x+(1,-1)$ and $x+(0,-2)$, chosen uniformly at random and independently for all $x$ (one can generalize this to higher dimensions straightforwardly). This model satisfies the conditions of \Cref{model2}. Hence, it has indistinguishable components and level-sets by \Cref{thm:indistinguishability}.
\end{example}

\subsubsection{Renewal Model}
\label{subsec:renewal}

\begin{example}[Renewal model on $\mathbb Z$]
	\label{ex:renewalEFT}
	Let $\Phi=\mathbb Z$ and $\mu$ be a probability measure on $\mathbb N$. For each $x\in\mathbb Z$, let $\bs F(x):=x+\bs U(x)$, where $\bs U(x)>0$ is a random number with distribution $\mu$ chosen independently for all $x\in\mathbb Z$. 
\end{example}

This is an instance of lattice CMTs. By \Cref{lem:lattice}, weak irreducibility holds if and only if $\supp(\mu)$ is non-lattice with respect to $\mathbb Z$ (i.e., not contained
in any $d\mathbb{Z}$, with $d>1$). In this case, weak aperiodicity holds if and only if $\supp(\mu)-z$ is non-lattice for some $z\in\supp(\mu)$.
However, even without weak irreducibility and weak aperiodicity, \Cref{prop:latticeIndistinguishability} implies that $\bs F$ has indistinguishable components. Also, if the components are one-ended, then the level-sets are indistinguishable. 

The one-endedness of the model is characterized by the following results of~\cite{BaSo19renewal,BaMiKh24coupling}: If $\omid{\bs U(x)}<\infty$ and $\supp(\mu)$ is non-lattice, there exists a single component, which is two-ended. Connectedness is also implied by the path-intersection property (see \Cref{thm:infComps}), and two-endedness is also implied by \Cref{thm:one-ended} and Blackwell's theorem, which shows that the Green function tends to a positive constant as distance tends to infinity. If $\omid{\bs U(x)}<\infty$ and $\supp(\mu)\subseteq d\mathbb N$, with $d$ minimal, then there are $d$ two-ended components.

If $\omid{\bs U(x)}=\infty$, then this model has one-ended components by \Cref{lem:lattice} (and has either one or infinitely many components in the non-lattice case). 
Also, \cite{BaMiKh24coupling} shows that when $\mu$ is the Pareto distribution with exponent $\beta$, there is a phase transition for connectedness: If $\beta<\frac 12$, there are infinitely many one-ended components, and if $\frac 12\leq \beta<1$, there is a single one-ended component.

%

\subsubsection{Coalescing Random Walks and Markov Chains}
\label{subsec:crw}

Another fundamental example of CMTs is \textit{coalescing Markov chains} (\Cref{ex:crw,ex:cmc} below), which include \textit{coalescing random walks}. This is not to be confused with coalescing Markov \textit{trajectories}. Here, the model is space-time; i.e., $\bs G\times \mathbb Z$, whereas the model of \Cref{def:cmt} is on $\bs G$.

\begin{example}[Coalescing Random Walks]
	\label{ex:crw}
	Let $[\bs G, \bs o]$ be a unimodular graph such that $\omid{\deg(\bs o)}<\infty$. At each time $t\in\mathbb Z$ and each vertex $x\in\bs G$, start an independent simple random walk from $x$ under the condition  that the particles coalesce when they meet at the same space-time point. To state this in the setting of \Cref{model1}, let $\bs G':=\bs G\times\mathbb Z$ and $\bs o':=(\bs o, 0)$ and add marks to $\bs G'$ for distinguishing the time direction. For each $(x,t)\in \bs G'$ and $y\sim x$, let $K((x,t),(y,t+1)):=1/\deg(x)$. Then, $K$ is a factor Markov kernel. Let $\bs F$ be the resulting CMT.
\end{example}

First, we study which conditions of \Cref{model1,model2} hold:
\begin{lemma}
	\label{lem:crw}
	In \Cref{ex:crw},
	\begin{enumerate}[label=(\roman*)]
		\item $\bs F$ is cycle-free and the balance condition is satisfied with $b(x,t):=\deg_G(x)$.
		\item  Weak irreducibility holds if and only if $\bs G$ is non-bipartite a.s.
		\item Weak aperiodicity \emph{does not} hold.
		\item If $\bs G$ is infinite a.s. and has bounded degrees, then all components of $\bs F$ are one-ended a.s. (see \Cref{prob:crw}).
	\end{enumerate}
\end{lemma}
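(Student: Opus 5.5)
We check the four claims one at a time, directly from \Cref{model1,model2} applied to the space-time graph $\bs G'=\bs G\times\mathbb Z$ with kernel $K((x,t),(y,t+1))=1/\deg(x)$ for $y\sim x$.

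\textbf{(i) Cycle-free and balance.} Every jump increases the time coordinate by one, so $h[(x,t),(y,s)]:=s-t$ is a Lyapunov cocycle as in \Cref{rem:model1}. Hence the graph of $\bs F$ has no cycles.

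For balance, take $b(x,t)=\deg(x)$. It is positive, and $\omid{b(\bs o')}=\omid{\deg(\bs o)}<\infty$. For every $(y,t+1)$,
\[
\sum_{x\sim y}\deg(x)\cdot\frac{1}{\deg(x)}=\deg(y).
\]
This is just the reversibility of simple random walk with respect to the degree measure.

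\textbf{(ii) Weak irreducibility.} Take the undirected graph on $\bs G\times\mathbb Z$ with $(x,t)$ adjacent to $(y,t\pm1)$ whenever $x\sim y$. This is the tensor product of $\bs G$ with $\mathbb Z$, so it is connected iff the following holds for any two points $(x,t)$ and $(y,s)$: there is a walk from $x$ to $y$ in $\bs G$ whose signed length is $s-t$. Here each step may be traversed forward or backward in time, so a walk of length $n$ can realize any displacement in $\{-n,-n+2,\ldots,n\}$.

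If $\bs G$ is bipartite, the parity of $x$ (its side of the bipartition) plus $t$ is preserved by every edge. So the graph splits into two classes and is disconnected.

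If $\bs G$ is non-bipartite, fix an odd cycle. Going around it and back lets us build walks from $x$ to $y$ of both parities and of arbitrarily large length. Going back and forth along a single edge then adjusts the net time displacement to any integer of the right parity, and the odd cycle supplies both parities. So every pair is connected.

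Finally, bipartiteness is an invariant property, which gives the a.s. statement.

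\textbf{(iii) Weak aperiodicity fails.} $K^n((x,t),(y,s))>0$ forces $n=s-t$. So no $d\ge1$ can satisfy both $K^n>0$ and $K^{n+d}>0$ between the same pair of points. The set \eqref{eq:aperiodic} is therefore empty, and the condition fails.

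\textbf{(iv) One-endedness for infinite bounded-degree $\bs G$.} We apply \Cref{thm:one-ended}. Since the time coordinate of the walk is deterministic,
\[
g_K((o,0),(y,n))=K^n((o,0),(y,n))=p_n(o,y),
\]
where $p_n$ is the $n$-step transition probability of simple random walk on $G$. We need $\lim_n p_n(o,\bs X_n)=0$ in probability, where $\bs X_n$ is the walk after $n$ steps.

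It suffices to show $\sup_y p_n(o,y)\to0$. This is the main obstacle. The plan:
\begin{itemize}
\item Reversibility gives $p_n(o,y)\le \frac{\deg(y)}{\deg(o)}\,p_n(y,o)$, and bounded degrees make this ratio bounded.
\item Cauchy–Schwarz gives $p_{2m}(o,y)\le C\sqrt{p_{2m}(o,o)\,p_{2m}(y,y)}$.
\item On an infinite connected graph, $p_{2m}(x,x)\to0$ for every $x$: otherwise simple random walk would have a stationary probability measure, which is impossible since the reversible measure is the degree measure, which has infinite mass.
\end{itemize}
However, the convergence in the third point need not be uniform in $y$. To handle this, we use unimodularity and work in expectation. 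We show $\omid{p_n(\bs o,\bs X_n)}\to0$ by the mass transport principle. After biasing by degree, the quantity $\sum_y p_n(o,y)^2\deg(o)/\deg(y)$ is comparable to $p_{2n}(o,o)$. Hence $\omid{p_n(\bs o,\bs X_n)}\le C\,\omid{p_{2n}(\bs o,\bs o)}$, which tends to $0$ by dominated convergence using the pointwise decay of return probabilities. Convergence in $L^1$ implies convergence in probability, and \Cref{thm:one-ended} then gives one-endedness.
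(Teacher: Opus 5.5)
Your proof is correct and follows the paper's route. Parts (i)--(iii) are the direct checks the paper calls clear. For (iv) you, like the paper, apply \Cref{thm:one-ended} and use reversibility plus Chapman--Kolmogorov to bound $\omid{g_K(\bs o',\bs X_n)}$ by a constant times $\omid{p_{2n}(\bs o,\bs o)}$, which tends to zero by decay of return probabilities and dominated convergence. The mass transport principle is not needed there, since $\sum_y p_n(o,y)^2\deg(o)/\deg(y)=p_{2n}(o,o)$ holds exactly in every sample. Your uniformity worry is also moot, because $p_{2m}(y,y)\le 1$ already makes your Cauchy--Schwarz bound uniform in $y$.
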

Note that the connectedness or disconnectedness of $\bs F$ depends on the path-intersection property; see \Cref{thm:infComps}.
\begin{proof}
	The first three parts are clear (note that the simple random walk has period 2 in bipartite graphs). For the last part, we use \Cref{thm:one-ended}. Denote $\bs F^{(n)}(\bs o')$ by $\bs X_n=:(\bs Y_n,n)$, where $(\bs Y_n)_n$ is distributed as the simple random walk on $\bs G$. 
	Let $p$ be the kernel of $(\bs Y_n)_n$. By the definition of the Green function $g_K$ and the fact that $\mathrm{deg}(x)p^n(x,y) = \mathrm{deg}(y)p^n(y,x)$, one obtains
	\begin{eqnarray*}
		\omid{\frac{\mathrm{deg}(\bs o)}{\mathrm{deg}(\bs Y_n)}g_K(\bs o, \bs X_n)} &=& \omid{\frac{\mathrm{deg}(\bs o)}{\mathrm{deg}(\bs Y_n)} p^n(\bs o, \bs Y_n)}\\
		&=& \omid{p^n(\bs Y_n,\bs o)}\\
		&=& \omid{p^{2n}(\bs o, \bs o)},
	\end{eqnarray*}
	where the last equality holds because going $n$ steps further from $\bs Y_n$ is equivalent to going $2n$ steps from $\bs o$.
	The last term converges to zero by Blackwell's renewal theorem (see e.g., \cite{Li77probabilistic}) for the return times to the root. Therefore, by the boundedness of ${\mathrm{deg}(\bs o)}/{\mathrm{deg}(\bs Y_n)}$, one obtains \Cref{eq:greendecay}. Hence, \Cref{thm:one-ended} implies that $\bs F$ has one-ended components a.s. 
\end{proof}

Now, we study indistinguishability of components and level-sets:

\begin{proposition}[Indistinguishability in Coalescing Random Walks]
	\label{prop:crw}
	In \Cref{ex:crw}, 
	\begin{enumerate}[label=(\roman*)]
		\item The connected components are indistinguishable.
		\item If the level-sets are infinite a.s., then there are at most two indistinguishability classes for the level-sets. If, in addition, $\bs G$ is non-bipartite a.s., then all level-sets are indistinguishable.
	\end{enumerate}
\end{proposition}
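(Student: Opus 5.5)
Part (i) is a direct application of \Cref{thm:indistinguishability}~\ref{thm:indistinguishability-comp}, with one caveat. By \Cref{lem:crw}, $\bs F$ is cycle-free and balanced with $b(x,t)=\deg(x)$. Since $\omid{\deg(\bs o)}<\infty$, the bias $b$ is integrable on $[\bs G',\bs o']$. Unimodularity of $[\bs G\times\mathbb Z,(\bs o,0)]$, with marks for the time direction, follows from the MTP on $\bs G$ together with translation invariance in time.

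The caveat is that weak irreducibility may fail when $\bs G$ is bipartite. In that case I use \Cref{rem:model1}(iii): only \Cref{step3comp} needs to be checked. Here $\anc(\bs o')$ is $(\bs Y_n,n)$, with $(\bs Y_n)$ the simple random walk on $\bs G$. Ergodicity of $[\bs G',\bs o';(\bs Y_n,n)_n]$ under shift therefore reduces to ergodicity of the simple random walk on the unimodular graph $[\bs G,\bs o]$ (biased by degree), which is \Cref{thm:pathIntro}~\ref{thm:pathIntro:ergodic} applied to $\bs G$ itself. The chain on $\bs G$ is reversible, so it is balanced with $b=\deg$ and irreducible, hence weakly irreducible. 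The time coordinate is just the step counter, so it adds nothing to the invariant sigma-field.

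For (ii), again only \Cref{step3foil} is missing (\Cref{rem:model1}(iii),(ix)). \Cref{step1foil,step2foil} hold for any cycle-free CMT with one-ended components, and by \Cref{thm:classification} infinite level-sets force one-endedness. So it suffices to analyze the tail sigma-field of $[\bs G',\bs o';(\bs Y_n,n)_n]$ under shift. Again the time coordinate carries no information, so this is the tail sigma-field of the SRW $[\bs G,\bs Y_n;(\bs Y_{n+i})_i]$. For this I pass to the two-step chain $(\bs Y_{2n})$. It is aperiodic in the weak sense, since $K^2(x,x)>0$ and $K^4(x,x)>0$ give set \eqref{eq:aperiodic} containing a $d$ with gcd 1; one can also use laziness of returns. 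It is also balanced. Its only possible obstruction to irreducibility is the bipartition.

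I would then apply \Cref{thm:pathIntro}~\ref{thm:pathIntro:tail}, or its proof, to the two-step chain, restricted to each parity class of the bipartition when $\bs G$ is bipartite. The conclusion is that the tail sigma-field of the SRW is generated mod $\mathbb P$ by at most one extra bit: the parity class of $\bs Y_{n}$ relative to $n$, i.e.\ $\bs Y_0$'s side when $\bs G$ is bipartite. This gives at most two classes. If $\bs G$ is non-bipartite, the two-step chain is weakly irreducible, the tail is trivial, and (via \Cref{thm:general}~\ref{thm:general:level}) all level-sets are indistinguishable.

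The main obstacle I expect is this tail-triviality step. Weak aperiodicity genuinely fails for the space-time chain, so \Cref{thm:pathIntro}~\ref{thm:pathIntro:tail} cannot be applied directly. The reduction from the space-time tail to the tail of the two-step chain on $\bs G$ needs care: the shift by one step changes parity, and the biased measure must remain stationary. I would first try to express a tail event $A$ as a pair of tail events for the even and odd subsequences. Then I would show that shifting by one step identifies them up to the parity bit, using that $\sigma$ preserves the degree-biased law (\Cref{lem:stationary}).
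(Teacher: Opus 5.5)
Part (i) is correct. You use \Cref{rem:model1}(iii) and reduce \Cref{step3comp} to the ergodicity of the simple random walk on $\bs G$ via \Cref{thm:pathErgodic}, which needs no aperiodicity, and you note that the time coordinate carries no information. This treats bipartite and non-bipartite $\bs G$ uniformly. The paper instead applies \Cref{thm:indistinguishability} directly when $\bs G$ is non-bipartite, and otherwise splits $\bs G'$ into two isomorphic weakly irreducible halves. Either route works.

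The gap is in (ii), at the step you flag yourself. Passing to $(\bs Y_{2n})_n$ only gives triviality of the tail of the two-step chain. You still need every tail event of $[\bs G, \bs Y_n;(\bs Y_{n+i})_{i\geq 0}]$ to be equivalent to a tail event of the two-step chain. Your plan of splitting a tail event into tail events of the even and odd subsequences is not justified: $\bigcap_N\sigma(\bs Y_n: n\geq 2N)$ can be strictly larger than the join of the two separate tails. For a Markov chain this can be repaired (the odd steps are conditionally independent given the even ones, then apply a conditional $0$--$1$ law). That is an extra argument, to be carried out on $\mathcal G_\infty$ up to isomorphism, and you do not supply it.

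More importantly, the detour is unnecessary, and this is the missing idea. Once you have reduced to the walk on $\bs G$ with kernel $p$, the one-step walk is itself weakly aperiodic when $\bs G$ is non-bipartite. For $y\sim x$, $p(x,y)>0$ and $p^3(x,y)>0$ put $2$ in the set \eqref{eq:aperiodic}. A closed walk of odd length $m$ at $x$ gives $p^m(x,x)>0$ and $p^{2m}(x,x)>0$, which puts the odd number $m$ in it. Since $p$ is also balanced by $\deg$ and weakly irreducible, \Cref{thm:pathIntro}~\ref{thm:pathIntro:tail} applies to $(\bs Y_n)_n$ directly, and this is exactly the paper's argument.

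In the bipartite case you can likewise stay with the one-step chain. The hypothesis of \Cref{lem:pathTV} for $p$ holds with $k=2$ (take $l=1$ and a neighbor of the root). So in the proof of \Cref{thm:pathI=T} the function $\Lambda(G,x,k)$ depends only on the parity of $k$. This is the mechanism of \Cref{rem:periodic} with $d_0=2$; it gives at most one extra tail bit, hence at most two classes of level-sets.
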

Note that, due to the lack of weak aperiodicity, our results do not directly imply the indistinguishability of level-sets in this example. Nevertheless, we may prove this with another idea. The reader might postpone reading the rest of these arguments to after seeing the proof of \Cref{thm:indistinguishability}.

\begin{proof}
	If $\bs G$ is non-bipartite a.s., then the first claim follows from \Cref{thm:indistinguishability} and \Cref{lem:crw}. Otherwise, one may partition $\bs G'$ into two isomorphic weakly irreducible parts (each part is a discrete metric space although not a graph). So, the assumptions of \Cref{model1} hold and $\bs F$ has indistinguishable components by \Cref{thm:indistinguishability}. 
	
	For the second part, we cannot directly apply \Cref{thm:indistinguishability} due to the lack of weak aperiodicity.
	The only remaining part of the proof is showing the tail triviality of the ancestor line in \Cref{thm:pathIntro} (see \Cref{prob:aperiodic} and the discussion around it). One can show that the tail triviality of $[\bs G', \bs X_n; (\bs X_{n+i})_{i\geq 0}]$ is equivalent to the tail triviality of $[\bs G, \bs Y_n; (\bs Y_{n+i})_{i\geq 0}]$ (since there is no additional information in the second coordinates of $((\bs Y_n,n))_n$). The latter is directly implied by \Cref{thm:pathIntro} in the non-bipartite case. This implies that the level-sets of $\bs F$ are indeed indistinguishable. However, if $\bs G$ is bipartite and the two parts are distinguishable, then the level-sets of $\bs F$ are distinguishable by parity (and there are exactly two indistinguishability classes of level-sets).
\end{proof}

\del{
\begin{remark}
	Given $k\geq 2$, the $k$-tuples of components might or might not be indistinguishable by tail component-properties in $\mathcal G'_*$. For instance, if $\bs G$ is the 3-regular tree, then 3-tuples are indistinguishable but 4-tuples are distinguishable. This can be seen by \Cref{thm:fixed-comp}.
\end{remark}
}

\begin{example}[Coalescing Markov Chains]
	\label{ex:cmc}
	Let $[\bs G, \bs o]$ be a unimodular graph or discrete space and let $b_0\geq 0$ be an equivariant function on $\bs G$ such that $\omid{b_0(\bs o)}<\infty$. Assume $b_0(\bs G, \cdot)$ is an invariant measure for a given factor Markov kernel $p$ on $\bs G$ and satisfies the detailed balance condition $b_0(x)p(x,y)=b_0(y)p(y,x)$. Then, the model of \defstyle{coalescing Markov chains} is defined on $\bs G':=\bs G\times \mathbb Z$ similarly to \Cref{ex:crw}, namely, $K((x,t),(y,t+1)):=p(x,y)$.
\end{example}
This generalizes \Cref{ex:crw} (let $b_0(x):=\deg_G(x)$ and $p$ be the kernel of the simple random walk).
The last proof can be modified slightly to prove the following (see \Cref{thm:pathIntro} and \Cref{rem:periodic}).
\begin{proposition}[Indistinguishability in Coalescing Markov Chains]
	\label{prop:cmc}
	In \Cref{ex:cmc}, assume in addition that the Markov chain on $\bs G$ satisfies weak irreducibility and the relaxed version of weak aperiodicity by allowing the gcd of~\eqref{eq:aperiodic} be $d<\infty$. Then, the components are indistinguishable and the number of indistinguishability classes of level-sets is a divisor of $d$.
\end{proposition}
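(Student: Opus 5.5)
We follow the proof of \Cref{prop:crw}, now using the detailed balance of $p$ in place of the reversibility of the simple random walk. First we check the conditions of \Cref{model1} for the space-time CMT on $\bs G'=\bs G\times\mathbb Z$, with $b(x,t):=b_0(x)$. The cycle-free condition holds because every jump increases the time coordinate by one, so the time coordinate serves as a Lyapunov cocycle (see \Cref{rem:model1}). The balance condition follows from the invariance of $b_0$ for $p$:
\[
\sum_{x} b_0(x)p(x,y)=b_0(y).
\]
Unimodularity of $[\bs G',\bs o']$ holds for the same reasons as in \Cref{ex:crw}. We regard $\bs G'$ as a discrete space with marks that record the time direction.

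\textbf{Components.} Weak irreducibility of $K$ on $\bs G'$ does not follow directly from weak irreducibility of $p$, since a walk in $\bs G'$ must track time parity. For this reason we partition $\bs G'$ according to the period structure. By the relaxed weak aperiodicity, the differences in~\eqref{eq:aperiodic} have gcd $d$. Using weak irreducibility of $p$, we construct an equivariant class function $c:\bs G\to\mathbb Z/d$ with $p(x,y)>0\Rightarrow c(y)=c(x)+1$. This step needs some care: the equivariant choice of labels is only defined up to a global shift, but the partition itself is canonical. On $\bs G'$, the quantity $c(x)-t \bmod d$ is preserved along jumps, so it splits $\bs G'$ into $d$ pieces. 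On each piece, $K$ is weakly irreducible; this should follow from \Cref{lem:aperiodicity}-type arguments on the relaxed aperiodicity, in the same way the bipartite case was handled in \Cref{prop:crw}. Applying \Cref{thm:indistinguishability}\ref{thm:indistinguishability-comp} to each piece (or \Cref{thm:general} with Steps~\ref{step1comp}--\ref{step3comp}, where \Cref{step3comp} comes from \Cref{thm:pathIntro}\ref{thm:pathIntro:ergodic} applied to the space-time chain) gives indistinguishability within each piece. The pieces are then glued by the time-shift symmetry: shifting time by one maps piece $j$ to piece $j-1$. Hence a component-property cannot prefer one piece over another, since the property is invariant under the root-change $(\bs o,0)\mapsto(\bs o,1)$ after reconsidering the ergodic decomposition. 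The main point to verify here is that $\mathcal I$ of $\bs G'$ coincides with $\mathcal I$ of $\bs G$, which holds because $\bs G'$ has no time-origin marks.

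\textbf{Level-sets.} Steps~\ref{step1foil} and~\ref{step2foil} hold for general CMTs by \Cref{thm:I in T 2} and \Cref{thm:tailDrainage}, since they do not use weak aperiodicity (\Cref{rem:model1}). The level-set tail sigma-field is therefore equivalent to the tail sigma-field of $[\bs G',\bs X_n;(\bs X_{n+i})_i]$. Since $\bs X_n=(\bs Y_n,n)$ and the time coordinate carries no information beyond $n$, this tail sigma-field equals the tail sigma-field of $[\bs G,\bs Y_n;(\bs Y_{n+i})_i]$. The key step, and the one we expect to be the main obstacle, is to show that this tail sigma-field for a $d$-periodic chain is generated, mod $\mathbb P$, by the invariant sigma-field (which is trivial by \Cref{thm:pathIntro}\ref{thm:pathIntro:ergodic}) together with at most $d$ cyclic classes.

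We will prove this by passing to the $d$-step chain $(\bs Y_{nd})_n$ restricted to a cyclic class. This chain is weakly aperiodic on that class, which is a unimodular discrete space after a suitable bias. \Cref{thm:pathIntro}\ref{thm:pathIntro:tail} then makes its tail trivial, and hence tail events of the original chain can only depend on the residue class $c(\bs Y_n)-n \bmod d$, which is a tail variable taking at most $d$ values. It follows that a tail level-set-property is determined by this residue. Under the shift the residues are permuted cyclically, so the indistinguishability classes correspond to the orbits of a subgroup action of $\mathbb Z/d$. The number of such classes is therefore a divisor of $d$.
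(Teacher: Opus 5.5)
Your proposal is correct and follows essentially the paper's route, namely the proof of \Cref{prop:crw} combined with \Cref{rem:periodic}. For components you split $\bs G\times\mathbb Z$ into isomorphic weakly irreducible pieces glued by the time shift; for level-sets you reduce via Steps~\ref{step1foil} and~\ref{step2foil} to the tail of the spatial chain $(\bs Y_n)_n$, control it through the $d$-step chain on the cyclic classes, and let the shift act cyclically on the at most $d$ atoms.

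Two small points need tightening. First, the cyclic class of $\bs o$ need not be ergodic as a unimodular space (the classes may be distinguishable, as in \Cref{ex:pathProperty-3reg}), so you should invoke the non-ergodic form of \Cref{thm:pathTail} (tail $\sigma$-field equals the invariant $\sigma$-field of the base mod $\mathbb P$) rather than claim tail-triviality; this non-ergodic form is exactly where the at most $d$ atoms come from. Second, the existence of your class function $c$ relies on the symmetric support $p(x,y)>0\Leftrightarrow p(y,x)>0$, which comes from detailed balance with $b_0>0$ and in fact forces $d\in\{1,2\}$.
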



\subsubsection{Discrete-Time Voter Model}
\label{subsec:voter}

The voter model describes the evolution of opinions in a graph. To the best of our knowledge, on infinite graphs, this model is discussed only in continuous-time in the literature. Here, we provide a discrete-time version and discuss its dual representation and stationary version. We also prove the indistinguishability of clusters, which is new. 

\begin{example}[Discrete-Time Voter Model]
	\label{ex:voter-discrete}
	Let $G$ be a graph or discrete space and $r:G\times G\to[0,1]$ be the rate function of opinion spread. Each $(x,y,i)\in G\times G\times \mathbb N$ is called a potential trigger at time $i$. Assume each trigger is activated with probability $r(x,y)$ independently from all other triggers. 
	Start from a partition of $G$ at time $t=0$, whose clusters represent the vertices with the same opinions (at the beginning, all vertices have different opinions and the values of the opinions are irrelevant for us, but it is straightforward to modify the definitions if one is interested in specifying the opinions by marks in $(0,1)$). 
	Inductively, proceed as follows. At time $t+1$, each active trigger $(x,y,t+1)$ tries to push the opinion of $x$ to $y$; i.e., to cut $y$ from its own cluster and attach it to the cluster of $x$. If there are more than one active triggers of the form $(\cdot,y,t+1)$, choose one of them randomly and uniformly (or with a prescribed distribution defined measurably and not depending on time). So, at each time $t\in \mathbb N\cup\{0\}$, a partition of $G$ is obtained. We call this the \defstyle{discrete-time multi-type voter model}.
	
	This model has a dual representation by coalescing random walks. For each $(x,t)\in G\times \mathbb N$, if we trace backwards the opinions leading to $(x,t)$, one can see that a backward Markov chain is obtained. Doing this for all $(x,t)$, one obtains a coalescing Markov chain model as in \Cref{ex:cmc}. Every level-set of $\bs F$ represents a set of vertices with the same opinion at a given time. This immediately gives a \defstyle{stationary version} of the voter model: If $\bs F$ is the backward coalescing Markov chain model on $G\times \mathbb Z$, consider the partition obtained by the level-sets of $\bs F$. In addition, one can prove that this is the \textit{finest} stationary distribution; i.e., the partitions of every other stationary distribution of the model are coarser than this specific one (in the coupling given by the dual representation).
	
	Now, let $[\bs G, \bs o]$ be a unimodular graph or discrete space. One may define a voter model as above, assuming that all parameters ($r$ and the choice at the ties) are factors. We need to assume that the kernel of the backward chains satisfies the balance condition of \Cref{ex:cmc}. For example, this holds if $\bs G$ is a graph, $r$ is constant on the edges and $\omid{\mathrm{deg}(\bs o)}<\infty$ (hence, the backward chains are just lazy simple random walks). In this case, \Cref{prop:cmc} implies \Cref{prop:voter}.
	
	The analogous result for the continuous-time voter model is discussed in \Cref{ex:voter-continuous}. Also,
	similar indistinguishability results may also be given for the voter model on deterministic graphs in the light of \Cref{subsec:tail-fixed}. 
\end{example}

\subsubsection{CMTs on Free Groups and Diestel-Leader Graphs}

The following examples show CMTs where the forward paths from two vertices may have zero chance to meet, and also where the weak aperiodicity condition is not satisfied.

\begin{example}[Free Groups]
	\label{ex:free}
	Let $G$ be the free group generated by $k$ elements, namely, $e_1,\ldots,e_k$. To distinguish the generators, put mark $i$ on every pair $(x,xe_i)$. Then, for each $x\in G$, let $\bs F(x)$ be one of $xe_1,\ldots,xe_k$ chosen uniformly at random. This satisfies the assumptions \Cref{model1} with $b\equiv 1$, and hence, the components of $\bs F$ are indistinguishable (note that there are infinitely many components because the path-intersection property is not satisfied, see \Cref{thm:infComps}). In fact, the descendants of a given $x\in G$ form a critical Galton-Watson tree and $C(x)$ is an \textit{Eternal Galton-Watson tree} \cite{eft}. So, the components are one-ended a.s. However, the weak irreducibility condition does not hold. We guess that the level-sets are distinguishable as well, but this is not implied by our results (see \Cref{prob:aperiodic2}).
	

\end{example}

\begin{example}[Diestel-Leader Graph]
	\label{ex:dl}
	Given $2\leq q\in\mathbb N$, let $T_q$ be the $(q+1)$-regular tree. Let $h$ be the height function with respect to a distinguished end of $T_q$. The (unimodular) Diestel-Leader graph $DL(q,q)$ is defined on the vertex set $\{(x,y)\in T_q^2: h(x)+h(y)=0\}$. Two vertices $(x,y)$ and $(x',y')$ are adjacent if and only if $x'$ and $y'$ are adjacent to $x$ and $y$ respectively. In this case, let $K((x,y),(x',y'))=1/q$ only if $h(x')=h(x)-1$. This model satisfies all of the assumptions except weak aperiodicity. However, similarly to \Cref{ex:crw}, one can prove that the components and level-sets are indistinguishable.\\
	As a variant, assume the vertices have i.i.d. marks $\bs I(t)\in\{1,2\}$ for $t\in\mathbb Z$. Let $K((x,y),(x',y'))=q^{-\bs I(h(x))}$ if $d((x',y'),(x,y))=\bs I(h(x))$ and $h(x')=h(x)-\bs I(h(x))$. Does this model have indistinguishable level-sets? (see \Cref{prob:aperiodic2}).\\
	The reader is invited to define similar models on lamp-lighter groups.
\end{example}

\subsubsection{Distinguishability Without the Balance Condition}
\label{subsec:distinguishable}

The following are examples in which the balance condition are not satisfied and level-sets or components are distinguishable. These examples are inspired from Example~14.15 of~\cite{bookLyPe16}, which is an example of a Markov chain on $\mathbb Z$ with a nontrivial tail sigma-field. The canopy tree is used to obtain a unimodular analogue of the example. We recall from \cite{processes} that the canopy tree consists of a sequence of layers, say $H_0, H_1,\ldots$ with the following properties: For all $i$, each point $x\in H_i$ has exactly one neighbor in $H_{i+1}$, which is called the \textit{parent} of $x$. Also, $x$ has exactly $2$ neighbors in $H_{i-1}$ if $i>0$. Denote the parent of $x$ by $p(x)$. Then, if the root $\bs o$ is chosen such that $\myprob{\bs o\in H_i}=2^{-i-1}$, a unimodular tree is obtained.


\begin{example}[Distinguishable Level-Sets]
	\label{ex:canopy}
	Consider the following kernel on the canopy tree (defined above): Let $l(x)\geq 0$ denote the index of the layer containing $x$. Let $K(x,p(x)):=1-2^{-l(x)-1}$, $K(x,p^{(2)}(x)):=2^{-l(x)-1}$. It is easy to check that the graph of the resulting CMT $\bs F$ is cycle-free and connected. Weak irreducibility holds, but the balance condition is not satisfied for any function $b$ (such a $b$ should be zero on $H_0$, and inductively be zero on all $H_i$'s). We claim that $\bs F$ has distinguishable level-sets.
	Indeed, if $n(x)$ is the number of $n\geq 0$ in which $\bs F^{(n+1)}(x)\neq p(\bs F^{(n)}(x))$, then $l+n$ is an invariant of the level-sets of $\bs F$ and distinguishes all of them.
\end{example}

\begin{example}[Distinguishable Components]
	\label{ex:canopy2}
	Let $\bs T$ be the canopy tree and $\bs G:=\bs T\times \{0,1\}$. We may put the mark 0 on $\bs T\times \{0\}$ and the mark 1 on $\bs T\times \{1\}$. So, $\bs G$ contains two distinguishable copies of $\bs T$ (alternatively, one might append a leaf to every vertex of $\bs T\times \{0\}$ in order to distinguishing the two copies of $\bs T$).
	Define the kernel $K$ for $v\in\bs T$ and $i\in\{0,1\}$ by $K((v,i),(p(v),i)):=1-2^{-i-1}$ and $K((v,i),(p(v),1-i)):=2^{-i-1}$. This model is cycle-free and weakly-irreducible, but the balance condition does not hold. We claim that $\bs F$ has distinguishable components.
	By the Borel-Cantelli lemma, one can check that $\anc(\bs o)$ contains finitely many \textit{jumps} between $\bs T\times\{0\}$ and $\bs T\times \{1\}$ a.s. For $i=0,1$, let $\bs S_i$ be the set of vertices $v$ in which $\anc(v)$ eventually lies in $\bs T\times \{i\}$. One can check that $\bs S_0$ and $\bs S_1$ are precisely the connected components of $\bs F$ a.s. One has $0<\myprob{\bs o\in \bs S_i}<1$. Hence, ergodicity implies that $\bs S_0$ and $\bs S_1$ are nonempty a.s. Now, we have distinguished the two connected components.
\end{example}

\section{A Simple Proof for Indistinguishability in Lattice CMTs (and More Models)}
\label{sec:warmUp}

In this section, we prove \Cref{thm:indistinguishability} in the special case of the lattice models of \Cref{ex:lattice} and \Cref{subsec:lattice}, assuming that Steps~\ref{step1comp} and~\ref{step1foil} are proved (which will be done in \Cref{sec:drainage}).
The benefit is that the proof is much simpler than the general case of \Cref{thm:indistinguishability} (and simpler than other indistinguishability proofs in the literature), but shows many underlying proof ideas. 
In addition, we will show in \Cref{prop:latticeIndistinguishability} that the assumptions of weak irreducibility and weak aperiodicity are not necessary for lattice CMTs.

The main proof technique consists in constructing a coupling of $\bs F$ with another copy $\bs F'$ such that $\restrict{\bs F'}{\oball{n}{o}}$ is independent from $\bs F$, but the two ancestor lines of $o$ shift-couple. The proof is provided in \Cref{prop:simpler,prop:latticeIndistinguishability} below, and is  based on the following lemma. In \Cref{subsec:wusf-indistinguishability}, a similar coupling will be used to obtain a simple proof of indistinguishability for the wired uniform spanning forest on $\mathbb Z^d$.

\begin{lemma}
	\label{lem:coupling}
	In the lattice CMTs of \Cref{ex:lattice}, given $x,y\in \Phi$, let $(\bs X_n)_n$ and $(\bs Y_n)_n$ be Markov chains on $\Phi$ with kernel $K$ and starting from $x$‌ and $y$ respectively.
	\begin{enumerate}[label=(\roman*)]
		\item \label{lem:coupling:shift} (Shift-Coupling). If $\supp(\mu)$ generates the lattice, then, for every $x,y\in \Phi$, there exists a coupling of $(\bs X_n)_n$ and $(\bs Y_n)_n$ such that, $\exists k,N: \forall n\geq N: \bs X_{n}=\bs Y_{n+k}$ a.s.
		\item \label{lem:coupling:successful} (Successful Coupling). If, in addition, $\{u-v: u,v\in\supp(\mu)\}$ generates $\Phi$, then, for every $x,y\in \Phi$, there exists a coupling of $(\bs X_n)_n$‌ and $(\bs Y_n)_n$ such that $\exists N: \forall n\geq N:\bs X_n=\bs Y_n$ a.s.
	\end{enumerate}
\end{lemma}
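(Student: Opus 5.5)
The plan is to reduce both parts to couplings of random walks on the lattice $\Phi\cong\mathbb Z^d$. The Markov chain with kernel $K$ is the random walk $\bs X_n=x+\sum_{i\le n}\bs U_i$, where the $\bs U_i$ are i.i.d.\ with law $\mu$. I would use an \emph{Ornstein-type coupling}. At each step, the two chains either move together, using the same increment, or move independently. Their difference $\bs D_n:=\bs Y_{n'}-\bs X_n$ (with appropriate time indices) then performs a symmetric random walk with increments of the form $u-v$, with $u,v\in\supp(\mu)$. Once the difference hits $0$, we glue the chains forever.

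For part~\ref{lem:coupling:successful}, use the same time index for both chains. At each step, with probability $1/2$ the chains take the same increment, and with probability $1/2$ they take independent increments $\bs U,\bs U'\sim\mu$. The difference is then a symmetric random walk with increment law $\frac12\delta_0+\frac12\,\mathrm{law}(\bs U-\bs U')$. Its support generates $\{u-v\}$, which is $\Phi$ by hypothesis, so the walk is irreducible on $\Phi$.

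The difficulty is recurrence. A symmetric walk on $\mathbb Z^d$ with $d\ge3$ is transient, so the plain Ornstein coupling does not work directly. I would address this by working with the first differing coordinates in a well-chosen basis, or more robustly as follows. Choose finitely many pairs $(u_j,v_j)$ whose differences generate $\Phi$. Couple the increments so that the two chains differ only by $\pm(u_j-v_j)$ moves in one direction $j$ at a time, and fix the coordinates of the difference one at a time. Concretely, I would use a coupling where at each step only one chosen one-dimensional component of the difference performs a symmetric lazy walk while the others stay frozen. The one-dimensional walk is recurrent, so that component hits zero a.s.; then we move to the next component.

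This can be done provided $\mu$ charges both $u_j$ and $v_j$ with positive mass. Write $\mu=c(\delta_{u_j}+\delta_{v_j})/2\cdot(\ldots)+\text{rest}$. With the rest and the common part of $\mu$, the chains take identical increments. On the $\{u_j,v_j\}$ part, choose either the same increment or swapped increments, each with probability $1/2$. Swapped increments change the difference by $\pm(u_j-v_j)$. Since $\Phi$ is a finitely generated free abelian group and the differences generate it, write $y-x=\sum_j m_j(u_j-v_j)$ with integers $m_j$ and kill each $m_j$ in turn with a recurrent one-dimensional walk. The main care needed is to switch phases at stopping times, so that each marginal stays a $\mu$-random walk; this holds by the strong Markov property.

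Part~\ref{lem:coupling:shift} reduces to part~\ref{lem:coupling:successful} with a time shift. Since $\supp(\mu)$ generates $\Phi$, write $y-x=\sum_i a_i w_i-\sum_j b_j w'_j$ with $w_i,w'_j\in\supp(\mu)$. Let $\bs X$ run alone for $A$ steps and $\bs Y$ alone for $B$ steps. Define the lattice $\Phi'$ generated by $\{u-v: u,v\in\supp(\mu)\}$. With positive probability the positions after these initial runs differ by an element of $\Phi'$; otherwise the argument above applies to the increments only modulo $\Phi/\Phi'$. Because every increment lies in a single coset $w_0+\Phi'$, the class of $\bs X_n$ in $\Phi/\Phi'$ is $x+nw_0$, which is deterministic. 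Choose $k$ such that $x+nw_0\equiv y+(n+k)w_0$; such $k$ exists because $w_0$ generates the finite cyclic group $\Phi/\Phi'$, using that $\supp(\mu)$ generates $\Phi$. Then couple $\bs X_n$ with $\bs Y_{n+k}$ by running $\bs Y$ alone for $k$ steps (for negative $k$, swap the roles of $\bs X$ and $\bs Y$). After this, the difference lies in $\Phi'$, and the phase-by-phase coupling above applies within the coset. Note that $\Phi/\Phi'$ is generated by the image of $w_0$, since every element of $\supp(\mu)$ maps to the same class; hence it is cyclic, and finite because $\Phi'$ has full rank.

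The expected main obstacle is recurrence in high dimension. I would resolve it with the coordinate-by-coordinate swap coupling described above, making sure each phase uses only swaps of a pair $u_j,v_j$ of atoms, so that the marginals are preserved.
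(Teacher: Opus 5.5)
Your proof is correct. Part~(i) follows the paper, but part~(ii) takes a different route.

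The paper does not construct the successful coupling itself. It invokes Theorem~1.3 of \cite{Mu19coupling}, which says the set of starting differences admitting a successful coupling is exactly $\Phi'$. It then derives part~(i) by the same coset argument you give: $\supp(\mu)\subseteq\Phi'+w$, hence $\Phi'+w\mathbb Z=\Phi$, and one chain is pre-run $|m|$ steps to bring the difference into $\Phi'$.

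Your swap coupling replaces that citation with a self-contained, elementary construction. On the atoms $u_j,v_j$ you either copy or swap the increment with probability $1/2$, so each marginal stays $\mu$. Meanwhile the tracked integer coefficient $m_j$ performs a lazy simple random walk, which hits $0$ a.s.\ by one-dimensional recurrence. This correctly sidesteps the transience of the naive Ornstein difference walk for $d\geq 3$. What the paper's route buys in addition is the converse: no successful coupling exists when $y-x\notin\Phi'$. That is not needed for the lemma.

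Three small points to clean up.
\begin{itemize}
\item $\Phi/\Phi'$ need not be finite, because $\Phi'$ can have lower rank. For example, $\mu=\delta_1$ on $\mathbb Z$, or $\supp(\mu)=\{(1,0),(1,1)\}$ in $\mathbb Z^2$; both are cycle-free and give $\Phi/\Phi'\cong\mathbb Z$. This is harmless, since you only use that $\Phi/\Phi'$ is cyclic and generated by the class of $w_0$.
\item Drop the ``with positive probability \dots\ otherwise'' sentence in part~(i). The coset argument puts $\bs X_n-\bs Y_{n+k}$ in $\Phi'$ deterministically. Your phase coupling then applies using finitely many pairs whose differences generate $\Phi'$ rather than $\Phi$; such pairs exist because $\Phi'$ is finitely generated.
\item The vectors $u_j-v_j$ may be linearly dependent. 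So phase $j$ must end when the tracked coefficient $m_j$ hits $0$, not when a ``coordinate'' of the difference vanishes. Your ``kill each $m_j$ in turn'' already does this; your earlier phrasing about freezing components of the difference should be read that way.
\end{itemize}
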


\del{Note that the converses of these claims also hold.}
In dimension 1, The second statement is a slight generalization of Ornstein's coupling; see Example~5.7.7 of~\cite{bookDu19durrett} and note that the assumptions of irreducibility and aperiodicity in the proof of~\cite{bookDu19durrett} can be weakened to weak irreducibility and weak aperiodicity. A generalization to higher dimensional lattices (and more general Abelian groups) is proved in \cite{Mu19coupling} as mentioned in the following proof.

\begin{proof}
	Let $\Phi'$ be the sub-lattice generated by $\{u-v: u,v\in \supp(\mu)\}$. By Theorem~1.3 of~\cite{Mu19coupling}, $\Phi'$ is equal to the set of $y\in \Phi$ such that there exists a successful coupling of $(\bs X_n)_n$ and $(\bs Y_n)_n$ starting from $\bs X_0=0$‌ and $\bs Y_0=y$. This immediately implies \ref{lem:coupling:successful}. Now, we prove \ref{lem:coupling:shift}. One has $\supp(\mu)\subseteq \Phi'+w$ for some $w\in \Phi$. Since $\supp(\mu)$ generates $\Phi$, one obtains that $\Phi'+w\mathbb Z=\Phi$. Hence, given any $x,y\in \Phi$, there exists $m\in\mathbb Z$ such that $y-x\in \Phi'+ mw$. If $m\geq 0$, construct $(\bs X_i)_{i\leq m}$ starting from $\bs X_0:=x$. For all $i\geq 0$, one has $\bs X_i\in x+ \Phi' + iw$, and thus, $y-\bs X_m\in\Phi'$. So, there exists a successful coupling for the Markov chains starting from $\bs X_m$‌ and $y$. This gives a shift-coupling of $(\bs X_n)_n$ and $(\bs Y_n)_n$. If $m<0$, construct $(\bs Y_i)_{i\leq -m}$ starting from $\bs Y_0:=y$, which implies that $\bs Y_{-m}-x\in\Phi'$. So, there exists a successful coupling for the Markov chains starting from $x$‌ and $\bs Y_{-m}$. By the same argument as above, this gives a shift-coupling of $(\bs X_n)_n$ and $(\bs Y_n)_n$ and the claim is proved.
\end{proof}

In the case of one-dimensional Markov chains, the successful coupling of \Cref{lem:coupling} is used in Theorem~5.7.6 and Example~5.7.7 of~\cite{bookDu19durrett} to prove the tail triviality of the Markov chain. This establishes \Cref{step3foil} in this case. In the following proposition, we return to general CMTs and show that the existence of a shift-coupling (resp. successful coupling) can be used to deduce indistinguishability directly from \Cref{step1comp} (resp. \Cref{step1foil}). Note that there exist examples which do not have the shift-coupling property; e.g., the coalescing simple random walk on the regular tree (\Cref{ex:crw}).

\begin{proposition}[Simpler Case for Indistinguishability]
	\label{prop:simpler}
	One has
	\begin{enumerate}[label=(\roman*)]
		\item \label{prop:simpler-comp} In \Cref{model1}, if $[\bs G, \bs o]$ has the shift-coupling property a.s. (i.e., satisfies the claim of part~\ref{lem:coupling:shift} of \Cref{lem:coupling}), then $\bs F$ has indistinguishable connected components. 
		\item \label{prop:simpler-foil} In \Cref{model2}, if $[\bs G, \bs o]$ has the successful coupling property a.s. (i.e., satisfies the claim of part~\ref{lem:coupling:successful} of \Cref{lem:coupling}), then  $\bs F$ has indistinguishable level-sets.
	\end{enumerate}
	In particular, these claims hold for lattice CMTs under the assumptions of \Cref{lem:coupling}.
\end{proposition}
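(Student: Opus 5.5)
By ergodic decomposition (\Cref{lem:indistinguishability}) we may assume $[\bs G, \bs o]$ is ergodic; then $[\bs G, \bs o; \bs F]$ is ergodic. It suffices to show that every component-property (resp. level-set-property) $A$ has $\myprob{A}\in\{0,1\}$. By Step~\ref{step1comp} (resp. Step~\ref{step1foil}), as established in \Cref{thm:I in T 2}, we may replace $A$ by an equivalent tail branch-property (resp. tail level-set-property). So fix such an $A$.

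The core of the plan is a coupling argument at a fixed typical sample $(G,o)$ and a fixed radius $n$. We will construct two CMTs $\bs F,\bs F'$ on $G$, each with law $\mathbb P_G$, such that:
\begin{itemize}
\item $\restrict{\bs F'}{\oball{n}{o}}$ is independent of $\bs F$;
\item $\anc_{\bs F}(o)$ and $\anc_{\bs F'}(o)$ shift-couple a.s. (resp. eventually coincide with equal indices).
\end{itemize}

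The construction goes as follows. Sample $\bs F$ outside $\oball{n}{o}$, and sample independently two fresh copies of the jumps inside $\oball{n}{o}$, one for $\bs F$ and one for $\bs F'$. The two ancestral lines of $o$ are Markov chains that eventually leave $\oball{n}{o}$. Since $G$ is boundedly-finite and the chain is transient (cycle-free), each line exits the ball permanently after finitely many steps. From the exit points $x,y$, apply the shift-coupling (resp. successful coupling) property to couple the two future trajectories. The point to check is that this can be realized by choosing jumps of $\bs F'$ outside the ball so that $\bs F'$ still has the correct law: at vertices not visited by $\anc_{\bs F'}(o)$ we set $\bs F'=\bs F$. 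One must also ensure that the coupled trajectory of $\bs F'$ does not revisit vertices already used with inconsistent values. Cycle-freeness and the fact that a Markov coupling only needs to specify the jump at newly visited states (sampling it conditionally, or copying $\bs F$ when the trajectories have merged) handle this. I expect this step, turning a coupling of chains into a coupling of whole forests with the right marginals, to be the main technical obstacle. I would proceed by revealing both trajectories step by step and filling in all unrevealed jumps of $\bs F'$ from $\bs F$ at the end.

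Now we use the coupling. In $\bs F'$, modify the finitely many jumps along the pre-merging portion of the paths. After merging, $\anc_{\bs F'}(o)$ and $\anc_{\bs F}(o)$ share a tail, and $\bs F,\bs F'$ differ only at finitely many points plus possibly the independent ball values. By Observation~\eqref{obs:knowingAnc} and the definition of tail branch-properties, $\identity{A}[G,o;\bs F]=\identity{A}[G,o;\bs F']$ a.s. In the level-set case, equal-index merging means that $o$'s level-set position is preserved, so tail level-set-properties agree as well. Hence $\identity{A}(\bs F)$ is a.s. equal to a function that is independent of $\restrict{\bs F}{\oball{n}{o}}$ given $(G,o)$.

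Letting $n\to\infty$, $A$ is independent of $\bs F$ given $(G,o)$ (Lévy 0-1), so $\probPalm{G}{A}\in\{0,1\}$ and $A$ is equivalent to the event $\{\probPalm{G}{[G,o;\bs F]\in A}=1\}$, which depends only on $[G,o]$. This event is root-invariant. Indeed, $A$ is a component/level-set property and hence measurable in the invariant sigma-field; more simply, changing the root to $v$ and applying the same argument with balls around $o$ containing $v$ gives the same value. By ergodicity, $\myprob{A}\in\{0,1\}$. For lattice CMTs, \Cref{lem:coupling} supplies the required couplings.
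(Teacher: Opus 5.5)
The gap is in your last paragraph, at the claim that the event $\{h=1\}$ is root-invariant, where $h(G,o):=\probPalm{G}{[G,o;\bs F]\in A}$. Neither of your justifications works.

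\begin{itemize}
\item A component-property (resp.\ level-set-property) is invariant only under moving the root within $C_{\bs F}(o)$ (resp.\ $L_{\bs F}(o)$). It is not measurable with respect to $\sigfield{pm}{I}{}$, let alone $\sigfield{}{I}{}$; if it were, indistinguishability would be a tautology.
\item Rerunning the independence argument at another root $v$ only gives $h(G,v)\in\{0,1\}$. It does not give $h(G,v)=h(G,o)$.
\end{itemize}

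As written, nothing excludes $h(G,\cdot)$ being $1$ on one part of $G$ and $0$ on another. That is exactly how distinguishability arises: compare the unrelated sublattices in \Cref{rem:model1}, or the parity classes of level-sets in \Cref{prop:crw}.

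The paper closes this step as follows. On $\{u\in C(v)\}$ (resp.\ $\{u\in L(v)\}$) one has $\identity{A}[G,u;\bs F]=\identity{A}[G,v;\bs F]$, and both indicators are a.s.\ constant. Hence $h(G,u)=h(G,v)$ whenever $\probPalm{G}{u\in C(v)}>0$ (resp.\ $\probPalm{G}{u\in L(v)}>0$). The paper then shows that the graph on $G$ with these edges is connected:
\begin{itemize}
\item for components, this is weak irreducibility;
\item for level-sets, it is \Cref{lem:foil-connectivity}, a nontrivial step that uses weak aperiodicity (through \Cref{lem:aperiodicity}) and the infiniteness of level-sets, and which can fail without them.
\end{itemize}

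Alternatively, your hypothesis gives a coupling for \emph{every} pair of starting points, so you could rerun your construction with two roots $u,v$. This produces $\bs F'\sim\mathbb P_G$ differing from $\bs F$ at finitely many points, with $\anc_{\bs F'}(v)$ shift-coupling (resp.\ coinciding at equal indices) with $\anc_{\bs F}(u)$. Moving the root along the common tail and applying the tail property gives $\identity{A}[G,u;\bs F]=\identity{A}[G,v;\bs F']$ a.s., hence $h(G,u)=h(G,v)$. One of these two arguments must be supplied. Once $h(G,\cdot)$ is constant on $G$ for a.e.\ sample, indistinguishability follows.

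A smaller issue is the starting point of your coupling. You cannot apply the coupling property at the points of \emph{permanent} exit from $\oball{n}{o}$. The last exit time is not a stopping time, and the path after it is conditioned never to return, so it is not the Markov chain the coupling property is about. If you start at the first exit instead, the coupled $\bs F'$-path may re-enter the ball at vertices whose jumps were already fixed by the independent copy; cycle-freeness of that path does not prevent this. The paper abandons the coupling at each such re-entry and starts a fresh one once the path leaves the ball again. This happens only finitely often, since each vertex of the finite ball is visited at most once.
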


This is a special case of \Cref{thm:indistinguishability}, but we present a separate proof which is significantly simpler. The goal is to highlight the differences with the proof of the general case of \Cref{thm:indistinguishability}. So, we will make further assumptions in the proof (assuming one-endedness and that \Cref{step1comp,step1foil} hold) for simplicity of arguments.

\begin{proof}
	We may assume that $[\bs G, \bs o]$ is ergodic.
	Let $A$ be a component-property (resp. level-set-property). It is enough to prove that $\myprob{A}\in\{0,1\}$.
	As mentioned before the proposition, we assume that \Cref{step1comp} (resp. \Cref{step1foil}) holds (this will be proved for general point-maps in \Cref{sec:drainage}, but can also be proved for CMTs using the method of \textit{pivotal updates} of~\cite{LySc99,HuNa17indistinguishability}). Therefore, it is enough to assume that $A$ is a tail component-property (resp. tail level-set-property). We also assume that $\bs F$‌ is one-ended a.s. for simplicity (otherwise, for part~\ref{prop:simpler-comp}, one needs to assume that $A$‌ is a \textit{tail branch-property}; see \Cref{sec:drainage}).
	
	The idea of the proof is to show that $A$ is independent from the restriction of $\bs F$ to $\oball{n}{\bs o}$ (conditionally on $[\bs G, \bs o]$) for all $n$, and to deduce that $A$ is independent of itself.
	Let $n\geq 0$ be arbitrary. For every sample $(G,o)$ of $[\bs G, \bs o]$, construct another copy $\bs F'$ of $\bs F$ on $G$ as follows. Inside $\oball{n}{o}$, construct $\bs F'$ as an independent copy of $\bs F$ (given $(G, o)$). This constructs a part of $\anc_{\bs F'}(o)$. 
	By assumption, one can continue the ancestor line of $o$ in $\bs F'$ by coupling it with $\bs F$ in such a way that $\anc_{\bs F'}(o)$ shift-couples (resp. successfully couples) with $\anc_{\bs F}(o)$. The possible issue is that $\anc_{\bs F'}(o)$ hits $\oball{n}{o}$ before the coupling time. If this occurs, say at $x=\bs F'^{(T)}(o)$‌ for the first time, forget the coupling after time $T$ since $\bs F'(x)$ is already defined. Instead, after time $T+1$, use a similar coupling starting from $\bs F'(x)$ and repeat if $\oball{n}{o}$ is hit again. Using the fact that every point of $\oball{n}{o}$ appears at most once in $\anc_{\bs F'}(o)$ (by the cycle-free condition), this procedure repeats at most finitely many times. So, at the end, $\anc_{\bs F'}(o)$ is constructed such that it shift-couples (resp. successfully couples) with $\anc_{\bs F}(o)$ a.s. Let $E$‌ denote the event where this last property holds, which satisfies $\myprob{E}=1$.
	
	Finally, for each $x\not\in\oball{n}{o}$ where $\bs F'(x)$ is not already defined, let $\bs F'(x):=\bs F(x)$. The definition implies that $\bs F'$ has the same distribution as $\bs F$ given $(G, o)$ and given $\restrict{\bs F}{\oball{n}{o}}$).
	
	Let $A'$ be the event $[\bs G, \bs o; \bs F']\in A$. By construction, $A$ is independent from $\restrict{\bs F'}{\oball{n}{o}}$ (conditionally on $[\bs G, \bs o]$). Also, the construction shows that $\bs F'$ is a finite modification of $\bs F$ and the component (resp.  level-set) of $\bs o$ is modified at only finitely many points (one the event $E$). Therefore, since $A$ is a tail component-property (resp. level-set-property), one obtains that $A\cap E = A'\cap E$. Hence, $\myprob{A\Delta A'}=0$. This shows that $A'$ is independent from $\restrict{\bs F'}{\oball{n}{o}}$ (conditionally on $[\bs G, \bs o]$). Since this holds for all $n$, one obtains that $A'$ is independent from $\bs F'$ (conditionally on $[\bs G, \bs o]$). Therefore, $A'$ is independent from itself (conditionally on $[\bs G, \bs o]$). So, $\probCond{A'}{\bs G, \bs o}\in\{0,1\}$, and hence, $\probCond{A}{\bs G, \bs o}\in\{0,1\}$.
	
	For every sample $(G,o)$ of $[\bs G, \bs o]$, let $h(G,o):=\probPalm{G}{[G,o;\bs F]\in A}$, where $\mathbb P_{G}$ is defined in \Cref{model1}. By the last paragraph, $h\in\{0,1\}$ for almost every sample $(G,o)$. Note that for $u,v\in G$, on the event $u\in C(v)$ (resp.  $u\in L(v)$), one has $\identity{A}[G,u]=\identity{A}[G,v]$ by the definition of $A$. Therefore, if $\probPalm{G}{u\in C(v)}>0$ (resp. $\probPalm{G}{u\in L(v)}>0$), then $h(G,u)=h(G,v)$. 
	So, $h$ is constant if we show that the graph with vertex set $G$ and edge set $E_C:=\{(u,v): \probPalm{G}{u\in C(v)}>0\}$ (resp. $E_L:=\{(u,v): \probPalm{G}{u\in L(v)}>0\}$) is connected. The connectedness of $E_C$ is directly implied by the weak irreducibility condition. That of $E_L$ can also be proved by the weak aperiodicity condition, but this is a little more involved and we prove it in \Cref{lem:foil-connectivity}.
	
	The last paragraph shows that, in almost every sample $(G,o)$, either $\forall u\in G: \probPalm{G}{[G,u;\bs F]\in A}=0$ or $\forall u\in G: \probPalm{G}{[G,u;\bs F]\in A}=1$. This implies indistinguishability and the claim is proved.
%
%
\end{proof}

The last result implies \Cref{prop:latticeIndistinguishability} as follows.

\begin{proof}[Proof of \Cref{prop:latticeIndistinguishability}]
	%
	As mentioned after the definition of \Cref{model1}, the balance condition holds for $b\equiv 1$.
	Let $\Phi'$ and $w$ be as in the proof of \Cref{lem:coupling}. 
	The idea is that $\bs F$ has similar behaviors in all translates of $\Phi'$. This is formalized as follows.
	
	One has $\bs F(\Phi+mw)\subseteq \Phi+(m+1)w$ for all $m\in\mathbb Z$. One can repeat the proof of \Cref{prop:simpler} until deducing that $h\in\{0,1\}$, but the graphs with edge sets $\{(u,v): \probPalm{G}{u\in C(v)}>0\}$ or $\{(u,v): \probPalm{G}{u\in L(v)}>0\}$ are possibly disconnected. 
	Repeating the proof is possible since, in the proof, the constructed parts of $\anc_{\bs F}(\bs o)$ and $\anc_{\bs F'}(\bs o)$ start from the same vertex. This implies that $\forall n: \bs F^{(n)}(\bs o)-\bs F'^{(n)}(\bs o)\in \Phi'$, and hence, the successful coupling in the proof can always be constructed for lattice CMTs with the minimal assumptions of this proposition. Now, since $\Phi$‌ is transitive, $h$ should be constant. So, either $h\equiv 0$ or $h\equiv 1$. This implies the claim.
\end{proof}

\section{Steps C3 and L3: Ergodicity and Tail Triviality of Markov Chains on Unimodular Spaces}
\label{sec:Markov}

In this section, as announced in \Cref{main:markov}, we study Markov chains on unimodular graphs or discrete spaces. We prove the results stated in \Cref{main:markov}; i.e., stationarity (after a suitable biasing), ergodicity and tail-triviality for such Markov chains. These results establish \Cref{step3comp} and \Cref{step3foil}, but recall that there is no CMT in these results.

%
%

\subsection{Stationarity}
\label{subsec:stationary}

In this subsection, we generalize Theorem~4.1 of~\cite{processes} regarding the stationarity of Markov chains on unimodular graphs.
Fix a factor Markov kernel $K$ as in \Cref{subsec:cmt}. 
Given a sample $(G,o)$, let $(\bs X_n)_{n\geq 0}$ be a Markov chain on $G$ with kernel $K$ and starting from $o$. Assume that the balance condition of \Cref{model1} is satisfied by $K$ for some initial bias $b:\mathcal G_*\to\mathbb R^{\geq 0}$ (note again that no CMT‌ is required for this assumption). So, the measure $b(G,\cdot)$ is an invariant measure for the Markov chain on $G$. In general, $b(G,\cdot)$ is an infinite measure and the Markov chain on $G$ might be transient.

Below, we define a different Markov chain on the space $\mathcal G_{\infty}$ of all objects $[G,x_0;(x_i)_{i\geq 0}]$, where $G$ is a graph or discrete space and $(x_i)_i$ is a sequence of points in $G$ . 
Consider the shift operator $\sigma$ on $\mathcal G_{\infty}$ defined as follows:
\begin{equation}
	\label{eq:shift}
	\begin{split}
		\sigma[G,x_0;(x_i)_{i\geq 0}]&:= [G,x_1;(x_{i+1})_{i\geq 0}].
	\end{split}
\end{equation}
It can be proved that the map that assigns, to every $[G,o]$, the distribution of $[G,o;(\bs X_n)_{n\geq 0}]$ (which is a probability measure on $\mathcal G_{\infty}$), is measurable. So, given a random graph or discrete space $[\bs G, \bs o]$, the random element $[\bs G, \bs o; (\bs X_n)_{n\geq 0}]$ of $\mathcal G_{\infty}$ is well defined. \del{The\mar{\ali{Since \cite{processes} already discusses stationarity, we may move further explanations to the book.}} next lemma studies when the distribution of $[\bs G, \bs o; (\bs X_n)_n]$ is stationary (i.e., invariant) under the shift $\sigma$, possibly after a biasing.
Note that the Markov chain $(\bs X_n)_n$ on $G$ might be transient (e.g., in \Cref{model1}) and might not have any stationary probability measure, in contrast to the Markov chain $([\bs G, \bs X_n; (\bs X_{n+i})_{i\geq 0}])_n$ on $\mathcal G_{\infty}$.}

Recall that $\omid{b(\bs o)}<\infty$ in \Cref{model1} and that biasing by $b(\bs o)$ means considering the probability measure $\hat{\mathbb P}[B]:=\frac{1}{\omid{b(\bs o)}} \omid{b(\bs o)\identity{B}}$ for every event $B$. One might bias the distribution of $[\bs G, \bs o; (\bs X_n)_{n\geq 0}]$ by $b(\bs o)$ as well. We denote the resulting distribution by $\hat{\mathbb P}$ again, by an abuse of notation.

\begin{lemma}[Stationarity of Markov Chains on Unimodular Graphs \cite{processes}]
	\label{lem:stationary}
	If $K$ satisfies the balance condition of \Cref{model1}, then the probability measure $\hat{\mathbb P}$ on $\mathcal G_{\infty}$ is invariant under the shift $\sigma$. In other words, after biasing by $b(\bs o)$, the Markov chain $\left([\bs G, \bs X_n; (\bs X_{n+i})_{i\geq 0}]\right)_{n\geq 0}$ (on $\mathcal G_{\infty}$) is stationary.
\end{lemma}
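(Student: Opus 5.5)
Since the chain $\left([\bs G, \bs X_n; (\bs X_{n+i})_{i\geq 0}]\right)_n$ is just the shift $\sigma$ acting on one $\mathcal G_{\infty}$-valued random element, it suffices to show $\hat{\mathbb P}[\sigma^{-1}B]=\hat{\mathbb P}[B]$ for every measurable $B\subseteq\mathcal G_{\infty}$; stationarity of the chain under $\hat{\mathbb P}$ then follows by iterating. For a sample $(G,x)$, write $\mathbb P_{(G,x)}$ for the law of the Markov chain on $G$ with kernel $K$ started at $x$, and set
\[
\phi_B(G,x):=\mathbb P_{(G,x)}\left[[G,x;(\bs X_i)_{i\geq 0}]\in B\right],
\]
which is a measurable function on $\mathcal G_*$ by the measurability noted before the lemma.

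The first step uses the Markov property at time $1$. Conditionally on $\bs X_1=y$, the shifted path $[G,y;(\bs X_{1+i})_{i\geq 0}]$ has law $\mathbb P_{(G,y)}$. Hence
\[
\hat{\mathbb P}[\sigma^{-1}B]=\frac{1}{\omid{b(\bs o)}}\,\omid{b(\bs o)\sum_{y\in\bs G}K(\bs o,y)\,\phi_B(\bs G,y)}.
\]

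The second step applies the mass transport principle~\eqref{eq:mtp} with the nonnegative transport $g(G,x,y):=b(G,x)K(G,x,y)\phi_B(G,y)$. This $g$ is a measurable function on $\mathcal G_{**}$ because $b$, $K$ and $\phi_B$ are all factors. The principle gives
\[
\omid{\sum_y b(\bs o)K(\bs o,y)\phi_B(y)}=\omid{\sum_x b(x)K(x,\bs o)\,\phi_B(\bs o)}.
\]
By the balance condition $\sum_x b(x)K(x,\bs o)=b(\bs o)$, the right-hand side equals $\omid{b(\bs o)\phi_B(\bs o)}$. After normalization this is exactly $\hat{\mathbb P}[B]$. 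Everything is nonnegative, so Tonelli justifies all interchanges even though $b(G,\cdot)$ may be an infinite measure; the only finiteness needed is $\omid{b(\bs o)}<\infty$, which makes $\hat{\mathbb P}$ a probability measure.

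The main delicate point is measurability and well-definedness up to isomorphism. One needs $\phi_B$ to depend only on $[G,x]$, which holds because $K$ is defined on $\mathcal G_{**}$ and $B$ is an isomorphism-invariant set. One also needs the conditional law of the shifted path, read as an element of $\mathcal G_{\infty}$, to be $\phi_B(G,\bs X_1)$. Both follow from the factor property of $K$ together with the standard Markov property on the fixed sample $G$, so I expect this step to be routine rather than a real obstacle.
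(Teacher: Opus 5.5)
Your proposal is correct and follows essentially the same route as the paper's proof. Both condition on $\bs X_1$ via the Markov property and apply the mass transport principle to the transport $b(x)K(x,y)\phi_B(y)$. The balance condition then turns $\omid{\sum_x b(x)K(x,\bs o)\phi_B(\bs o)}$ into $\omid{b(\bs o)\phi_B(\bs o)}$, which gives $\hat{\mathbb P}[\sigma^{-1}B]=\hat{\mathbb P}[B]$.
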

This lemma generalizes Theorem~4.1 of~\cite{processes}, which proves that $\left([\bs G, \bs X_n]\right)_n$ is a stationary Markov chain when biasing by $b(\bs o)$ (reversibility is also studied in \cite{processes}). 
In fact, the special case of random walks is already considered in Equation~(1.1) of~\cite{HuNa17indistinguishability}.
An important consequence of this result is that it enables the use of Birkhoff's pointwise ergodic theorem.

\begin{proof}
	Fix an event $B\subseteq\mathcal G_{\infty}$. 
	Let $\mathbb P_{(G,v)}$ be the distribution of the Markov chain $(\bs X_i)_{i\geq 0}$ on $G$ starting from $v$. 
	Define the function $g:\mathcal G_{*}\to\mathbb R^{\geq 0}$ by $g[G,v]:= \probPalm{(G,v)}{[G,v;(\bs X_i)_{i\geq 0}]\in B}$. One has
	\begin{eqnarray*}
			\probhat{[\bs G, \bs X_1; (\bs X_{i+1})_{i\geq 0}]\in B} &=& \frac 1{\omid{b(\bs o)}} \omid{b(\bs o)\sum_{v\in\bs G} K(\bs o,v) g(v)}\\
			&=& \frac 1{\omid{b(\bs o)}} \omid{\sum_{v\in\bs G} b(v) K(v, \bs o) g(\bs o)}\\
			&=& \frac 1{\omid{b(\bs o)}} \omid{b(\bs o)g(\bs o)}\\
			&=& \probhat{[\bs G, \bs X_0; (\bs X_i)_{i\geq 0}]\in B},
		\end{eqnarray*}
	where the second and third equations hold by the MTP~\eqref{eq:mtp} and the balance condition respectively. So, the claim is proved.
\end{proof}


Note that if $K$ is cycle-free and one considers a CMT $\bs F$‌ with the same kernel $K$, then $\anc(\bs o)=(\bs  F^{(n)}(\bs o))_n$ has the same distribution as $(\bs X_n)_n$‌ given $\bs G$ and $\bs o$, but the sequence $[\bs G, \bs F^{(n)}(\bs o); \bs F]$ is not stationary in general, as discussed before \Cref{lem:shiftToF(o)}.




\subsection{\Cref{step3comp}: Invariant Path Events and Ergodicity}
\label{subsec:pathErgodic}

In this subsection, we prove the part of \Cref{thm:pathIntro} regarding ergodicity, which is restated in \Cref{thm:pathErgodic} below.

	

For a deterministic graph or discrete space $G$, the space $G^{\mathbb N\cup\{0\}}$ is the state space of Markov chains on $G$. We 
recall the following classical definition.
\begin{definition}
	\label{def:path-invariant-fixed}
	For deterministic $G$, an event $A\subseteq G^{\mathbb N\cup\{0\}}$ is called an \defstyle{invariant path property on $G$} if it is invariant under the shift $(x_i)_{i\geq 0}\mapsto (x_{i+1})_{i\geq 0}$. Let $\sigfield{path}{I}{G}$ denote the sigma-field of invariant path properties of $G$.
\end{definition}

Allover the paper, the subscript $G$ is used for the sigma-fields when a deterministic graph $G$ is fixed.
The following similar definition allows one to consider all graphs, but it is important to note that graphs are considered up to isomorphism, despite the last definition.


\begin{definition}[Invariant Path Properties]
	\label{def:path-invariant}
	Consider the space $\mathcal G_{\infty}$  defined in \Cref{subsec:stationary}.
		An event $A\subseteq \mathcal G_{\infty}$ is called a (shift-) \defstyle{invariant path property} (on $\mathcal G_{\infty}$) if $\sigma^{-1}A=A$. Equivalently, for every $G$ and every pair of sequences $(x_i)_{i\geq 0}$ and $(x'_i)_{i\geq 0}$ in $G$ that shift-couple {(see \Cref{sec:notation})}, one has $\identity{A}[G,x_0;(x_i)_i]=\identity{A}[G,x'_0;(x'_i)_i]$. Let $\sigfield{path}{I}{}$ be the sigma-field of invariant path events on $\mathcal G_{\infty}$.
\end{definition}

\Cref{ex:pathProperty-3reg} shows an example of an invariant event. 
Note also that, given $G$, the map $(x_i)_i\mapsto [G,x_0;(x_i)_i]$ induces a function $\sigfield{path}{I}{}\to\sigfield{path}{I}{G}$.

This following theorem is a restatement of Part~\ref{thm:pathIntro:ergodic} of \Cref{thm:pathIntro}. As mentioned in \Cref{main:markov}, it is a slight generalization of Theorem~4.6 of~\cite{processes}.

\begin{theorem}[\Cref{step3comp}: Ergodicity of Markov Chains on Unimodular Graphs, \cite{processes}]
	\label{thm:pathErgodic}
	Let $[\bs G, \bs o]$ be an ergodic unimodular graph and $(\bs X_i)_i$ be a Markov chain on $\bs G$, starting from $\bs o$, whose kernel satisfies the balance condition and the weak irreducibility of \Cref{model1} (but not necessarily the cycle-free condition). Then, 
	\begin{enumerate}[label=(\roman*)]
		\item \label{thm:pathErgodic-trivial} The invariant path sigma-field $\sigfield{path}{I}{}$ is trivial.
		\item \label{thm:pathErgodic-ergodic} The distribution of $[\bs G, \bs o; (\bs X_i)_i]$, biased by $b(\bs o)$ (which is invariant under the shift $\sigma$ by \Cref{lem:stationary}), is ergodic.
		\item \label{thm:pathErgodic-harmonic} Every bounded $K$-harmonic function on $\mathcal G_*$ is essentially constant.
	\end{enumerate}
	In addition, {if $[\bs G, \bs o]$ is not ergodic, then} $\sigfield{path}{I}{}=\sigfield{}{I}{}$ mod $\mathbb P$.
\end{theorem}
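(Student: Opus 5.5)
The plan is to route everything through harmonic functions. Given an invariant path event $A\in\sigfield{path}{I}{}$, define $h[G,v]:=\probPalm{(G,v)}{[G,v;(\bs X_i)_{i\geq 0}]\in A}$, a measurable function on $\mathcal G_*$. Shift-invariance of $A$ and the Markov property give $h(v)=\sum_w K(v,w)h(w)$, so $h$ is bounded and $K$-harmonic. Conversely, by the martingale convergence theorem, $h(\bs X_n)\to \identity{A}$ a.s. This is Lévy's 0-1 law: $h(\bs X_n)=\probCond{A}{\bs X_0,\dots,\bs X_n}$, using the invariance of $A$ under shifts and under the isomorphism class. Hence it suffices to prove part~\ref{thm:pathErgodic-harmonic}: once $h$ is a.s. constant $c$, we get $\identity{A}=c$ a.s., which yields part~\ref{thm:pathErgodic-trivial}. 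Part~\ref{thm:pathErgodic-ergodic} then follows, since $\hat{\mathbb P}\ll\mathbb P$ and ergodicity of a stationary measure is the triviality of invariant sets.

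For part~\ref{thm:pathErgodic-harmonic}, take a bounded $K$-harmonic $h$ on $\mathcal G_*$. The key step is to show that $h(\bs X_1)=h(\bs X_0)$ a.s. under $\mathbb P$. I would use the standard variance computation under the stationary measure $\hat{\mathbb P}$ of \Cref{lem:stationary}. Since $h(\bs X_n)$ is a bounded martingale,
\[
\omidhat{(h(\bs X_1)-h(\bs X_0))^2}=\omidhat{h(\bs X_1)^2}-\omidhat{h(\bs X_0)^2},
\]
and stationarity makes the right-hand side zero. This requires only $\omid{b(\bs o)}<\infty$. Since $b>0$, the measures $\hat{\mathbb P}$ and $\mathbb P$ are equivalent, so $h(v)=h(w)$ whenever $K(v,w)>0$, for $\mathbb P$-almost every rooted sample, and (via the MTP / \Cref{lem:happensAtRoot}) simultaneously for all $v\in G$ with $K(v,w)>0$. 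More precisely, the set of $v$ having some $w$ with $K(v,w)>0$ and $h(w)\neq h(v)$ is an equivariant subset with $\myprob{\bs o\in\cdot}=0$, hence empty a.s.

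Weak irreducibility then propagates equality along the undirected edges $\{K(x,y)+K(y,x)>0\}$, which are connected. So $h[G,\cdot]$ is constant on $G$, that is, $h$ is $\sigfield{}{I}{}$-measurable. Ergodicity of $[\bs G,\bs o]$ makes $h$ essentially constant. In the non-ergodic case the same argument shows that $h$ is $\sigfield{}{I}{}$-measurable, and therefore $\identity{A}=\lim h(\bs X_n)=h(\bs X_0)$ a.s. is $\sigfield{}{I}{}$-measurable mod $\mathbb P$. The reverse inclusion $\sigfield{}{I}{}\subseteq\sigfield{path}{I}{}$ is trivial, which gives $\sigfield{path}{I}{}=\sigfield{}{I}{}$ mod $\mathbb P$.

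The main obstacle I expect is the measurability and isomorphism-invariance bookkeeping in the Lévy 0-1 step. One has to check that conditioning on the path up to time $n$, up to rooted isomorphism, indeed yields $h[G,\bs X_n]$. This uses that $A$ depends on $[G,x_n;(x_{n+i})_i]$ only, by shift invariance. The strict positivity of $b$ is used to transfer $\hat{\mathbb P}$-a.s. statements back to $\mathbb P$.
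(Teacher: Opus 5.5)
Your proposal is correct, but it runs in the opposite direction from the paper.

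\textbf{The paper's route.} The paper proves part~(i) first, with a Lyons--Schramm-type argument:
\begin{enumerate}
\item It extends the $\widehat{\mathbb P}$-stationary chain to negative times, using the time-reversal, which is conditionally independent of the future given the rooted graph.
\item It approximates $A$ by a cylinder event $A'$ depending on $(\bs X_0,\dots,\bs X_N)$ and pushes $A'$ into the past via $\rho^N$.
\item Conditional independence of past and future then gives $\probhatC{A}{\bs G,\bs o}\in\{0,1\}$.
\end{enumerate}
Only after that does it propagate this $0$-$1$ value along edges with $K(u,u')>0$, using a positive-probability coupling $\bs X_{i+1}=\bs X'_i$, and then invoke weak irreducibility and ergodicity. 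Part~(iii) is deduced from part~(i) at the end, by martingale convergence.

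\textbf{Your route.} You prove part~(iii) first, via the identity
$\omidhat{(h(\bs X_1)-h(\bs X_0))^2}=\omidhat{h(\bs X_1)^2}-\omidhat{h(\bs X_0)^2}=0$.
This gives $h(v)=h(w)$ across every $K$-edge directly, without first knowing that $h$ is $\{0,1\}$-valued. You then apply it to $h_A(v)=\mathbb P_{(G,v)}[A]$ and use L\'evy's $0$-$1$ law to obtain part~(i).

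\textbf{Comparison.} Your argument is shorter and more elementary. It avoids the two-sided extension and the $\epsilon$-approximation, and it never uses the $0$-$1$ property as an intermediate step. It also yields the non-ergodic identity $\sigfield{path}{I}{}=\sigfield{}{I}{}$ mod $\mathbb P$ immediately, since $\identity{A}=h_A(\bs X_0)$ a.s. with $h_A$ root-invariant.

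The paper's argument instead produces, as an intermediate result, the conditional statement that invariant path events are $0$-$1$ given $[\bs G,\bs o]$. This is the same ``independent of itself conditionally on the base'' template the paper reuses in \Cref{sec:warmUp} and \Cref{thm:tailDrainage}.

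The isomorphism bookkeeping you flag is harmless. L\'evy's law is applied on a fixed representative $(G,o)$, where shift-invariance gives $\identity{A}[G,o;(\bs X_i)_i]=\identity{A}[G,\bs X_n;(\bs X_{n+i})_i]$. The conditional probability $h_A[G,\bs X_n]$ depends only on the rooted class.
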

In this theorem, we say that a measurable function $h:\mathcal G_*\to\mathbb R$ is \defstyle{$K$-harmonic} if \[h[G,o]=\sum_{x\in G} K(o,x) h[G,x], \quad \forall [G,o]\in\mathcal G_*.\]

\begin{proof}
	\ref{thm:pathErgodic-trivial}. 
	We proceed similarly to the proof of Theorem~5.1 of~\cite{LySc99}.
	Consider the space of all $[G,o; (x_i)_{i\in\mathbb Z}]$; i.e., we allow for negative indices. Let $\rho$ be the shift on this space defined similarly to $\sigma$.
	Since $[\bs G, \bs o; (\bs X_n)_{n\geq 0}]$ is $\hat{\mathbb P}$-stationary by \Cref{lem:stationary}, one can extend it to negative times to obtain a $\hat{\mathbb P}$-stationary process $[\bs G, \bs o; (\bs X_n)_{n\in\mathbb Z}]$ (consider $[\bs G, \bs X_m; (\bs X_{m+n})_{n\geq -m}]$ and take weak limit using the consistency of this sequence). One can check that the process $(\bs X_{-n})_{n\geq 0}$ is the time-reversal {(with respect to the infinite invariant measure $b$)}; i.e., a random walk with kernel $K'(x,y):=b(y)K(y,x)/b(x)$, and is independent from $(\bs X_n)_{n\geq 0}$ (conditionally on $(\bs G, \bs o)$). 
	
	Let $A\in\sigfield{path}{I}{}$. It makes sense to consider $\identity{A}[\bs G, \bs o; (\bs X_i)_{i\in\mathbb Z}]$. Let $\epsilon>0$ be arbitrary. {By standard tools of measure theory, one can approximate $A$ as follows:} There exists $N\in\mathbb N$ and an event $A'\subseteq\mathcal G_{\infty}$ that depends only on $[\bs G, \bs o; (\bs X_i)_{0\leq i\leq N}]$, such that $\probhat{A\Delta A'}<\epsilon$.
	By the stationarity (\Cref{lem:stationary}) and the relation $\sigma^{-1}(A)=A$, one obtains $\probhat{A\Delta \rho^N(A')}<\epsilon$. The key point is that $\rho^N(A')$ depends only on $[\bs G, \bs o; (\bs X_i)_{i\leq 0}]$, while $A'$ depends only on $[\bs G, \bs o; (\bs X_i)_{i\geq 0}]$. Since the two sides of $(\bs X_i)_i$ are independent, this implies that $\rho^N(A')$ is independent from $A'$, conditionally on $[\bs G, \bs o]$. Since both events are arbitrarily close to $A$, it should be the case that $A$ is independent from itself, conditionally on $[\bs G, \bs o]$. This is made precise in the next paragraph.
	
	Let $Y:=\probhatC{A}{\bs G, \bs o}$, $Z:=\probhatC{A'}{\bs G, \bs o}$ and $W:=\probhatC{\rho^{N}(A')}{\bs G, \bs o}$. The previous paragraph implies that $\omidhat{(Z-Y)^+}\leq \probhat{A'\setminus A}<\epsilon$ and $\omidhat{(W-Y)^+}\leq \probhat{\rho^N(A')\setminus A}<\epsilon$. Hence, using the facts $Y\geq Z-(Z-Y)^+\geq 0$ and $Z,W\in[0,1]$, one obtains
	\begin{align*}
		\omidhat{Y^2} & \geq \omidhat{(Z-(Z-Y)^+)(W-(W-Y)^+)}\\
		&\geq \omidhat{ZW}-\omidhat{(Z-Y)^+} - \omidhat{(W-Y)^+}\\
		&\geq \omidhat{\omidhatC{\identity{A'}}{\bs G, \bs o}\cdot\omidhatC{\identity{\rho^N(A')}}{\bs G, \bs o}} -2\epsilon\\
		&= \omidhat{\omidhatC{\identity{A'}\identity{\rho^N(A')}}{\bs G, \bs o}}-2\epsilon\\
		&= \probhat{A'\cap \rho^N(A')} -2\epsilon\\
		&\geq \probhat{A} -4\epsilon = \omidhat{Y}-4\epsilon. 
	\end{align*}
	Since $\epsilon$ can be arbitrarily small and $0\leq Y\leq 1$, this implies that $Y\in\{0,1\}$ a.s.; i.e., $\probhatC{A}{\bs G, \bs o}\in\{0,1\}$ a.s.
	
	
	We now show that $\probhatC{A}{\bs G, \bs o}$ does not depend on $\bs o$ (note that the explicit distribution of $(\bs X_n)_n$, given a sample of $(\bs G,\bs o)$, provides an explicit version of the last conditional distribution).
	For $j\in\{0,1\}$, let $E_j$ be the event $\probhatC{A}{\bs G, \bs o}=j$.
	By the previous paragraph and \Cref{lem:happensAtRoot}, almost all samples $(G,o)$ of $[\bs G, \bs o]$ satisfy $[G,u]\in E_0\cup E_1, \forall u\in G$. Consider such a sample.
	The definition of $E_j$ implies that, if $[G,u]\in E_j$, then $\myprob{[G,u;(\bs X_i)_i]\in A}=j$, where $(\bs X_i)_i$ is the Markov chain started from $u$. Assume $K(u,u')>0$ and let $(\bs X'_i)_i$ be the Markov chain started from $u'$. Since $\myprob{\bs X_1=u'}>0$, we can couple the two chains in such a way that $\myprob{\bs X_{i+1}=\bs X'_i, \forall i}>0$. In this case, the invariance of $A$ gives $\identity{A}[G,u; (\bs X_i)_i]=\identity{A}[G,u'; (\bs X'_i)_i]$. Since this happens with positive probability, one obtains that $\identity{E_j}[G,u]=\identity{E_j}[G,u']$. Hence, the assumption of weak irreducibility of \Cref{model1} implies that either $[G,u]\in E_0,\forall u\in G$ or $[G,u]\in E_1, \forall u\in G$. This implies that $E_0$ and $E_1$ are invariant events up to null modifications (note that we have considered a generic sample in the last argument). So, the ergodicity of $[\bs G, \bs o]$ implies that $\myprob{E_j}\in\{0,1\}$. This implies that $\myprob{A}\in\{0,1\}$ and the claim is proved.
	
	\ref{thm:pathErgodic-ergodic}. The claim is already proved in part~\ref{thm:pathErgodic-trivial} by the definition of ergodicity.
	
	\ref{thm:pathErgodic-harmonic}. 
	The claim is proved similarly to the classical bijection between invariant functions and harmonic functions for Markov chains (see e.g, \cite{bookLyPe16}). Let $h$ be a bounded $K$-harmonic function on $\mathcal G_*$. For every deterministic $(G,o)$ satisfying the balance condition of \Cref{model1}, the process $h[G, \bs X_n]$ is a bounded martingale. Hence, the limit $\tilde h[G,o,(x_i)_i]:=\lim_i h[G, x_i]$ is well defined for almost every sample $(x_i)_i$ of the random walk. It can be seen that $\tilde h$ is an invariant {path function}. So, the triviality of $\sigfield{path}{I}{}$ implies that $\tilde h[\bs G, \bs o; (\bs X_i)_i]$ is essentially constant. In addition, the martingale property implies that $h[G,o] = \omid{\tilde{h}[G,o;(\bs X_i)_i]}$. This implies that $h[\bs G, \bs o]$ is also essentially constant.
\end{proof}

\subsection{Strengthening Weak Aperiodicity}
\label{subsec:aperiodic}

This subsection is only needed for the indistinguishability of level-sets and the reader might skip it at first reading.
As a first application of \Cref{thm:pathErgodic}, we strengthen the weak aperiodicity property of \Cref{model2} in the following lemma, which will be used later in the proof of \Cref{thm:pathI=T}. 

\begin{lemma}
	\label{lem:aperiodicity}
	In \Cref{model2}, almost surely,
	\[
	\forall u\in \bs G, \forall d_0\in\mathbb N: \exists n\in\mathbb N, \exists y\in \bs G: K^n(u,y)>0 \text{ and } K^{n+d_0}(u, y)>0.
	\]
	The cycle-free and one-endedness conditions are not needed here.
\end{lemma}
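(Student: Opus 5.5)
The idea is to define, for each sample $(G,u)$, the set of achievable lag differences
\[
D(G,u):=\{d\in\mathbb Z_{\geq 0}:\ \exists n\geq 0,\ \exists y\in G:\ K^n(u,y)>0,\ K^{n+d}(u,y)>0\},
\]
show that it is closed under addition and is the same for all $u\in G$, and then use weak aperiodicity.

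\textbf{Step 1: additive closure.} Suppose $d_1,d_2\in D(G,u)$, witnessed by $(n_1,y_1)$ and $(n_2,y_2)$, with $d_2$ witnessed from the point $y_1$. Concatenating paths gives $K^{n_1+n_2}(u,y_2)>0$ and $K^{n_1+n_2+d_1+d_2}(u,y_2)>0$. The second path takes $n_1+d_1$ steps to reach $y_1$ and then $n_2+d_2$ steps to reach $y_2$. So the property we actually need is that $D(G,\cdot)$ is constant along $G$.

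\textbf{Step 2: constancy.} This is where \Cref{thm:pathErgodic} enters, and I expect it to be the main obstacle. Fix $d_0$ and let $E_{d_0}$ be the event that $d_0\in D(G,o)$. If $K(u,u')>0$ and $d_0\in D(G,u')$, then prepending the step $u\to u'$ gives $d_0\in D(G,u)$. Hence the function $h=\identity{E_{d_0}}$ is monotone along the chain: $h[G,X_{i+1}]\leq h[G,X_i]$. I would argue via stationarity of the biased chain (\Cref{lem:stationary}) that $\hat{\mathbb P}[h(X_0)=1,\,h(X_1)=0]=0$, since $h(X_0)$ and $h(X_1)$ have the same law and $h(X_1)\le h(X_0)$. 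Therefore $h(X_1)=h(X_0)$ a.s., so $h$ is invariant along $K$-edges, both forwards and backwards. Weak irreducibility then makes $h$ constant on $G$ for typical samples.

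Concretely, the backward direction comes from the reversed chain in the proof of \Cref{thm:pathErgodic}. Alternatively, $h$ restricted to $\{K>0\}$-neighbours is a bounded harmonic-type function, and one can invoke part~\ref{thm:pathErgodic-harmonic} together with \Cref{lem:happensAtRoot}. Either way, for almost every sample, $D(G,u)=D(G)$ for all $u$.

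\textbf{Step 3: conclusion.} Any $d$ in the set~\eqref{eq:aperiodic} lies in $D(G,x)$ for some $x$, hence in $D(G)$. By Step 1 with constancy, $D(G)$ is an additive semigroup of $\mathbb N$ whose elements have gcd $1$. By the standard numerical-semigroup fact, it therefore contains all sufficiently large integers.

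To get every $d_0\geq1$, I would use translation: if $d\in D$ then also $d+d'$ for any $d'\in D$. This alone gives only large values, so instead I would show that $D(G)$ is closed under differences. Suppose $a,b\in D$ with $a<b$. Witness $b$ from $u$ at $(n,y)$, and then witness $a$ from $y$ at $(m,z)$. This yields paths from $u$ to $z$ of lengths $n+m$, $n+b+m$, $n+m+a$ and $n+b+m+a$. Comparing $n+m+a$ with $n+b+m$ gives the difference $b-a\in D$. A set of positive integers closed under both sums and positive differences is $g\mathbb N$ with $g=\gcd=1$, which gives all $d_0\in\mathbb N$.

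Cycle-freeness and one-endedness are never used.
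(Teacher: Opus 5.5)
Your proof is correct, and it takes a genuinely different route from the paper.

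The paper first reduces to the ergodic case. It fixes constants $d_1,\dots,d_k$ with $\sum_i a_id_i=d_0$, and uses \Cref{thm:pathErgodic} to make the single trajectory $(\bs X_n)_n$ from $\bs o$ pass through witnesses of each $d_i$ infinitely often. On $|a_i|$ disjoint time intervals it then takes either the short or the long detour. This produces two paths to $\bs X_T$ whose lengths differ by $\sum_i a_id_i$; negative coefficients are handled by putting the long detour on the other path.

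You instead prove that $D(G,u)$ does not depend on $u$. The monotonicity $h(u)\ge h(u')$ whenever $K(u,u')>0$, together with stationarity of the $b$-biased chain (\Cref{lem:stationary}) and $b>0$, forces $h(u)=h(v)$ along every $K$-edge. Weak irreducibility then finishes the argument. This is the same device the paper uses with the monotone set $\bs H$ in the proof of \Cref{thm:one-ended-or-connected}.

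After that, everything is a deterministic per-sample computation. Your observation that constancy gives closure under both sums and positive differences, so that $D(G)\cap\mathbb N$ is $\gcd\cdot\mathbb N=\mathbb N$, replaces the paper's bookkeeping of signed coefficients.

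What each route buys:
\begin{itemize}
\item Yours is shorter and needs neither ergodicity of $[\bs G,\bs o]$ nor \Cref{thm:pathErgodic}; it uses only \Cref{lem:stationary}, $b>0$ and \Cref{lem:happensAtRoot}. It also yields the stronger structural fact that the set of achievable lag differences is the same at every vertex.
\item The paper's route stays on one trajectory, in the spirit of the coupling used right afterwards in \Cref{lem:pathTV}.
\end{itemize}

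A few presentational points:
\begin{itemize}
\item Your remark that \Cref{thm:pathErgodic} enters in Step~2 is a misattribution, since your argument only uses \Cref{lem:stationary}.
\item The alternative via part~(iii) of \Cref{thm:pathErgodic} does not apply as stated. Pointwise, $h$ is only superharmonic; it is harmonic only almost surely, and only after the stationarity argument.
\item The reversed chain is unnecessary, because equality along an edge is already symmetric.
\item You should cite \Cref{lem:happensAtRoot} explicitly to pass from the out-edges of the root to all edges.
\item Allowing $n=0$ in $D(G,u)$ is harmless: appending one $K$-step to a witness $(n,y)$ gives a witness with $n+1$.
\end{itemize}
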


\begin{proof}
	By \Cref{lem:happensAtRoot}, it is enough to prove the claim for $u=\bs o$.
	We may assume that $[\bs G, \bs o]$ is ergodic, without loss of generality. Let $(\bs X_n)_n$ be the Markov chain with kernel $K$, starting from $\bs o$. Let $d_0\in \mathbb N$ be arbitrary.
	For each $d\in \mathbb N$, let $\bs S_d:=\{x\in \bs G: \exists n, y: K^n(x,y)>0, K^{n+d}(x,y)>0\}$. The weak aperiodicity of \Cref{model2} implies that $\bs S_d\neq\emptyset$ for sufficiently many values of $d$, more precisely, for $d_1, \ldots, d_k$ such that $\sum_i a_i d_i = d_0$ for some $a_1,\ldots,a_k\in \mathbb Z$. By ergodicity, we may choose $d_1,\ldots, d_k$ to be constant. Also, \Cref{lem:happensAtRoot} implies that $\myprob{\bs o\in \bs S_{d_i}}>0, \forall i$. Therefore, ergodicity (\Cref{thm:pathErgodic}) implies that $(\bs X_n)_n$ intersects each $\bs S_{d_i}$ infinitely often a.s. In addition, at each intersection with $\bs S_{d_i}$, say at time $t$, there is a positive probability that $\exists n: K^{n+d_i}(\bs X_t, \bs X_{t+n})>0$. By ergodicity again, for every $i$, the last event happens for infinitely many $t$ a.s. Therefore, for each $i$, one can almost surely find $\norm{a_i}$ intervals of the form $[t_{ij}, t_{ij}+n_{ij}]\subseteq\mathbb N$ (for $1\leq j\leq \norm{a_i}$), such that $K^{n_{ij}+d_i}(\bs X_{t_{ij}}, \bs X_{t_{ij}+n_{ij}})>0$ and the intervals are disjoint. Note that, in each of these intervals $[t_{ij},t_{ij}+n_{ij}]$, we have two choices for going from $\bs X_{t_{ij}}$ to $\bs X_{t_{ij}+n_{ij}}$: One by a path of length $n_{ij}$ and another by a path of length $n_{ij}+d_{i}$. Now, for $T\geq \max_{i,j}(t_{ij}+n_{ij})$, one can show that $K^{T+c}(\bs o, \bs X_T)>0$ and $K^{T+c+d_0}(\bs o, \bs X_T)>0$, where $c=-\sum_i a_i d_i \identity{\{a_i\leq 0\}}=\sum_i a_i d_i \identity{\{a_i\leq 0\}} -d_0 \geq 0$. This proves the claim.
\end{proof}

\begin{lemma}
	\label{lem:foil-connectivity}
	In almost every sample $(G,o)$ of \Cref{model2}, the graph with vertex set $G$ and edge set $\{(u,v): \probPalm{G}{v\in L(u)}>0\}$ is connected.
\end{lemma}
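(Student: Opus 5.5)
Since $\bs F$ is a CMT with independent jumps, the probability $\probPalm{G}{v\in L(u)}$ is positive as soon as there exist $n\geq 0$ and $y\in G$ such that $K^n(u,y)>0$ and $K^n(v,y)>0$, with the two paths avoiding each other before reaching $y$. Avoiding is not a real restriction: fix positive-probability paths $u=x_0,\dots,x_n=y$ and $v=z_0,\dots,z_n=y$, and let $m$ be the first index with $x_m$ equal to some $z_k$. By cycle-freeness, the two paths can then be merged at $x_m$. In the merged configuration we must check that $m=k$. This holds because the remaining segments from $x_m$ to $y$ along each path have lengths $n-m$ and $n-k$, and the realized configuration makes them identical. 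More cleanly, one just needs a positive-probability event on which $\bs F^{(n)}(u)=\bs F^{(n)}(v)$. Write $u\approx v$ if $\exists n,y$ with $K^n(u,y)K^n(v,y)>0$ and the joint configuration is consistent. Consistency is automatic, because jumps from distinct points are independent and each point is used once by cycle-freeness along each path. Where the two paths share a point, the sharing time can be pushed forward: take $y$ to be the first common point, where both arrival times agree only if the lengths match, and otherwise use a different witness. I will handle this by working with $\bs F^{(n)}$ directly, namely $\probPalm{G}{\bs F^{(n)}(u)=\bs F^{(n)}(v)}>0$.

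The plan is to show that, for each edge $(x,y)$ of the weak irreducibility graph, say $K(x,y)>0$, the points $x$ and $y$ are connected in the level-set graph. Since that graph is connected by \Cref{model1}, this suffices. Note that $x$ and $y$ are not in the same level: $y$ is one level above $x$ along that jump. So I need a level-set neighbour of $x$, or a chain of them, reaching $y$. Use \Cref{lem:aperiodicity} with $u=y$ and $d_0=1$. It gives $n$ and $z$ with $K^n(y,z)>0$ and $K^{n+1}(y,z)>0$. Then $K^{n+1}(x,z)\geq K(x,y)K^n(y,z)>0$ and $K^{n+1}(y,z)>0$. Hence, on an event of positive probability, $\bs F^{(n+1)}(x)=z=\bs F^{(n+1)}(y)$, which means $y\in L(x)$.

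To realize this event, choose the path $\gamma_1$ given by $x\to y\to\cdots\to z$ of length $n+1$, and a path $\gamma_2$ from $y$ to $z$ of length $n+1$. The obstacle is that these paths overlap, since $\gamma_1$ passes through $y$ and $\gamma_2$ starts at $y$. The prescribed jumps must agree at shared vertices, and $\gamma_2$'s jump at $y$ may differ from $\gamma_1$'s jump at $y$. To fix this, replace $\gamma_1$ by $x\to y$ followed by $\gamma_2$, but truncated. Following $\gamma_2$ from $y$ reaches $z$ after $n+1$ steps, so from $x$ we reach $z$ after $n+2$ steps, which is off by one.

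The remedy is to take a first-meeting argument: let $w$ be the first vertex of $\gamma_2$ (after index $0$) that lies on the original $n$-step path $\gamma$ from $y$ to $z$. Such a vertex exists because $z$ lies on both. Splice the paths there. If $w$ sits at index $i$ in $\gamma_2$ and index $j$ in $\gamma$, then the prescriptions are consistent: $x\to\gamma$ and $\gamma_2$ up to $w$, then continue along $\gamma$. Cycle-freeness and first-meeting ensure that distinct vertices receive single prescriptions. The vertex $y$ itself is a problem, since $\gamma$ and $\gamma_2$ both start there with possibly different jumps. This is exactly where I expect the main difficulty.

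To get around it, I would instead apply \Cref{lem:aperiodicity} at $u=x$ with $d_0=1$, obtaining two paths from $x$ to $z$ of lengths $n$ and $n+1$. Take the first point at which they diverge, $a$, and the first subsequent rejoining point, $w$. The two branches then start at the distinct children $a_1\ne a_2$ of $a$ and are disjoint until $w$, with lengths differing by $1$. Under the configuration that follows the shorter branch from $a$, $a_2$ and $a_1$ are not siblings. But $\bs F^{(\ell+1)}(a_2)=w=\bs F^{(\ell)}(a_1)$ is realizable with positive probability, because the branches are disjoint, so the segments give consistent prescriptions. This shows $a_2\in L(\bs F(a_1))$-type relations; that is, a positive-probability configuration in which $a_2$ and $\bs F(a_1)$ share a level. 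I would then combine this local "shift by one" move with the trivial relations (siblings, $u$ with $u'$ when both have positive probability of jumping to a common point, which are disjoint single jumps) and with transport of such moves along weak-irreducibility edges. The aim is to conclude that every vertex is level-connected both to its potential parents and to its potential children. The careful bookkeeping of this transport is the main obstacle.
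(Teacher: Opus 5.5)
Your reduction matches the paper's: connect $x$ and $y$ whenever $K(x,y)>0$, then invoke weak irreducibility. Applying \Cref{lem:aperiodicity} at $y$ with $d_0=1$ to get paths $\gamma$ (length $n$) and $\gamma'$ (length $n+1$) from $y$ to some $z$ is also exactly the paper's starting point. However, the proposal never gets past the obstruction you identify, so there is a genuine gap.

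\textbf{Where it breaks.}
The inference in your second paragraph (``hence, on an event of positive probability, $\bs F^{(n+1)}(x)=z=\bs F^{(n+1)}(y)$'') is false as stated. On $\{\bs F(x)=y\}$ it would force $\bs F^{(n)}(y)=\bs F^{(n+1)}(y)$, which is a cycle. For the same reason, your first-paragraph claims that avoidance and consistency are ``automatic'' are wrong. The condition $K^m(u,w)K^m(v,w)>0$ does not give $\probPalm{G}{\bs F^{(m)}(u)=\bs F^{(m)}(v)}>0$ when the two paths share a vertex at mismatched times, which is precisely the situation here.

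Your fallback in the last paragraph is only a plan, and it has two problems:
\begin{itemize}
\item The first divergence/rejoining bubble of two paths of lengths $n$ and $n+1$ need not have length difference $\pm1$. Only the sum of the bubble differences is $1$; individual bubbles can differ by $0$, $3$, $-2$, and so on.
\item The ``transport along weak-irreducibility edges'' that would turn local shift moves into a connection between $x$ and $y$ is the entire content of the lemma, and it is left undone.
\end{itemize}
You also never use the hypothesis that level-sets are infinite. The paper stresses that this hypothesis is needed: the claim may fail when level-sets are finite.

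\textbf{The missing idea.}
Do not try to put $x$ and $y$ in a common level-set; instead find a third vertex adjacent to both.
\begin{itemize}
\item Work on the positive-probability event that $\bs F(x)=y$, $\bs F$ follows $\gamma$ from $y$ to $z$, $\bs F$ is locally finite, and $L(x)$ is infinite. The last two hold a.s.\ in \Cref{model2}.
\item Each $D(\gamma'_i)\cap L(x)$ lies in a single generation $D_j(\gamma'_i)$, which is finite by local finiteness. So there is some $u_0\in L(x)\setminus\bigcup_i D(\gamma'_i)$.
\item The ancestral line of $u_0$ avoids $\gamma'$, hence avoids $z=\gamma'_{n+1}$. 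Therefore it avoids $x$, $y$ and $\gamma$, all of which lie in $D(z)$ on this event.
\item Since $u_0\in L(x)$, the line of $u_0$ meets the line of $x$. It does so first at some point $w_k$, $k\geq1$, of the ancestral line $z=w_0,w_1,\dots$ of $z$, after exactly $n+1+k$ steps.
\item The path $u_0\to\cdots\to w_k$ and the path $x\to y\to\gamma\to w_1\to\cdots\to w_k$ are admissible, have equal length, and meet only at $w_k$.
\item The same holds for $u_0\to\cdots\to w_k$ and $\gamma'\to w_1\to\cdots\to w_k$. The latter path is simple by cycle-freeness.
\end{itemize}
Jumps at distinct vertices are independent, so $\probPalm{G}{u_0\in L(x)}>0$ and $\probPalm{G}{u_0\in L(y)}>0$. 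This joins $x$ to $y$ through $u_0$ and resolves the overlap at $y$ that blocked your second and third paragraphs.
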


This lemma was used in the proof of \Cref{prop:simpler}. We stress that the claim may fail if the level-sets are finite.

\begin{proof}
	For $v\neq u$, the condition $\probPalm{G}{v\in L(u)}>0$ is equivalent to the existence of $k>0$ and \defstyle{admissible sequences} $u=u_0,u_1,\ldots,u_k$ and $v=v_0,v_1,\ldots,v_k$ that intersect only at $u_k=v_k$, where admissible means $K(u_i,u_{i+1})>0, \forall i$.


	Assume $(G,o)$ satisfies the conditions of \Cref{model1,model2} and the claim of \Cref{lem:aperiodicity}. Assume also that $\bs F$ is locally-finite and all level-sets of $\bs F$ are infinite, $\mathbb P_G$-a.s. 
	Let $u,v\in G$ be such that $K(u,v)>0$. We will show that $u$ and $v$ are connected in the graph mentioned in the lemma. The claim of the lemma then follows from weak irreducibility.
	
	By \Cref{lem:aperiodicity}, there exist $n\in\mathbb N$ and $w\in G$ such that $K^n(v,w)>0$ and $K^{n+1}(v,w)>0$. Let $v_0,\ldots,v_n$ and $v'_0,\ldots,v'_{n+1}$ be two admissible sequences from $v$ to $w$. 
	We call a point $u_0\in G$ \textit{good} if there exist $k\geq 0$ and admissible sequences $(u_i)_{i=0}^{n+k+1}$ and $(w_i)_{i=0}^k$ such that $w_0=w$, $u_{n+k+1}=w_k$ and $(u_i)_i$ does not intersect any of the sequences $(w_i)_i, (v_i)_i$ and $(v'_i)_i$ except at $w_k$. 
	\begin{center}
		\begin{tikzpicture}[x=0.01mm, y=0.01mm]
			\filldraw[black] (20705,-13362) circle(1.5pt) node[left] {{\scriptsize $w_k$}};
			\filldraw[black] (23985,-13980) circle(1.5pt) node[left] {{\scriptsize $w$}};
			\filldraw[black] (22450,-14205)  node {{\scriptsize$(w_i)_i$}};
			\filldraw[black] (24400,-13600)  node {{\scriptsize$(v_i)_i$}};
			\filldraw[black] (23800,-12850)  node {{\scriptsize$(u_i)_i$}};
			\filldraw[black] (27450,-15150) circle(1.5pt) node[right] {{\scriptsize $u$}};
			\filldraw[black] (27360,-12630) circle(1.5pt) node[right] {{\scriptsize $u_0$}};
			\filldraw[black] (27270,-14925) circle(1.5pt) node[right] {{\scriptsize $v$}};
			\filldraw[black] (24400,-14750)  node {{\scriptsize$(v'_i)_i$}};
			\draw[thick] (20700,-13350)--(20701,-13351)--(20703,-13354)--(20708,-13359)--(20714,-13368)--(20724,-13380)
			--(20737,-13396)--(20753,-13415)--(20772,-13439)--(20795,-13467)--(20821,-13499)
			--(20851,-13535)--(20883,-13573)--(20917,-13614)--(20953,-13657)--(20991,-13702)
			--(21030,-13748)--(21070,-13793)--(21109,-13838)--(21149,-13883)--(21188,-13926)
			--(21226,-13967)--(21264,-14006)--(21300,-14043)--(21335,-14077)--(21369,-14108)
			--(21401,-14136)--(21433,-14161)--(21462,-14183)--(21491,-14203)--(21519,-14219)
			--(21547,-14232)--(21573,-14242)--(21600,-14250)--(21630,-14256)--(21660,-14259)
			--(21690,-14258)--(21720,-14254)--(21750,-14246)--(21780,-14235)--(21810,-14221)
			--(21840,-14204)--(21870,-14184)--(21900,-14161)--(21930,-14137)--(21960,-14110)
			--(21990,-14083)--(22020,-14054)--(22050,-14025)--(22080,-13996)--(22110,-13967)
			--(22140,-13940)--(22170,-13913)--(22200,-13889)--(22230,-13866)--(22260,-13846)
			--(22290,-13829)--(22320,-13815)--(22350,-13804)--(22380,-13796)--(22410,-13792)
			--(22440,-13791)--(22470,-13794)--(22500,-13800)--(22526,-13808)--(22553,-13818)
			--(22579,-13831)--(22606,-13847)--(22632,-13866)--(22659,-13888)--(22685,-13913)
			--(22712,-13940)--(22738,-13970)--(22765,-14002)--(22791,-14036)--(22818,-14072)
			--(22844,-14110)--(22871,-14149)--(22897,-14189)--(22924,-14230)--(22950,-14271)
			--(22976,-14312)--(23003,-14352)--(23029,-14392)--(23056,-14430)--(23082,-14467)
			--(23109,-14502)--(23135,-14535)--(23162,-14566)--(23188,-14593)--(23215,-14618)
			--(23241,-14640)--(23268,-14658)--(23294,-14674)--(23321,-14685)--(23347,-14694)
			--(23374,-14698)--(23400,-14700)--(23426,-14698)--(23453,-14693)--(23479,-14685)
			--(23505,-14672)--(23531,-14656)--(23557,-14636)--(23582,-14613)--(23608,-14585)
			--(23633,-14555)--(23658,-14521)--(23683,-14485)--(23707,-14446)--(23732,-14405)
			--(23756,-14363)--(23780,-14319)--(23805,-14274)--(23829,-14229)--(23853,-14184)
			--(23878,-14140)--(23903,-14096)--(23928,-14054)--(23953,-14014)--(23979,-13977)
			--(24005,-13942)--(24032,-13910)--(24059,-13881)--(24087,-13857)--(24115,-13836)
			--(24144,-13819)--(24174,-13807)--(24204,-13799)--(24235,-13795)--(24267,-13795)
			--(24300,-13800)--(24327,-13807)--(24355,-13816)--(24383,-13828)--(24412,-13843)
			--(24442,-13861)--(24473,-13882)--(24504,-13906)--(24537,-13933)--(24570,-13963)
			--(24604,-13995)--(24639,-14030)--(24675,-14067)--(24711,-14107)--(24748,-14148)
			--(24786,-14191)--(24824,-14236)--(24862,-14282)--(24901,-14330)--(24939,-14378)
			--(24978,-14426)--(25017,-14475)--(25056,-14524)--(25094,-14572)--(25133,-14620)
			--(25170,-14668)--(25207,-14714)--(25244,-14759)--(25280,-14802)--(25314,-14843)
			--(25348,-14883)--(25381,-14920)--(25413,-14955)--(25443,-14987)--(25472,-15017)
			--(25499,-15044)--(25525,-15068)--(25550,-15089)--(25573,-15107)--(25595,-15122)
			--(25615,-15134)--(25633,-15143)--(25650,-15150)--(25671,-15155)--(25689,-15155)
			--(25703,-15149)--(25715,-15139)--(25723,-15123)--(25728,-15103)--(25730,-15078)
			--(25729,-15049)--(25726,-15016)--(25720,-14980)--(25713,-14940)--(25704,-14898)
			--(25694,-14855)--(25683,-14809)--(25671,-14763)--(25659,-14717)--(25648,-14670)
			--(25637,-14625)--(25627,-14580)--(25619,-14537)--(25611,-14496)--(25606,-14457)
			--(25603,-14421)--(25602,-14388)--(25604,-14357)--(25608,-14330)--(25615,-14306)
			--(25624,-14285)--(25636,-14266)--(25650,-14250)--(25666,-14236)--(25684,-14224)
			--(25704,-14213)--(25726,-14204)--(25749,-14196)--(25775,-14190)--(25802,-14184)
			--(25830,-14179)--(25860,-14175)--(25891,-14172)--(25923,-14170)--(25956,-14168)
			--(25989,-14167)--(26023,-14166)--(26058,-14166)--(26093,-14166)--(26127,-14167)
			--(26162,-14168)--(26197,-14170)--(26231,-14172)--(26266,-14175)--(26299,-14179)
			--(26332,-14184)--(26365,-14190)--(26397,-14196)--(26428,-14204)--(26459,-14213)
			--(26490,-14224)--(26520,-14236)--(26550,-14250)--(26577,-14264)--(26603,-14280)
			--(26631,-14298)--(26659,-14318)--(26688,-14340)--(26717,-14365)--(26749,-14392)
			--(26781,-14422)--(26815,-14454)--(26850,-14488)--(26886,-14525)--(26924,-14563)
			--(26962,-14603)--(27001,-14645)--(27041,-14688)--(27080,-14731)--(27120,-14775)
			--(27159,-14818)--(27197,-14861)--(27233,-14902)--(27267,-14940)--(27299,-14977)
			--(27329,-15010)--(27355,-15040)--(27378,-15066)--(27397,-15089)--(27413,-15107)
			--(27426,-15122)--(27436,-15133)--(27442,-15141)--(27447,-15146)--(27449,-15149)
			--(27450,-15150);
			\draw[thick] (20700,-13350)--(20701,-13349)--(20705,-13348)--(20710,-13345)--(20720,-13341)--(20733,-13335)
			--(20750,-13327)--(20771,-13317)--(20797,-13306)--(20827,-13292)--(20862,-13276)
			--(20900,-13260)--(20941,-13241)--(20984,-13222)--(21030,-13202)--(21077,-13182)
			--(21124,-13162)--(21172,-13142)--(21219,-13122)--(21265,-13104)--(21310,-13086)
			--(21354,-13070)--(21396,-13054)--(21436,-13041)--(21473,-13029)--(21509,-13018)
			--(21543,-13009)--(21575,-13002)--(21606,-12997)--(21635,-12993)--(21663,-12991)
			--(21690,-12990)--(21720,-12991)--(21750,-12994)--(21779,-12999)--(21807,-13007)
			--(21834,-13016)--(21861,-13028)--(21888,-13042)--(21914,-13058)--(21939,-13075)
			--(21965,-13094)--(21989,-13114)--(22014,-13135)--(22038,-13157)--(22063,-13178)
			--(22087,-13200)--(22111,-13222)--(22135,-13242)--(22159,-13262)--(22184,-13280)
			--(22208,-13297)--(22233,-13312)--(22258,-13325)--(22283,-13335)--(22308,-13343)
			--(22333,-13349)--(22359,-13352)--(22384,-13352)--(22410,-13350)--(22434,-13346)
			--(22458,-13340)--(22482,-13331)--(22506,-13321)--(22530,-13307)--(22554,-13292)
			--(22578,-13275)--(22603,-13255)--(22627,-13234)--(22651,-13211)--(22675,-13186)
			--(22699,-13161)--(22724,-13134)--(22748,-13107)--(22772,-13080)--(22796,-13053)
			--(22821,-13026)--(22845,-12999)--(22869,-12974)--(22893,-12949)--(22917,-12926)
			--(22941,-12905)--(22965,-12885)--(22989,-12868)--(23013,-12853)--(23036,-12839)
			--(23060,-12829)--(23083,-12820)--(23107,-12814)--(23130,-12810)--(23157,-12808)
			--(23183,-12809)--(23210,-12812)--(23235,-12819)--(23261,-12829)--(23286,-12842)
			--(23311,-12857)--(23336,-12875)--(23361,-12894)--(23385,-12916)--(23409,-12938)
			--(23433,-12962)--(23457,-12986)--(23481,-13010)--(23505,-13034)--(23530,-13056)
			--(23555,-13078)--(23580,-13098)--(23606,-13116)--(23632,-13132)--(23659,-13145)
			--(23687,-13156)--(23715,-13163)--(23744,-13168)--(23774,-13170)--(23805,-13170)
			--(23831,-13168)--(23857,-13164)--(23883,-13158)--(23910,-13150)--(23938,-13140)
			--(23966,-13127)--(23995,-13113)--(24024,-13097)--(24053,-13079)--(24083,-13059)
			--(24113,-13038)--(24144,-13016)--(24174,-12993)--(24205,-12969)--(24236,-12944)
			--(24267,-12920)--(24298,-12895)--(24329,-12871)--(24360,-12848)--(24391,-12825)
			--(24422,-12804)--(24453,-12785)--(24484,-12767)--(24514,-12752)--(24544,-12739)
			--(24574,-12728)--(24604,-12719)--(24634,-12714)--(24663,-12711)--(24692,-12711)
			--(24721,-12714)--(24750,-12720)--(24777,-12728)--(24804,-12738)--(24831,-12751)
			--(24857,-12767)--(24883,-12786)--(24908,-12808)--(24933,-12833)--(24958,-12861)
			--(24982,-12891)--(25005,-12923)--(25029,-12958)--(25052,-12995)--(25074,-13033)
			--(25097,-13072)--(25119,-13113)--(25142,-13154)--(25164,-13195)--(25187,-13237)
			--(25210,-13277)--(25233,-13317)--(25257,-13356)--(25281,-13393)--(25306,-13428)
			--(25332,-13461)--(25358,-13492)--(25386,-13519)--(25415,-13544)--(25444,-13565)
			--(25475,-13583)--(25508,-13598)--(25541,-13609)--(25576,-13616)--(25612,-13620)
			--(25650,-13620)--(25680,-13618)--(25711,-13614)--(25744,-13608)--(25778,-13599)
			--(25813,-13588)--(25850,-13575)--(25890,-13560)--(25931,-13542)--(25974,-13522)
			--(26019,-13499)--(26066,-13474)--(26116,-13447)--(26167,-13418)--(26221,-13387)
			--(26276,-13354)--(26332,-13319)--(26390,-13283)--(26450,-13246)--(26510,-13207)
			--(26571,-13168)--(26632,-13128)--(26693,-13087)--(26753,-13047)--(26812,-13008)
			--(26870,-12968)--(26926,-12930)--(26980,-12894)--(27031,-12859)--(27078,-12826)
			--(27123,-12795)--(27164,-12767)--(27200,-12742)--(27233,-12719)--(27262,-12699)
			--(27286,-12682)--(27307,-12667)--(27323,-12656)--(27336,-12647)--(27346,-12640)
			--(27353,-12635)--(27357,-12632)--(27359,-12631)--(27360,-12630);
			\draw[thick,dashed] (23985,-13980)--(23986,-13982)--(23988,-13987)--(23991,-13995)--(23997,-14008)--(24004,-14025)
			--(24014,-14048)--(24025,-14075)--(24039,-14106)--(24055,-14140)--(24072,-14176)
			--(24090,-14213)--(24109,-14251)--(24129,-14287)--(24149,-14322)--(24169,-14354)
			--(24189,-14383)--(24210,-14408)--(24230,-14430)--(24251,-14447)--(24273,-14460)
			--(24296,-14469)--(24320,-14474)--(24345,-14475)--(24365,-14473)--(24386,-14469)
			--(24409,-14463)--(24432,-14454)--(24456,-14442)--(24481,-14427)--(24507,-14411)
			--(24533,-14391)--(24561,-14369)--(24589,-14346)--(24617,-14320)--(24646,-14293)
			--(24675,-14265)--(24705,-14235)--(24735,-14206)--(24765,-14175)--(24796,-14146)
			--(24826,-14116)--(24857,-14088)--(24887,-14060)--(24918,-14035)--(24948,-14011)
			--(24978,-13990)--(25009,-13971)--(25038,-13955)--(25068,-13942)--(25098,-13932)
			--(25127,-13926)--(25157,-13923)--(25186,-13923)--(25216,-13928)--(25245,-13935)
			--(25270,-13944)--(25295,-13955)--(25320,-13969)--(25345,-13985)--(25371,-14004)
			--(25396,-14026)--(25422,-14049)--(25447,-14076)--(25473,-14104)--(25499,-14135)
			--(25525,-14168)--(25551,-14202)--(25577,-14239)--(25603,-14276)--(25629,-14315)
			--(25655,-14355)--(25681,-14396)--(25707,-14437)--(25734,-14479)--(25760,-14520)
			--(25786,-14562)--(25812,-14603)--(25839,-14643)--(25865,-14683)--(25891,-14721)
			--(25918,-14758)--(25944,-14794)--(25970,-14828)--(25997,-14861)--(26023,-14891)
			--(26050,-14920)--(26076,-14946)--(26102,-14970)--(26129,-14992)--(26155,-15012)
			--(26182,-15030)--(26208,-15046)--(26235,-15060)--(26267,-15074)--(26299,-15086)
			--(26332,-15095)--(26366,-15102)--(26401,-15107)--(26438,-15109)--(26477,-15109)
			--(26517,-15107)--(26559,-15104)--(26603,-15098)--(26648,-15091)--(26695,-15082)
			--(26742,-15073)--(26791,-15062)--(26840,-15050)--(26888,-15038)--(26936,-15025)
			--(26982,-15012)--(27027,-14999)--(27068,-14987)--(27107,-14976)--(27141,-14966)
			--(27172,-14956)--(27198,-14948)--(27220,-14941)--(27237,-14936)--(27250,-14931)
			--(27260,-14928)--(27265,-14927)--(27269,-14925)--(27270,-14925);
		\end{tikzpicture}
	\end{center}
	We claim that there exists a good point. Indeed, consider the event that $\forall 0<i\leq n+1: \bs F^{(i)}(u)=v_i$, $\bs F$ is locally-finite and $L_{\bs F}(u)$ is infinite. On this event, the set $L_{\bs F}(u)\setminus \cup_i D(v'_i)$ is infinite and consists only of good points a.s. So, there exists a good point $u_0$.
	Now, both $u$ and $v$ have admissible sequences to $u_{n+k+1}$, of length $n+k+1$, and disjoint from $(u_i)_i$. So, $\probPalm{G}{u_0\in L(u)}>0$  and $\probPalm{G}{u_0\in L(v)}>0$. So, $u$ and $v$ are connected by the two edges $(u,u_0)$ and $(u_0,v)$, and the claim is proved.
	%
%
\end{proof}

\subsection{\Cref{step3foil}: Tail Events and Tail-Triviality}
\label{subsec:pathTail}

In this subsection, we prove Part~\ref{thm:pathIntro:tail} of \Cref{thm:pathIntro} and \Cref{thm:pathI=T-intro}, which are restated in Theorems~\ref{thm:pathI=T} and~\ref{thm:pathTail} respectively.
This subsection is only needed for the indistinguishability of level-sets and the reader might skip it at first reading. We first recall the tail sigma-field on a deterministic space in \Cref{def:path-tail-fixed} below. Then, the analogous notion on $\mathcal G_*$‌ is given in \Cref{def:path-tail}.
\begin{definition}
	\label{def:path-tail-fixed}
	For deterministic $G$, an event $A\subseteq G^{\mathbb N\cup\{0\}}$ is called a \defstyle{tail path property on $G$} if, for every $n\in\mathbb N$, it is contained in the sigma-field generated by $x_n,x_{n+1},\ldots$; i.e., it does not depend on $x_0,\ldots,x_{n-1}$. Let $\sigfield{path}{T}{G}$ denote the sigma-field of invariant path properties of $G$.
\end{definition}

One has $\sigfield{path}{I}{G}\subseteq \sigfield{path}{T}{G}$. Our first result is the following, which is a restatement of \Cref{thm:pathI=T-intro}. Here, given a deterministic $(G,v)$, we let $\mathbb P_{(G,v)}$ be the distribution of the Markov chain on $G$ with kernel $K$ starting from $v$. Note that this Markov chain is steady if $\sigfield{path}{T}{G}=\sigfield{path}{I}{G}$ mod $\mathbb P_{(G,v)}$.

\begin{theorem}[Steadiness of Markov Chains on Deterministic Graphs]
	\label{thm:pathI=T}
	Let $[\bs G, \bs o]$ be an ergodic unimodular graph and $(\bs X_i)_i$ be a Markov chain on $\bs G$, starting from $\bs o$, whose kernel satisfies the conditions of balance, weak irreducibility and weak aperiodicity of \Cref{model2} (but not necessarily the cycle-free condition).
	Then, in almost every sample $(G,o)$ of $[\bs G, \bs o]$, the Markov chain $(\bs X_i)$ is steady. In addition,
	\begin{equation}
		\label{eq:pathI=T}
		\sigfield{path}{T}{G}=\sigfield{path}{I}{G} \text{ mod }\left(\sum_{v\in G}\mathbb P_{(G,v)}\right).
	\end{equation}
	Also, the same holds for any deterministic $G$ that satisfies~\eqref{eq:aperiodic2}.
\end{theorem}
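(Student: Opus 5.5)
We reduce steadiness to a coupling statement via the classical characterization of steady chains: by the Blackwell–Freedman / Derriennic "0--2 law" type criterion, the tail and invariant sigma-fields of a Markov chain with kernel $K$ on a countable set $G$ agree modulo all starting points if and only if
\begin{equation*}
\lim_{m\to\infty}\tv{K^{m}(x,\cdot)-K^{m+1}(x,\cdot)}=0\quad\text{for all } x\in G .
\end{equation*}
Equivalently, for every $x$ the chains started at $x$ and at a point distributed as $K(x,\cdot)$ admit a successful coupling. Steadiness for every initial $\mu$ and the identity~\eqref{eq:pathI=T} modulo $\sum_v\mathbb P_{(G,v)}$ both follow, since $\mathbb P_\mu$ is absolutely continuous with respect to $\sum_v\mathbb P_{(G,v)}$. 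So the task is to prove that $\tv{K^m(x,\cdot)-K^{m+1}(x,\cdot)}\to 0$ for all $x$ in a typical sample, or under~\eqref{eq:aperiodic2}.

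\textbf{Deterministic case.} Assume~\eqref{eq:aperiodic2}. We use an Ornstein-type coupling built from the uniform overlap $K^l(x,\cdot)\wedge K^{l+1}(x,\cdot)$ of mass at least $\epsilon$. Run two copies $\bs X$, $\bs Y$ with a time lag $D$, where $D$ is the difference between the number of steps taken by $\bs Y$ and by $\bs X$. Whenever both copies sit at the same point $z$ with current lag $D\neq 0$, advance $\bs X$ by $l$ steps and $\bs Y$ by either $l$ or $l+1$ steps, using the overlap so that each configuration has probability $\epsilon/2$, and using the same endpoint for both where possible. The lag then performs a lazy symmetric $\pm1$ walk, recurrent on $\mathbb Z$, while the chains stay glued in space. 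The lag therefore hits $0$ a.s., after which the chains coincide forever. Starting from $x$ and $\bs X_1$, we first let $\bs Y$ run ahead so that both sit at the same point; one could also simply use the $\epsilon$-overlap at time zero, so that with probability at least $\epsilon$ they meet at time $l$ with lag $\pm1$. Each attempt succeeds with probability bounded below, hence eventually a.s.

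\textbf{Unimodular case.} Here~\eqref{eq:aperiodic2} may fail, and the overlaps are not uniform. Instead we use \Cref{lem:aperiodicity}: for every $u$ and $d_0$ there exist $n,y$ with $K^n(u,y)>0$ and $K^{n+d_0}(u,y)>0$. Using $d_0=1$, the set $\bs S:=\{x:\exists n\le N,\ \tv{K^n(x,\cdot)\wedge K^{n+1}(x,\cdot)}>\epsilon\}$ has $\myprob{\bs o\in\bs S}>0$ for suitable $N,\epsilon$. By ergodicity of the path (\Cref{thm:pathErgodic}, using the stationarity of \Cref{lem:stationary} after biasing by $b$), the chain visits $\bs S$ infinitely often a.s. We then run the same lag-random-walk coupling, but perform the randomized $n$ versus $n+1$ step only at visits of the glued pair to $\bs S$ and move jointly otherwise. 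The lag performs a time-changed lazy symmetric walk with infinitely many steps a.s. (the visits to $\bs S$ of the glued path have the law of visits of a single chain), so it hits zero a.s. This gives $\tv{K^m(x,\cdot)-K^{m+1}(x,\cdot)}\to0$ for $x=\bs o$, and for all $x$ by \Cref{lem:happensAtRoot}.

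\textbf{Main obstacle.} The hard step is justifying that infinitely many visits to $\bs S$ occur for the glued path under the coupling. The visit times must be stopping times, and the coupling must not bias the path away from $\bs S$. Before each jump we would therefore decide, by a stopping rule depending only on the past, whether to attempt the $n/(n+1)$ move. The marginal of the glued path is then still the Markov chain, and the a.s. recurrence to $\bs S$ from \Cref{thm:pathErgodic} holds conditionally on the past, by the Markov property and the fact that a.s. every starting point in a typical sample visits $\bs S$ infinitely often.
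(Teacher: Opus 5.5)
Your proposal follows essentially the paper's route: \Cref{lem:aperiodicity} and the ergodicity of \Cref{thm:pathErgodic} give infinitely many visits to an overlap set. A glued time-lag Ornstein coupling then yields $\tv{K^{n+1}(v,\cdot)-K^{n}(v,\cdot)}\to 0$ as in \Cref{lem:pathTV}. The passage to \eqref{eq:pathI=T} is exactly the criterion the paper attributes to Theorem~2.2 of \cite{Ka92boundary}, before it gives its own direct space-time-harmonic argument. Your single joint steps off $\bs S$, instead of the paper's $l$/$l+k$ blocks everywhere, are a harmless and slightly simpler variant.

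The one thing to repair is the block step. As written (``$\bs X$ advances $l$ steps and $\bs Y$ $l$ or $l+1$'', ``same endpoint where possible'') it would either break the Markov property of $\bs Y$ or unglue the pair, and the lag would not be symmetric. Specify it as in the paper. First draw the common endpoint $y$ from $\frac12\left(K^l(z,\cdot)+K^{l+1}(z,\cdot)\right)$. Then draw each copy's segment independently from the equal mixture of length-$l$ and length-$(l+1)$ $K$-paths from $z$, conditioned to end at $y$. With this choice each marginal is Markov, the lag increment is symmetric, and it is nonzero with probability at least $\epsilon/2$.
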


Note that equivalence mod $\sum_{v\in G}\mathbb P_{(G,v)}$ is stronger than equivalence mod $\mathbb P_{(G,v)}$ for all $v\in G$. For the former, one should approximate the events in a way not depending on the choice of $v\in G$. We will need this in \Cref{subsec:tail-fixed} because \Cref{lem:happensAtRoot} no longer holds and we need to consider all possible roots simultaneously.



This theorem generalizes a similar claim on transitive Markov chains on a deterministic transitive environment
(see e.g., Theorem~14.47 of~\cite{bookLyPe16}). We prove this result by a slight generalization of the proof of~\cite{bookLyPe16}, but by leveraging unimodularity. 
Before giving the proof, we present the analogous claim on tail triviality when $G$ is not fixed (\Cref{thm:pathTail}).

\begin{definition}[Tail Path Properties on $\mathcal G_{\infty}$]
	\label{def:path-tail}
	Consider the space $\mathcal G_{\infty}$ defined in \Cref{subsec:stationary}.
		An event $A\subseteq \mathcal G_{\infty}$ is called a \defstyle{tail path property} (on $\mathcal G_{\infty}$) if, for every $n\in\mathbb N$, $A$ is in the sigma-field generated by $[G,x_n; (x_{n+i})_{i\geq 0}]$; i.e., is not changed by modifying $(x_0,\ldots,x_{n-1})$. Let $\sigfield{path}{T}{}$ denote the sigma-field of tail path events on $\mathcal G_{\infty}$.
\end{definition}

\begin{example}
	\label{ex:pathProperty-3reg}
	Let $[\bs G, \bs o]$ be the 3-regular tree in which each vertex $v$ is equipped with two marks $\bs m_1(v)$ and $\bs m_2(v)$. Assume that $\bs m_1(\cdot)$ are i.i.d. uniform numbers in $[0,1]$ and $\bs m_2(\cdot)$ are periodic marks in $\{0,1\}$ (i.e., $\bs m_2(u)\neq \bs m_2(v)$ if $u$‌ is adjacent to $v$) such that $\myprob{\bs m_2(\bs o)=1}=1/2$. Also, let $(\bs X_i)_i$ be the simple random walk starting from the root. 
	\begin{enumerate}[label=(\roman*)]
		\item The event that $\limsup_{i\to\infty} \bs m_1(\bs X_i)>1/2$ is a shift-invariant event, but it has probability one. In fact, by \Cref{thm:pathErgodic}, there is no nontrivial shift-invariant event here.
		\item The events $\bs m_2(\bs o)=0$ and $\bs m_2(\bs o)=1$ are nontrivial tail events. We will see that these are essentially the only nontrivial tail events (see \Cref{rem:periodic}).
	\end{enumerate}
\end{example}

The following is the next tail triviality result, which is an expansion of Part~\ref{thm:pathIntro:tail} of \Cref{thm:pathIntro}.

\begin{theorem}[\Cref{step3foil}: Tail Triviality for Markov Chains on Unimodular Graphs]
	\label{thm:pathTail}
	Under the assumptions of Part~\ref{thm:pathIntro:tail} of \Cref{thm:pathIntro},
	\begin{enumerate}[label=(\roman*)]
		\item The tail path sigma-field $\sigfield{path}{T}{}$ is trivial.
		\item $[\bs G, \bs o; (\bs X_n)_n]$ is tail-trivial under the shift $\sigma$.
		\item Every bounded space-time $K$-harmonic function on $\mathcal G_*$ is essentially constant.
	\end{enumerate}
	In addition, in the non-ergodic case, $\sigfield{path}{T}{}=\sigfield{path}{I}{}=\sigfield{}{I}{}$ mod $\mathbb P$.
\end{theorem}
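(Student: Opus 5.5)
The plan is to reduce tail triviality on $\mathcal G_\infty$ to two ingredients already at hand: the steadiness statement of \Cref{thm:pathI=T} on typical samples, and the ergodicity statement of \Cref{thm:pathErgodic}. We may assume $[\bs G, \bs o]$ is ergodic; the non-ergodic claim $\sigfield{path}{T}{}=\sigfield{path}{I}{}=\sigfield{}{I}{}$ mod $\mathbb P$ then follows by ergodic decomposition. This works because the assumptions of \Cref{model2} hold in almost every ergodic component, and $\sigfield{}{I}{}\subseteq\sigfield{path}{I}{}\subseteq\sigfield{path}{T}{}$ trivially.

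Fix $A\in\sigfield{path}{T}{}$. For a sample $(G,o)$, the map $(x_i)_i\mapsto[G,x_0;(x_i)_i]$ pulls $A$ back to an event $A_G\subseteq G^{\mathbb N\cup\{0\}}$. This event is a tail path property on $G$ in the sense of \Cref{def:path-tail-fixed}, since modifying $x_0,\ldots,x_{n-1}$ does not change $\identity{A}$.

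By \Cref{thm:pathI=T}, for almost every $(G,o)$ there is $B_G\in\sigfield{path}{I}{G}$ with $\mathbb P_{(G,v)}[A_G\Delta B_G]=0$ for all $v\in G$. The next step is to lift $B_G$ to an event on $\mathcal G_\infty$. The obstacle is that $B_G$ is defined on a fixed labelled $G$ and need not be isomorphism invariant. To get around this, I would define the candidate function
\[h[G,x_0;(x_i)_i]:=\lim_n \mathbb P_{(G,x_n)}[A_G\circ\theta^{-n}]\]
where $\theta^{-n}$ denotes reattaching the past. More concretely, I would use the harmonic function $u_n(x):=\mathbb P_{(G,x)}[\text{the chain shifted by } n \text{ lies in } A]$. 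By Lévy's 0-1 law, $\identity{A}=\lim_n u_n$-type martingale limits, and these limits are well defined on $\mathcal G_\infty$ because they are built from $A$ itself and are therefore isomorphism invariant.

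The cleanest route is the following. Steadiness says $\identity{A}$ equals, $\mathbb P_{(G,o)}$-a.s., a shift-invariant function of the path. Concretely, set
\[\tilde B:=\{[G,x_0;(x_i)_i]: \limsup_n \identity{A}[G,x_n;(x_{n+i})_i]=1\}.\]
This $\tilde B$ is shift-invariant and measurable on $\mathcal G_\infty$. I expect to show $\mathbb P[A\Delta\tilde B]=0$. The argument uses the fact that on typical samples $\identity{A}\circ\sigma^n=\identity{B_G}\circ\sigma^n=\identity{B_G}$ a.s. under $\sum_v\mathbb P_{(G,v)}$. Here it is important that the equivalence in \eqref{eq:pathI=T} is modulo the sum over all starting points, so it survives the random time shift to $x_n$. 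Hence $\identity{A}\circ\sigma^n=\identity{A}$ a.s. for every $n$, and $\tilde B$ agrees with $A$ a.s. Then \Cref{thm:pathErgodic} gives $\mathbb P[\tilde B]\in\{0,1\}$, proving (i).

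For (ii), tail triviality of $[\bs G,\bs o;(\bs X_n)_n]$ under $\sigma$ is the same statement, since the tail sigma-field of this chain is generated by $\sigfield{path}{T}{}$ through the identification above. For (iii), given a bounded space-time harmonic $h(n,[G,x])$, the martingale $h(n,[G,\bs X_n])$ converges. Its limit is a tail path function, hence essentially constant by (i), and then taking expectations gives that $h$ is essentially constant. The main obstacle is the measurability and invariance issue in the lifting step. Specifically, one must check that $\identity{A}\circ\sigma^n=\identity{A}$ mod $\mathbb P$ follows from \eqref{eq:pathI=T}. This requires that the null sets be uniform in the starting vertex, which is exactly why the strong form of \Cref{thm:pathI=T} is needed.
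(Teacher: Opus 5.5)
Your argument is correct, but it builds the shift-invariant representative of a tail event differently from the paper. The paper does not use \Cref{thm:pathI=T} as a black box. It reruns that proof with $f_{(G,o)}:=g[G,\cdot\,;\cdot]$, using \eqref{eq:pathTV} to show that the space-time harmonic function $\Lambda(G,x,k)$ does not depend on $k$. It then sets $\lambda:=\Lambda(\cdot,\cdot,0)$, which is $K$-harmonic on $\mathcal G_*$, and defines $\tilde g[G,x_0;(x_i)_i]:=\lim_n\lambda[G,x_n]$. L\'evy's 0-1 law gives $\tilde g=g$ a.s., and the paper must check that $\tilde g$ is well defined on isomorphism classes before applying \Cref{thm:pathErgodic}. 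Your abandoned sketch with $u_n$ is essentially this route.

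Your executed route uses the uniform equivalence \eqref{eq:pathI=T} only once. Under $\mathbb P_{(G,o)}$, the law of $(\bs X_{n+i})_i$ is $\sum_v K^n(o,v)\mathbb P_{(G,v)}$, which is absolutely continuous with respect to $\sum_v\mathbb P_{(G,v)}$. This gives $\identity{A}\circ\sigma^n=\identity{A}$ a.s. for every $n$. The event $\tilde B=\{\limsup_n\identity{A}\circ\sigma^n=1\}$ is then manifestly measurable, isomorphism-invariant and $\sigma$-invariant, so no representative-independence check is needed. Uniformity in the starting point is indeed what is used here; steadiness under a single full-support initial law would serve equally well.

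What each approach buys:
\begin{itemize}
\item Yours is shorter and more modular. It shows directly that $\sigfield{path}{T}{}=\sigfield{path}{I}{}$ mod $\mathbb P$ as soon as \eqref{eq:pathI=T} holds for almost every sample. That is a samplewise property, so it passes through ergodic decomposition, and the non-ergodic addendum follows from the last assertion of \Cref{thm:pathErgodic}.
\item The paper's route works directly from \eqref{eq:pathTV}. It keeps an explicit harmonic-function representative, which matches the viewpoint of part (iii) and the Poisson-boundary discussion.
\end{itemize}

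For (iii), your expectation step only handles $h(\cdot,0)$. For the other time slices, let $L$ be the martingale limit and use $h([\bs G,\bs X_n],n)=\mathbb E[L\mid \bs X_0,\ldots,\bs X_n]=c$ a.s. Then \Cref{lem:stationary}, together with $b>0$, transfers this to $h([\bs G,\bs o],n)$.
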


Here, a \defstyle{space-time $K$-harmonic} function is defined as a measurable function $h:\mathcal G_*\times (\mathbb N\cup\{0\})\to\mathbb R$ such that \[h([G,o],n)=\sum_{x\in G} K(o,x) h([G,x],n+1), \quad \forall [G,o]\in\mathcal G_*, \forall n\geq 0.\]
Note that $\sigfield{path}{I}{}\subseteq \sigfield{path}{T}{}$. Hence, \Cref{thm:pathTail} extends \Cref{thm:pathErgodic}. In fact, \Cref{thm:pathErgodic} will be used in the proof.


\begin{remark}[Periodic Case]
	\label{rem:periodic}
	If the weak aperiodicity condition is modified by allowing the largest common divisor of the set \Cref{eq:aperiodic} to be $d_0\in\mathbb N$, then a slight modification of the proofs gives that the tail sigma-field is equivalent to the sigma-field generated by at most $d_0$ disjoint tail events (assuming ergodicity). Indeed, if $E$ is the graph on $\bs G$ where $u$ is connected to $v$ when $K^{d_0}(u,v)>0$, then $E$ has at most $d_0$ connected components and indistinguishability holds in each of its components. 	
	Therefore, the level-sets of $\bs F$ can be partitioned into $d_0$ classes in a periodic way and each class contains indistinguishable level-sets. Hence, the number of indistinguishability classes of level-sets is a divisor of $d_0$. Note that, when $d_0=\infty$, the proof fails; e.g., in \Cref{ex:free,ex:dl}. This was resolved by another idea in coalescing Markov chains of \Cref{subsec:crw}.
\end{remark}

We now start to prove \Cref{thm:pathI=T,thm:pathTail}. Adapting the strategy of Theorem~14.47 of~\cite{bookLyPe16}, we start with the following lemma.

\begin{lemma}
	\label{lem:pathTV}
	Assume $[\bs G, \bs o]$ is unimodular and $K$ is a factor Markov kernel that satisfies the balance and weak irreducibility conditions of \Cref{model1} (but not necessarily the cycle-free condition). 
	Then, for every $k\in\mathbb N$ and almost every sample $(G,o)$ of $[\bs G, \bs o]$, if there exist $x$ and $l$ such that $K^{l+k}(o,x)>0$ and $K^l(o,x)>0$, then
	\begin{equation}
		\label{eq:pathTV}
		\forall v\in G: \lim_{n\to\infty} \tv{K^{n+k}(v,\cdot)-K^n(v,\cdot)} = 0.
	\end{equation}
	The same holds for every deterministic graph or discrete space $G$ that satisfies~\eqref{eq:aperiodic2}.
\end{lemma}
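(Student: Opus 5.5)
Following the strategy of Theorem~14.47 of~\cite{bookLyPe16}, I would prove \eqref{eq:pathTV} by an Ornstein-type coupling argument localized along the chain. The basic observation is this. Suppose that at some time $t$ the chain sits at a point $y$ with $K^{l+k}(y,z)>0$ and $K^{l}(y,z)>0$ for some $z$. Then two copies of the chain, one delayed by $k$ steps relative to the other, can be made to meet at $z$ with probability at least $\delta(y):=\min_z \|K^{l+k}(y,\cdot)\wedge K^l(y,\cdot)\|_{\mathrm{TV}}>0$. After meeting they are merged.

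The plan is to build a coupling of $(\bs X_n)$ started at $v$ with a chain $(\bs X'_n)$ started from $K^k(v,\cdot)$, so that their laws at time $n$ are $K^n(v,\cdot)$ and $K^{n+k}(v,\cdot)$. The two chains run in lockstep except at designated \emph{attempt times}. At such a time both chains are at the same point $y$ (up to the accumulated lag). The difference between the two chains is tracked as a lag in time, and the aim is to drive that lag to zero. I would use the classical trick of a random-walk lag, as in Ornstein's coupling. At each attempt from a point $y$ in the good set, with probability bounded below by $\delta(y)$, one copy advances by a path of length $l$ and the other by a path of length $l+k$. This changes the lag by $\pm k$ or $0$, symmetrically and independently of everything else. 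The lag then performs a lazy symmetric random walk on $k\mathbb Z$, started at $k$, which hits $0$ a.s. provided there are infinitely many attempts. Once the lag is $0$, the chains are identified. The coupling inequality then gives $\tv{K^{n+k}(v,\cdot)-K^n(v,\cdot)}\le \myprob{\text{not coupled by } n}\to 0$.

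So the main task is to guarantee infinitely many attempts with $\sum \delta=\infty$ along the trajectory, for every starting point $v$. Let $\bs S:=\{x: \exists l,z: K^{l+k}(x,z)>0,\ K^l(x,z)>0\}$. This is an equivariant subset, and the hypothesis says $\bs o\in\bs S$ with positive probability. I would first pass to ergodic components. By \Cref{lem:happensAtRoot}, $\bs S\neq\emptyset$ and $\myprob{\bs o\in \bs S}>0$ on the relevant component. I would also fix a constant $\eta>0$ and $l$ such that $\bs S_{\eta,l}:=\{x:\|K^{l+k}(x,\cdot)\wedge K^l(x,\cdot)\|>\eta\}$ still has positive probability of containing $\bs o$. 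Under the biased stationary measure of \Cref{lem:stationary}, the ergodicity of the path (\Cref{thm:pathErgodic}) combined with Birkhoff's theorem implies that the chain visits $\bs S_{\eta,l}$ infinitely often, $\hat{\mathbb P}$-a.s. Since $b>0$, this also holds $\mathbb P$-a.s. from $\bs o$.

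To extend this from $\bs o$ to every starting point $v$, I would apply \Cref{lem:happensAtRoot} to the event "the chain from $x$ visits $\bs S_{\eta,l}$ infinitely often a.s." Attempts are made at the successive visits to $\bs S_{\eta,l}$, spaced apart by at least $l+k$ steps, so that the attempt excursions do not overlap. Between attempts both chains move identically under $K$, which keeps the marginals correct. The success probability at each attempt is at least $\eta$, so infinitely many lag moves occur, and the lag random walk hits zero a.s.

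For a deterministic $G$ satisfying \eqref{eq:aperiodic2}, no ergodic argument is needed. Every point qualifies, with $\eta=\epsilon$, so one attempts at every block of $l+1$ steps. Condition \eqref{eq:aperiodic2} covers only $k=1$, and the general $k$ follows from the triangle inequality: $\tv{K^{n+k}-K^n}\le\sum_{j<k}\tv{K^{n+j+1}-K^{n+j}}$.

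I expect the main obstacle to be the rigorous bookkeeping of the coupling. The attempt times must be stopping times for the first chain. At each attempt, both chains must be at the same point with the lag recorded as a time offset. The construction must be arranged so that each marginal remains exactly a $K$-Markov chain, and this requires a careful maximal-coupling split of $K^{l+k}(y,\cdot)$ versus $K^l(y,\cdot)$ at each attempt.
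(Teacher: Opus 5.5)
Your proposal is correct and uses the same mechanism as the paper. Both build path segments of length $l$ or $l+k$ with a common endpoint at good points, run a conditionally symmetric lag walk on $k\mathbb Z$ until it is absorbed, and obtain infinitely many good visits from \Cref{lem:stationary}, \Cref{thm:pathErgodic} and \Cref{lem:happensAtRoot}.

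The one genuine difference is what happens away from the good set.

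\begin{itemize}
\item The paper takes blocks of random length $l$ or $l+k$ at every step, with a fair coin when the endpoint lies outside $S_{l,\epsilon}$. Attempts can therefore only be made at states of the block chain with kernel $(K^l+K^{l+k})/2$.
\item To know that this block chain visits $S_{l,\epsilon}$ infinitely often, the paper uses that $\tv{K^l(v,\cdot)\wedge K^{l+k}(v,\cdot)}$ is nondecreasing in $l$ to pick $l$ coprime to $k$. This makes the block chain weakly irreducible (it still balances $b$), and the paper then applies the ergodicity theorem a second time.
\item You take single $K$-steps between attempts. Then the first visit of the $K$-chain itself to $S_{\eta,l}$ after each block ends is automatically an attempt point. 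Infinitely many visits of the $K$-chain therefore already give infinitely many attempts, and the coprimality step and second ergodicity argument disappear.
\item The cost is only mixed segment lengths in the bookkeeping. This is harmless, because each chain's next segment length is decided at segment boundaries, independently of that segment's increments, so each marginal stays a $K$-chain.
\end{itemize}

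For the deterministic case, \eqref{eq:aperiodic2} only concerns $k=1$, where $S_{l,\epsilon}=G$. The paper leaves the passage to general $k$ implicit, and your triangle-inequality reduction makes it explicit.

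Two small points to tidy:
\begin{itemize}
\item Your $\delta(y)$ carries a spurious $\min_z$.
\item The lag increments are not independent of everything else. They are only conditionally symmetric, with the probability of a nonzero move bounded below: at least $\eta$ with a maximal-coupling split, and $\epsilon/2$ in the paper's construction. Together with conditional Borel--Cantelli, this is all the hitting argument needs.
\end{itemize}
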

The reader might compare the assumption of this lemma with~\eqref{eq:aperiodic}.
\begin{proof}
	We may let $k$ be fixed. We may also assume that $[\bs G, \bs o]$ is ergodic.
	For $l\in\mathbb N$ and $\epsilon\geq 0$, let
	\[
	S_{l,\epsilon}:=S_{l,\epsilon}(G):=\{v\in G: \tv{K^l(v,\cdot)\wedge K^{l+k}(v,\cdot)}\geq\epsilon \}.
	\]	 
	Note that the assumption of the lemma on $(G,o)$ is equivalent to $o\in \cup_{l,m} S_{l,1/m}$. If $\forall l,m:\myprob{\bs o\in S_{l,1/m}}=0$, the assumption does not hold. Hence, we can fix $l$ and $\epsilon>0$ such that $\myprob{\bs o\in S_{l,\epsilon}}>0$. Since $\tv{K^l(v,\cdot)\wedge K^{l+k}(v,\cdot)}$ is nondecreasing in $l$, we may choose $l$ such that it is coprime with $k$. Then, the ergodicity of $[\bs G, \bs o]$ and~\Cref{lem:happensAtRoot} imply that $S_{l,\epsilon}\neq\emptyset$ a.s. By \Cref{lem:stationary} (stationarity after biasing by $b(\bs o)$), one obtains that $\hat{\mathbb P}$-almost surely, $(\bs X_i)_i$ lies in $S_{l,\epsilon}$ either infinitely often or never. Thus, this happens $\mathbb P$-almost surely as well. 
	
	The event  $\card{(\bs X_i)_i\cap S_{l,\epsilon}}=\infty$ is an invariant path property. So, \Cref{thm:pathErgodic} and the previous paragraph imply that this event happens almost surely (even if $\bs o\not\in S_{l,\epsilon}$). 
	Independently from $(\bs X_i)_i$, let $(\bs t_i)_i$ be a sequence of times such that $\bs t_0:=0$ and $\bs t_{i+1}-\bs t_i$ is chosen randomly and uniformly in $\{l,l+k\}$. Since $(\bs X_{\bs t_i})_i$ is a Markov chain with kernel $(K^l+K^{l+k})/2$, one obtains similarly that $(\bs X_{\bs t_i})_i$ also visits $S_{l,\epsilon}$ infinitely often (note that, since $l$ and $k$ are coprime, this Markov chain is also weakly irreducible). Fix a sample $(G,o)$ such that, starting the Markov chain $(\bs X_i)_i$ from every point $v\in G$, the last property holds a.s.
	Observe that almost every sample is of this type. We will prove that this sample has the desired property. Let $v\in G$ be arbitrary.
	
	We construct a coupling of two copies of $(\bs X_i)_i$ on $G$ starting from $v$ as follows. We do this by constructing two random sequences $(\bs W_0,\bs W_1,\ldots)$ and $(\bs W'_0,\bs W'_1,\ldots)$, where each $\bs W_i$ or $\bs W'_i$ is a finite path in $G$, such that the end points of $\bs W_i$ and $\bs W'_i$ are the same for all $i$ and each $\bs W_i$ and $\bs W'_i$ is generated by going either $l$ or $l+k$ steps of the kernel $K$. To do this, assume that $\bs W_i$ and $\bs W'_i$ are constructed and $x$ is their common end point. If $x\not\in S_{l,\epsilon}$, choose $\bs W_{i+1}=\bs W'_{i+1}$ by going either $l$ or $l+k$ steps from $x$ (by tossing a fair coin). 
	If $x\in S_{l,\epsilon}$, do the following. Let $\nu_j$ be the distribution of a path of length $j$ starting from $x$ and $q_j:=K^j(x,\cdot)$ be the distribution of its end point. We would like to choose $\bs W_{i+1}$ and $\bs W'_{i+1}$ with distribution $(\mu_l+\mu_{l+k})/2$. First, choose the end point $\bs y$ with distribution $(q_l+q_{l+k})/2$. Then, choose $\bs W_{i+1}$ and $\bs W'_{i+1}$ randomly with distribution $(\mu_l+\mu_{l+k})/2$ independently, conditionally on ending in $\bs y$. Note that the lengths of these paths differ by $\pm k$ with probability $\sum_y q_l(y)q_{l+k}(y)/(q_l(y)+q_{l+k}(y)) \geq \sum_y (q_l(y)\wedge q_{l+k}(y))/2\geq \epsilon/2$. Otherwise, the lengths are equal. Now, $(\bs W, \bs W')$ is constructed.
	
	Since $S_{l,\epsilon}$ is visited by the endpoints of $(\bs W_i)_i$ infinitely often, there exists a first $i$ such that the sum of the lengths of $\bs W_0, \ldots, \bs W_i$ is exactly that of $\bs W'_0,\ldots,\bs W'_i$ plus $k$. Let $\bs W''_j:=\bs W'_j$ if $j\leq i$ and $\bs W''_j:=\bs W_j$ if $j\geq i$. Now, by concatenating the paths, $(\bs W, \bs W'')$ gives a coupling of two copies of $(\bs X_j)_j$, namely $(\bs X_j, \bs X''_j)_j$, such that $\bs X''_j = \bs X_{j+k}$ for large enough $j$, namely, for $j\geq \tau$. Hence, $\lim_j\myprob{\bs X''_j=\bs X_{j+k}}\geq \lim_j\myprob{\tau\leq j}=1$. This implies the claim.
\end{proof}

\begin{proof}[Proof of \Cref{thm:pathI=T-intro,thm:pathI=T}]
	The stronger form of weak aperiodicity, proved in \Cref{lem:aperiodicity}, implies that the assumption of \Cref{lem:pathTV} holds for every $k>0$.
	Let $(G,o)$ be a sample that satisfies~\eqref{eq:pathTV} for every $k$ (almost every sample satisfies these conditions).
	We now show that $(G,o)$ satisfies~\eqref{eq:pathI=T}. This claim follows from~\eqref{eq:pathTV} and Theorem~2.2 of~\cite{Ka92boundary}, but we include a direct proof which is a modification of that of Theorem~14.47 of~\cite{bookLyPe16} and {utilizes~\eqref{eq:pathTV}}. 
	
	Let $f$ be a bounded $\sigfield{path}{T}{G}$-measurable function. To prove~\eqref{eq:pathI=T}, it is enough to construct an $\sigfield{path}{I}{G}$-measurable function $\tilde f$ such that $\forall x\in G: \probPalm{(G,x)}{f=\tilde f}=1$. 
	For every $x\in G$ and $k\geq 0$, define 
	$\Lambda(G,x,k):=\omidPalm{(G,x)}{f\big((\bs X^{(k)}_i)_{i\geq 0}\big)}$, where $\bs X^{(k)}_{i}:=\bs X_{i-k}$ for $i\geq k$‌, $\bs X^{(k)}_i:=y_i$ for $i<k$, and $(y_i)_{0\leq i<k}$‌ is arbitrary. Since $f$ is a tail path function, the definition of $\Lambda(G,x,k)$ does not depend on the choice of $y_0,\ldots,y_{k-1}$.
	%
	%
	Note that $\Lambda(G,\cdot,\cdot)$ is space-time $K$-harmonic (as a function on $G\times (\mathbb N\cup\{0\})$). Hence, for all $n,k\geq 0$,
	\begin{align*}
		\Lambda(G,x,0) & = \omidPalm{(G,x)}{\Lambda(G,\bs X_{n+k},n+k)},\\
		\Lambda(G,x,k) & = \omidPalm{(G,x)}{\Lambda(G,\bs X_{n},n+k)}.
	\end{align*}
	So,
	\[
	\norm{\Lambda(G,x,0)-\Lambda(G,x,k)} \leq 2\ \mathrm{sup}(\Lambda) \tv{K^{n+k}(x,\cdot)-K^n(x,\cdot)} \to 0,
	\]
	where the last convergence holds as $n\to\infty$ by~\eqref{eq:pathTV} and the fact that $\Lambda$ is bounded (since $f$ is bounded). Therefore, $\Lambda(G,x,k)=\Lambda(G,x,0)$. Heuristically, this implies that $f$ is invariant (up to a negligible modification) by reversing the procedure, which is made precise as follows. Space-time $K$-harmonicity of $\Lambda$ implies that the function $\lambda(G,x):=\Lambda(G,x,0)$ is $K$-harmonic. Therefore, the martingale convergence theorem implies that, for every $x_0\in G$ and $\mathbb P_{(G,x_0)}$-a.e. sequence $(x_i)_{i\geq 0}$, the limit $\tilde f((x_i)_i):=\lim_{n\to\infty} \lambda(G,x_n)$ exists. Note that $\tilde f$ is invariant; i.e., $\sigfield{path}{I}{G}$-measurable. 
	Since $\Lambda(G,\bs X_n,n)$ is almost surely defined, the arguments above show that it is equal to $\Lambda(G,\bs X_n,0)$. So, by the definition of $\Lambda$ and the Markov property of $(\bs X_n)_n$, one almost surely has 
	\begin{align*}
		\tilde f((\bs X_i)_i) & =  \lim_n \lambda(G,\bs X_n) = \lim_n \Lambda(G,\bs X_n,n)\\
		&= \lim_n \omidPalmC{(G,o)}{f((\bs X_i)_i)}{\bs X_n}\\
		&= \lim_n \omidPalmC{(G,o)}{f((\bs X_i)_i)}{\bs X_0,\ldots, \bs X_n}.
	\end{align*}
	By L\'evy zero-one law, the last limit is equal to $f((\bs X_i)_i)$ a.s., and hence, $\tilde f = f$, $\mathbb P_{(G,o)}$-a.s. So, the claim is proved.
	%
\end{proof}

\begin{proof}[Proof of \Cref{thm:pathIntro,thm:pathTail}]
	Theorem~2.2 of~\cite{Ka92boundary} and~\eqref{eq:pathTV} imply that the Markov chain $([\bs G, \bs X_n])_n$ is steady. So, \Cref{thm:pathErgodic} implies that it has trivial tail. If $(\bs G, \bs o)$ has no nontrivial automorphisms a.s., then it follows that the Markov chain $([\bs G, \bs X_n; (\bs X_{i+n})_{i\geq 0}])_n$ is also tail-trivial. For the general case, we prove the claim similarly to that of \Cref{thm:pathI=T} mentioned above.
	Let $g$ be a bounded $\sigfield{path}{T}{}$-measurable function. It is enough to show that $g$ is essentially constant. Note that for every sample $(G,o)$, the function $f:=f_{(G,o)}:= g[G,o;\cdot]:G^{\mathbb N\cup\{0\}}\to\mathbb R$ is a tail function, i.e., is $\sigfield{path}{T}{G}$-measurable. The arguments in the proof of \Cref{thm:pathI=T} can be repeated word by word to show that for almost all samples $(G,o)$, the function $\Lambda(G,x,n)$ does not depend on $n$, 
	the function $\lambda(G,x):=\Lambda(G,x,0)$ is $K$-harmonic, and the function $\tilde f=\tilde f_{(G,o)}$ defined therein is equal to $f$, $\mathbb P_{(G,o)}$-a.s. Define the function $\tilde g:\mathcal G_{\infty}\to\mathbb R$ by $\tilde g[G,o;(x_i)_i]:=\tilde f_{(G,o)}((x_i)_i)$ and note that it is well defined (i.e., does not depend on the representative) and measurable. One also obtains $\tilde g = g$, $\mathbb P$- a.s. One can check that $\tilde g$ is invariant; i.e., $\sigfield{path}{I}{}$-measurable. Therefore, \Cref{thm:pathErgodic} implies that $\tilde g$ is essentially constant. Hence, so is $g$ and the claim is proved.
\end{proof}

\section{Steps C1 and L1: Ruling Out Non-Tail Properties in General Models}
\label{sec:drainage}

In this section, we prove Steps~\ref{step1comp} and~\ref{step1foil} for general point-maps (not limited to CMTs) in \Cref{thm:I in T 2} and provide the proof of \Cref{thm:general}. Before that, \Cref{subsec:drainage-events} provides the definitions and basic properties of tail branch/level-set properties.

\subsection{Tail Component/Level-Set Properties}
\label{subsec:drainage-events}

Recall that $\mathcal G'_*$ is the space of all $[G,o;f]$, where $G$ is a graph or discrete space and $f:G\to G$. Recall also the sigma-fields $\sigfield{pm}{I}{}$, $\sigfield{comp}{I}{}$ and $\sigfield{level}{I}{}$ of invariant point-map-properties, component-properties and level-set-properties from \Cref{def:invariantDrainage}.
In this subsection, we define tail point-map-properties, branch-properties and tail level-set-properties on $\mathcal G'_*$ for general point-maps. 

The path sigma-fields, defined in \Cref{sec:Markov}, apply to point-maps as well:
If $A$ is an event in $\mathcal G_{\infty}$, one might consider the event $[G,o;\anc_f(o)]\in A$ on $\mathcal G'_*$. More precisely, we identify $A$ with the event $\pi^{-1}(A)\subseteq\mathcal G'_*$, where $\pi:\mathcal G'_*\to\mathcal G_{\infty}$ is the projection defined by $\pi[G,o;f]:=[G,o;\anc_f(o)]$. 
So, by an abuse of notation, one may regard the sigma-fields $\sigfield{path}{I}{}$ and $\sigfield{path}{T}{}$, defined in \Cref{sec:Markov}, as sigma-fields on $\mathcal G'_*$. 

In the following definitions, we use $[G,o;f]$ as a variable element of $\mathcal G'_*$.

%
%
%

\begin{definition}[Tail Point-Map/Component/Level-Set Properties]
	\label{def:tailDrainage}
	$\quad$
	\begin{enumerate}[label=(\roman*)]
		\item An event $A\subseteq\mathcal G'_*$ is called a \defstyle{tail {point-map}-property} if 
		it is an invariant point-map property (that is, $A\in\sigfield{pm}{I}{}$, see \Cref{def:invariantDrainage}) and 
		it is invariant under all finite modifications of $f$; i.e., if $f':G\to G$ and $\card{\{f\neq f'\}}<\infty$, then $\identity{A}[G,o;f]= \identity{A}[G,o;f']$. Let $\sigfield{pm}{T}{}$ be the sigma-field of invariant tail point-map-properties.
		
		\item An event $A\subseteq\mathcal G'_*$ is called a \defstyle{tail component-property} if it is a component-property (that is, $A\in\sigfield{comp}{I}{}$) and it is invariant under {those} finite modifications of $f$ such that $C_f(o)$ is also modified at finitely many points. More precisely, if $f':G\to G$ is such that $\card{\{f\neq f'\}}<\infty$ and $\card{C_f(o)\Delta C_{f'}(o)}<\infty$, then $\identity{A}[G,o;f]= \identity{A}[G,o;f']$.
		Let $\sigfield{comp}{T}{}$ be the sigma-field of tail component-properties. 
		
		\item An event $A\subseteq\mathcal G'_*$ is called a \defstyle{tail level-set-property} if it is a level-set-property (that is, $A\in\sigfield{level}{I}{}$) and it is invariant under {those} finite modifications of $f$ such that $L_f(o)$ is also modified at finitely many points. More precisely, if $f':G\to G$ is such that $\card{\{f\neq f'\}}<\infty$ and $\card{L_f(o)\Delta L_{f'}(o)}<\infty$, then $\identity{A}[G,o;f]= \identity{A}[G,o;f']$.
		Let $\sigfield{level}{T}{}$ be the sigma-field of tail level-set-properties. 
	\end{enumerate}
\end{definition}

We assumed invariance in the definition of tail point-map properties for coherence with the definitions of tail level-set-properties and tail branch-properties defined next. Note that, when the base graph is deterministic, tail properties can be defined for events in $G^G$, which is classical, and no invariance assumption is needed (see also \Cref{thm:usf-ergodic}).

As mentioned in \Cref{intro:method}, tail component-properties are not well suited for two-ended models, which are studied in some of the results of this section. For this reason, we use the following stronger notion of tail branch-properties. For models with one-ended components, the reader might keep using tail component-properties in all of the following discussions; see \Cref{rem:branch}.


\begin{definition}[Tail Branch-Properties]
	\label{def:super-tail-comp}
	Given $(G,o;f)$, an \textbf{ancestral branch} of $C_f(o)$ is a branch $B$ of $C_f(o)$ (see \Cref{sec:notation}) such that $f^n(o)$ is eventually in $B$ as $n\to\infty$.	
	An event $A\subseteq\mathcal G'_*$ is called a \defstyle{tail ancestral-branch-property} (or for short, \defstyle{tail branch-property}) if 
	it is a component-property (that is, $A\in\sigfield{comp}{I}{}$) and,
	when $[G,o;f]$ and $[G,o;f']$ are such that $\card{\{f\neq f'\}}<\infty$ and $C_{f'}(o)$ shares an ancestral branch with $C_f(o)$ (and the graph structures of $f$ and $f'$ agree on the common branch), then $\identity{A}[G,o;f']=\identity{A}[G,o;f]$.
	Let $\sigfield{branch}{T}{}$ be the sigma-field of tail branch-properties. 
\end{definition}

In other words, a component-property $A$ is an ancestral-branch-property if
$\identity{A}[G,o;f]$ is determined by knowing $(G,o)$, knowing $f$ at all but finitely many points, and knowing an {ancestral branch} of $C_f(o)$.



\begin{remark}
	\label{rem:branch}
	It is straightforward to check that $\sigfield{branch}{T}{}\subseteq\sigfield{comp}{T}{}$ mod $\mathbb P$, assuming that the components are cycle-free a.s.
	Equality might fail in two-ended models (note that if $C_f(o)$ is two-ended, then a finite modification of $f$ might preserve an ancestral branch of $o$ but result in an infinite modification of $C_f(o)$).
	If all components are one-ended a.s., then $\sigfield{branch}{T}{}=\sigfield{comp}{T}{}$ mod $\mathbb P$. In this case, the reader can replace $\sigfield{branch}{T}{}$ with $\sigfield{comp}{T}{}$ and the arguments become simpler. This is safe for the indistinguishability theorem since indistinguishability is trivial in the two-ended case of CMTs by \Cref{thm:one-ended-or-connected}. However, we use $\sigfield{branch}{T}{}$ since some of the next results do not require one-endedness; e.g., \Cref{step1comp,step2comp} (\Cref{thm:I in T 2,thm:tailDrainage}). 
\end{remark}

The definitions imply that $\sigfield{pm}{T}{}\subseteq\sigfield{pm}{I}{}$, $\sigfield{branch}{T}{}\subseteq\sigfield{comp}{I}{}$ and $\sigfield{level}{T}{}\subseteq\sigfield{level}{I}{}$. We also have the following basic inclusions (see Figure~\ref{fig:sigfields}).

\begin{lemma}[Connections Between Path/Branch/Level-Set Sigma-Fields]
	\label{lem:basicInclusion0}
	One has $\sigfield{pm}{T}{}\subseteq \sigfield{branch}{T}{}$. Also, if $[\bs G, \bs o; \bs F]$ is a random element of $\mathcal G'_*$ such that $C(\bs o)$ is locally finite and cycle-free a.s., and $L(\bs o)$ is infinite a.s., then 
	$\sigfield{branch}{T}{}  \subseteq \sigfield{level}{T}{}$ mod $\mathbb P$.
\end{lemma}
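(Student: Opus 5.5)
The first inclusion is immediate from the definitions. Let $A\in\sigfield{pm}{T}{}$. Since $A$ is invariant under all changes of the root, it is in particular a component-property. If $f'$ differs from $f$ at finitely many points, then $\identity{A}[G,o;f']=\identity{A}[G,o;f]$ by the tail point-map property. This holds with no assumption on ancestral branches, so $A$ is a tail branch-property.

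For the second inclusion, take $A\in\sigfield{branch}{T}{}$ and verify the two requirements of a tail level-set-property, outside a null event.

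\textbf{Level-set-property.} Since $L_f(o)\subseteq C_f(o)$ and $A$ is a component-property, $A$ is a level-set-property without any exceptional set.

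\textbf{Invariance under finite modifications.} Fix $f'$ with $\card{\{f\neq f'\}}<\infty$ and $\card{L_f(o)\Delta L_{f'}(o)}<\infty$. We want $\identity{A}[G,o;f']=\identity{A}[G,o;f]$, and we will restrict to the almost-sure event on which $C_f(o)$ is locally finite and cycle-free and $L_f(o)$ is infinite. The plan is to exhibit a common ancestral branch of $C_f(o)$ and $C_{f'}(o)$ on which the two graph structures agree, and then apply the definition of tail branch-property.

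\begin{enumerate}[label=(\alph*)]
\item Let $U:=\{f\neq f'\}$, which is finite. Since $L_f(o)$ is infinite and $L_{f'}(o)$ differs from it in finitely many points, $L_f(o)\cap L_{f'}(o)$ is infinite. Pick $w$ in this intersection such that neither $\anc_f(w)$ nor $\anc_{f'}(w)$ meets $U$. This is possible by local finiteness: a point $u\in U$ is an $f$-ancestor of only those $w$ in the finite set $D_m(u)$, where $m$ is the generation gap, so only finitely many points of the level-set have $f$-ancestry through $U$.

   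The $f'$ side is the delicate part. The $f'$-ancestries of points of $L_f(o)\cap L_{f'}(o)$ follow $f$ until they first hit $U$. So for the chosen $w$ (which avoids $U$ under $f$) the $f'$-ancestry cannot reach $U$, and $\anc_{f'}(w)=\anc_f(w)$.
\item Both $w$ and $o$ lie in the same $f$-level-set and the same $f'$-level-set. Hence $\anc_f(o)$ eventually merges into $\anc_f(w)$, and $\anc_{f'}(o)$ eventually merges into $\anc_{f'}(w)=\anc_f(w)$. Therefore the two ancestral lines of $o$ eventually coincide, at the same height.
\item Remove from $C_f(o)$ a finite set of vertices: $U$, the vertices $x$ with $f(x)\in U$ or $f'(x)\in U$ (finitely many, by local finiteness of both components), and a finite initial segment of the common ancestral line. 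Take the component containing the tail of $\anc_f(o)$ in what remains. On this piece $f=f'$, so it is also a branch of $C_{f'}(o)$ with the same graph structure. Cycle-freeness ensures that removing the chosen finite vertex set leaves the tail of the ancestral line in a single infinite component that contains no modified edges.
\end{enumerate}

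With this common ancestral branch in hand, the definition of tail branch-property gives $\identity{A}[G,o;f']=\identity{A}[G,o;f]$, so $A\in\sigfield{level}{T}{}$ up to the null event.

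\textbf{Main obstacle.} Two points need care. The first is local finiteness of $C_{f'}(o)$, which is needed in step (c). It follows because $f'$ changes only finitely many out-edges, so in-degrees change only at finitely many vertices, and each such change is by a finite amount. The second is that the exceptional null set must not depend on $f'$. This is handled because every property used is a property of $f$ alone (local finiteness, cycle-freeness, infinite level-set), so the statement holds for all admissible $f'$ simultaneously on one almost-sure event. We then modify $A$ on that null event, for instance by replacing $A$ with $A$ intersected with the good event and using that the good event is itself a tail level-set-property, to conclude that the inclusion holds mod $\mathbb P$.
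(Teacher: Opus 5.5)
Your proof follows the paper's route. You restrict to the event $E$ on which $C_f(o)$ is locally finite and cycle-free and $L_f(o)$ is infinite. You pick a far point $w\in L_f(o)\cap L_{f'}(o)$ with $\anc_f(w)=\anc_{f'}(w)$, so the ancestral lines of $o$ eventually agree. Deleting a finite set containing $U=\{f\neq f'\}$ gives a common ancestral branch, the tail branch-property applies, and you finish with $A\cap E$. Steps (a)--(c) are sound and use only properties of $f$. In (c) you do not need local finiteness of $C_{f'}(o)$, since $\{x\in C_f(o): f'(x)\in U\}\subseteq U\cup\{x\in C_f(o): f(x)\in U\}$. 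In fact, deleting $U$ from both components already leaves a common ancestral branch, which is what the paper does.

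One claim in your last paragraph is false. The in-degree count does not make $C_{f'}(o)$ locally finite, because a finite modification can attach to $C_{f'}(o)$ vertices of \emph{other} components whose in-degree is already infinite. For example, let $v\notin C_f(o)$ have infinitely many children, all leaves, and let $f'$ agree with $f$ except $f'(v):=f^{3}(o)$. Then no ancestral line in $C_f(o)$ changes, and $v$ and its children sit at heights $h(o)-2$ and $h(o)-1$, so $L_{f'}(o)=L_f(o)$. Yet $C_{f'}(o)$ is not locally finite. So the good event is not closed under the moves defining $\sigfield{level}{T}{}$, and $A\cap E$ need not be a tail level-set-property. The paper's remark that $E\in\sigfield{level}{T}{}$ has the same defect.

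The repair uses exactly your observation that only properties of $f$ enter. Suppose $x=[G,o;f]\in E$ and $y=[G,o';f']$ with $\card{\{f\neq f'\}}<\infty$ and $\card{L_f(o)\Delta L_{f'}(o')}<\infty$. Move the root to a far $w\in L_f(o)\cap L_{f'}(o')$ whose $f$-ancestry avoids $U$, and run (a)--(c) at $w$; this gives $\identity{A}(x)=\identity{A}(y)$. Every pair related by the countable Borel equivalence relation $\mathcal R$ of Lemma~\ref{lem:cber} for $\sigfield{level}{T}{}$ is of this form, so $\identity{A}$ is constant on each $\mathcal R$-class meeting $E$. Let $E^{*}$ be the $\mathcal R$-saturation of $E$, which is Borel, $\mathcal R$-invariant and of full measure. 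Then $A\cap E^{*}\in\sigfield{level}{T}{}$, and it differs from $A$ by a null set.
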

\begin{proof}
	We only prove the second assertion and the rest is left to the reader. 
	Let $A\in\sigfield{branch}{T}{}$ and let $E$ be the set of $[G,o;f]$ in which $C_f(o)$ is locally finite and cycle-free, and $\card{L_f(o)}=\infty$. Note that $E\in\sigfield{level}{T}{}$.
	Assume $[G,o;f]\in E$, $[G,o';f']\in E$, $\card{\{f\neq f'\}}<\infty$ and $\card{L_f(o)\Delta L_{f'}(o')}<\infty$. These facts imply that there exists $z\in L_f(o)\cap L_{f'}(o')$, far enough from $o$, such that $\anc_f(z)=\anc_{f'}(z)$. As a result, $\anc_f(o)$ and $\anc_{f'}(o')$ both shift-couple with $\anc(z)$. Now, $C_f(o)$ and $C_{f'}(o')$ share an ancestral branch which is obtained by removing the set $\{f\neq f'\}$. Hence, the definition of $A\in\sigfield{branch}{T}{}$ implies that $\identity{A}[G,o';f']=\identity{A}[G,o;f]$. This proves that $A\cap E\in\sigfield{level}{T}{}$. The claim then follows since $A$ is equivalent to $A\cap E$.
	%
	%
	%
\end{proof}

As mentioned in Observations~\eqref{obs:comp} and~\eqref{obs:foil}, tail branch/level-set properties are connected to the invariant/tail path-sigma-fields as follows. Recall that Steps~\ref{step2comp} and~\ref{step2foil} are aimed at proving the converse of this lemma.

\begin{lemma}[Comparison of Point-Map/Path Sigma-Fields]
	\label{lem:basicInclusion}
	Let $[\bs G, \bs o; \bs F]$ be a random element of $\mathcal G'_*$. 
	\begin{enumerate}[label=(\roman*)]
		\item \label{lem:basicInclusion-comp} One has $\sigfield{path}{I}{} \subseteq \sigfield{branch}{T}{}$.
		\item \label{lem:basicInclusion-foil}
		If $C(\bs o)$ is locally finite and cycle-free a.s. and $L(\bs o)$ is infinite a.s., then $\sigfield{path}{T}{} \subseteq \sigfield{level}{T}{}$ mod  $\mathbb P$.
	\end{enumerate}
\end{lemma}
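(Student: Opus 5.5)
We identify an event $A\subseteq\mathcal G_{\infty}$ with $\pi^{-1}(A)\subseteq\mathcal G'_*$, where $\pi[G,o;f]=[G,o;\anc_f(o)]$, and check the defining invariances of $\sigfield{branch}{T}{}$ and $\sigfield{level}{T}{}$ directly. The only tool is the characterization of shift-invariant (resp. tail) path events in terms of shift-coupled (resp. eventually coinciding with equal index offsets) sequences. Measurability of $\pi^{-1}(A)$ is clear, since $\pi$ is measurable.

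For part~\ref{lem:basicInclusion-comp}, let $A\in\sigfield{path}{I}{}$. We first show $\pi^{-1}(A)\in\sigfield{comp}{I}{}$. If $v\in C_f(o)$, then $\anc_f(v)$ and $\anc_f(o)$ shift-couple: in the graph of $f$, any two points of a component have forward orbits that eventually merge, including the case where the component contains a cycle, since both orbits then enter the cycle and shift-couple there. By \Cref{def:path-invariant}, $\identity{A}[G,o;\anc_f(o)]=\identity{A}[G,v;\anc_f(v)]$.

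Next, let $f'$ differ from $f$ at finitely many points, with $C_{f'}(o)$ sharing an ancestral branch $B$ with $C_f(o)$ on which the graph structures agree. Since $f^n(o)\in B$ eventually and $f'^n(o)\in B$ eventually, and $\{f\neq f'\}$ is finite, there is a point $z\in B\setminus\{f\neq f'\}$ far along $\anc_f(o)$. We need its forward orbit never to meet $\{f\neq f'\}$; this holds for $z$ late enough, unless the orbit is periodic through a modified point, and that case is excluded because then the ancestral branches could not agree. Hence $\anc_f(z)=\anc_{f'}(z)$. Now $\anc_f(o)$ shift-couples with $\anc_f(z)$. Also, $\anc_{f'}(o)$ eventually lies in $B$, where edges agree; after its last visit to the finite set $\{f\neq f'\}$ it follows $f$-edges inside $B$, so it merges with the $f$-orbit, hence with $\anc_f(z)$. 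Shift-invariance then gives equality of the indicators, so $\pi^{-1}(A)\in\sigfield{branch}{T}{}$. The main subtlety is this merging argument, which must use that the branches are ancestral and that, away from the finite set, $f$ and $f'$ agree edge by edge.

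For part~\ref{lem:basicInclusion-foil}, let $A\in\sigfield{path}{T}{}$ and restrict to the event $E$ of \Cref{lem:basicInclusion0}, namely $C_f(o)$ locally finite and cycle-free with $L_f(o)$ infinite. The event $E$ is a tail level-set-property and has full probability, so it suffices to show $\pi^{-1}(A)\cap E\in\sigfield{level}{T}{}$. For level-set invariance, take $v\in L_f(o)$. Then $f^n(v)=f^n(o)$ for some $n$, so the two ancestral lines coincide from the same index $n$ onward. Since $A$ is a tail path event, it is determined by $[G,x_n;(x_{n+i})_i]$, and therefore the indicators agree. For tail invariance, let $f'$ be a finite modification with $[G,o;f']\in E$ and $\card{L_f(o)\Delta L_{f'}(o)}<\infty$. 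As in the proof of \Cref{lem:basicInclusion0}, infiniteness of the level-set and local finiteness give $z\in L_f(o)\cap L_{f'}(o)$ whose $f$-ancestral line avoids $\{f\neq f'\}$; this uses cycle-freeness and local finiteness, since only finitely many points of the level set have ancestral lines through the finite modified set within bounded depth, and a diagonal choice of $z$ handles all depths. Then $\anc_f(o)$ meets $\anc_f(z)=\anc_{f'}(z)$ at equal index $m$, and $\anc_{f'}(o)$ meets $\anc_{f'}(z)$ at equal index $m'$. Taking $N=\max(m,m')$, the lines $\anc_f(o)$ and $\anc_{f'}(o)$ agree from index $N$ on. The tail property then gives equality of the indicators, which completes the inclusion mod $\mathbb P$.
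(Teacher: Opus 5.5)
Your proposal is correct and follows the paper's proof essentially step for step. In (i) you use that ancestral lines in a common component shift-couple and that late points $z$ of $\anc_f(o)$ satisfy $\anc_f(z)=\anc_{f'}(z)$. In (ii) you restrict to the full-probability event $E$ and pick $z\in L_f(o)\cap L_{f'}(o)$ with a common ancestral line, exactly as the paper does. The ``diagonal choice'' in (ii) is not needed, since a modified point $x\in C_f(o)$ can only be reached from $L_f(o)$ at the fixed depth $h(o)-h(x)$.

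One caveat, which the paper shares: $E$ is not literally a tail level-set-property. Setting $f'(w):=o$ for a point $w$ of infinite in-degree in another component leaves $L(o)$ unchanged but destroys local finiteness of $C(o)$. However, your construction of $z$ only uses $[G,o;f]\in E$. So it suffices to replace $E$ by the larger event that $L_f(o)$ is infinite and $\card{L_f(o)\cap D_f(x)}<\infty$ for all $x\in G$, which is genuinely invariant under the relevant moves.
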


\begin{proof}
	\ref{lem:basicInclusion-comp}.
	Let $A\in\sigfield{path}{I}{}$. 
	We have $\identity{A}[G,o;f]=\identity{A}[G,o';f]$ for all $o'\in C_f(o)$, since $\anc(o)$ shift-couples (see \Cref{sec:notation}) with $\anc(o')$. This implies that $A\in\sigfield{comp}{I}{}$.
	Assume $(G,o;f)$ and $(G,o;f')$ are given such that $\card{\{f\neq f'\}}<\infty$ and $B$ is a common ancestral branch of $C_f(o)$ and $C_{f'}(o)$. This implies that $\anc_f(o)$ shift-couples with $\anc_{f'}(o)$, and one can hence find $i$ such that $v:=f^i(o)$ satisfies $\anc_f(v)=\anc_{f'}(v)\subseteq B$.  Since $A\in\sigfield{path}{I}{}$, one obtains that $\identity{A}[G,o;f] = \identity{A}[G,v;f]=\identity{A}[G,v;f']=\identity{A}[G,o;f']$. 
	 Hence, $A\in\sigfield{branch}{T}{}$ and the claim is proved.

	
	\ref{lem:basicInclusion-foil}.
	Let $E'$ be the event that $C(\bs o)$ is locally finite and cycle-free, and that $L(\bs o)$ is infinite. Note that $E'\in\sigfield{level}{T}{}$.
	Let $A\in\sigfield{path}{T}{}$.
	We have $\identity{A}[G,o;f]=\identity{A}[G,o';f]$ for all $[G,o;f]\in E'$ and $o'\in L_f(o)$ (since $\anc(o)$ eventually coincides with $\anc(o')$). 
	This implies that $A\cap E'\in\sigfield{level}{I}{}$.
	Assume $[G,o;f],[G,o;f']\in E'$ are such that $\card{\{f\neq f'\}}<\infty$ and $\card{L_f(o)\Delta L_{f'}(o)}<\infty$. These imply the existence of $z\in L_f(o)\cap L_{f'}(o)$ such that $\anc_f(z)=\anc_{f'}(z)$. Now, $\anc(z)$ eventually coincides with both $\anc_f(o)$ and $\anc_{f'}(o)$.
	Since $A\in\sigfield{path}{T}{}$, one obtains that $\identity{A}[G,o;f] = \identity{A}[G,z;f]=\identity{A}[G,z;f']=\identity{A}[G,o;f']$. 
	Therefore, $A\cap E'\in\sigfield{level}{T}{}$.
	 Since $A$ is equivalent to $A\cap E'$, one obtains that $A$ is in the null-event-augmentation of $\sigfield{level}{T}{}$ (\Cref{def:augmentation}).
\end{proof}


\subsection{Steps~\ref{step1comp} and~\ref{step1foil}}
\label{subsec:drainage-ergodic}

The following theorem establishes Steps~\ref{step1comp} and~\ref{step1foil} for general point-maps on unimodular graphs (not limited to CMTs).


\begin{theorem}[Steps~\ref{step1comp} and~\ref{step1foil}: Invariant Properties are Almost Tail]
	\label{thm:I in T 2}
	Let $[\bs G, \bs o]$ be a unimodular graph or discrete space and let $\bs F$ be a point-map on $\bs G$.
	\begin{enumerate}[label=(\roman*)]
		\item \label{thm:I in T 2-all}If $\bs G$ is infinite a.s.,
		then
		$
		\sigfield{pm}{I}{}= \sigfield{pm}{T}{} \text{ mod } \mathbb P.
		$
		More generally, this claim holds if $\bs F$ is any equivariant random graph on $\bs G$.
		\item \label{thm:I in T 2-comp} If $\bs F$ is cycle-free a.s., then 
		$
		\sigfield{comp}{I}{}=\sigfield{branch}{T}{} \text{ mod } \mathbb P.
		$
		\item \label{thm:I in T 2-foil} If the level-sets of $\bs F$ are infinite a.s., then 
		$
		\sigfield{level}{I}{}=\sigfield{level}{T}{} \text{ mod } \mathbb P.
		$
	\end{enumerate}
\end{theorem}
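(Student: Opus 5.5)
Only the inclusions $\sigfield{pm}{I}{}\subseteq\sigfield{pm}{T}{}$, $\sigfield{comp}{I}{}\subseteq\sigfield{branch}{T}{}$ and $\sigfield{level}{I}{}\subseteq\sigfield{level}{T}{}$ mod $\mathbb P$ need proof; the reverse inclusions hold by definition. The plan follows the classical idea of~\cite{LySc99,HuNa17indistinguishability} that an invariant event is almost determined by a finite window, and is almost unaffected by the window around a far-away point. Here, however, no pivotal updates are used. Instead, this is combined with the measure-theoretic lemmas on null-event-augmentation from \Cref{ap:measurability}.

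\textbf{Constructing a tail version.} Fix $A\in\sigfield{comp}{I}{}$ (the other two cases are analogous). Approximate $\identity{A}$ in $L^1$ by a function $\phi_r$ depending only on $[\bs G,\bs o;\restrict{\bs F}{\oball{r}{\bs o}}]$, with error $\epsilon_r\to 0$. The key point is that moving the root inside $C(\bs o)$ does not change $\identity{A}$. Suppose we root at points $x\in C(\bs o)$ that are far from $\bs o$, for example $x=\bs F^{(n)}(\bs o)$ or a far descendant or cousin. Then $\identity{A}$ is approximated by $\phi_r$ evaluated around $x$, and this depends only on $\bs F$ restricted to $\oball{r}{x}$, which is far from $\bs o$. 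To keep the error controlled we need a rerooting that preserves the distribution up to a bounded density. For this, use the MTP in the form of \Cref{lem:nested}: average $\phi_r[\bs G,x;\bs F]$ over a finite factor subset of $C(\bs o)$ located far away, e.g.\ over $D_m(\bs F^{(k)}(\bs o))$ with $k$ large. The $L^1$ error of such averages is still at most $\epsilon_r$, by the computation in the proof of \Cref{lem:nested}.

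\textbf{Making it invariant under finite modifications.} Define $\tilde A$ as a $\limsup$ of such averages along a sequence $r\to\infty$ with $k=k(r,\text{window})\to\infty$. A finite modification of $\bs F$ that keeps a common ancestral branch changes only finitely many values. The far ancestors $\bs F^{(k)}(\bs o)$ and their large descendant sets eventually lie inside the common branch and do not see the modified points. Hence the $\limsup$ is unchanged, so $\tilde A\in\sigfield{branch}{T}{}$. In the level-set case, use cousins at the same height, i.e.\ the averages over $D_k(\bs F^{(k)}(\bs o))$ minus a bounded neighbourhood. For $\sigfield{pm}{I}{}$, use far points of the infinite graph. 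The convergence $\tilde A=A$ a.s.\ is then obtained by Borel--Cantelli along a subsequence with $\sum\epsilon_{r}<\infty$. The appendix lemmas justify that the resulting event lies in the augmented tail sigma-field, by showing that the augmentation of an intersection of decreasing sigma-fields contains the intersection of the augmentations in this setting.

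\textbf{Main obstacle.} The hardest step is making sure that the averaging sets are genuinely far from every given finite set, uniformly over all finite modifications, while staying within $C(\bs o)$ (respectively $L(\bs o)$). For level-sets this is where the infinitude of the level-sets (hypothesis of~\ref{thm:I in T 2-foil}) enters: it makes the cousins unbounded. For components, cycle-freeness guarantees that $\anc(\bs o)$ leaves every finite set. If the descendant sets $D_m(\bs F^{(k)}(\bs o))$ can be finite or empty, I would instead average along $\anc(\bs o)$ itself, using the biasing of \Cref{lem:shiftToF(o)} to control the change of measure.
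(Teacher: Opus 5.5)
Your part~(iii) is essentially the paper's argument. You approximate $A$ by an event $A'$ depending on $[\bs G,\bs o;\restrict{\bs F}{\oball{R}{\bs o}}]$ and average $\identity{A'}$ over the nested equivariant partitions $\{D_k(\bs F^{(k)}(x))\}$. By \Cref{lem:nested} the limit is $\probCond{A'}{\sigfield{level}{I}{}}$. Infiniteness of $L(\bs o)$ shows that finite modifications affect a vanishing fraction of terms, and \Cref{lem:completion-g,lem:completion-I(R)} conclude. Some corrections here:
\begin{itemize}
\item Fix $R$ and let $k\to\infty$ first. The diagonal $\limsup$ and Borel--Cantelli are unnecessary.
\item The MTP bound on the $L^1$ error holds only for the partition element containing $\bs o$. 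For $D_m(\bs F^{(k)}(\bs o))$ with $m\neq k$ the MTP produces the weight $\card{D_k(\bs F^{(m)}(\bs o))}/\card{D_m(\bs F^{(m)}(\bs o))}$, not $1$.
\item The appendix does not give a statement about intersections of decreasing sigma-fields. What it gives is \Cref{lem:completion-I(R)}: you must verify invariance on a full-measure good set that is itself closed under the relevant equivalence relation, because a finite modification of a typical sample need not be typical.
\end{itemize}

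The genuine gap is part~(ii): you have no valid averaging scheme over $C(\bs o)$.
\begin{itemize}
\item Averaging over $D_k(\bs F^{(k)}(\bs o))$ gives $\omidCond{\phi_r}{\sigfield{level}{I}{}}$. This is a level-set-property but not a component-property, so your $\tilde A$ is not in $\sigfield{branch}{T}{}\subseteq\sigfield{comp}{I}{}$.
\item Part~(ii) allows two-ended components, whose level-sets are finite. There these sets stop growing, and a modification near $\bs o$ changes a non-vanishing fraction of the average.
\item The natural exhaustion $D(\bs F^{(k)}(\bs o))\uparrow C(\bs o)$ is not a partition, so MTP control is lost.
\item Your fallback along $\anc(\bs o)$ has error $\omid{d_i(\bs o)\,\norm{\identity{A}-\phi_r}}$ with $d_i=\card{\bs F^{-i}(\bs o)}$ (iterate \Cref{lem:shiftToF(o)}). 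This is small uniformly in $i$ only if $(d_i)_i$ is uniformly integrable. Since $\omid{d_i\identity{\{d_i>K\}}}=\myprob{\card{D_i(\bs F^{(i)}(\bs o))}>K}$, uniform integrability holds for two-ended components, where $\card{L(\bs o)}<\infty$. It fails for one-ended ones, where $\card{D_i(\bs F^{(i)}(\bs o))}\to\infty$.
\end{itemize}
The missing idea is an equivariant nested sequence of finite partitions exhausting $C(\bs o)$. The paper takes clusters of Bernoulli bond percolation on $C(\bs o)$ with parameters $1-2^{-i}$; these are finite in a one-ended tree and increase to $C(\bs o)$. It then applies \Cref{lem:nested}, integrates out the percolation, and checks branch-tail invariance by coupling the percolations on the common ancestral branch. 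For two-ended components it averages over the bushes $\tau^{-1}(Y_j)$ hanging from the bi-infinite path, using Birkhoff's theorem along the path.

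Part~(i) has the same gap in a stronger form. ``Far points of the infinite graph'' cannot be reached by averages over finite factor sets as in \Cref{lem:nested}. An equivariant nested sequence of finite partitions exhausting $\bs G$ would force $[\bs G,\bs o]$ to be hyperfinite (amenable), which fails, e.g., on the $3$-regular tree. The paper instead does the following:
\begin{itemize}
\item It runs a random walk with a strictly positive symmetric factor kernel, which balances counting measure.
\item It applies Birkhoff's theorem via \Cref{lem:stationary} and identifies the limit with $\probCond{A'}{\sigfield{pm}{I}{}}$ through \Cref{thm:pathErgodic}.
\item It gets invariance under finite modifications because this walk has infinite invariant measure. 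Hence it is not positive recurrent and spends asymptotically zero time in any ball.
\end{itemize}
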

Part~\ref{thm:I in T 2-all} generalizes the classical fact in point process theory that the invariant sigma-field is included in the tail sigma-field modulo zero measure events (see e.g., Exercise 12.3.2 of~\cite{bookDaVe03II}). Part~\ref{thm:I in T 2-comp} establishes \Cref{step1comp}. As mentioned in \Cref{intro:method}, this generalizes a result of~\cite{HuNa17indistinguishability}, which proves the claim for $\wusf$. We prove the general case with a simpler proof that does not involve pivotal updates or $\epsilon,\delta$ calculations. Part~\ref{thm:I in T 2-foil} also establishes \Cref{step1foil}.

\begin{proof}
	Let $\sigfield{pm}{\hat T}{}$ denote the null-event-augmentation of $\sigfield{pm}{T}{}$. Define $\sigfield{branch}{\hat T}{}$ and $\sigfield{level}{\hat T}{}$ similarly.
	Fix $\epsilon>0$ and let $A$ be an event in $\sigfield{pm}{I}{}$, $\sigfield{comp}{I}{}$ or $\sigfield{level}{I}{}$, respectively in each part of the theorem. In order to prove that $A$ belongs to $\sigfield{pm}{\hat T}{}$, $\sigfield{branch}{\hat T}{}$ or $\sigfield{level}{\hat T}{}$ respectively, by \Cref{lem:completion-g}, it is enough to find a function $\alpha$, which is measurable with respect to $\sigfield{pm}{\hat T}{}$, $\sigfield{branch}{\hat T}{}$ or $\sigfield{level}{\hat T}{}$ respectively, and satisfies $\omid{\norm{\identity{A}-\alpha}}<\epsilon$. Since $A$ is a Borel set, there exist $R<\infty$ and an event $A'$ that depends only on $[\bs G, \bs o; \restrict{\bs F}{\oball{R}{\bs o}}]$, such that $\myprob{A\Delta A'}<\epsilon$. For part~\ref{thm:I in T 2-all}, take $\alpha:=\probCond{A'}{\sigfield{pm}{I}{}}$, which heuristically means averaging $\identity{A'}[\bs G,\cdot; \bs F]$ over all points of $\bs G$. One has 
	\begin{align*}
		\omid{\norm{\identity{A}-\alpha}} &= \omid{\norm{\omidCond{\identity{A}-\identity{A'}}{\sigfield{pm}{I}{}}}}\\
		& \leq \omid{\omidCond{\norm{\identity{A}-\identity{A'}}}{\sigfield{pm}{I}{}}}
		= \omid{\norm{\identity{A}-\identity{A'}}} = \myprob{A\Delta A'}<\epsilon.
	\end{align*}
	By taking similarly $\alpha:= \probCond{A'}{\sigfield{comp}{I}{}}$ in part~\ref{thm:I in T 2-comp} and $\alpha:=\probCond{A'}{\sigfield{level}{I}{}}$ in part~\ref{thm:I in T 2-foil}, one obtains similarly that $\omid{\norm{\identity{A}-\alpha}}<\epsilon$. So, it remains to prove that $\alpha$ is measurable with respect to $\sigfield{pm}{\hat T}{}$, $\sigfield{branch}{\hat T}{}$ or $\sigfield{level}{\hat T}{}$ respectively, in each part of the proof. In order to prove this, the definition of conditional expectation does not seem to help directly. Instead, we will construct an explicit version of $\alpha$ using a random walk or a suitable partition, as described below.	
	We prove the three statements in the order \ref{thm:I in T 2-all}, \ref{thm:I in T 2-foil}, \ref{thm:I in T 2-comp}.
	\vspace{2mm}
	
	\ref{thm:I in T 2-all}.
	%
	By Lemma~3.10 of~\cite{Kh23unimodular}, one can construct a Markov kernel $h$ on $\bs G$ that does not depend on $\bs F$, such that $h(\cdot, \cdot)>0$ and it is symmetric (i.e., $\forall x,y\in\bs G: h(x,y)=h(y,x)$) a.s. 	
	Let $(\bs Y_i)_{i\geq 0}$ be a random walk on $\bs G$ with kernel $h$, starting from $\bs o$ and independent from $\bs F$. 
	Let $\tilde{\mathbb P}$ be the distribution of $[\bs G, \bs o; \bs F, (\bs Y_i)_i]$.
	Since $h$ is symmetric, it balances the counting measure on $\bs G$. Hence, \Cref{lem:stationary} implies that the sequence $([\bs G, \bs Y_i; \bs F])_{i\geq 0}$ is stationary. So, Birkhoff's pointwise ergodic theorem implies that the following limit is well defined $\tilde{\mathbb P}$-a.s.
	\begin{equation}
		\label{eq:lem:I in T}
		g[\bs G, \bs o; \bs F, (\bs Y_i)_i]:= \lim_{N\to\infty} \frac 1 N \sum_{i=1}^N \identity{A'}[\bs G, \bs Y_i; \bs F].
	\end{equation}
	In addition, the pointwise ergodic theorem implies that $g=\probCond{A'}{\mathcal J}$, $\tilde{\mathbb P}$-a.s., where $\mathcal J$ is the sigma-field of events that are invariant under the shift $(\bs Y_i)_{i\geq 0}\mapsto (\bs Y_{i+1})_{i\geq 0}$. So, we may apply \Cref{thm:pathErgodic} for $(\bs Y_i)_i$ (and keep $\bs F$ as a decoration) and deduce that $\mathcal J=\sigfield{pm}{I}{}$ mod $\tilde{\mathbb P}$. Hence, $g=\probCond{A'}{\sigfield{pm}{I}{}}=\alpha$, $\tilde{\mathbb P}$-a.s.
	
	Let $M'$ be the set of $[G,o;f]$ such that:
	\begin{enumerate}[label=(\arabic*)]
		\item $G$ is infinite and, for all $x,y\in G$, one has $h(G,x,y)=h(G,y,x)>0$ and $\sum_z h(G,x,z)=1$.
		\item For all $x\in G$, the random variable $g[G,x;f,(\bs Y_i)_i]$ (with respect to the randomness of $(\bs Y_i)_i$ started from $x$) is essentially constant.
		\item If $g'[G,x,f]$ is the constant such that $g[G,x;f,(\bs Y_i)_i]=g'[G,x,f]$ a.s., then $g'[G,x,f]$‌ does not depend on $x$.
	\end{enumerate}
	 Observe that $\myprob{M'}=1$ and $g'=\alpha$, a.s. So, it is enough to show that $g'$‌ is $\sigfield{pm}{\hat T}{}$-measurable. 
	 Assume $[G,o;f]\in M'$, $o'\in G$ and $f'\in G^G$‌ such that $\card{\{f\neq f'\}}<\infty$. 
	 By \Cref{lem:cber,lem:completion-I(R)}, to prove the $\sigfield{pm}{\hat T}{}$-measurability of $g'$, it is enough to show that $[G,o';f']\in M'$ and $g'[G,o;f] = g'[G,o';f']$ (this also implies that $M'\in\sigfield{pm}{T}{}$, as required in \Cref{lem:completion-I(R)}).
	 Let $x\in G$‌ be arbitrary and let 
	 $(\bs Y_i)_i$ be the random walk on $G$ with kernel $h$, starting from $x$ (note that it does not depend on $f$ nor $f'$). Since the kernel $h$ on $G$ is positive, this Markov chain on $G$ is irreducible. Therefore, the counting measure on $G$ is the unique stationary measure up to multiplicity by constants. Since the counting measure has infinite total mass (since $G$ is infinite by assumption), this chain cannot be positive recurrent. So, either it is transient or null recurrent. In either case, the frequency of visits of $(\bs Y_i)_i$ to the set $\oball{n+R}{o}$ is zero a.s., where $n$ is chosen such that $\{f\neq f'\}\subseteq \oball{n}{o}$. On the other hand, the definition of $A'$ implies that $\identity{A'}[G,y; f] = \identity{A'}[G,y;f']$ for all $y\in G\setminus\oball{n+R}{o}$. These facts imply that $g[G,x;f,(\bs Y_i)_i] = g[G,x; f', (\bs Y_i)_i]$ a.s. This in turn implies that $g[G,x;f,(\bs Y_i)_i]$ is essentially constant and the constant does not depend on $x$, and hence, the claim is proved.
	\vspace{2mm}
	
	\ref{thm:I in T 2-foil}. Instead of using a random walk with kernel $h$, we use asymptotic averaging on the level-set of the origin as follows. Define
	\[
	g[\bs G, \bs o; \bs F]:= \lim_{i\to\infty}    \mathrm{average}\{\identity{A'}[\bs G, x; \bs F]: x\in D_i(\bs F^{(i)}(\bs o)) \}, 
	\]
	if the limit exists, (see the definition of $\mathrm{average\{\cdot\}}$ just after \Cref{lem:nested}). If the limit does not exist, let $g=0$. Indeed, since for each $i$, the partition $\{D_i(\bs F^i(v)): v\in L(\bs o)\}$ of $L(\bs o)$ is equivariant and these partitions are nested,
	Lemma~8.1 of~\cite{Kh23unimodular} (see \Cref{lem:nested}) implies that the limit exists and is equal to $\probCond{A'}{\sigfield{level}{I}{}}=\alpha$ a.s.
	So, it is enough to prove that $g$ is $\sigfield{level}{\hat T}{}$-measurable. We will prove this by \Cref{lem:completion-I(R)} again.
	
	First, note that $g[\bs G, \bs o; \bs F]=g[\bs G, y; \bs F]$ for all $y\in L(\bs o)$, since $D_i(\bs F^{(i)}(\bs o))=D_i(\bs F^{(i)}(y))$ for large enough $i$.
	Let $M'$ be the set of $[G,o;f]$ such that the graph of $f$ is locally-finite, all level-sets of $f$ are infinite, and $f$ is one-ended. Note that $M'\in\sigfield{level}{T}{}$ and $\myprob{M'}=1$ by \Cref{thm:classification}. Assume $[G,o;f]\in M', [G,o;f']\in M', \card{\{f\neq f'\}}<\infty$ and $\card{L_f(o)\Delta L_{f'}(o)}<\infty$.
	This implies that, for all but finitely many $y\in L_f(o)\cap L_{f'}(o)$, one has $\anc_f(y)=\anc_{f'}(y)=:\anc(y)$ and $\anc(y)$ does not intersect $\oball{n+R}{o}$, where $n$ is chosen such that $\{f\neq f'\}\subseteq \oball{n}{o}$. Note that $\identity{A'}[G,x;f]=\identity{A'}[G,x;f']$ for all $x\in G\setminus\oball{n+R}{o}$. This fact, together with the infiniteness of $L_f(o)$, imply $g[G,o;f]=g[G,o;f']$. 
	Since  $M'\in\sigfield{level}{T}{}$, \Cref{lem:cber,lem:completion-I(R)} imply that $g$ is $\sigfield{level}{\hat T}{}$-measurable, and the claim is proved. 
	\vspace{2mm}
	
	\ref{thm:I in T 2-comp}. 
	We will use percolation in the following steps.
	\vspace{2mm}
	
	\textbf{Defining $\hat g$}.
	Let $[G, o; f]$ be a sample such that the graph of $f$ is locally-finite and $C_f(o)$ is either one-ended or two-ended. By \Cref{thm:classification}, almost every sample is of this type. We will define $\hat{g}[G,o;f]$ in the following cases.
	
	First, consider the case where $C(o)$ is one-ended. Let $\Phi_i$ be the Bernoulli bond percolation on $C(o)$ with parameter $1-2^{-i}$; i.e., every edge of $C(o)$ belongs to $\Phi_i$ with probability $1-2^{-i}$ independently from the other edges. Let $\bs S_i = \bs S_i(o)$ be the connected component of $o$ in $\Phi_i$. One might also couple the percolations for different values of $i$ such that $\Phi_i\subseteq\Phi_{i+1}$, and hence, $\bs S_i\subseteq \bs S_{i+1}$. Then, $\bs S_i$ is finite a.s. and converges to $C(o)$ as $i\to\infty$. 
	Define 
	\[g_i[G,o;f, (\Phi_i)_i]:=\mathrm{average}\{\identity{A'}[G, x; f]: x\in \bs S_i \}.\] 
	Since the percolation clusters form an equivariant nested sequence of partitions of $C(o)$ that converge to the trivial partition of $C(o)$, Lemma~8.1 of~\cite{Kh23unimodular} (see \Cref{lem:nested}) implies that $g:=\lim_i g_i$ exists in almost every sample of $[G,o;f, (\Phi_i)_i]$. 
	Note that $g$ might depend on the $\Phi_i$'s as well (in fact, one can see at the end of the proof that $g$ does not depend on $\Phi_i$'s a.s. since adding Poisson randomness does not break ergodicity). Define $\hat g[G, o; f]:=\omid{g}$, where the expectation is with respect to the randomness of the $\Phi_i$'s. 
	
	
	Second, consider the case where $C(o)$ is two-ended. In this case, let $H$ be the union of the bi-infinite paths in the graph of $f$ (note that there might be more than one bi-infinite path since we consider general point-maps in this theorem and \Cref{thm:one-ended-or-connected} is not available). For $u\in G$, let $\tau(u)=\tau_f(u)$ be the first element of $\anc(u)$ that lies in $H$ (if well defined). Let $Y_0:=\tau(o)$, $Y_j:=f^j(Y_0)$ for $j> 0$ and let $Y_{-j}$ be the unique point in $D_j(Y_0)\cap H$. Note that $\left(\tau^{-1}(Y_j)\right)_j$ is the partition of $C(o)$ into the branches hanging from a bi-infinite path. Let $g'_i$ be the average of $\identity{A'}[G,x;f]$ over $x\in\bigcup_{j=0}^i \tau_f^{-1}(Y_j)$. Define $g''_i$ similarly by the overage over $j\in [-i,i]$. Let $\hat g:=\lim_i g'_i$ if the limit exists. We will show that this is the case in almost every sample.
	\vspace{2mm}
	
	\textbf{Defining $M''$}.
	Let $M''$ be the set of $[G,o;f]$ that:
	\begin{enumerate}[label=(\arabic*)]
		\item $[G,o;f]$ satisfies the claims of \Cref{thm:classification} and each component of $f$ is either one-ended or two-ended.
		\item If $C_f(o)$‌ is one-ended, then $g=\lim_i g_i[G,o;f,(\Phi_i)_i]$ exists for almost every sample of $(\Phi_i)_i$.
		\item If $C_f(o)$ is two-ended, then $\hat g = \lim_i g'_i$ exists.
	\end{enumerate} 
	
	\textbf{Proving $\myprob{M''}=1$}.
	To prove this claim, it remains to show that, on the event $E$ in which $C(\bs o)$ is two-ended, $\hat g$ exists a.s. 
	We may assume $\myprob{E}>0$.
	By the mass transport principle, one obtains $\omid{\card{\tau^{-1}(\bs o)}} = \myprob{E}$. Hence, $\tau^{-1}(\bs o)$ is finite a.s.
	One can easily show that the sequence $\left([\bs G, Y_i; \bs F]\right)_i$, biased by $1/\card{\tau^{-1}(Y_0)}\identity{E}$, is stationary (this is a general property of unimodular two-ended trees and can be proved by using the mass transport principle directly, see Theorem~4.17 of~\cite{eft}). Therefore, Birkhoff's pointwise ergodic theorem implies that the following limits exist a.s. on $E$:
	\begin{align*}
		\lim_i \frac 1 i\sum_{j=0}^{i-1} \card{\tau^{-1}(Y_j)},\quad
		\lim_i \frac 1 i \sum_{j=0}^{i-1} \sum_{x\in \tau^{-1}(Y_j)} \identity{A'}[\bs G, x; \bs F].
	\end{align*}
	Hence, $\lim_i g'_i$ exists and the claim is proved. In addition, one can make the sum over $j\in [-i,i]$ similarly (and divide by $2i+1$) and the value of the limits does not change a.s. Therefore, $\lim_i g'_i=\lim_i g''_i$ a.s. on $E$. 
	\vspace{2mm}
	
	\textbf{Proving that $\hat g=\alpha$ a.s.} Consider the percolation $\Phi$ defined above in the one-ended case. When $C(o)$ is two-ended, replace $\Phi_i$ by the percolation on the unique bi-infinite path of $C(o)$. Let the rest of the edges of $C(o)$ be always in $\Phi_i$ (i.e., be \textit{open}). Define $\bs S_i, g_i$ and $g$ similarly to the one-ended case. Since $\bs S_i$ is a union of the sets $\tau^{-1}(Y_j)$ over $j$ in a large interval, one deduces that $g=\lim_i g''_i=\lim_i g'_i=\hat g$ a.s. on $E$. Now, \Cref{lem:nested} for the partition $\bs S_i$ (both in the one-ended and two-ended cases) implies that $g=\alpha$, $\tilde{\mathbb P}$-a.s., where $\tilde{\mathbb P}$ is the distribution of $[\bs G, \bs o; \bs F, (\Phi_i)_i]$. This implies that $\hat g=\alpha$, $\mathbb P$-a.s.

	\vspace{2mm}
	
	\textbf{Measurability of $\hat g$}.
	We now show that $\hat g$ is $\sigfield{branch}{\hat T}{}$-measurable.
	Assume that $[G,o;f]\in M'', o'\in G, f'\in G^G, \card{\{f\neq f'\}}<\infty$ and there exists a common ancestral branch $B$ of $C_f(o)$ and $C_{f'}(o')$. By \Cref{lem:cber,lem:completion-I(R)}, we need to prove that $[G,o';f']\in M''$‌ and $\hat g[G,o;f]=\hat g[G,o';f']$ (this also shows that $M''\in\sigfield{branch}{T}{}$, as required in \Cref{lem:completion-I(R)}). We may assume $\{f\neq f'\}\subseteq \oball{n}{o}$. Note that $\restrict{f}{B}=\restrict{f'}{B}$. So, by \Cref{thm:classification}, either both $C_f(o)$ and $C_{f'}(o')$ are one-ended or both are two-ended. We consider these cases separately.
	
	First, assume $C_f(o)$ and $C_{f'}(o')$ are one-ended. In this case, $C_f(o)\setminus B$ and $C_{f'}(o')\setminus B$ are finite. 
	Denote by $(\Phi'_i)_i$  the sequence of percolations used in the definition of $\hat{g}[G,o';f']$ and define $(\bs S'_i)_i$ similarly. One can couple $(\Phi'_i)_i$ with $(\Phi_i)_i$ in such a way that $\restrict{\Phi_i}{B} = \restrict{\Phi'_i}{B}, \forall i$ a.s. (since these are Bernoulli percolations). 
	This implies that $\bs S_i\cap B = \bs S'_i\cap B$ for large enough $i$. Note that $\bs S_i\setminus B$ and $\bs S'_i\setminus B$ are included in a finite set which does not depend on $i$, and also $\identity{A'}[G,x;f]=\identity{A'}[G,x;f]$ if $x\in G\setminus \oball{n+R}{o}$. 
	So, in the averages made in $g_i[G,o;f, (\Phi_i)_i]$ and $g_i[G,o';f',(\Phi'_i)_i]$, all of the terms are equal, except finitely many of them.
	This implies that $\hat{g}[G,o;f]=\hat{g}[G,o';f']$, as desired.
	
	Second, assume $C_f(o)$ and $C_{f'}(o')$ are two-ended. Define $H'$, $(Y'_i)_i$ and $\tau_{f'}$ similarly for $f'$. Since $C_f(o)$ and $C_{f'}(o')$ share an ancestral branch, one can easily deduce that 
	\[
	\exists k\in \mathbb Z, \exists J\geq 0: \forall j\geq J: Y'_j=Y_{j+k}, \quad \tau_{f'}^{-1}(Y'_j)= \tau_f^{-1}(Y_{j+k}).
	\]
	As a result, the averaging sets in the definitions of $g'_i[G,o;f]$ and $g'_i[G,o';f']$ are almost equal. 
	Since $\identity{A'}[G,x;f]=\identity{A'}[G,x;f']$ for all $x\in G\setminus\oball{n+R}{o}$, one obtains that $\lim_i g'_i[G,o;f]=\lim_i g'_i[G,o';f']$; i.e., $\hat g[G,o;f]=\hat g[G,o';f']$, as desired.
	
	In both cases, we showed that $\hat g[G,o;f]=\hat g[G,o';f']$. 
	So, \Cref{lem:completion-I(R)} for the event $M''$ implies that $\hat g$ is $\sigfield{branch}{\hat T}{}$-measurable. So, $\alpha$ is also $\sigfield{branch}{\hat T}{}$-measurable and the proof is completed.
	%
	%
\end{proof}

\begin{proof}[Proof of \Cref{thm:general}]
	We prove \ref{thm:general:comp}. The proof of~\ref{thm:general:level} is similar and is omitted. In \Cref{thm:I in T 2}, we proved \Cref{step1comp}; i.e., $\sigfield{comp}{I}{}=\sigfield{branch}{T}{}$ mod $\mathbb P$. If the claim of \Cref{step2comp} holds, one has $\sigfield{branch}{T}{}=\sigfield{path}{I}{}$ mod $\mathbb P$. If the claim of \Cref{step3comp} holds, then $\sigfield{path}{I}{}=\sigfield{}{I}{}$ mod $\mathbb P$. If both hold, one obtains that $\sigfield{comp}{I}{}=\sigfield{}{I}{}$ mod $\mathbb P$. This implies indistinguishability of components by \Cref{lem:indistinguishability}.
\end{proof}

\section{Steps C2 and L2: Reduction to Path Properties and Proof of Indistinguishability in CMTs}
\label{sec:cmt-proof}

In this section, we establish Steps~\ref{step2comp} and~\ref{step2foil} for CMTs (we do not require unimodularity of the base space). Then, using the results of \Cref{sec:Markov,sec:drainage}, the proof of indistinguishability for CMTs (\Cref{thm:indistinguishability}) is completed in \Cref{subsec:proofOfIndistinguishability}. Finally, indistinguishability for CMTs on a deterministic base space (see \Cref{main:deterministic}) is discussed in \Cref{subsec:tail-fixed}.

\del{
Note that the triviality of $\sigfield{branch}{T}{}$ and $\sigfield{level}{T}{}$ is used in proving the triviality of $\sigfield{comp}{I}{}$ and $\sigfield{level}{I}{}$, unlike the proof for Markov chains (where the triviality of $\sigfield{path}{I}{}$ is proved before that of $\sigfield{path}{T}{}$).
}

\subsection{Steps~\ref{step2comp} and~\ref{step2foil} for CMTs}
\label{subsec:drainageTail}

\del{It\mar{\ali{The second half of this paragraph is already discussed many times. Maybe a 1-line introductory sentence is enough.}} is a classical fact that an i.i.d. marking of a deterministic base space is tail trivial. Part~\ref{thm:tailDrainage-all} of \Cref{thm:tailDrainage} below is an analogous claim for CMTs on random base spaces (not necessarily unimodular). In the next parts of the theorem, analogous claims are proved for tail branch/level-set properties of CMTs, which reduce such properties to invariant/tail properties of $\anc(\bs o)$ and establish Steps~\ref{step2comp} and~\ref{step2foil}.}

\begin{theorem}[Steps~\ref{step2comp} and~\ref{step2foil} for CMTs]
	\label{thm:tailDrainage}
	Let $\bs F$ be a CMT on any random rooted graph or discrete space $[\bs G, \bs o]$. Then, 
	\begin{enumerate}[label=(\roman*)]
		\item \label{thm:tailDrainage-all} $\sigfield{pm}{T}{} = \sigfield{}{I}{}$ mod  $\mathbb P$, where $\sigfield{}{I}{}$ is the invariant sigma-field (not to be confused with $\sigfield{pm}{I}{}$, see \Cref{thm:I in T 2}).
		\item \label{thm:tailDrainage-comp} $\sigfield{branch}{T}{} = \sigfield{path}{I}{}$ mod $\mathbb P$.
		\item \label{thm:tailDrainage-foil} If $C(\bs o)$ is locally finite and cycle-free a.s. and $L(\bs o)$ is infinite a.s., then $\sigfield{level}{T}{} = \sigfield{path}{T}{}$ mod $\mathbb P$.
	\end{enumerate}	
%
\end{theorem}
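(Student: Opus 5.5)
The easy inclusions are already available: $\sigfield{}{I}{}\subseteq\sigfield{pm}{T}{}$ trivially, $\sigfield{path}{I}{}\subseteq\sigfield{branch}{T}{}$ by \Cref{lem:basicInclusion}, and $\sigfield{path}{T}{}\subseteq\sigfield{level}{T}{}$ mod $\mathbb P$ under the stated assumptions. So it remains to prove the reverse inclusions. The plan is to follow Steps~\ref{step2comp-1-second}, \ref{step2comp-1} and \ref{step2comp-2}, generalizing the coupling argument of \Cref{prop:simpler}. The essential change is that the shift/successful coupling used there is replaced by the ancestral line itself: we resample $\bs F$ but \emph{keep} $\anc(\bs o)$.

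\textbf{Step 1 (conditional tail triviality).} Fix a sample $(G,o)$ and $n$. Because $(\bs F(x))_{x\in G}$ are independent, conditioning on $\anc(o)$ is explicit. The values $\bs F(x)$ for $x\notin\anc(o)$ stay independent, each with law $K(x,\cdot)$, except that they must not create a new path merging into $\anc(o)$ in a way that alters it. In fact, since $\anc(o)$ is determined only by the values on $\anc(o)$, those other values are simply independent with law $K(x,\cdot)$ conditioned on nothing.

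Build $\bs F'$ as follows:
\begin{itemize}
\item keep $\bs F'=\bs F$ on $\anc(o)$;
\item resample $\bs F'$ independently on $\oball{n}{o}\setminus\anc(o)$;
\item set $\bs F'=\bs F$ elsewhere.
\end{itemize}
Then $\bs F'\sim\bs F$ given $(G,o,\anc(o))$, $\anc_{\bs F'}(o)=\anc_{\bs F}(o)$, and $\{\bs F\ne\bs F'\}$ is finite. By Observation~\eqref{obs:knowingAnc}, any $A\in\sigfield{branch}{T}{}$ or $\sigfield{level}{T}{}$ satisfies $\identity{A}(\bs F)=\identity{A}(\bs F')$. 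Hence $A$ is conditionally independent of $\restrict{\bs F}{\oball{n}{o}\setminus\anc(o)}$ given $(G,o,\anc(o))$. Letting $n\to\infty$ with a martingale argument, $A$ is conditionally independent of itself, so $\probCond{A}{\bs G,\bs o,\anc(\bs o)}\in\{0,1\}$. This gives an event $A'$ depending only on $[\bs G,\bs o;\anc(\bs o)]$ with $\myprob{A\Delta A'}=0$.

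The same argument with $\anc(o)$ removed gives part~\ref{thm:tailDrainage-all}. Resampling $\bs F$ on $\oball{n}{o}$ does not affect $A\in\sigfield{pm}{T}{}$, so $\probCond{A}{\bs G,\bs o}\in\{0,1\}$. Because $A$ is root-invariant, $\identity{\{\probCond{A}{\bs G,\bs o}=1\}}$ is an event in $\sigfield{}{I}{}$ (the function $[G,o]\mapsto\mathbb P_G[A]$ is root-invariant).

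\textbf{Step 2 (choosing $A'$ shift-invariant or tail).} Define $\phi[G,o;(x_i)]:=\mathbb P_G[A\mid \anc(o)=(x_i)]\in\{0,1\}$ using the explicit version of the conditional law. This uses the conditional product measure of $\bs F$ off the path, so the version is measurable on $\mathcal G_\infty$.

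For part~\ref{thm:tailDrainage-comp}, we need $\phi\circ\sigma=\phi$ a.s. Couple the conditional law given $\anc(o)=(x_0,x_1,\dots)$ with the conditional law given $\anc(x_1)=(x_1,x_2,\dots)$ rooted at $x_1$. They differ only in the value of $\bs F(x_0)$, which is a finite modification. Since $x_1\in C(x_0)$ and $A$ is a component-property, and changing $\bs F(x_0)$ preserves the ancestral branch, $\identity{A}$ agrees on both sides. Therefore $\phi[G,x_0;(x_i)_{i\ge0}]=\phi[G,x_1;(x_{i+1})]$ on a full-measure set.

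For part~\ref{thm:tailDrainage-foil}, the corresponding claim is that $\phi$ does not depend on $x_0,\dots,x_{k-1}$. Take two paths that agree from time $k$ on, and couple the forests off the paths identically, except on the finitely many differing path points and their outgoing values. The roots then lie in the same level-set up to finite modification, and $A$ is a tail level-set-property, so $\identity{A}$ agrees.

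\textbf{Main obstacle.} The delicate step is making these comparisons hold on a set of full measure when the second path (the shifted one, or the one with altered initial segment) has a law that is only absolutely continuous, or not even comparable, with respect to the original. The same issue arises in making the modified $\phi$ measurable on $\mathcal G_\infty$ modulo null sets. I would handle this with \Cref{lem:cber,lem:completion-I(R)}: define a full-measure event $M\subseteq\mathcal G_\infty$ on which $\phi$ is $\{0,1\}$-valued and respects the relevant equivalence (shift-coupling, or eventual coincidence). Then argue, as in the proof of \Cref{thm:I in T 2}, that $\phi$ is measurable with respect to the augmented $\sigfield{path}{I}{}$ (respectively $\sigfield{path}{T}{}$). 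The domination of the shifted law by \Cref{lem:shiftToF(o)} is what transfers null sets along the shift.
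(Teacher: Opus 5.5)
Your proposal is correct and follows essentially the paper's proof: you get conditional tail-triviality given $[\bs G,\bs o;\anc(\bs o)]$ by resampling $\bs F$ on $\oball{n}{o}\setminus\anc(o)$, then show an explicit version of $\probCond{A}{[\bs G,\bs o;\anc(\bs o)]}$ respects shift-coupling (resp.\ eventual coincidence) via a finite-modification coupling, and finish with \Cref{lem:cber,lem:completion-I(R),lem:completion-cond}; your one-step $\phi=\phi\circ\sigma$ coupling at $x_0$ is a tidier form of the paper's comparison. Drop the appeal to \Cref{lem:shiftToF(o)}, which requires unimodularity (not assumed in this theorem) and is not needed: with the explicit version your coupling gives $\phi=\phi\circ\sigma$ surely for every simple path, so no null sets need to be transported. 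To make the full-measure set shift-invariant, extend $\phi$ to eventually simple paths by evaluating it at the first index after which the path is simple. Also, in part~(iii), say explicitly that the infinite-level-set hypothesis is what supplies a far common root $z$ whose ancestral line is unchanged, so that the tail level-set property can be applied.
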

The first part is a  version of the classical fact that an i.i.d. marking of a deterministic space is tail-trivial. Note that unimodularity is not assumed in this result.
Note also that, in contrast with the proof of \Cref{sec:warmUp} for lattice models, we do not require any shift-coupling or successful coupling property. The other conditions of \Cref{model1,model2} are not required as well. 

\begin{proof}
	Let $K$ be the Markov kernel of $\bs F$, defined in \Cref{subsec:cmt}.
	The proof of the first statement uses a method similar to that of \Cref{sec:warmUp}. The same applies to the other statements, but heavily relies on the Markov chain sigma-fields. The idea is to show that the event under consideration is independent from itself, conditionally on $[\bs G, \bs o; \anc(\bs o)]$ (this establishes \Cref{step2comp-1}), and to show that its conditional probability is measurable with respect to the desired path sigma-field (this establishes \Cref{step2comp-2}). The proof relies on \Cref{lem:completion-I(R),lem:completion-cond}.
	\vspace{0.2cm}
	
	\ref{thm:tailDrainage-all}.
	Since events in $\sigfield{}{I}{}$ depend only on the base graph (not on the CMT nor on the root), one has $\sigfield{}{I}{}\subseteq\sigfield{pm}{T}{}$. So, we should prove that $\sigfield{pm}{T}{}\subseteq\sigfield{}{I}{}$.	
	Let $A\in\sigfield{pm}{T}{}$ and $n\in\mathbb N$ be arbitrary. Construct a copy $\bs F'$ of $\bs F$ coupled with $\bs F$ as follows. For every sample $(G,o)$ of $[\bs G, \bs o]$, construct $\bs F$ on $G$ and construct $\bs F'$ inside $\oball{n}{o}$ as an independent copy of $\restrict{\bs F}{\oball{n}{o}}$. Outside $\oball{n}{o}$, let $\bs F'$ be identical to $\bs F$. Then, $\bs F'$ has the same distribution as $\bs F$ given $(G,o)$. Let $A'$ be the event $[\bs G, \bs o; \bs F']\in A$. By construction, $A$ is independent from $\restrict{\bs F'}{\oball{n}{\bs o}}$ (conditionally on $[\bs G, \bs o]$). Also, since $A$ is a tail point-map-property, one has $\identity{A}=\identity{A'}$ a.s. This shows that $A'$ is independent from $\restrict{\bs F'}{\oball{n}{\bs o}}$ (conditionally on $[\bs G, \bs o]$). Since this holds for all $n$, one obtains that $A'$ is independent from $\bs F'$ (conditionally on $[\bs G, \bs o]$). Therefore, $A'$ is independent from itself (conditionally on $[\bs G, \bs o]$), and hence, the same holds for $A$ as well. So, $g:=\probCond{A}{[\bs G, \bs o]}\in\{0,1\}$ a.s. So, Lemma~\ref{lem:completion-cond} implies that $A$ is in the null-event-augmentation of the sigma-field generated by $[\bs G, \bs o]$. 
	
	To complete the proof, 
	by Lemma~\ref{lem:completion-cond}, it is enough to construct a version of $\probCond{A}{[\bs G, \bs o]}$ that depends only on $\bs G$. Let $G$ be deterministic. For $x\in G$, define $g'[G,x]:=\probPalm{G}{[G,x;\bs F]\in A}$. It is clear that $g'$ is a version of $\probCond{A}{[\bs G, \bs o]}$. By using the same instance of $\bs F$ for different $x,y\in G$, and the fact $A\in\sigfield{pm}{I}{}$, one obtains that $g'[G,x]=g'[G,y]$. So, $g'$ depends only on $G$ and the claim is proved.
	\vspace{0.2cm}	
	
	\ref{thm:tailDrainage-comp}.
	\Cref{lem:basicInclusion} implies that $\sigfield{path}{I}{}\subseteq\sigfield{branch}{T}{}$. Conversely,
	let $A\in\sigfield{branch}{T}{}$. We will use \Cref{lem:completion-cond} for the sigma-fields $\mathcal F$ and $\mathcal F'$ with $\mathcal F=\sigfield{path}{I}{}$ and $\mathcal F'$ the sigma-field of events in $\mathcal G'_*$ that depend only on $[\bs G, \bs o; \anc(\bs o)]$. To use this lemma, we first show that $\probCond{A}{[\bs G, \bs o; \anc(\bs o)]}$ is $\hat{\mathcal F}$-measurable (see the notation used in \Cref{lem:completion-cond}). For this, we use the following explicit version of the conditional probability. Let $M'$ be the set of $[G,x_0;(x_i)_{i\geq 0}]$ such that $x_N,x_{N+1},\ldots$ are distinct for some $N$. 
	Note that $M'\in\sigfield{path}{I}{}$ (while the event that $x_0,x_1,\ldots$ are distinct is not in $\sigfield{path}{I}{}$).
	Note also that $\myprob{[\bs G, \bs o; \anc(\bs o)]\in M'}=1$. Given $[G,x_0;(x_i)_{i\geq 0}]\in M'$, define a random element $\bs F_1\in G^G$ using $(x_i)_i$ as follows: 
	Let $\bs F$ be the CMT with kernel $K$ on $G$.
	If $N$ is the smallest number such that $(x_i)_{i\geq N}$ are distinct, let $\bs F_1(x_{j}):=x_{j+1}, \forall j\geq N$. 
	For other points $x\in G$, let $\bs F_1(x):=\bs F(x)$. Then, on $M'$, define
	\begin{equation}
		\label{eq:thm:tailDrainage-g}
		g[G,x_0;(x_i)_i]:= \probPalm{G}{[G,x_0;\bs F_1]\in A}.
	\end{equation}
	Outside $M'$, let $g[G,x_0;(x_i)_i]:=0$.
	Note that $\anc(x_0)$ is a simple path a.s. and that $\bs F_1$ is a regular conditional version of $\bs F$ conditionally on $\anc_{\bs F}(x_0)=(x_i)_i$.
	
	We now prove that $g$ is $\hat{\mathcal F}$-measurable. For this, we use Lemma~\ref{lem:completion-I(R)} for the event $M'$ and the countable Borel equivalence relation corresponding to $\sigfield{path}{I}{}$; see \Cref{lem:cber} (in fact, since $g$ is zero outside $M'$, it is $\mathcal F$-measurable). Assume $[G,x_0;(x_i)_i]\in M'$, $[G,x'_0;(x'_i)_i]\in M'$ and that $(x'_i)_i$ shift-couples with $(x_i)_i$ (but maybe $x_0\neq x'_0$). Assume $x_{m+i}=x'_{m'+i}$ for all $i\geq 0$. 
	Define $\bs F'_1$ similarly to $\bs F_1$, but for $(x'_i)_i$, using the same source of randomness $\bs F$.
	One has
	\begin{equation}
		\label{eq:tailDrainage-coupling}
		\begin{split}
			g[G,x_0;(x_i)_i]&= \probPalm{G}{[G,x_0;\bs F_1]\in A},\\
			g[G,x'_0;(x'_i)_i]&= \probPalm{G}{[G,x'_0;\bs F'_1]\in A}.
		\end{split}
	\end{equation}
	Now, the constructed coupling between $\bs F_1$‌ and $\bs F'_1$ implies that $[G,x'_0;\bs F'_1]$ is obtained from $[G,x_0;\bs F_1]$ by changing the root to $x_m$, which is in the same component as $x_0$, then applying a finite modification without changing the ancestors of the new root, and then moving the root again in its component (to $x'_0$). So, the fact $A\in\sigfield{branch}{T}{}$ implies that $\identity{A}[G,x_0;\bs F_1] = \identity{A}[G,x'_0; \bs F'_1]$ a.s., and hence, $g[G,x_0;(x_i)_i] = g[G,x'_0,(x'_i)_i]$. Then, \Cref{lem:completion-I(R),lem:cber} imply that $g$ is measurable with respect to $\hat{\mathcal F}$.
	
	
	Second, we show that $g\in\{0,1\}$ a.s.
	Let $n\in\mathbb N$ be arbitrary. Given any sample $(G,o)$, construct a copy $\bs F'$ of $\bs F$ coupled with $\bs F$ as follows. Given $\bs F$, construct $\bs F'$ inside $\oball{n}{o}\setminus\anc(o)$ as an independent copy of $\bs F$. In $\oball{n}{o}^c\cup \anc(o)$, let $\bs F'$ be identical to $\bs F$. Then, $\bs F'$ has the same distribution as $\bs F$ given $(G, o)$. Let $A'$ be the event $[G, o; \bs F']\in A$. By construction, $A'$ is independent from the restriction of $\bs F$ to $\oball{n}{o}$, conditionally on $(G, o; \anc(o))$. Also, since $A\in\sigfield{branch}{T}{}$, one has $\identity{A}=\identity{A'}$ a.s. in this coupling. This shows that $A$ is independent from the restriction of $\bs F$ to $\oball{n}{o}$ conditionally on $(G, o; \anc(o))$. Since this holds for all $n$, one obtains that $A$ is independent from $\bs F$ conditionally on $(G, o; \anc(o))$. Therefore, $A$ is independent from itself conditionally on $(G, o; \anc(o))$). This implies that $g\in\{0,1\}$ a.s.
	
	By the above facts, Lemma~\ref{lem:completion-cond} implies that $A\in\hat{\mathcal F}$, and the claim is proved.
	
	
	\vspace{0.2cm}
	\ref{thm:tailDrainage-foil}.
	The proof is similar to the previous case, so, we sketch the proof and only highlight the differences.
	By \Cref{lem:basicInclusion}, one has $\sigfield{path}{T}{}\subseteq\sigfield{level}{T}{}$ mod $\mathbb P$. Conversely,
	let $A\in\sigfield{level}{T}{}$.
	We claim that $\probCond{A}{[\bs G, \bs o; \anc(\bs o)]}$ is $\sigfield{path}{\hat T}{}$-measurable, where the latter is the null-event-augmentation of $\sigfield{path}{T}{}$. Consider the regular version $g$ of this conditional probability defined in~\Cref{eq:thm:tailDrainage-g}.
	We prove that $g$ is $\sigfield{path}{\hat T}{}$-measurable. 
	For this, we use \Cref{lem:completion-I(R)} for the set $M''$ of all $[G,x_0;(x_i)_i]$ that satisfy the following properties: 
	$(x_i)_{i\geq N}$ are distinct for some $N$ and if $\bs F_1$ is the corresponding modification of $\bs F$ (defined before~\Cref{eq:thm:tailDrainage-g}), then $\bs F_1$ is locally-finite and $L_{\bs F_1}(x_0)$ is infinite, $\mathbb P_G$-a.s.
	Note that $M''$‌ is a tail path property and $\myprob{[\bs G, \bs o; \anc(\bs o)]\in M''}=1$.
	Assume $[G,x_0;(x_i)_i]\in M''$ and $[G,x'_0;(x'_i)_i]\in M''$ such that $(x'_i)_i$ eventually coincides with $(x_i)_i$.
	We should prove that $g[G,x_0;(x_i)_i]=g[G,x_0;(x'_i)_i]$. 
	By defining $\bs F_1$ and $\bs F'_1$ similarly to the previous case, \eqref{eq:tailDrainage-coupling} is still valid. 
	One can see that $\card{L_{\bs F_1}(x_0)\Delta L_{\bs F'_1}(x'_0)}<\infty$ in this coupling.
	Now, $[G,x'_0;\bs F'_1]$ is obtained from $[G,x_0;\bs F_1]$ by three steps: By moving the root $x_0$ to a point $z\in L_{\bs F_1}(x_0)$ which is far enough, then applying a finite modification to $\bs F_1$ {(which is a finite modification of $L(z)$ as well)}, and then moving the root again to $x'_0$, which is in the level-set of $z$ if $z$ is far enough.
	Therefore, since $A\in\sigfield{level}{T}{}$, one obtains that $\identity{A}[G,x_0;\bs F_1] = \identity{A}[G,x'_0; \bs F'_1]$ a.s. So, \eqref{eq:tailDrainage-coupling} implies that $g[G,x_0;(x_i)_i]=g[G,x'_0;(x'_i)_i]$. Hence, as mentioned above, \Cref{lem:cber,lem:completion-I(R)} imply that $g$ is $\sigfield{path}{\hat T}{}$-measurable, and the claim is proved.
	
	It is proved in the previous part that $g\in\{0,1\}$ a.s. (the proof does not rely on $A\in\sigfield{branch}{T}{}$). 
	So, Lemma~\ref{lem:completion-cond} implies that $A$ is in the null-event-augmentation of $\sigfield{path}{T}{}$ and the proof is completed.
%
\end{proof}

\subsection{Proof of Indistinguishability for CMTs}
\label{subsec:proofOfIndistinguishability}
	
We now conclude the proof of \Cref{thm:indistinguishability}.

\begin{proof}[Proof of \Cref{thm:indistinguishability}]
	For the indistinguishability of components, Steps~\ref{step1comp}, \ref{step2comp} and~\ref{step3comp} are established in Theorems~\ref{thm:I in T 2}, \ref{thm:tailDrainage} and~\ref{thm:pathErgodic} respectively. 
	For the indistinguishability of level-sets, Steps~\ref{step1foil}, \ref{step2foil} and~\ref{step3foil} are established in Theorems~\ref{thm:I in T 2}, \ref{thm:tailDrainage} and~\ref{thm:pathTail} respectively. 
	So, the claims are implied by \Cref{thm:general}.
%
%
%
\end{proof}

\subsection{Indistinguishability in CMTs on Deterministic Graphs}
\label{subsec:tail-fixed}

In this subsection, we prove the analogous indistinguishability results for CMTs on deterministic graphs; i.e., \Cref{thm:fixed-comp,thm:fixed-foil}. Let $G$ be a deterministic graph or discrete space and $K$ be a Markov kernel on $G$. One can define a CMT $\bs F$ with kernel $K$, similarly to \Cref{def:cmt}, on the space $G^G$ (or $\{0,1\}^{G\times G}$). Component-properties and tail component-properties are defined in~\cite{Hu20indistinguishability}. We recall this definition and modify it to tail branch-properties as follows. A Borel subset $A$ of $G\times G^G$ (rather than $\mathcal G'_*$) is called a \defstyle{component-property} if $\identity{A}(v,f)=\identity{A}(v',f)$ for every $f\in G^G$, $v\in G$ and $v'\in C_f(v)$. If so, $A$ is called a \defstyle{tail branch-property} if $\identity{A}(v,f)=\identity{A}(v,f')$ for every finite modification $f'$ of $f$ such that $C_f(v)$ and $C_{f'}(v)$ share a common ancestral branch. We extend these definitions to level-set-properties as follows. We call $A$ a \defstyle{level-set-property} if $\identity{A}(v,f)=\identity{A}(v',f)$ for every $f\in G^G$, $v\in G$ and $v'\in L_f(v)$. If so, we call $A$ a \defstyle{tail level-set-property} if $\identity{A}(v,f)=\identity{A}(v,f')$ for every finite modification $f'$ of $f$ in which $\card{L_f(v)\Delta L_{f'}(v)}<\infty$.

As mentioned in \Cref{main:deterministic},
we restrict attention to variants of tail properties as follows: Let $\sigfield{pm}{T}{G}, \sigfield{branch}{T}{G}$ and $\sigfield{level}{T}{G}$ denote the sigma-fields of tail point-map-properties, tail branch-properties and tail level-set-properties respectively, when the underlying graph $G$ is fixed. By focusing on the ancestral line of $v$, one can also regard $\sigfield{path}{I}{G}$ and $\sigfield{path}{T}{G}$, defined in \Cref{sec:Markov}, as sigma-fields on $G\times G^G$ (by identifying them with their inverse images under the projection $(v,f)\mapsto \anc_f(v)$).

Let $\bs F$ be a CMT on $G$. Denote the distribution of $\bs F$ by $\mathbb P_G$ (on $G^G$). For considering all possible roots in $G$ simultaneously, we consider the following $\sigma$-finite measure on $G\times G^G$: 
\[
	Q_G:=\left(\sum_{v\in G}\delta_v\right)\otimes \mathbb P_G.
\]


\begin{proposition}[Steps~\ref{step2comp} and~\ref{step2foil} on a Deterministic Graph]
	\label{prop:tail-fixed}
	Let $\bs F$ be a CMT on a deterministic graph or discrete space $G$ and define $Q_G$ as above.
	\begin{enumerate}[label=(\roman*)]
		\item \label{prop:tail-fixed:all} $\sigfield{pm}{T}{G}$ is trivial under $\mathbb P_G$.
		\item \label{prop:tail-fixed:comp} One has $\sigfield{branch}{T}{G}=\sigfield{path}{I}{G}$ mod $Q_G$.
		\item \label{prop:tail-fixed:foil} If $\bs F$ is locally finite and cycle-free a.s. and all level-sets of $\bs F$ are infinite a.s., then $\sigfield{level}{T}{G}=\sigfield{path}{T}{G}$ mod $Q_G$.
	\end{enumerate}
\end{proposition}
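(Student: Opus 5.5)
The plan is to repeat the proof of \Cref{thm:tailDrainage} on the fixed space $G$. Two things change: there is no quotient by isomorphisms, and every root $v\in G$ has to be handled at once, because equivalence is taken mod $Q_G$ rather than mod $\mathbb P_{(G,v)}$. For part~\ref{prop:tail-fixed:all}, the statement is Kolmogorov's zero-one law: the values $(\bs F(x))_{x\in G}$ are independent, and a tail point-map-property is unchanged by finite modifications of $f$, so it lies in the tail sigma-field of this independent family. Equivalently, one can use the resampling coupling of part~\ref{thm:tailDrainage-all}: resample $\bs F$ inside a finite set $W$ to get $\bs F'$. Then $A$ is independent of $\restrict{\bs F'}{W}$ for every finite $W$, so it is independent of itself. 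On a fixed $G$ no root is needed, so triviality holds directly under $\mathbb P_G$.

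For parts~\ref{prop:tail-fixed:comp} and~\ref{prop:tail-fixed:foil}, the inclusions $\sigfield{path}{I}{G}\subseteq\sigfield{branch}{T}{G}$ and $\sigfield{path}{T}{G}\subseteq\sigfield{level}{T}{G}$ mod $Q_G$ follow as in \Cref{lem:basicInclusion}. The arguments there are pointwise in $(v,f)$ and do not use unimodularity; in the level-set case one intersects with the full-measure event that $\bs F$ is locally finite, cycle-free, and has infinite level-sets.

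For the converse, fix $A\in\sigfield{branch}{T}{G}$ (resp. $\sigfield{level}{T}{G}$). As in \eqref{eq:thm:tailDrainage-g}, define an explicit conditional probability on sequences $(x_i)_i$ that are eventually simple: let $g((x_i)_i):=\mathbb P_G[(x_0,\bs F_1)\in A]$, where $\bs F_1$ is obtained from $\bs F$ by forcing $\bs F_1(x_j)=x_{j+1}$ beyond the first index $N$ after which the $x_j$ are distinct. Since the jumps are independent, $\bs F_1$ is a regular version of $\bs F$ conditioned on $\anc(x_0)=(x_i)_i$. The same coupling as in \Cref{thm:tailDrainage} then gives two facts. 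First, if two such sequences shift-couple (resp. eventually coincide), the corresponding values of $g$ agree. Hence $g$ is $\sigfield{path}{I}{G}$- (resp. $\sigfield{path}{T}{G}$-) measurable, and since this holds pointwise in sequence space no null sets depend on the root. Second, resampling $\bs F$ inside a finite set minus $\anc(v)$ leaves $\identity{A}$ unchanged, because the modified component still shares the ancestral branch with the original (resp. the level-set changes at finitely many points). So $A$ is conditionally independent of itself given $\anc(v)$, and $g\in\{0,1\}$ for $\mathbb P_{(G,v)}$-a.e. path, for each $v$.

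The main obstacle is moving from these facts to equivalence mod $Q_G$ via \Cref{lem:completion-cond}. That lemma is formulated for a probability measure, and it also needs a single set $B$ in the path sigma-field with $Q_G[A\Delta\{(v,f):\anc_f(v)\in B\}]=0$. I would take $B:=\{g=1\}$. This set is defined uniformly, with no dependence on $v$, and it lies in $\sigfield{path}{I}{G}$ (resp. $\sigfield{path}{T}{G}$) because $g$ is measurable in the strong, root-free sense established above. Then, for each fixed $v$, the facts $g\in\{0,1\}$ a.s. and $g=\mathbb P_G[A\mid\anc(v)]$ give $\mathbb P_G[\{\identity{A}(v,\cdot)\neq\identity{B}(\anc(v))\}]=0$. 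Summing over the countably many $v\in G$ gives $Q_G[A\Delta B]=0$; equivalently, one can apply \Cref{lem:completion-cond} to the probability measure $\sum_v 2^{-n(v)}\delta_v\otimes\mathbb P_G$, which has the same null sets as $Q_G$. The key point to check carefully is that the invariance of $g$ under shift-coupling (resp. eventual coincidence) holds for every pair of admissible sequences and not merely almost surely. Otherwise the exceptional sets would depend on the root, and the countable union could fail to lie in the path sigma-field.
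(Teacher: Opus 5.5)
Your proposal follows the paper's proof. The paper likewise reruns \Cref{thm:tailDrainage} on the fixed $G$, notes that the event $A'=\{g=1\}$ is built without reference to the root, and concludes $Q_G(A\Delta A')=0$; your summation over roots (or the weighted probability measure with the same null sets) is exactly the extension of \Cref{lem:completion-I(R)} to $Q_G$ that the paper leaves implicit.

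One caveat on the pointwise invariance you single out, and the paper has the same wrinkle. When $N>0$ and $x_0$ is not on the forced tail, $\anc_{\bs F_1}(x_0)$ need not join that tail, so the invariance of $g$ can fail there. Check it instead only for pairs of simple sequences, where your coupling argument applies verbatim. Then extend $g$ to the saturation of that set under the countable Borel equivalence relation; this saturation is Borel, so the rest of the argument is unchanged.
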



\begin{proof}
	The proof is similar to that of \Cref{thm:tailDrainage} and we only highlight the differences. Given $o\in G$ and $A\in\sigfield{branch}{T}{G}$ (resp. $A\in\sigfield{level}{T}{G}$), the same proof as that of \Cref{thm:tailDrainage} constructs another event $A'\in \sigfield{path}{I}{G}$ (resp. $A'\in\sigfield{path}{T}{G}$) that is equivalent to $A$ under $\mathbb P_{(G,o)}$, where $\mathbb P_{(G,o)}:=\delta_o\otimes\mathbb P_G$. It can be seen that the construction of $A'$ does not depend on the choice of $o\in G$ (one needs to use $Q_G$ instead of $\mathbb P$ in \Cref{lem:completion-I(R)} and extend this lemma accordingly). This implies that $Q_G(A\Delta A')=0$, which implies the claim.
\end{proof}

By combining with \Cref{thm:pathI=T-intro}, one obtains:
\begin{corollary}
	In \Cref{model2}, almost every sample $G$ of $\bs G$ satisfies
	\[\sigfield{level}{T}{G} = \sigfield{branch}{T}{G}=\sigfield{path}{T}{G}=\sigfield{path}{I}{G} \text{ mod } Q_G.\]
\end{corollary}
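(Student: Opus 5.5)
The plan is to chain two earlier results on a typical sample $G$ of \Cref{model2}. \Cref{prop:tail-fixed} supplies the identifications between the point-map and path sigma-fields, and \Cref{thm:pathI=T} (the restatement of \Cref{thm:pathI=T-intro}) supplies steadiness of the chain.

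\textbf{Step 1: choosing a typical sample.} I would fix a sample $G$ of $\bs G$ that satisfies the following four properties simultaneously:
\begin{enumerate}[label=(\alph*)]
\item the conditions of \Cref{model1,model2} hold for $G$;
\item $\bs F$ on $G$ is locally finite, cycle-free and has infinite level-sets, $\mathbb P_G$-a.s.;
\item the conclusion \eqref{eq:pathI=T} of \Cref{thm:pathI=T} holds for $G$;
\item every component of $\bs F$ on $G$ is one-ended, $\mathbb P_G$-a.s.
\end{enumerate}
Properties (a), (b) and (d) hold for almost every sample by the definition of \Cref{model2} together with \Cref{thm:classification}. Property (c) holds for almost every sample by \Cref{thm:pathI=T}, after reducing to the ergodic case through ergodic decomposition. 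Since \eqref{eq:pathI=T} is stated mod $\sum_v \mathbb P_{(G,v)}$, and this measure is precisely the image of $Q_G$ under $(v,f)\mapsto \anc_f(v)$, it gives $\sigfield{path}{T}{G}=\sigfield{path}{I}{G}$ mod $Q_G$ once both are regarded as sigma-fields on $G\times G^G$.

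\textbf{Step 2: applying \Cref{prop:tail-fixed}.} On such a $G$, part~\ref{prop:tail-fixed:comp} gives $\sigfield{branch}{T}{G}=\sigfield{path}{I}{G}$ mod $Q_G$. Part~\ref{prop:tail-fixed:foil} applies because of property (b), and gives $\sigfield{level}{T}{G}=\sigfield{path}{T}{G}$ mod $Q_G$.

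\textbf{Step 3: chaining.} Combining the three equalities from Steps 1 and 2, all four sigma-fields coincide mod $Q_G$, which is the claimed chain
\[
\sigfield{level}{T}{G}=\sigfield{path}{T}{G}=\sigfield{path}{I}{G}=\sigfield{branch}{T}{G} \text{ mod } Q_G .
\]

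\textbf{Main obstacle: transferring the pullback identification.} The delicate point is the transfer in Step 1: an equivalence mod $\sum_v\mathbb P_{(G,v)}$ on path space must become an equivalence mod $Q_G$ on $G\times G^G$.

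The difficulty is that the pushforward of $Q_G$ under $(v,f)\mapsto\anc_f(v)$ is the law of the Markov chain only while the path does not self-intersect. The cycle-free condition makes this hold $\mathbb P_G$-a.s., so the pushforward agrees with $\sum_v\mathbb P_{(G,v)}$ restricted to simple paths.

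Given that, pulling back null sets preserves equivalence of events: if $B\Delta B'$ is null for the path measure, then its preimage is $Q_G$-null. This yields the required equality of the pulled-back sigma-fields mod $Q_G$, and the rest of the argument is bookkeeping.
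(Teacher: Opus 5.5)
Your proposal is correct and follows the paper's route: the paper obtains this corollary by combining parts~\ref{prop:tail-fixed:comp} and~\ref{prop:tail-fixed:foil} of \Cref{prop:tail-fixed} with the steadiness statement of \Cref{thm:pathI=T-intro} (restated as \Cref{thm:pathI=T}), and chaining the three equalities as you do. Your extra bookkeeping is a sound elaboration of what the paper leaves implicit. You choose a typical sample. You also check that, by the cycle-free condition, the pushforward of $Q_G$ under $(v,f)\mapsto\anc_f(v)$ is dominated by $\sum_v\mathbb P_{(G,v)}$, so null sets on path space pull back to $Q_G$-null sets.
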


To deduce \Cref{thm:fixed-comp,thm:fixed-foil}, we first need to define the Poisson boundary and the Liouville property. 
These definitions are slight generalizations of the definitions in Section~14.3 of~\cite{bookLyPe16} by removing the assumption of irreducibility. See also \cite{Ka92boundary} for a more general treatment.

The \defstyle{Liouville property} holds if  there is no nonconstant bounded $K$-harmonic function on $G$; equivalently, if $\sigfield{path}{I}{G}$ is trivial mod $Q_G$. In this case, the Poisson boundary can be defined to have only one point. More generally:

\begin{definition}[Poisson Boundary]
	Let $K$ be a transient Markov kernel on a countable set $G$. Let $\mu$ be an arbitrary probability measure on $G$ with full support. Fix a finite or countable dense sequence of events $A_1,A_2,\ldots$  in $\sigfield{path}{I}{G}$. Here, \textit{dense} is meant with respect to the pseudo-metric $d(A,B):=\probPalm{\mu}{A\Delta B}$, where $\mathbb P_{\mu}:=\sum_{v\in G}\mu(v)\mathbb P_v$ and $\mathbb P_v$ is the distribution of the Markov chain with kernel $K$ started from $v$ (the topology of this metric does not depend on the choice of $\mu$). The \defstyle{Poisson boundary} is the quotient of $G^{\mathbb N\cup\{0\}}$ under the equivalence relation $(x_n)_n\sim (y_n)_n\iff \forall i: \identity{A_i}((x_n)_n)=\identity{A_i}((y_n)_n)$, and equipped with the quotient sigma-field. 
\end{definition}
The choice of the dense set $A_1,A_2,\ldots$ results in various realizations of the Poisson boundary, but all such realizations are \textit{equivalent} in some sense, see~\cite{bookLyPe16}. Note that, for every initial point $v\in G$, the probability measure $\mathbb P_v$ induces a probability measure on the Poisson boundary. These probability measures might not be equivalent in the non-irreducible case. However, $\mathbb P_{\mu}$ induces a  probability measure on the Poisson boundary whose equivalence class does not depend on the choice of $\mu$.
Also, after adding and removing some null sets of boundary points, one may regard the Poisson boundary as the topological boundary of $G$ in some compactification of $G$; see e.g., Remark~14.32 of~\cite{bookLyPe16}. Then, the Markov chain converges to a point of the boundary a.s. in the topological sense.

As an example, note that the 3-regular tree has a nontrivial Poisson boundary. So, the corresponding coalescing random walks model (\Cref{ex:crw}) has distinguishable components (see the next example). Here, it is important that configurations are not considered up to graph-isomorphisms. Otherwise, the components would be indistinguishable as discussed in \Cref{subsec:crw}.

\begin{example}[A Non-Liouville Example]
	\label{ex:nonLiouville}
	Let $G$ be deterministic and consider the coalescing simple random walk model on $G':=G\times \mathbb Z$ (\Cref{ex:crw}). It can be seen that the invariant sigma-field of the space-time walk is determined by the tail sigma-field of the walk in $G$. For instance, let $k\geq 3$ and $G:=G_1\cup G_2$, where $G_1:=\mathbb Z^{\{1,\ldots,k\}}$ and $G_2:=\mathbb Z^{\{4,\ldots,k+3\}}$, regarded as subgraphs of the lattice $\mathbb Z^{k+3}$. It can be seen that the simple random walk lies eventually in one of $G_1$ and $G_2$ a.s. It is also known that the tail sigma-field of the simple random walk in $G_i$ is determined by the parity of the starting point. Hence, a version of the Poisson boundary of the space-time walk in $G'$ has exactly 4 points (determined by parity and by whether the walk eventually lies in $G_1$ or $G_2$). These 4 points distinguish the components of the coalescing simple random walks, which has infinitely many components..
\end{example}

\begin{proof}[Proof of \Cref{thm:fixed-comp} (For CMTs)]
	According to the construction of the Poisson boundary, mentioned above, every event in $\sigfield{path}{I}{G}$ is equivalent to (the inverse image of) some measurable subset of the Poisson boundary (this is analogous to \Cref{step3comp}). 
	So, the claim is directly implied by \Cref{prop:tail-fixed}.
	
	For $k$-tuples of components, the proof is similar and we avoid repeating all arguments (see Theorem~1.2 of~\cite{Hu20indistinguishability}). We only remark that the definition of component-properties and tail branch-properties can be extended to $k$-tuples straightforwardly. The sigma-fields $\sigfield{path}{I}{G}$ and $\sigfield{path}{T}{G}$ can also be extended to $k$-tuples of path. Finally, \Cref{prop:tail-fixed} can be extended with the same proof, which implies the claim.
\end{proof}

\begin{proof}[Proof of \Cref{thm:fixed-foil}]
	Assume that $\sigfield{path}{T}{G}=\sigfield{path}{I}{G}$ (by \Cref{thm:pathI=T-intro}, almost every sample $(G,o)$ of \Cref{model2} satisfies this property). If $(G,o)$ is such a sample and $A\in\sigfield{level}{T}{G}$ is arbitrary, then \Cref{prop:tail-fixed} implies that $A$ is equivalent to some $A'\in\sigfield{path}{I}{G}$ mod $Q_G$. This implies~\ref{thm:fixed-foil-1}. The claim of~\ref{thm:fixed-foil-2} is also implied by the fact that all level-sets of a given component of $\bs F$ correspond to a common boundary point in the Poisson boundary.
\end{proof}

\section{Qualitative Properties of the Connected Components of CMTs}
\label{sec:one-ended}

In this section, we prove the results announced in \Cref{main:qualitative} regarding connectedness and one-endedness. The proofs rely on \Cref{sec:Markov}.

\begin{proof}[Proof of \Cref{prop:finiteComps}]
	We may assume that $[\bs G, \bs o]$ is ergodic and the cycle-free condition fails a.s. Given $l\in \mathbb N$ and $\epsilon>0$, let $\bs S:=\{y\in \bs G: K^l(y,y)\geq \epsilon\}$. One might choose $l$ and $\epsilon$ such that $\myprob{\bs S\neq\emptyset}>0$, and hence, $\bs S\neq \emptyset$ a.s. If $(\bs X_n)_n$ is a Markov chain on $\bs G$ starting from $\bs o$ and with kernel $K$, then \Cref{thm:pathErgodic} implies that $\bs X_n\in\bs S$ for infinitely many indices $n$ a.s. The idea is that, in each visit to $\bs S$, a cycle is created with probability at least $\epsilon$. This is made precise as follows.
	
	Let $\tau_0$ be the first $n$ such  that $\bs X_n\in \bs S$. Inductively, let $\tau_{i+1}$ be the first $n\geq \tau_i + l$ such that $\bs X_{n}\in\bs S$. One has $\forall i: \tau_i<\infty$ a.s. By the definition of $\bs S$, one obtains $\probCond{\bs X_{\tau_i+l}=\bs X_{\tau_i}}{[\bs G, \bs o; (\bs X_n)_{n\leq \tau_i}]}\geq \epsilon$. This implies that $\exists i: \bs X_{\tau_i+l}=\bs X_{\tau_i}$ a.s. Since $\anc(\bs o)$ has the same distribution as $(\bs X_n)_n$ until the first self-intersection, one obtains that $\anc(\bs F)$ contains a cycle a.s. So, the claim is implied by \Cref{thm:classification}.
\end{proof}

\begin{definition}
	Let $G$ be a sample that satisfies the conditions of \Cref{model1}.
	Define the \defstyle{reachability relation} $\leq$ on $G$ as follows: For $u,v\in G$, we say $u\leq v$ when $g_K(G,u,v)=\sum_{n\geq 0} K^n(u,v)=\probPalm{G}{v\in \anc(u)}>0$, where the equality is implied by the cycle-free condition. This condition also implies that $\leq$ is a partial order. Define the \defstyle{upper cones} and the \defstyle{lower cones} as $\{\cdot\geq y\}=\{z\in G: z\geq y\}$ and $\{\cdot\leq y\}=\{z\in G: z\leq y\}$. Define $\{\cdot>y\}$ and $\{\cdot<y\}$ similarly.
\end{definition}

We also need \textit{update-tolerance}, defined as follows. This is similar to insertion-tolerance (see e.g., \cite{LySc99}) and the update-tolerance of the uniform spanning forest defined in~\cite{HuNa17indistinguishability}, but we define update-tolerance using finite modifications and prove it by a coupling method.

\begin{definition}
	\label{def:admissible-drainage}
	Given a graph $G$, a function $f:G\to G$ is called ($K$-) \defstyle{admissible} if $K(G,v,f(v))>0$ for all $v\in G$. Also, an \defstyle{admissible finite modification} of $f$ means another admissible function $f':G\to G$ such that $\card{\{f\neq f'\}}<\infty$.
\end{definition}

{Obviously, almost every sample of $[\bs G, \bs o; \bs F]$ is $K$-admissible.}

\begin{lemma}[Update-Tolerance]
	\label{lem:update-tolerance}
	In \Cref{model1}, let $B\subseteq\mathcal G'_*$ be any event such that $\myprob{B}=0$. Then, almost all samples $(G,o;f)$ of $[\bs G, \bs o; \bs F]$ satisfy the following property: For all admissible finite modifications $f'$ of $f$, one has $[G, o; f']\not\in B$.
\end{lemma}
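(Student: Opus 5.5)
The plan is to reduce the statement to a countable union of null events, one for each admissible finite modification, and to show each of these is null by a change-of-measure (absolute continuity) argument. Throughout, fix a typical sample $(G,o)$; the CMT is a product measure $\mathbb P_G=\bigotimes_{v\in G}K(v,\cdot)$ on $G^G$. Since $\myprob{B}=0$, the function $[G,o]\mapsto\probPalm{G}{[G,o;\bs F]\in B}$ vanishes for almost every $(G,o)$. By \Cref{lem:happensAtRoot}, applied to the event $\{[G,o]:\probPalm{G}{[G,o;\bs F]\in B}=0\}$, almost every sample satisfies $\probPalm{G}{[G,x;\bs F]\in B}=0$ for all $x\in G$. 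We only need the root $o$, but the statement for all roots is available if needed.

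Fix such a sample $(G,o)$. We encode a finite modification by a finite set $U\subseteq G$ (countably many choices, since $G$ is countable) and a map $\phi:U\to G$ with $K(u,\phi(u))>0$ for all $u\in U$ (again countably many choices). Given $f$, put $f^{U,\phi}:=\phi$ on $U$ and $f$ elsewhere. Every admissible finite modification of an admissible $f$ is of the form $f^{U,\phi}$ for some such pair. It therefore suffices to show that, for each fixed admissible pair $(U,\phi)$,
\[
\probPalm{G}{[G,o;\bs F^{U,\phi}]\in B}=0 .
\]
Under $\mathbb P_G$, the random map $\bs F^{U,\phi}$ has the law in which the coordinates in $U$ are the deterministic values $\phi(u)$ and the coordinates outside $U$ are independent with laws $K(v,\cdot)$. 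This law is dominated by $\mathbb P_G$. Indeed, it equals $\mathbb P_G$ conditioned on the event $\{\bs F(u)=\phi(u),\ \forall u\in U\}$, and that event has probability $\prod_{u\in U}K(u,\phi(u))>0$ by admissibility and finiteness of $U$. Hence
\[
\probPalm{G}{[G,o;\bs F^{U,\phi}]\in B}\le \frac{\probPalm{G}{[G,o;\bs F]\in B}}{\prod_{u\in U}K(u,\phi(u))}=0 .
\]
Taking the countable union over all $(U,\phi)$ gives a $\mathbb P_G$-null set outside of which no admissible finite modification lands in $B$. Integrating over $(G,o)$ then yields the claim.

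The main obstacle is the measure-theoretic bookkeeping rather than the probability. One has to check that the quantifier ``for all admissible finite modifications'' produces a measurable (or at least null-contained) event on $\mathcal G'_*$, and that the construction respects isomorphism classes. I would handle this by working with representatives $(G,o)$ and the countable parametrization by $(U,\phi)$ above, so that the exceptional set is a countable union of measurable null sets. Equivariance is not an issue here: the bound holds for every representative, and the property being proved is isomorphism-invariant.
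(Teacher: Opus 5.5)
Your argument is correct in substance, but it is organized differently from the paper's. You work quenched on a fixed typical $(G,o)$. You enumerate the admissible modifications by pairs $(U,\phi)$. You then use the product structure of $\mathbb P_G$: the law of $\bs F^{U,\phi}$ is $\mathbb P_G$ conditioned on the cylinder $\{\restrict{\bs F}{U}=\phi\}$, which has probability $\prod_{u\in U}K(u,\phi(u))>0$. A countable union over $(U,\phi)$ finishes.

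The paper argues annealed, with a single random modification:
\begin{itemize}
\item it resamples $\bs F$ inside $\oball{n}{\bs o}$, with an independent geometric radius $n$;
\item it notes that $[\bs G,\bs o;\bs F']$ has the same law as $[\bs G,\bs o;\bs F]$, so for a.e.\ sample the conditional law of $\bs F'$ given $(G,o;f)$ gives no mass to $B$;
\item it observes that this conditional law has an atom at every admissible finite modification of $f$.
\end{itemize}
Your route gives an explicit density bound and a clean quenched quasi-invariance statement on any fixed countable base space. The paper's route handles all modifications at once. More importantly, it gets measurability on $\mathcal G'_*$ for free: the exceptional set is the zero set of the measurable, equivariant function $[G,o;f]\mapsto\probPalm{(G,o;f)}{[G,o;\bs F']\in B}$.

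That is exactly where your last step is thinner than you suggest. Your parametrization by $(U,\phi)$ depends on the representative $(G,o)$. So it shows that the bad set is null inside each fiber $G^G$. It does not show that the bad set $E\subseteq\mathcal G'_*$ (samples having some admissible finite modification in $B$) is measurable, which is what you need in order to integrate over $(G,o)$.

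This is easily repaired. Let $\psi_n[G,o;f]$ be the probability that resampling $f$ inside $\oball{n}{o}$ according to $K$ lands in $B$. Then:
\begin{itemize}
\item $\psi_n$ is a measurable function on $\mathcal G'_*$;
\item $E\subseteq\bigcup_n\{\psi_n>0\}$, because an admissible $f'$ with $\{f\neq f'\}\subseteq\oball{n}{o}$ is hit with probability $\prod_{x\in\oball{n}{o}}K(x,f'(x))>0$;
\item $\omid{\psi_n[\bs G,\bs o;\bs F]}=\myprob{B}=0$, because resampling preserves the law.
\end{itemize}
This is, in effect, the paper's argument. Alternatively, if the modification relation is Borel (the kind of fact the paper takes for granted in its measurability lemmas), then $E$ is the projection of a Borel set with countable sections, hence Borel by the Lusin--Novikov theorem.
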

\begin{proof}
	Choose $n\in\mathbb N$ randomly and independently with the geometric distribution. Define $\bs F'$ by resampling $\bs F$ in $\oball{n}{\bs o}$; i.e., choose $\restrict{\bs F'}{\oball{n}{\bs o}}$ as a copy of $\restrict{\bs F}{\oball{n}{\bs o}}$ independently from $\bs F$ (conditionally on $[\bs G, \bs o]$) and, for all $x\not\in \oball{n}{\bs o}$, let $\bs F'(x):=\bs F(x)$. Then, $[\bs G, \bs o; \bs F']$ has the same distribution as $[\bs G, \bs o; \bs F]$. So, $\myprob{[\bs G, \bs o; \bs F']\in B}=0$. Thus, for almost every sample $(G,o;f)$ of $[\bs G, \bs o; \bs F]$, one has $\probPalm{(G,o;f)}{[G,o;\bs F']\in B}=0$, where $\mathbb P_{(G,o;f)}$ is the distribution of $\bs F'$ given $(G,o;f)$. Note that $\mathbb P_{(G,o;f)}$ is supported on a countable set, which is precisely the set of admissible finite modifications of $f$ (assuming that $f$ is also admissible). This implies the claim.
\end{proof}

\begin{proof}[Proof of \Cref{thm:one-ended-or-connected}]
	We may assume that $[\bs G, \bs o]$ is ergodic. Hence, so is $[\bs G, \bs o; \bs F]$ (\Cref{thm:tailDrainage}). 
	Assume the claim does not hold; i.e., with positive probability, there are at least two components, with at least one of them two-ended. By ergodicity, this happens almost surely. 
	By update-tolerance (\Cref{lem:update-tolerance}), almost surely, all admissible finite modifications of $\bs F$ satisfy this property; i.e., are cycle-free, have at least two components and have at least one two-ended component. Also, almost surely, all admissible finite modifications of $\bs F$ satisfy the claims of \Cref{thm:classification} and \Cref{model1}.
	
	Fix any sample $G$ of $\bs G$ on which $\bs F$ and all admissible finite modifications of $\bs F$ satisfy the properties in the last paragraph a.s. (almost every sample is of this kind).
	If $x$ is in a bi-infinite path of $\bs F$, we claim that $\{\cdot\geq x\}\subseteq C(x)$ a.s. To prove this claim, assume that it is not true. If $\{\cdot \geq x\}$ intersects another two-ended component, then there exists an admissible finite modification of $\bs F$ that results in a 3-ended component, which is a contradiction. If $\{\cdot\geq x\}$ intersects a one-ended component, then there exists a finite modification of $\bs F$ that results in a two-ended component with some infinite level-sets. This contradicts the claims of \Cref{thm:classification}. Hence, it is proved that if $x$ is in a bi-infinite path, then $\{\cdot\geq x\}\subseteq C(x)$. 
	Let $\bs H=\bs H_{(G; \bs F)}$ be the set of points $u\in G$ such that $\{\cdot\geq u\}$ is entirely a subset of a two-ended component of $\bs F$.
	Therefore, $\bs H$ includes all bi-infinite paths a.s.  
	Note that $\bs H$ is increasing under the reachability relation; i.e., if $u\in \bs H$ and $u\leq v$, then $v\in \bs H$.
	
	Returning to $[\bs G, \bs o]$, let $\bs X$ be a random point of $\bs G$ with distribution $K(\bs o, \cdot)$, chosen independently from $\bs F$. By the stationarity property of \Cref{lem:stationary}, one obtains that $\probhat{\bs X\in \bs H}=\probhat{\bs o\in \bs H}$, where $\hat{\mathbb P}$ is the probability measure obtained by biasing by $b(\bs o)$. Note that, by the monotonicity of $\bs H$,  
	the event $\bs o\in \bs H$ is included in the event $\bs X\in\bs H$ (after removing the null event $K(\bs o, \bs X)=0$). So, the assumption $b>0$ and the last equality imply that $\myprob{\bs o\not\in \bs H, \bs X\in \bs H}=0$. Therefore, \Cref{lem:happensAtRoot} gives that, almost surely, $\forall u\not\in \bs H, \forall v\in \bs H: K(u,v)=0$.
	Now, we see that any two-ended component is not connected to any other component in the graph with edge set $\{(x,y): K(x,y)+K(y,x)>0 \}$. This contradicts weak irreducibility, and the claim is proved.
\end{proof}


\begin{lemma}[Infinite Path-Intersection]
	\label{lem:infiniteCollision}
	In \Cref{model1}, if the path-intersection property (\Cref{def:colision}) holds a.s., then the infinite path-intersection property holds a.s. as well.
\end{lemma}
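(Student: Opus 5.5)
We will reduce the infinite path-intersection property to the (finite) path-intersection property via the strong Markov property, and then use ergodicity of the pair chain, in the spirit of \Cref{thm:pathErgodic}. Fix a typical sample $(G,o)$ and two independent chains $(\bs X_n)_n$, $(\bs X'_n)_n$ with kernel $K$ started from $o$. The key observation is that the path-intersection property holds at \emph{every} vertex of a typical sample, by \Cref{lem:happensAtRoot}. The event ``$(G,v)$ has the path-intersection property'' is measurable in $[G,v]$, and by assumption it holds at the root a.s. Hence almost surely it holds for all $v\in G$.

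The plan is to exhibit, after any finite stage, a fresh pair of chains started from a common point. Suppose we have found an intersection $\bs X_m=\bs X'_n=:z$. The continuations $(\bs X_{m+i})_i$ and $(\bs X'_{n+i})_i$ are then two conditionally independent chains started from $z$, so the path-intersection property at $z$ would give a further intersection. The one issue is that the intersection time $(m,n)$ is not a stopping time for the pair process. I would handle this by building the intersection with a stopping rule. Choose the first intersection by revealing the chains alternately, one step each, and stop at the first time $t$ at which some $\bs X_m$ with $m\le t$ equals some $\bs X'_n$ with $n\le t$. With this rule, $(m,n)$ is determined by a stopping time $t$ of the joint filtration.

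The stopping time does not stop both chains at the intersection indices, however. At time $t$ the chains sit at $\bs X_t,\bs X'_t$, and only one of $m,n$ equals $t$. So instead I would restart from the later positions and use the following decomposition:
\[\text{infinitely many intersections}\iff\forall k:\ \text{the paths after index }k\text{ intersect}.\]
Given $\mathcal F_k$, the tails $(\bs X_{k+i})_i$ and $(\bs X'_{k+i})_i$ are independent chains started from $\bs X_k$ and $\bs X'_k$ respectively, which are \emph{different} points in general. So the real requirement is the two-point version: for all $x,y$, independent chains from $x$ and $y$ intersect a.s.

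This is the main obstacle, and I would derive it from the one-point property at each vertex together with weak irreducibility. Let $\phi(x,y)$ be the probability that the chains from $x$ and $y$ never intersect. If $K(x,x')>0$, then the chain from $x$ passes through $x'$ with positive probability and continues from there, so $\phi(x',y)=0$ implies $\phi(x,y)=0$. For the reverse direction, the chain from $x'$ together with the one-point property at $x$ (comparing the chain from $x$ conditioned to step to $x'$) should give $\phi(x,y)=0\Rightarrow\phi(x',y)=0$. Propagating this along edges of the weak-irreducibility graph, starting from $\phi(v,v)=0$, gives $\phi\equiv0$. If the backward propagation fails, I would fall back on ergodicity. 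The event that the tails of the two paths intersect infinitely often is invariant under shifting each path separately, so by a two-chain analogue of \Cref{thm:pathErgodic} (with the stationary biased pair, using \Cref{lem:stationary}) it has probability $0$ or $1$. It is then at least the probability of one intersection after every finite time, and a Borel--Cantelli-type argument using the positive intersection probability at each restart would force it to be $1$.
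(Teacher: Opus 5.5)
Your reduction has a genuine gap at the two-point step.

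\textbf{The propagation runs the wrong way.} The implication you treat as the easy one, ``$K(x,x')>0$ and $\phi(x',y)=0$ imply $\phi(x,y)=0$,'' is not justified. Knowing the chain from $x$ passes through $x'$ with positive probability only kills the non-intersection probability on $\{X_1=x'\}$. It says nothing about the chain from $x$ on $\{X_1\neq x'\}$.

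The implication that actually holds is the opposite one, and it is what your parenthetical hint proves. Suppose $\phi(x,y)=0$ and $K(x,x')>0$, and condition on $X_1=x'$. A chain $W$ from $x'$ that misses the path from $y$ forces that path to pass through $x$. After its hitting time of $x$, the path from $y$ is a fresh chain from $x$. The one-point property at $x$ (two chains from $x$, the first conditioned to step to $x'$) shows $W$ meets such a chain a.s. Hence $\phi(x,y)=0\Rightarrow\phi(x',y)=0$.

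This propagates only forward along $K$-edges. Starting from $\phi(v,v)=0$ you therefore cannot travel along the undirected weak-irreducibility graph, and $\phi\equiv0$ does not follow.

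\textbf{The fallback does not close the gap.} No two-chain version of Theorem~\ref{thm:pathErgodic} is available. After shifting each path separately, the pair is no longer rooted at a common point, and Lemma~\ref{lem:stationary} gives no invariant probability for the pair. The Borel--Cantelli restart would also need a lower bound on the intersection probability from $(X_k,X'_k)$, which is the same two-point problem. Finally, ``one intersection after every finite time'' is, by your own equivalence, exactly the event you want, so that comparison is circular.

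\textbf{Two ways to repair it.}

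First, you never need all pairs. You only need $\phi(X_k,X'_k)=0$, and both points are reachable from $o$.
Start from $\phi(o,o)=0$ (starting points included) and apply the valid forward implication in each coordinate along admissible paths. This gives $\phi(x,y)=0$ whenever $g_K(o,x)>0$ and $g_K(o,y)>0$. The Markov property at the deterministic time $k$ then yields an intersection with both indices $\geq k$ for every $k$, hence infinitely many intersections.

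Second, the paper's route removes exactly the obstacle you dismissed, using the cycle-free condition.
Let $i$ be the first index with $X_i$ in the image of $(X'_n)_n$, and write $z:=X_i=X'_j$.
In the reachability order, $X_0,\dots,X_{i-1}$ are strictly below $z$, while $X'_{j+1},X'_{j+2},\dots$ are strictly above it. So the event $\{i=a,j=b\}$, together with the prefixes, is determined by $X_0,\dots,X_a$ and $X'_0,\dots,X'_b$ alone.
Hence $(X_{i+n})_n$ and $(X'_{j+n})_n$ are independent chains from the common point $z$.
The one-point property at $z$, which holds at every vertex by Lemma~\ref{lem:happensAtRoot}, gives a further intersection, and iterating gives infinitely many.
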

\begin{proof}
	Let $(G,o)$ be a sample of $[\bs G, \bs o]$. By \Cref{lem:happensAtRoot}, we may assume that two Markov chains on $G$ started from any point $o'\in \bs G$ intersect a.s.
	Let $(\bs X_n)_n$ and $(\bs Y_n)_n$ be two independent Markov chains on $G$ starting from $o$ and with kernel $K$. 
	Let $i$‌ be the first integer such that $z:=\bs X_i$ is in the image of $(\bs Y_n)_n$. For the index $j$ for which $z=\bs Y_j$, the cycle-free condition implies that $j$‌ is also the first index such that $\bs Y_j$ is in the image of $(\bs X_n)_n$.
	The strong Markov property implies that, conditionally on $(\bs z,i,j)$, the sequences $(\bs X_{n+i})_{n\geq 0}$ and $(\bs Y_{n+j})_{n\geq 0}$ are two independent Markov chains on $G$ started from $\bs z$. So, they intersect a.s. By repeating this process, we find infinitely many intersection points a.s.
\end{proof}

%

\begin{lemma}
	\label{lem:collisionIneq}
	Let $K$ be any cycle-free Markov kernel on a measurable space $G$. For $x,y\in G$, let $p(x,y)$ be the probability that the paths of two independent Markov chains on $G$ with kernel $K$ and starting from $x$ and $y$ do not intersect. Then, $q(x,y)\geq q(x,x)/3$.
\end{lemma}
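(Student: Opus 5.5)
We read the statement with $q:=p$, i.e. $q(x,y)$ is the probability that the paths of two independent chains started from $x$ and $y$ do not intersect, in the sense of \Cref{def:colision} (starting points excluded). The plan is a three-path comparison based on a coalescing construction. Run three chains: $\bs X^1$ and $\bs X^2$ from $x$, independent, and a chain from $y$ built so that it coalesces with whichever of them it meets first. The key consequence will be that on the event $E$ that $\bs X^1$ and $\bs X^2$ do not intersect, which has probability $q(x,x)$, the chain from $y$ avoids at least one of $\bs X^1,\bs X^2$. Since each pair should behave like an independent pair, this would give
\[
q(x,x)\le \myprob{\bs Y \text{ avoids } \bs X^1}+\myprob{\bs Y\text{ avoids }\bs X^2}\approx 2q(x,y),
\]
and the constant $3$ is there to absorb the error in ``$\approx$''.

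First, define the coalescing chain $\bs Y$ from $y$. Run it independently of $\bs X^1,\bs X^2$ until the first time $\tau$ at which it lands on a point of $\bs X^1\cup\bs X^2$ (excluding $x$), and from then on let it follow the path it hit. By the strong Markov property, and because cycle-freeness guarantees the hit point is visited only once by the target path, $\bs Y$ is again a Markov chain with kernel $K$; this is the same mechanism as in the proof of \Cref{lem:infiniteCollision}. Second, on $E$: if $\bs Y$ first hits $\bs X^1$, it then follows $\bs X^1$, which never meets $\bs X^2$. So $\bs Y$ avoids $\bs X^2$, provided that before $\tau$ it also did not touch $\bs X^2$, which holds by the definition of $\tau$. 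The case where it first hits $\bs X^2$ is symmetric. If $\tau=\infty$, it avoids both.

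Third, compare $\myprob{\bs Y\text{ avoids }\bs X^2}$ with $q(x,y)$. Let $\bs Y^{(2)}$ be the chain that coalesces only with $\bs X^2$; then $(\bs X^2,\bs Y^{(2)})$ has the same non-intersection probability as an independent pair, since whether an independent chain ever hits $\bs X^2$ is decided before the hitting time. The paths $\bs Y$ and $\bs Y^{(2)}$ agree until $\tau$ and can differ only if $\bs Y$ first hit $\bs X^1$. The event ``$\bs Y$ avoids $\bs X^2$'' is a subset of ``$\bs Y^{(2)}$ avoids $\bs X^2$'' union ``$\bs Y$ first hits $\bs X^1$, and afterwards $\bs X^1$ meets $\bs X^2$''. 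The second piece lies in $E^c$, which is not sharp enough on its own.

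The third step is the main obstacle, and the place where the constant matters. What I would try first is to work directly with independent pairs. The events $\{\bs Y^{(i)}\text{ avoids }\bs X^i\}$ for $i=1,2$ each have probability $q(x,y)$. I would then show that on $E$, outside an event of probability at most $q(x,y)$, at least one of them occurs, by inspecting which path the independent chain from $y$ hits first. Together this would give $q(x,x)\le 3q(x,y)$, i.e. $q(x,y)\ge q(x,x)/3$. The remaining work is to identify this third event precisely and bound its probability by $q(x,y)$, using cycle-freeness and the strong Markov property at $\tau$.
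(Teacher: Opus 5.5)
There is a genuine gap. The step that actually produces the constant is left open (``the remaining work is to identify this third event precisely and bound its probability by $q(x,y)$''), and the bookkeeping you propose does not give $3$.

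In your last paragraph, $\bs Y^{(i)}$ coalesces with $\bs X^i$. So $\{\bs Y^{(i)}\text{ avoids }\bs X^i\}$ is just the event that the independent chain from $y$ never hits $\bs X^i$. The exceptional event is then $E\cap\{\text{the independent chain hits both}\}$, and nothing in your sketch bounds it by $q(x,y)$. The coalescence argument bounds it only by $2q(x,y)$, which yields $q(x,x)\le 4q(x,y)$.

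The same loss appears in Steps 2--3. The bound $q(x,x)\le\myprob{\bs Y\text{ avoids }\bs X^1}+\myprob{\bs Y\text{ avoids }\bs X^2}$ counts $E\cap\{\tau=\infty\}$ twice, and each term can only be bounded by $2q(x,y)$. Comparing $\bs Y$ with $\bs Y^{(2)}$, the chain coalesced into the very path you test against, is the wrong comparison. The $3$ is not slack for an ``$\approx$''; it is exact bookkeeping of three pieces.

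The missing observation concerns the event that $\bs Y$ first hits $\bs X^1$. On that event, your doubly-coalescing $\bs Y$ coincides with $\bs Y^{(1)}$, the chain coalesced into $\bs X^1$ only. Now $\bs Y^{(1)}$ is a function of the independent chain from $y$ and of $\bs X^1$, hence independent of $\bs X^2$, so $\myprob{\bs Y^{(1)}\text{ avoids }\bs X^2}=q(x,y)$.

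Concretely, let $i$ (resp. $i'$) be the first time the independent chain from $y$ hits $\{\bs X^1_n:n>0\}$ (resp. $\{\bs X^2_n:n>0\}$). On $E$ one cannot have $i=i'<\infty$. On $E\cap\{i<i'\}$ (with $i'=\infty$ allowed), $\bs Y^{(1)}$ avoids $\bs X^2$: up to time $i$ because $i<i'$, and afterwards because it runs along $\bs X^1$ at positive times. Hence
\[
E\subseteq\{i=i'=\infty\}\cup\bigl(E\cap\{i<i'\}\bigr)\cup\bigl(E\cap\{i'<i\}\bigr).
\]
The three pieces have probability at most $\myprob{i=\infty}=q(x,y)$, at most $\myprob{\bs Y^{(1)}\text{ avoids }\bs X^2}=q(x,y)$, and (by symmetry) at most $q(x,y)$. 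This gives $q(x,x)\le 3q(x,y)$.

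This is exactly the paper's argument. The paper writes it as $\myprob{i<\infty,A}\ge\epsilon-\delta$, followed by the symmetric split according to whether $i\le i'$.
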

\begin{proof}
	Let $\epsilon:=q(x,x)$ and $\delta:=q(x,y)$. Let $(\bs X_n)_n$, $(\bs X'_n)_n$ and $(\bs Y_n)_n$ be independent Markov chains starting from $x$, $x$ and $y$ respectively. 
	Let $A$ be the event that the path $(\bs X_n)_n$ does not intersect with $(\bs X'_n)_n$.
	Let $0<i\leq\infty$ (resp $i'$) be the smallest number such that $\bs Y_i\in(\bs X_n)_{{n>0}}$ (resp. $\bs Y_{i'}\in (\bs X'_n)_{{n>0}}$). Let $0<j\leq \infty$ be the smallest number such that $\bs X_j=\bs Y_i$. Then, conditionally on $(\bs X_n)_{n\leq j}$ and $(\bs Y_n)_{n\leq i}$, the futures $(\bs X_n)_{n\geq j}$ and $(\bs Y_n)_{n\geq i}$ of the paths are independent Markov chains with kernel $K$ (the cycle-free assumption is important for the last conclusion). So, by letting $\bs Y'_n:=\bs Y_n$ for $n\leq i$ and $\bs Y'_{i+n}:=\bs X_{j+n}$ for $n\geq 0$, one obtains that $(\bs Y'_n)_n$ has the same distribution as $(\bs Y_n)_n$, and is independent of $(\bs X'_n)_n$.
	One has $\myprob{i<\infty, A}\geq \myprob{A}-\myprob{i=\infty}=\epsilon-\delta$. By symmetry of $i$ and $i'$, one gets $\myprob{i<\infty, i\leq i', A} \geq (\epsilon-\delta)/2$. The last event is contained in the event that $(\bs Y'_n)_n$ does not intersect with $(\bs X'_n)_n$. Hence, $\delta\geq (\epsilon-\delta)/2$ and the claim is implied.
\end{proof}

\begin{lemma}[Finite Intersection]
	\label{lem:finiteCollision}
	In \Cref{model1}, assume that $[\bs G, \bs o]$ is ergodic, and that not almost all samples of $[\bs G, \bs o]$ satisfy the path-intersection property. Then, {in almost every sample of $\bs G$}, two Markov chains on $\bs G$ started from any two points intersect finitely often a.s.
\end{lemma}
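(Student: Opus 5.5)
The plan is to argue by contradiction, using the ergodicity of the pair chain together with the comparison inequality of \Cref{lem:collisionIneq}. For a typical sample $G$, let $q(x,y)$ denote the probability that two independent chains started from $x$ and $y$ never intersect, as in \Cref{lem:collisionIneq}, and let $r(x,y)$ denote the probability that they intersect infinitely often.

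The first step is to show that $q(\bs o,\bs o)>0$ a.s. Since the path-intersection property fails on a set of positive probability, the event $\{q(\bs o,\bs o)>0\}$ has positive probability. By \Cref{lem:happensAtRoot} and ergodicity, the set $\bs S_\epsilon:=\{x: q(x,x)\geq\epsilon\}$ is then nonempty a.s. for some fixed $\epsilon>0$. \Cref{lem:collisionIneq} upgrades this to $q(x,y)\geq q(x,x)/3$, so from a point of $\bs S_\epsilon$ any pair of starting positions $(x,y)$ escapes with probability at least $\epsilon/3$. What I actually need, however, is escape from the current pair of positions of the two chains, and these need not be equal. I would handle this via the coalescence structure: right after an intersection, the two chains restart from a common point $z$, exactly as in the strong Markov argument of \Cref{lem:infiniteCollision}.

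The second step applies this at successive intersections. Let $z_1,z_2,\dots$ be the successive intersection points, defined as in the proof of \Cref{lem:infiniteCollision}. Conditionally on the past, after the $k$-th intersection the continuations are independent chains from $z_k$, so the probability of no further intersection is $q(z_k,z_k)$. By Lévy's zero-one law, on the event of infinitely many intersections one has $q(z_k,z_k)\to0$. The points $z_k$ lie on the trajectory $(\bs X_n)_n$, and \Cref{thm:pathErgodic} gives that the trajectory visits $\bs S_\epsilon$ infinitely often a.s. The difficulty is that the intersection points need not be in $\bs S_\epsilon$. To fix this I would use \Cref{lem:collisionIneq} in the reverse direction: if $\bs X_n=x\in\bs S_\epsilon$ and $\bs Y$ is currently at some point $y$, then escape from $(x,y)$ has probability at least $\epsilon/3$. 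By the Markov property at the visit times of $\bs X$ to $\bs S_\epsilon$, the conditional probability of no future intersection is then at least $\epsilon/3$ infinitely often.

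There is a subtlety here, because $\bs Y$'s time index is not synchronized with $\bs X$'s. I would therefore treat the joint process at the stopping times where $\bs X$ enters $\bs S_\epsilon$, taking the current position of $\bs Y$ at the same time $n$. The events of intersecting at later times split into (later $\bs X$) meeting (past $\bs Y$) and (later $\bs X$) meeting (later $\bs Y$). The first kind should happen only finitely often, by transience of the cycle-free kernel and the local finiteness coming from \Cref{thm:classification}. With that, Lévy's zero-one law forces only finitely many intersections a.s.

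Finally, I would pass from the root to arbitrary starting points $x,y$ using \Cref{lem:happensAtRoot}, again invoking ergodicity via \Cref{thm:pathErgodic} from any starting point. The main obstacle I expect is the synchronization issue in the second step, namely controlling intersections between the future of one chain and the past of the other. If the estimate above fails, the fallback is to phrase everything in terms of the sets $\anc$ in a CMT coupling. There an intersection means the components merge, and \Cref{thm:infComps}-type arguments apply.
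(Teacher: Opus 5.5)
Your argument is correct, but it pairs the two chains differently from the paper before applying \Cref{lem:collisionIneq}.

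\textbf{The paper's route.} After each intersection, the paper waits for the next visit of $\bs X$ to $\bs S$, at time $\tau_m$. It then pairs $\bs X_{\tau_m}$ with the position $\bs Y_{s_{i_m}}=\bs X_{t_{i_m}}$ of $\bs Y$ at the first intersection after $\tau_{m-1}$. Intersection times are increasing in both coordinates, by cycle-freeness. So every later intersection is automatically between the two futures, no cross terms appear, and the number of rounds is dominated by a geometric variable with success probability $\epsilon/3$. The cost is a strong Markov property at the two-parameter random time $(\tau_m,s_{i_m})$, which the paper leaves to the reader.

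\textbf{Your route.} You freeze both chains at the same deterministic time $n$. This needs only the ordinary Markov property of the product chain together with L\'evy's zero-one law. The price is the cross terms, and there you can drop the hedging. For fixed $n$, the event that $(\bs X_{n+a})_{a\geq 1}$ and $(\bs Y_{n+b})_{b\geq 1}$ never meet is contained in the event $E$ of finitely many intersections. Indeed, every other intersection has an $\bs X$-index or a $\bs Y$-index in $\{1,\ldots,n\}$. Both paths are simple because the kernel has no directed cycles, so there are at most $2n$ such intersections. Hence
\[
\mathbb P\bigl[E\mid \bs X_0,\ldots,\bs X_n,\bs Y_0,\ldots,\bs Y_n\bigr]\geq q(\bs X_n,\bs Y_n)\geq \epsilon/3 \quad\text{on } \{\bs X_n\in\bs S_\epsilon\}.
\]
This event occurs for infinitely many $n$, so L\'evy's zero-one law gives $\mathbb P[E]=1$.

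\textbf{Small corrections.} Finiteness of the cross intersections is needed only for each fixed $n$, not across all $n$. It follows from simplicity of the paths, not from \Cref{thm:classification}, which concerns the graph of a point-map rather than two independent chains. The detour through the successive intersection points $z_k$ and the CMT fallback are not needed.
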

\begin{proof}
	Given $\epsilon>0$, let $\bs S$ be the set of $x\in \bs G$ such that two chains started from $x$ path-intersect with probability at most $1-\epsilon$. By the assumption, we may fix $\epsilon$ such that $\bs S\neq\emptyset$ a.s. So, the triviality of $\sigfield{path}{I}{}$ (\Cref{thm:pathErgodic}) implies that the Markov chain on $\bs G$ started from $\bs o$ hits $\bs S$ infinitely many times. 
	
	Let $(G,o)$ be a sample of \Cref{model1} such that the Markov chain from any $o'\in G$ intersects $\bs S$ infinitely often (by \Cref{lem:happensAtRoot}, almost every sample is of this type). For $x,y\in G$, we will show that the chains $(\bs X_n)_n$ and $(\bs Y_n)_n$ on $G$, started from $x$ and $y$ respectively, path-intersect finitely often a.s. 
	Let $t_1,t_2,\ldots \in \mathbb N\cup\{\infty\}$ and $s_1,s_2,\ldots\in\mathbb N\cup\{\infty\}$ denote the intersection times, in increasing order, such that $\bs X_{t_i}=\bs Y_{s_i}$ for all $i$ for which $t_i,s_i<\infty$.
	Let $\tau_0$ be the first time such that $\bs X_{\tau_0}\in \bs S$. Inductively, let $\tau_{m}$ be the first time in $(\tau_{m-1},\infty]$ such that $\bs X_{\tau_{m}}\in\bs S$ and $(\tau_{m-1},\tau_{m}]\cap (t_n)_n\neq\emptyset$ (we let $\tau_{m}=\infty$ if no such time exists). Let $i_m\in\mathbb N\cup\{\infty\}$ denote the first number such that $t_{i_m}\in (\tau_{m-1},\tau_{m}]$ (we let $i_0:=0$). Conditionally on the event $\tau_m<\infty$ and on $(\bs X_n)_{n\leq \tau_m}$ and $(\bs Y_n)_{n\leq s_{i_m}}$, the futures of the paths are independent Markov chains with kernel $K$ (the proof of this fact is left to the reader). So, \Cref{lem:collisionIneq} for the points $\bs X_{\tau_m}$ and $\bs X_{t_{i_m}}=\bs Y_{s_{i_m}}$ implies that $\tau_{m+1}=\infty$ with conditional probability at least $\epsilon/3$. This implies that $\exists m: \tau_m=\infty$ a.s., and the claim is proved.
	%
\end{proof}

We are now ready to prove \Cref{thm:infComps}.
\begin{proof}[Proof of \Cref{thm:infComps}]
	\ref{thm:infComps:1}. 
	We should prove that two independent Markov chains started from any two points in $\bs G$ intersect a.s.
	By infinite path-intersection (\Cref{lem:infiniteCollision}) and \Cref{lem:happensAtRoot}, we may consider a sample $(G,o)$ of \Cref{model1} in which any two Markov chains with a common starting point $o'\in G$ intersect infinitely often a.s. 
	For $x,y\in G$, say $x\sim y$ if the chains started at $x$ and $y$ intersect infinitely often a.s. We claim that $\sim$ is an equivalence relation. Assume $x\sim y$ and $y\sim z$. Let $(\bs X_n)_n$ and $(\bs Y_n)_n$ be Markov chains started from $x$ and $y$ coupled after the first path-intersection. Independently, let $(\bs Y'_n)_n$ and $(\bs Z_n)_n$ be Markov chains started from $y$ and $z$ coupled after the first path-intersection. Since $(\bs Y_n)_n$ and $(\bs Y'_n)_n$ are independent chains from $y$, they intersect infinitely a.s. Hence, by the coupling assumption, so do $(\bs X_n)_n$ and $(\bs Z_n)_n$. Since $(\bs X_n)_n$ and $(\bs Z_n)_n$ are independent chains started from $x$ and $z$, we find that $x\sim z$. So, we have proved that $\sim$ is an equivalence relation.
	
	Note that if $\probPalm{G}{x\in \anc(a)}>0$ and $\probPalm{G}{y\in \anc(a)}>0$ for some $a\in G$, then $x\sim y$ (use infinite path-intersection for two chains started from $a$). So, the weak irreducibility condition of \Cref{model1} implies that all points of $G$ are equivalent. This implies the claim.
	
	\ref{thm:infComps:2}. Let $(G,o)$ be a sample of \Cref{model1} that satisfies the finite path-intersection property of \Cref{lem:finiteCollision}. We claim that, on this sample, $\bs F$ has infinitely many components a.s. It is enough to show that, for all $k\in\mathbb N$, $\bs F$ has at least $k$ components a.s.
	For this, we construct $\bs F$ by the following algorithm:
	\begin{itemize}
		\item Generate $k$ independent Markov chains $(\bs X^i_n)_n$, $i=1,\ldots, k$ from $o$.
		\item For $i=1,\ldots,k$:
		\begin{itemize}
			\item For all $n\geq 0$, set $\bs F(\bs X^i_n):=\bs X^i_{n+1}$. If $\bs F(\bs X^i_n)$ is already defined, the new definition of $\bs F(\bs X^i_n)$ replaces the previous one.
		\end{itemize}
		\item For all $x\not\in \bigcup_i (\bs X^i_n)_n$, choose $\bs F(x)$ with distribution $K(x,\cdot)$, independently.
	\end{itemize}
	It is easy to show that $\bs F$ has the same distribution as the CMT with kernel $K$.
	By finite path-intersection (\Cref{lem:finiteCollision}), there exists $N<\infty$ such that the paths $(\bs X^i_n)_{n\geq N}$ for $i=1,\ldots,k$ do not intersect. This implies that $\bs X^1_N,\ldots,\bs X^k_N$ are in different components of $\bs F$ and the claim is proved.
\end{proof}

\begin{proof}[Proof of \Cref{prop:decay}]
	The proof is similar to that of Theorem~4.1 of~\cite{LySc99}.
	We may assume that $[\bs G, \bs o]$ is ergodic and $\bs F$ has infinitely many components a.s. (by \Cref{thm:infComps}).
	Let $h$ be a factor Markov kernel such that, almost surely, $h[\bs G,\cdot,\cdot]$ is a strictly positive symmetric function on $\bs G\times \bs G$. See Lemma~3.10 of~\cite{Kh23unimodular} for the existence of $h$ (if $[\bs G, \bs o]$ is a graph with finite expected degree, one can use the simple random walk instead).
	For every component $C$ of $\bs F$, let $\alpha(C)$ be the \textit{frequency} of $C$ given by \Cref{lem:freq} applied to the kernel $h$ and the constant function $b'\equiv 1$ (which satisfies the balance condition by the symmetry of $h$). By indistinguishability (Part~\ref{thm:indistinguishability-comp} of \Cref{thm:indistinguishability}), all components have the same frequency. Since there are infinitely many components, one obtains that all components have zero frequency a.s.
	
	Assume the claim is false. By ergodicity, for some $\epsilon>0$, we have $\forall x\in\bs G: \probPalm{\bs G}{x\in C(\bs o)}\geq \epsilon$ a.s. Now, Let $(\bs X_n)_n$ be the Markov chain on $\bs G$ with kernel $h$ from $\bs o$ independent of $\bs F$. Then, $\probPalm{\bs G}{\bs X_n\in C(\bs o)}\geq \epsilon$.
	So, the definition of $\alpha$ and dominated convergence implies that $\omidPalm{\bs G}{\alpha[\bs G, \bs o; C(\bs o)]}\geq \epsilon$, which is a contradiction.
\end{proof}

The following lemma is an extension of Lemma~4.2 of~\cite{LySc99} and is used in the previous proof.

\begin{lemma}[Cluster Frequency]
	\label{lem:freq}
	Let $M$ be the space of all $[G,o;S]$, where $(G,o)$ is a rooted graph and $S\subseteq G$.
	\begin{enumerate}[label=(\roman*)]
		\item There exists a measurable function $\alpha:M\to [0,1]$ that satisfies the following property: For every unimodular graph $[\bs G, \bs o]$ in which $\omid{\mathrm{deg}(\bs o)}<\infty$, and every equivariant partition $\Pi$ of $\bs G$, if $(\bs X_n)_n$ is the simple random walk on $\bs G$ started from $\bs o$, then the \emph{frequency of visits} of every element $E$ of $\Pi$ is equal to $\alpha[\bs G, \bs o; E]$ a.s. More precisely, almost surely,
		\[
		\forall E\in \Pi: \alpha[\bs G, \bs o; E] = \lim_n \frac 1 n \sum_{i=1}^n \identity{E}(\bs X_i).
		\]
		In addition, $\alpha$ can be chosen such that it does not depend on the root.
		\item If the simple random walk is replaced by the Markov chain with a given factor Markov kernel $h=h(G,x,y)$, then the same claim holds for every unimodular graph or discrete space $[\bs G, \bs o]$ such that, for some function $b'$ for which $\omid{b'(\bs o)}<\infty$, almost every sample $(G,o)$ of $[\bs G, \bs o]$ satisfies the following properties: (1) the balance condition of \Cref{model1} for the kernel $h$ and the function $b'$, (2) the paths of two Markov chains on $G$ started from any two points of $G$ intersect with positive probability.
	\end{enumerate}	
\end{lemma}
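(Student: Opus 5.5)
The plan is to obtain the frequency from Birkhoff's ergodic theorem, applied to the stationary chain of \Cref{lem:stationary}. The real work is to show that the limit depends only on $[G,o;E]$, and we use this to write down $\alpha$ explicitly. Part (i) is the special case of (ii) with $h$ the simple random walk kernel and $b'=\deg$. Here the balance condition holds by reversibility. The positive-intersection condition follows because two independent walks started at $x,y$ meet at a common vertex with positive probability (for instance the walk from $x$ steps to $y$ first). So we prove (ii).

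\textbf{Step 1 (existence of the limit).} Given an equivariant partition $\Pi$, mark each vertex $x$ by the element $\Pi(x)$, or equivalently consider $[\bs G,\bs o;\Pi]$, which is unimodular. Fix $x\in\bs G$ and the element $E=\Pi(x)$. The frequency of visits to $\Pi(\bs X_0)$ is a Birkhoff average of the function $\identity{\{\bs X_i\in\Pi(\bs X_0)\}}$ along the $\hat{\mathbb P}$-stationary sequence of \Cref{lem:stationary}. To handle an arbitrary element, not only the one containing the root, we do not use a global ergodic average. Instead, we consider $\tilde{\mathbb P}$ as the extended shift space carrying $\Pi$ as a decoration and apply Birkhoff's theorem to $\identity{E}$ for each $E$. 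More simply, for a fixed deterministic sample $(G,o)$ the indicator sum is a functional of the chain on $G$. Existence a.s. for the element $\Pi(v)$ follows from Birkhoff applied to the function $[G,x_0;(x_i)]\mapsto\identity{\{x_i\in \Pi(v)\}}$, after transporting $v$ to the root via \Cref{lem:happensAtRoot}. Since $\Pi$ is countable, we get a.s. existence simultaneously for all $E$.

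\textbf{Step 2 (the limit is deterministic given $(G,E)$).} Let $\phi(G,o;E)$ be the limit of $\frac1n\sum_{i\le n}\identity{E}(\bs X_i)$, an invariant path function. We show it is a.s. constant given $(G,o)$, and that this constant does not depend on $o$. Let $(\bs X_n)$ and $(\bs Y_n)$ be independent chains from $x$ and $y$. By condition (2) they meet with positive probability. After meeting, by the Markov property we may recouple them to run together. Here we use the sampling trick of \Cref{lem:collisionIneq}, which replaces the tail of one chain by the other's after the first intersection without changing its law. The Cesàro limit is invariant under shift-coupling, so on a positive-probability event the two limits agree. Starting both chains from the same $x$, this shows the limit has zero conditional variance. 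Indeed, if the limit were nonconstant, two independent copies would differ with positive probability after meeting, which contradicts the coupling identity. To make this precise we use the conditional path invariant sigma-field on a fixed $G$ and a $0$-$1$-type argument as in the proof of \Cref{thm:pathErgodic}. There, a bounded harmonic function $u(x)=\omidPalm{(G,x)}{\phi}$, with $\phi$ taking the role of $\tilde h$, is shown to equal $\phi$ along paths by martingale convergence. The equal-limits-after-intersection property then forces $\phi$ to be a constant $c(G,E)$, independent of the starting point.

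\textbf{Step 3 (measurability).} Define $\alpha[G,o;S]:=\limsup_n \frac1n\sum_{i\le n}\mathbb P_{(G,o)}[\bs X_i\in S]$. By bounded convergence and Step 2, this equals $c(G,E)$ a.s. It is measurable on $M$ as a limit of measurable functions, and root-independence holds on the relevant full-measure set. To make it root-independent everywhere, we can replace it by its value made invariant, e.g. take $\sup_v$ over points at rational distances, or restrict to the a.s. event and set $0$ elsewhere.

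The main obstacle is Step 2. The difficulty is to rigorously turn ``positive intersection probability'' into ``the invariant path sigma-field on a fixed $G$ is trivial'' for each sample; this is a Liouville-type statement for $h$ on $G$. I would first try the classical argument that positive intersection from every pair of points implies that every bounded $h$-harmonic function is constant. That argument uses $|u(x)-u(y)|\le 2\|u\|_\infty\,\mathbb P[\text{no intersection}]$ together with iteration along the chain: each visit gives a fresh chance to intersect, pushing the non-intersection probability to $0$ in the limit.
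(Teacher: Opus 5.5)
Your reduction of (i) to (ii) is fine. The proposal has genuine gaps in both Step~1 and Step~2.

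\textbf{Step~1: existence of the limit.} Your Step~1 does not prove that the limit exists. Neither $\identity{\{\bs X_i\in\Pi(\bs X_0)\}}$ nor $\identity{\Pi(v)}(\bs X_i)$, for a fixed vertex $v$, is a fixed function evaluated at the $i$-th shift of the $\hat{\mathbb P}$-stationary sequence of \Cref{lem:stationary}. The first refers back to $\bs X_0$ with a lag that grows with $i$. In the second, $v$ is not determined equivariantly by the shifted state. \Cref{lem:happensAtRoot} only concerns equivariant subsets, so it cannot turn these sums into Birkhoff averages. This is exactly the obstruction the paper points out right after the lemma. Equivariance of $\Pi$ must enter essentially: for simple random walk on $\mathbb Z$ and $E=\mathbb N$, the averages oscillate between $0$ and $1$.

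One way to supply the missing idea is as follows. Extend the chain to a two-sided $\hat{\mathbb P}$-stationary sequence, as in the proof of \Cref{thm:pathErgodic}. Then $k\sim l\iff \Pi(\bs X_k)=\Pi(\bs X_l)$ is a stationary random partition of $\mathbb Z$. Mass transport on $\mathbb Z$ shows that infinite classes have no smallest or largest element. The map ``jump to the next time in the same class'' therefore preserves the measure. Birkhoff applied to the gap sequence then gives the density of each class, with the same value forward and backward in time.

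\textbf{Step~2: the limit is deterministic.} Your Step~2 rests on a false implication. Positive intersection probability from every pair of points does not imply the Liouville property, nor triviality of $\sigfield{path}{I}{G}$ on a fixed sample. Simple random walk on the $3$-regular tree is a counterexample: any two walks meet with positive probability, yet the Poisson boundary is nontrivial. There, two independent walks converge to distinct ends and meet only finitely often. So the ``fresh chance at each visit'' iteration cannot push the non-intersection probability to $0$. Also, ``two independent copies would differ after meeting'' is not a contradiction, because the recoupled pair is not the independent pair.

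\Cref{thm:pathErgodic} does not help either. The frequency of a non-equivariantly chosen $E$ lies in $\sigfield{path}{I}{G}$, not in $\sigfield{path}{I}{}$.

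What makes the frequency deterministic is the stationary two-sided structure. By the time-partition argument above, the forward frequency of $\Pi(\bs X_0)$ equals its backward frequency a.s. Past and future are conditionally independent given $[\bs G,\bs X_0;\Pi]$. A random variable that is a.s.\ equal to a conditionally independent one is conditionally a.s.\ constant. Hence this common value is a.s.\ a function of $[\bs G,\bs X_0;\Pi]$.

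Condition~(2) (or connectedness, for simple random walk) and the Markov property are needed only afterwards. They serve to pass from the root's own element to arbitrary elements, and to make the constant independent of the starting point.
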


The proof of this lemma is identical to that of Lemma~4.2 of~\cite{LySc99} and is omitted for brevity. 
Note that the claim does not follow from Birkhoff's ergodic theorem since the sequence $(\identity{E}(\bs X_i))_i$ is not stationary (since the element $E$ of $\Pi$ is arbitrary and is not chosen equivariantly).

\begin{proof}[Proof of \Cref{thm:one-ended}]
	In every sample $(G,o)$ of $[\bs G, \bs o]$, construct an instance $(\bs X_n)_{n\geq 0}$ of the Markov chain on $G$ with kernel $K$, starting from $o$, and independent from $\bs F$ (note that $(\bs X_n)_n$ and $\anc(\bs o)$ have the same distribution and are independent given $(G, o)$).
	Let $\widehat{\mathbb P}$ be the distribution obtained by biasing by $b(\bs o)$.
	By keeping $\bs F$ as a decoration and using \Cref{lem:stationary} for $(\bs X_n)_n$, under $\widehat{\mathbb P}$, for all $i$, $[\bs G, \bs X_i; (\bs X_{n+i})_n,\bs F]$ has the same distribution as $[\bs G, \bs o; (\bs X_n)_n, \bs F]$.
	
	By ergodic decomposition\del{ (\Cref{subsec:ergodic})}, we can assume that $[\bs G, \bs o]$ is ergodic. So, $[\bs G, \bs o; \bs F]$ is also ergodic. So, by \Cref{thm:one-ended-or-connected}, we have the following two cases.
	
	\vspace{2mm}
	\textbf{Case 1.} Assume that $\bs F$ is connected and two-ended a.s. 
	Let $\bs H$ be the unique bi-infinite path in the graph of $\bs F$. Then, $\bs H$ is an equivariant subset of $\bs G$. 
	So, by Birkhoff's ergodic theorem, the \textit{frequency of visits of $(\bs X_n)_n$ to $\bs H$} is $\widehat{\mathbb P}$-a.s. well defined, where the latter is defined by
	\[
	\freq(U):=\lim_n \frac 1 n \sum_{i=1}^n \identity{U}(\bs X_i) \quad \text{for $U\subseteq G$, if the limit exists.}
	\] 
	In addition, by ergodicity (\Cref{thm:pathErgodic}), $\freq(\bs H)$ is constant and is equal to its expectation; i.e., 
	\begin{equation*}
		\freq(\bs H)=\widehat{\mathbb P}[\bs o\in\bs H] =:\lambda \text{ a.s.}
	\end{equation*}
	Note that if $\bs X_j\in\bs H$, then $(\bs X_i)_{i\geq j}$ cannot intersect the half-path $\bs H\cap \{\cdot <\bs X_j\}$. Therefore, at most finitely many of the $\bs X_i$'s fall in $\bs H\setminus\anc(\bs o)$. So, the last equation implies that
	$
	\lim_n \frac 1 n \sum_{i=1}^n \identity{\anc(\bs o)}(\bs X_i) = \lambda \text{ a.s.}
	$
	The bounded convergence theorem then gives that
	$
	\lim_n \frac 1 n \sum_{i=1}^n \myprob{\bs X_i\in \anc(\bs o)} = \lambda.
	$
	In other words,
	\begin{equation}
		\label{eq:thm:one-ended}
		\lim_n \frac 1 n \sum_{i=1}^n \omid{g_K(\bs o, \bs X_i)} = \lambda.
	\end{equation}
	\Cref{lem:happensAtRoot} gives that $\myprob{\bs o\in \bs H}>0$. So, the facts $b>0$ and $\lambda = \widehat{\mathbb P}[\bs o\in\bs H] = \omid{b(\bs o)\identity{\{\bs o\in\bs H\}}}/\omid{b(\bs o)}$ imply that $\lambda>0$. Therefore, \eqref{eq:thm:one-ended} implies that $g_K(\bs o, \bs X_n)$ cannot converge to zero in probability, and the claim is proved.
	
	\vspace{2mm}
	\textbf{Case 2.} Assume that all components of $\bs F$ are one-ended a.s. As in the proof of \Cref{thm:pathErgodic}, one may extend $(\bs X_n)_{n\geq 0}$ to negative times and the sequence $\left([\bs G, \bs X_i; (\bs X_{n+i})_{n\in\mathbb Z}, \bs F]\right)_{i\in\mathbb Z}$ is stationary after biasing by $b (\bs o)$. As a result, 
	\[\omidhat{g_K(\bs o, \bs X_n)} = \omidhat{g_K(\bs X_{-n},\bs o)}.\] 
	By one-endedness, $D(\bs o)$ is finite a.s. This implies that $\lim_{n\to\infty} \identity{D(\bs o)}(\bs X_{-n}) = 0$ a.s.; i.e., $\lim_{n\to\infty} \identity{\{\bs o\in \anc(\bs X_{-n})\}} = 0$ a.s. By taking expectation and using bounded convergence, one obtains that $\lim_{n\to\infty} \omidhat{g_K(\bs X_{-n},\bs o)} = 0$. So, the last display implies that $\lim_{n\to\infty} \omidhat{g_K(\bs o, \bs X_n)} = 0$. Thus, since $g_K(\bs o, \bs X_n)$ is a random variable in $[0,1]$, it converges to zero in probability. So, the claim is proved.
\end{proof}

\section{Indistinguishability in Wired Uniform Spanning Forests}
\label{sec:wusf}

In \Cref{subsec:wusf-indistinguishability} below, we use the method of the present paper to provide an alternative proof for the indistinguishability of the connected components of the wired uniform spanning forest on unimodular graphs (\Cref{thm:usf-indistinguishability} stated in \Cref{main:wusf}). This method also gives a simple proof of tail-triviality of $\wusf$ on deterministic graphs and its ergodicity on unimodular graphs (\Cref{thm:usf-ergodic} in \Cref{subsec:wusf-ergodic}). The proof uses a coupling given by a generalization of the \textit{stochastic covering property}, which is discussed in \Cref{subsec:usf-defs}. See also \Cref{prob:lerw}, and the discussion after it, regarding the indistinguishability of level-sets of $\wusf$.

\subsection{Definitions and Basic Properties}
\label{subsec:usf-defs}

We briefly recall some definitions and results regarding the wired uniform spanning forest. The reader is advised to see the main references like~\cite{BLPS-usf,bookLyPe16} for a detailed treatment of this object and for the \textit{free} uniform spanning forest.

The 
\defstyle{wired uniform spanning forest ($\wusf$)} is defined for every infinite locally-finite graph $G$. In short, 
$\wusf(G)$ is the weak limit of the uniform spanning tree of $G/\oball{n}{o}^c$, where the last notation means contracting all of the vertices of $\oball{n}{o}^c$ to a single point, and where uniform spanning tree means the random subgraph obtained by choosing one of the spanning trees with the uniform distribution. It is proved that the limits exist and do not depend on the choice of $o\in G$. It is proved that $\wusf(G)$ 
is almost surely a forest, but is not necessarily connected. Also, each component is infinite a.s. 

The wired uniform spanning forest satisfies
the \defstyle{spatial Markov property}: Conditioned on containing a finite set $E$ of edges and avoiding a finite set $E'$ (assuming that this event has positive probability), it is distributed as 
$E\cup \wusf((G\setminus E')/E)$. In addition, it is known that $\wusf(G)$ is tail-trivial. 

The wired uniform spanning forest can be constructed by Wilson's algorithm.\del{If\mar{\ali{It is better to cite, rather than spending so much space. This would also help the reader focus.}} $v_0,v_1,\ldots$ is an arbitrary ordering of the vertices and $P$ is an initial subforest of $G$, in each step, choose the next vertex $v_i$, perform a simple random walk from $v_i$ until hitting the previously constructed forest, and then, erase the loops. If the initial configuration is a single point, the result of Wilson's algorithm has the same distribution as $\wusf(G)$. If $G$ is \textit{transient}, we may also let $P$ be empty. In this case, the algorithm is called \textit{Wilson's algorithm rooted at infinity}. Recall that transience of $G$ means that the simple random walk never returns to its starting point with positive probability. Otherwise, $G$ is called \textit{recurrent}.} 
In the transient case, Wilson's algorithm \textit{rooted at infinity} creates an oriented version of $\wusf(G)$, which is denoted by $\owusf(G)$. In this case, we let $\anc(o)$ be the directed infinite path in $\owusf(G)$ starting from $o$, which is distributed as the loop-erased random walk starting from $o$.

\begin{lemma}
	\label{lem:usf-anc}
	Let $(G,o)$ be a deterministic transient locally-finite rooted graph and $\bs F:=\owusf(G)$. Then, 
	the result of Wilson's algorithm with initial configuration $P$ (where $P$ ranges over infinite simple paths in $G$ starting from $o$) is a version of the conditional distribution of $\bs F$ conditionally on $\anc_{\bs F}(o)=P$.
\end{lemma}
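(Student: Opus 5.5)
The plan is to prove \Cref{lem:usf-anc} by combining Wilson's algorithm rooted at infinity, run with a particular ordering of the vertices, with the known independence properties of loop-erasure. First enumerate the vertices as $o=v_0,v_1,v_2,\ldots$. In Wilson's algorithm rooted at infinity, the first step runs a simple random walk from $o$ and loop-erases it. Transience makes this walk visit every finite set only finitely often, so the loop-erasure is well defined. It produces exactly $\anc_{\bs F}(o)$, which is the loop-erased random walk $\mathrm{LERW}(o)$. Every later step runs a random walk from the next vertex not yet in the forest, stops it on hitting the current forest (or lets it run to infinity), loop-erases it, and attaches it. Those later walks are independent of the first walk. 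It is classical that the output does not depend on the chosen ordering, so the output has the law of $\owusf(G)$ whichever ordering is used.

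Next, condition on the first step. Given that the first loop-erased path equals $P$, the remaining steps of the algorithm are, by construction, exactly Wilson's algorithm with initial configuration $P$ (oriented towards infinity along $P$). They use fresh randomness that is independent of the first walk. So if we define the kernel $\kappa(P,\cdot)$ to be the law of Wilson's algorithm started from $P$, the joint law of $(\anc_{\bs F}(o),\bs F)$ is obtained by first drawing $P\sim\mathrm{LERW}(o)$ and then drawing $\bs F\sim\kappa(P,\cdot)$. This is precisely the defining property of a regular conditional distribution, so $\kappa(P,\cdot)$ is a version of the law of $\bs F$ given $\anc_{\bs F}(o)=P$. For this to make sense we need $\kappa(P,\cdot)$ to be defined, and measurable in $P$, for every infinite simple path $P$ starting at $o$. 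For each finite-step walk the construction depends measurably on $P$ through hitting times of $P$, so measurability follows.

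The main obstacle is to justify that conditioning on the first step really leaves the later steps as Wilson's algorithm from $P$. Two points need care. First, a later walk may fail to hit $P$; it then runs to infinity and is loop-erased, which is allowed because $G$ is transient and the algorithm is rooted at infinity. Second, order-independence must hold for infinite graphs in the rooted-at-infinity setting (Benjamini–Lyons–Peres–Schramm), and it must hold for every initial path $P$, not only for almost every one. For the second point, I would appeal to the standard stack-of-instructions (cycle-popping) representation. There the output with initial set $P$ is a deterministic function of the stacks and of $P$, which makes both the order-independence and the measurability in $P$ transparent.
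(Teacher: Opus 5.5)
Your proposal is correct and follows the paper's argument. The paper runs Wilson's algorithm rooted at infinity starting from $o$, observes that its first step produces $\anc_{\bs F}(o)$, and conditions on that first step, so that the remaining steps are Wilson's algorithm with initial configuration $P$ driven by independent randomness. Your remarks on measurability in $P$ and on order-independence (for example via the stack representation) fill in details that the paper leaves implicit.
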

\begin{proof}
	If we start Wilson's algorithm rooted at infinity from $o$, then $\anc(o)$ is the result of the first step of the algorithm. So, the claim follows by conditioning Wilson's algorithm on its first step.
\end{proof}

Another property that we use is the \defstyle{stochastic covering property}: For an edge $e$, if $\bs F_1$ (resp. $\bs F_2$) is a version of $\wusf(G)$ conditioned on containing $e$ (resp. avoiding $e$), then there exists a coupling of $\bs F_1$ and $\bs F_2$ such that $\card{\bs F_1\Delta \bs F_2}\leq 2$. This is implied by the results of~\cite{PePe14} (see the proof of the next lemma). We prove a variant of this result in \Cref{lem:usf-covering} below.

\begin{lemma}
	\label{lem:usf-covering}
	Let $(G,o)$ be a rooted graph and assume $S$ is a finite subset of the edges. Assume also that, for each $e\in S$, nonempty subsets $I_1(e),I_2(e)\subseteq\{0,1\}$ are given. For $i=1,2$, let $\bs F_i$ be distributed as $\wusf(G)$ conditioned on $\identity{\bs F_i}(e)\in I_i(e), \forall e\in S$ (assuming that this happens with positive probability).
	\begin{enumerate}[label=(\roman*)]
		\item \label{lem:usf-covering-1} There exists a coupling of $\bs F_1$ and $\bs F_2$ such that $\card{\bs F_1\Delta \bs F_2}\leq 2\card{S}$ a.s. 
		\item \label{lem:usf-covering-2} If $G$ is transient and  $P$ is an infinite simple path starting from $o$ that satisfies $1\in I_1(e)\cap I_2(e),\forall e\in S\cap P$, then there exists a coupling of the oriented version of $\bs F_i$ conditioned on $\anc_{\bs F_i}(o)=P$ (for each $i=1,2$) with the same property.
	\end{enumerate}
\end{lemma}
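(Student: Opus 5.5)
Part (i) goes by induction on $\card{S}$. The base case is the single-edge stochastic covering property of \cite{PePe14}, and each induction step changes the conditioning on one edge at a time. Part (ii) is reduced to part (i) applied to a modified graph, using \Cref{lem:usf-anc} and the spatial Markov property.

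\textbf{Single-edge moves.} For one edge $e$ and a fixed conditioning $\mathcal C$ on the other edges of $S$, compare $\wusf$ conditioned on $\mathcal C$ together with ``$e$ present'' or ``$e$ absent'' or no constraint on $e$. By the spatial Markov property, $\wusf(G)$ conditioned on prescribed statuses of finitely many edges is $E\cup\wusf((G\setminus E')/E)$. So changing the constraint on $e$ amounts to comparing $\wusf$ on a fixed graph $H:=(G\setminus E')/E$ conditioned on containing or avoiding $e$. The result of \cite{PePe14} gives a coupling with $\card{\bs F_1\Delta\bs F_2}\le 2$ in this case. When one side has no constraint on $e$, write it as a mixture of the two constrained laws. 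Couple the unconstrained side with the constrained one as the identity when $e$ has the matching status, and via the covering coupling otherwise. Again at most $2$ edges differ.

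\textbf{Induction.} Order $S=\{e_1,\dots,e_k\}$. Define intermediate laws $\mu_j$: conditioned on $I_2(e_i)$ for $i\le j$ and on $I_1(e_i)$ for $i>j$. Each of these must have positive probability. This is the delicate point, since the mixed constraint sets might jointly be impossible. I will handle it as follows. First pass from $\bs F_1$ to the law constrained only on edges with singleton $I$'s taken from both sides, choosing an order in which every intermediate event has positive probability. If a mixed event is null, I will instead route through the unconditioned edges, i.e.\ use $I=\{0,1\}$ as an intermediate step, which only relaxes the event. Consecutive laws differ in the constraint on one edge, so there is a coupling with symmetric difference at most $2$. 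Gluing the successive couplings (couplings compose via conditional distributions) gives a total difference of at most $2k$ in the worst case. For the $I=\{0,1\}$ detour I count more carefully: the detour costs two moves on a single edge. To keep the bound $2\card{S}$, I argue directly that going from $I_1(e)$ to $I_2(e)$ is one mixture-coupling with cost $2$.

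\textbf{Part (ii).} By \Cref{lem:usf-anc}, conditioning $\owusf$ on $\anc(o)=P$ is Wilson's algorithm rooted at $P$. In law, this is the wired forest on $G/P$ with $P$ contracted and wired to infinity, with orientation toward $P$ and then along $P$. The constraints on edges of $S\cap P$ are automatically satisfied because $1\in I_i(e)$, so they can be dropped. The constraints on $S\setminus P$ become constraints on edges of $G/P$. Applying part (i) on $G/P$ yields the coupling. Orientation is a deterministic function of the unoriented forest together with $P$, so the symmetric difference bound transfers to the oriented versions.

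\textbf{Main obstacle.} I expect the main obstacle to be ensuring positivity of the intermediate conditioning events while still keeping the constant exactly $2\card{S}$.
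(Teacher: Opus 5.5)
The single-edge moves in your proposal are fine: the covering coupling, the mixture trick when one side leaves $e$ free, and gluing couplings. The gap is the induction in (i), and it sits exactly at the obstacle you flag at the end.

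\textbf{The ordering cannot be chosen.} In general no ordering of $S$ makes every mixed singleton assignment non-null. Let $u$ have only the two edges $a=uv$ and $b=uw$, let $c=vw$ also be an edge, and take
$I_1(a)=I_2(b)=\{1\}$, $I_1(b)=I_2(a)=\{0\}$, $I_1(c)=I_2(c)=\{1\}$.
Both one-edge intermediates are null: with $a,b$ both present the triangle $uvw$ closes, and with both absent $u$ is isolated. So detours are forced. Relaxing the edge being switched does not help either: with $b$ absent, $a$ is forced anyway. You therefore have to relax \emph{other} edges and re-tighten them later.

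\textbf{The count is not controlled.} Each detour adds moves costing up to $2$ each, and nothing in the proposal shows the total stays $\le 2|S|$. Your remark that switching $I_1(e)$ to $I_2(e)$ is ``one mixture-coupling with cost 2'' applies only when both endpoint laws exist, which is precisely what fails here. Routing naively through the law with no constraint on $S$ gives only $2(k_1+k_2)$, where $k_j$ is the number of singleton constraints of $I_j$. That can be as large as $4|S|$.

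\textbf{The missing combinatorial step (from the paper).} First absorb the common constraints via the spatial Markov property. Then induct on $|S'|$ with $S'=\{e: I_1(e)\neq I_2(e)\}$. Choose $e_0\in S'$ \emph{adaptively}, so that at least one of the two assignments obtained by copying the other side's value at $e_0$ is non-null. Write $S'_i:=\{e\in S': I_i(e)=\{1\},\ I_{3-i}(e)=\{0\}\}$. Such an $e_0$ exists in two situations:
\begin{itemize}
\item $I_1(e_0)$ or $I_2(e_0)$ equals $\{0,1\}$, since copying then only relaxes a constraint;
\item some $e\in S'_i$ can be added to $S'_{3-i}$ without creating a cycle, since copying then shrinks the avoided set and keeps the contained set acyclic.
\end{itemize}

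\textbf{The rigid case.} If neither situation occurs, every edge of $S'_1$ closes a cycle in $S'_2$ and vice versa. Hence the two forests span the same components. Each side's avoidance constraints are then implied by its own containment constraints, so $\bs F_j$ is simply $\wusf$ conditioned to contain $S'_j$. Passing through the law that is free on $S'$ then costs $2|S'_1|+2|S'_2|=2|S'|$. This is exactly why the relaxations in the example above happen to be free.

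\textbf{Part (ii).} Your idea is the same as the paper's: conditioning on $\anc(o)=P$ wires $P$ to infinity. Two points need attention.
\begin{itemize}
\item $G/P$ is not locally finite, so (i) does not apply to it directly. You still need the finite approximations $G/(P\cup B_n(o)^c)$ and a limit of couplings, as the paper does.
\item That the orientation is a deterministic function of the forest does not by itself transfer the numerical bound to oriented edges. An edge swap reverses the orientation along a whole path, so the bound should be read for the unoriented edge sets.
\end{itemize}
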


\begin{proof}
		Let $A_i$ be the event $\forall e\in S: \identity{\bs F_i}(e)\in I_i(e)$.
		We call the function $I_1$ on $S$ \textit{valid} if $\myprob{A_1}>0$. This is equivalent to the condition that $\{e: I_1(e)=\{1\}\}$ has no cycles and $G\setminus \{e:I_1(e)=\{0\}\}$ has no finite connected component.
	
	\ref{lem:usf-covering-1}. 
	By the spatial Markov property, $\bs F_i\setminus S$ conditioned on $A_i$ is distributed as $\wusf(G_i)$, where $G_i$ is obtained by deleting the edges $e\in S$ in which $I_i(e)=\{0\}$ and contracting the edges $e\in S$ in which $I_i(e)=\{1\}$. 
	
	Let $S':=\{e\in S: I_1(e)\neq I_2(e)\}$.
	By induction on $\card{S'}$, we prove that there exists a coupling such that $\card{\bs F_1\Delta \bs F_2}\leq 2\card{S'}$ a.s. 
	The claim is trivial if $S'=\emptyset$. Also, if $S'=\{e_0\}$ is a single edge, the claim is implied by Proposition~ 2.2 and Example~5.3 of~\cite{PePe14}, combined with a subsequential weak limit. For being self-contained, we provide a direct proof of this case using monotone coupling.
	Let $\hat G_n:=G/\oball{n}{o}^c$ and let $\bs F_i^n$ be the uniform spanning tree of $\hat G_n$ conditioned on $\forall e\in S:\identity{\bs F_i^n}(e)\in I_i(e)$ (for $i=1,2$). By Rayleigh's monotonicity principle, one can find a monotone coupling of $\bs F_1^n$ and $\bs F_2^n$; in the sense that one of $\bs F_i^n\setminus\{e_0\}$ contains the other one (depending on $I_1(e_0)$ and $I_2(e_0)$). Since $\card{\bs F_1^n}=\card{\bs F_2^n}$, one obtains that $\card{\bs F_1^n\Delta\bs F_2^n}\leq 2$. By taking a subsequential limit of this coupling as $n\to \infty$, one obtains the claim (note that since the marginals of this coupling are known to converge, one can deduce tightness and the existence of a convergent subsequence). So, the base of the induction is shown.

	Now, suppose $\card{S'}>1$. Choose $e_0\in S'$ and, for $j=1,2$, let $I'_j(e_0)=I_{3-j}(e_0)$ $I'_j(e):=I_j(e)$ for $e\neq e_0$. We show at the end of the proof that $e_0$ can be chosen such that at least one of $I'_1$ and $I'_2$ is \textit{valid} (defined above), except in some trivial cases.  Assume this holds. Assume also that $I'_1$ is valid without loss of generality. Let $\bs F'_1$ be $\wusf(G)$ conditioned on $\forall e: \identity{\bs F'_1}(e)\in I'_1(e)$.	
	Note that $I'_1$ differs from $I_1$ in a single edge and differs from $I_2$ in $\card{S'}-1$ edges. So, by the induction hypothesis, there exists a coupling of $\bs F_1$ and $\bs F'_1$ such that $\card{\bs F_1\Delta \bs F'_1}\leq 2$, and a coupling of $\bs F'_1$ and $\bs F_2$ such that $\card{\bs F'_1\Delta\bs F_2}\leq2\card{S'}-2$. By combining these two couplings (by first sampling $\bs F'_1$ and then sampling $\bs F_1$ and $\bs F_2$ independently conditioned on $\bs F'_1$), one finds a coupling of $\bs F_1,\bs F'_1$ and $\bs F_2$ such that $\card{\bs F_1\Delta \bs F_2}\leq 2\card{S'}$. This proves the induction claim.
	
	It remains to show that such an $e_0$ can be chosen except in trivial cases. If there exists $e\in S'$ such that $I_1(e)=\{0,1\}$ or $I_2(e)=\{0,1\}$, then $e_0:=e$ works. Otherwise, one may write $S'=S'_1\cup S'_2$, where $S'_i:=\{e\in S': I_i(e)=\{1\}, I_{3-i}(e)=\{0\}\}$. In particular, each of $S'_1$ and $S'_2$ is a forest. If $i\in\{1,2\}$ and $e\in S'_i$ can be chosen such that $S'_{3-i}\cup\{e\}$ is a forest, then such $e_0:=e$ works. If that this is not the case, $S'_1$ and $S'_2$ are edge-disjoint forests with the same vertex sets and the same connected components. Thus, for $j=1,2$, $\bs F_j$ is distributed as $\wusf(G)$ conditioned on containing $S'_j$ (the condition of avoiding $S'_{3-j}$ is redundant). Similarly to the case $\card{S'}=1$, there exists a coupling of $\bs F_j$ and $\wusf(G)$ such that their symmetric difference has at most $2\card{S'_j}$ edges. By combining these couplings, one finds a coupling of $\bs F_1$ and $\bs F_2$ such that $\card{\bs F_1\Delta\bs F_2}\leq 2\card{S'_1}+2\card{S'_2}=2\card{S'}$. This proves the claim.
	%

	
	\ref{lem:usf-covering-2}.
	Let $\hat G_n:=\backslash B_n(o)^c$ and let $z$ be its unique vertex out of $B_n(o)$.
	Since $G$ is transient, Wilson's algorithm in $G$ rooted at infinity is the limit of Wilson's algorithm in $\hat G_n$ rooted at $z_n$. So, \Cref{lem:usf-anc} implies that, if $P_n$ is the path $P$ stopped at the first exit from $B_n(o)$, then $\owusf(G)$ conditioned on $\anc(o)=P$ is the limit of $\wusf(G_n)$ conditioned on containing $P_n$. Now, the claim is implied by taking a subsequential limit of the coupling in $\hat G_n$ constructed in Part~\ref{lem:usf-covering-1}.
%
\end{proof}

If $[\bs G, \bs o]$ is unimodular (resp. ergodic), then $[\bs G, \bs o;\wusf(\bs G)]$ is also unimodular (resp. ergodic) 
(a proof of the last ergodicity is given in \Cref{thm:usf-ergodic} below). Also, the number of ends of $\wusf(\bs G)$ has been recently fully characterized recently as follows (see~\cite{DiHu-EndsOfWUSF}): If $\bs G$ is transient (e.g., if it has infinitely many ends), then every component of $\wusf(\bs G)$ is one-ended a.s. If $\bs G$ is recurrent, then $\wusf(\bs G)$ is connected. In this case, if $\bs G$ is two-ended, then $\wusf(\bs G)$ is connected and two-ended a.s. If $\bs G$ is recurrent and one-ended, which is the most difficult case, then $\wusf(\bs G)$ is also one-ended a.s.

\subsection{Ergodicity and Tail Triviality of $\wusf$}
\label{subsec:wusf-ergodic}

Before proving indistinguishability, we use the coupling methods used in this paper to give an easy proof of the following well-known result.


\begin{theorem}[Ergodicity and Tail Triviality of $\wusf$ \cite{BLPS-usf}]
	\label{thm:usf-ergodic}
	\ 
	\begin{enumerate}[label=(\roman*)]
		\item \label{thm:usf-ergodic-1} For every locally-finite graph $G$, $\wusf(G)$ is tail trivial.
		\item \label{thm:usf-ergodic-3} One has $\sigfield{pm}{I}{}=\sigfield{pm}{T}{}=\sigfield{}{I}{}$ mod $\mathbb P$. As a result, if $[\bs G, \bs o]$ is ergodic, then so is $[\bs G, \bs o; \wusf(\bs G)]$.
	\end{enumerate}
\end{theorem}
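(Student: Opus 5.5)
We will follow the coupling argument of \Cref{thm:tailDrainage}, Part~\ref{thm:tailDrainage-all}, with the independence of the CMT jumps replaced by the stochastic covering property (\Cref{lem:usf-covering}). For Part~\ref{thm:usf-ergodic-1}, fix a deterministic locally-finite $G$ and $o\in G$, and let $\bs F:=\wusf(G)$, viewed as a random subset of edges. Let $A$ be a tail event, meaning that $\identity{A}$ is unchanged by finite modifications of the edge set. Fix $n$ and let $S$ be the finite set of edges inside $\oball{n}{o}$. For each configuration $\xi\in\{0,1\}^S$ of positive probability, apply \Cref{lem:usf-covering}\ref{lem:usf-covering-1} with $I_1(e):=\{\xi(e)\}$ and $I_2(e):=\{0,1\}$. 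This couples $\wusf(G)$ conditioned on $\restrict{\bs F}{S}=\xi$ with the unconditioned $\wusf(G)$, and the two differ in at most $2\card{S}$ edges. Since $A$ is tail, $\identity{A}$ agrees on the two coupled forests, so $\probCond{A}{\restrict{\bs F}{S}=\xi}=\myprob{A}$. Hence $A$ is independent of the edges in $\oball{n}{o}$ for every $n$. Letting $n\to\infty$, $A$ is independent of $\bs F$, hence of itself, and so $\myprob{A}\in\{0,1\}$.

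For Part~\ref{thm:usf-ergodic-3}, we prove the inclusions
\[
\sigfield{}{I}{}\subseteq\sigfield{pm}{T}{}\subseteq\sigfield{pm}{I}{}\subseteq\sigfield{pm}{T}{}\subseteq\sigfield{}{I}{} \quad\text{mod } \mathbb P.
\]
The first two are immediate from the definitions. The third is \Cref{thm:I in T 2}\ref{thm:I in T 2-all}, which applies to any equivariant random graph on an infinite base, and $\wusf$ is only defined on infinite graphs. For the last inclusion, take $A\in\sigfield{pm}{T}{}$. For each sample $(G,o)$, the section $\{f:[G,o;f]\in A\}$ is a tail event of $\wusf(G)$, so by Part~\ref{thm:usf-ergodic-1} its probability $g'[G,o]:=\probPalm{G}{[G,o;\wusf(G)]\in A}$ lies in $\{0,1\}$. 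Because $A$ is root-invariant and the same forest $\wusf(G)$ serves for all roots, $g'[G,o]=g'[G,v]$ for all $v\in G$. Thus $g'$ is $\sigfield{}{I}{}$-measurable and is a version of $\probCond{A}{[\bs G,\bs o]}$ taking values in $\{0,1\}$. By \Cref{lem:completion-cond}, $A$ lies in the null-event-augmentation of $\sigfield{}{I}{}$. Ergodicity then follows: if $\sigfield{}{I}{}$ is trivial, so is $\sigfield{pm}{I}{}$.

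The main obstacle is in Part~\ref{thm:usf-ergodic-1}. \Cref{lem:usf-covering} is stated for conditioning on positive-probability events on $S$, and the tail event $A$ must be measurable for the coupling to transfer $\identity{A}$. Both points are routine, but we must verify that tail events are preserved under the bounded symmetric difference, which holds by definition. A further subtlety is that, in Part~\ref{thm:usf-ergodic-3}, \Cref{lem:completion-cond} needs $g'$ to be measurable on $\mathcal G_*$. This follows from measurability of the $\wusf$ law in $G$, which holds because $\wusf$ is an equivariant process.
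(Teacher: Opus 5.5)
Your proposal is correct and follows essentially the same route as the paper: the coupling from \Cref{lem:usf-covering} shows that a tail event is independent of the forest restricted to every ball, \Cref{thm:I in T 2}\ref{thm:I in T 2-all} gives $\sigfield{pm}{I}{}=\sigfield{pm}{T}{}$ mod $\mathbb P$, and the root-independent version $g'$ of $\probCond{A}{[\bs G,\bs o]}$ together with \Cref{lem:completion-cond} gives $\sigfield{pm}{T}{}\subseteq\sigfield{}{I}{}$ mod $\mathbb P$. The only difference is organizational. You prove the deterministic statement (i) first and deduce (ii) sample by sample, whereas the paper runs the same coupling directly inside the proof of (ii) and declares (i) analogous.
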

It should be noted that, in the first part, tail events are considered in the classical sense of being invariant under finite modifications (the invariance in \Cref{def:tailDrainage} is not assumed).
\begin{proof}
	The proof of \ref{thm:usf-ergodic-1} is similar to~\ref{thm:usf-ergodic-3} and is skipped for brevity. It is proved in part~\ref{thm:I in T 2-all} of \Cref{thm:I in T 2} that $\sigfield{pm}{I}{}= \sigfield{pm}{T}{}$ mod $\mathbb P$. So, we only need to prove that $\sigfield{pm}{T}{} = \sigfield{}{I}{}$ mod $\mathbb P$ (this is analogous to \Cref{thm:tailDrainage} regarding CMTs).
	
	Let $A\in\sigfield{pm}{T}{}$ be a tail property of $[\bs G, \bs o; \wusf(\bs G)]$.  We claim that $A$ is independent from itself conditionally on $[\bs G, \bs o]$; i.e., $\probCond{A}{\bs G, \bs o}\in\{0,1\}$ a.s. For this, it is enough to prove that for every $R<\infty$, $A$ is independent from $\restrict{\wusf(\bs G)}{\oball{R}{o}}$ conditionally on $[\bs G, \bs o]$. Equivalently, for every event $B$ that depends only on $\bs G, \bs o$ and $\restrict{\wusf(\bs G)}{\oball{R}{o}}$, we should prove
	\begin{equation*}
		\probCond{A\cap B}{\bs G, \bs o} = \probCond{A}{\bs G, \bs o}\cdot \probCond{B}{\bs G, \bs o}, \text{ a.s.}
	\end{equation*}
	Let $(G,o)$ be a sample of $[\bs G, \bs o]$. We construct a coupling of two copies $(\bs F_1,\bs F_2)$ of $\wusf(G)$ as follows. First, construct $\bs F_1$ and $\bs F_2$ independently. Then, keep $\restrict{\bs F_2}{\oball{R}{o}}$ and erase the rest of $\bs F_2$. By \Cref{lem:usf-covering}, one can resample the rest of $\bs F_2$ coupled with $\bs F_1$ such that $\bs F_1\Delta\bs F_2$ is finite a.s. (see \Cref{lem:usf-covering}). We denote the distribution of this coupling by the same notation $\mathbb P_{(G,o)}$.
	
	For $i=1,2$, let $A_i$ (resp. $B_i$) be the event that $[G,o; \bs F_i]\in A$ (resp. $\in B$). By construction, $A_1$ is independent from $B_2$ in the above coupling. Since $\bs F_1\Delta\bs F_2$ is finite a.s. and $A$ is a tail point-map-property, $\probPalm{(G,o)}{A_1\Delta A_2}=0$. This implies that $A_2$ is independent from $B_2$. Since $\bs F_2$ is distributed as $\wusf(G)$, the above claim is proved. So, $\probCond{A}{\bs G, \bs o}\in\{0,1\}$ a.s. 
	
	Since $A\in\sigfield{pm}{T}{}\subseteq\sigfield{pm}{I}{}$, one can show that there exists a version of $\probCond{A}{\bs G, \bs o}$ that does not depend on the root. For this, consider the version $g[G,x]:=\myprob{[G,x;\wusf(G)]\in A}$. By using the same instance of $\wusf(G)$ for different $x\in G$, one obtains that $g$ depends only on $G$. We have also proved that $g\in\{0,1\}$ a.s. Therefore, Lemma~\ref{lem:completion-cond} implies that $A$ is in the null-event-augmentation of $\mathcal I$ and the claim is proved.
	%
	%
\end{proof}

\subsection{Indistinguishability of Components}
\label{subsec:wusf-indistinguishability}

In this section, we provide an alternative proof of \Cref{thm:usf-indistinguishability} based on the method of \Cref{intro:method-comp}.
If $\bs G$ is recurrent, then $\wusf(\bs G)$ is connected and there is nothing to prove. So, we assume that $\bs G$ is transient.
Hence, \cite{Hu16cyclebreaking} implies that $\wusf(\bs G)$ is one-ended a.s. Thus, it is enough to prove \Cref{thm:usf-indistinguishability} for $\owusf$ (since there is a unique way to orient the edges of $\wusf$ towards the ends), which can be considered as a point-map as discussed in \Cref{intro:models}.
We will prove the theorem by conditioning on the infinite path from the origin (i.e., the ancestral line) according to the steps sketched in the introduction.
We need the following coupling, which is based on \Cref{lem:usf-covering}.


%
%

\begin{lemma}
	\label{lem:usf-coupling}
	In the setting of \Cref{lem:usf-anc}, given $0\leq R<\infty$, there exists a coupling $(\bs F_1,\bs F_2)$ of two copies of $\owusf(G)$ such that:
	\begin{enumerate}[label=(\roman*)]
		\item \label{lem:usf-coupling-marginal} Each $\bs F_i$ has the same law as $\owusf(G)$.
		\item \label{lem:usf-coupling-anc} $\anc_{\bs F_1}(o)=\anc_{\bs F_2}(o)$ a.s.  
		\item \label{lem:usf-coupling-independent} Conditioned on $\anc_{\bs F_1}(o)$, $\bs F_1$ is independent from $\restrict{\bs F_2}{\oball{R}{o}}$.
		\item \label{lem:usf-coupling-delta} $\card{\bs F_1\Delta\bs F_2}\leq 2\card{\restrict{E}{\oball{R}{o}}}$ a.s., where $E$ is the edge set of $G$.
	\end{enumerate}
\end{lemma}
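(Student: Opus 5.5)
We build the coupling in three stages: sample a common ancestral line, then sample the two forests conditionally on it, independently, and finally correct the second forest far from $o$ using \Cref{lem:usf-covering}. First, sample $\bs P:=\anc(o)$, i.e., the loop-erased random walk from $o$ in $G$. Conditionally on $\bs P=P$, \Cref{lem:usf-anc} identifies the conditional law of $\owusf(G)$ given $\anc(o)=P$ as the output of Wilson's algorithm rooted at infinity with initial configuration $P$. Conditionally on $\bs P$, let $\bs F_1$ be a sample of this conditional law. Independently of $\bs F_1$ given $\bs P$, sample an auxiliary copy $\bs F'_2$ of the same conditional law. From it keep only $\restrict{\bs F'_2}{\oball{R}{o}}$, i.e., the status (open/closed and orientation) of every edge of $E$ with both endpoints in $\oball{R}{o}$.

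Next, we resample the rest of $\bs F_2$ coupled to $\bs F_1$. Let $S:=\restrict{E}{\oball{R}{o}}$, a finite edge set since $G$ is locally finite. For $e\in S$ set $I_2(e):=\{\identity{\bs F'_2}(e)\}$ and $I_1(e):=\{0,1\}$. Every $e\in S\cap P$ satisfies $1\in I_1(e)\cap I_2(e)$, since $e\in\anc(o)\subseteq\bs F'_2$. Hence Part~\ref{lem:usf-covering-2} of \Cref{lem:usf-covering} applies to the path $P$. It gives a coupling of two laws. The first is $\owusf(G)$ conditioned on $\anc(o)=P$, which is the law of $\bs F_1$ given $\bs P$. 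The second is $\owusf(G)$ conditioned on $\anc(o)=P$ and $\identity{}(e)\in I_2(e)$ for $e\in S$. In this coupling the symmetric difference is at most $2\card{S}$.

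We realize this coupling conditionally on $(\bs F_1,\restrict{\bs F'_2}{\oball{R}{o}})$. Concretely, we take the regular conditional law of the second coordinate given the first, and apply it to the already-sampled $\bs F_1$; this defines $\bs F_2$. Note that the event $\{\identity{}(e)\in I_2(e),\forall e\in S\}$ includes orientation data only implicitly: within $\oball{R}{o}$, orientations of open edges are determined by the forest structure together with the path $P$ and the one-ended structure. To be careful, one may instead condition on the oriented configuration on $S$. The covering argument is unchanged, since conditioning on finitely many oriented edges is again a finite cylinder event handled by the spatial Markov property in $\hat G_n$.

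It remains to check the four properties.
\begin{enumerate}[label=(\roman*)]
\item By construction, given $\bs P$, the restriction $\restrict{\bs F_2}{\oball{R}{o}}=\restrict{\bs F'_2}{\oball{R}{o}}$ has the conditional law of $\owusf$. Moreover, given that restriction, the rest of $\bs F_2$ has the correct conditional law. So $\bs F_2$ given $\bs P$ has the law of $\owusf(G)$ given $\anc(o)=\bs P$. Integrating over $\bs P$ gives the correct marginal law for $\bs F_2$, and likewise for $\bs F_1$.
\item The conditioning on $\anc(o)=P$ is preserved in both coordinates, so $\anc_{\bs F_1}(o)=\anc_{\bs F_2}(o)=\bs P$.
\item $\restrict{\bs F_2}{\oball{R}{o}}$ was sampled independently of $\bs F_1$ given $\bs P$.
\item This is the bound from \Cref{lem:usf-covering}.
\end{enumerate}

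The main obstacle is measurability: the coupling of \Cref{lem:usf-covering} must be chosen measurably in the data $(P,I_2)$. The data $I_2$ ranges over a finite set. For $P$, we would realize the subsequential limit there via a measurable selection, or more simply take the coupling in $\hat G_n$ and pass to a limit jointly over $P$ using tightness. A second obstacle is that \Cref{lem:usf-covering} requires positive-probability conditioning events. This holds almost surely here because $I_2$ is read off from an actual sample.
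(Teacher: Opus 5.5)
Your construction is correct and is essentially the paper's argument. Both proofs sample $\bs P=\anc(o)$, draw the ball data of the two forests independently given $\bs P$, and then use Part~(ii) of \Cref{lem:usf-covering} to couple the remainders with at most $2\card{S}$ discrepancies. The only difference is that you leave $\bs F_1$ unconstrained on $S$ (i.e., $I_1\equiv\{0,1\}$) and glue through the conditional kernel of the coupling, which makes (iii) immediate. The paper instead couples two laws $\nu_1,\nu_2$, each conditioned on an independently sampled ball restriction.

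One caveat concerns your aside that conditioning on \emph{oriented} edges of $S$ is a finite cylinder event: it is wrong. In $\hat G_n$ the orientation of a tree edge depends on where the root $z_n$ lies, i.e., on the global configuration. You should therefore keep only the unoriented statuses on $S$, which is all \Cref{lem:usf-covering} handles and all the paper uses. With that reading, your verification of (i)--(iv) goes through.
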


It should be noted that we do not require the distribution of $(\bs F_1, \bs F_2)$ in this lemma to be a measurable and equivariant function of $(G,o)$. Having the coupling on any single $(G,o)$ is sufficient.

\begin{proof}
	For every simple path $P$ in $G$, let $\mu_P$ be the distribution of the random forest constructed by Wilson's algorithm with initial configuration $P$.
	
	We construct the coupling step by step. In the beginning, the status of every edge is undetermined.  
	Construct $\bs F$ distributed as $\owusf(G)$. Include $\bs P:=\anc_{\bs F}(o)$ in both $\bs F_1$ and $\bs F_2$. 
	Then, given $\bs P$, construct $\restrict{\bs F_1}{\oball{R}{o}}$ distributed as the conditional law of $\restrict{\bs F}{\oball{R}{o}}$ conditionally on $\anc_{\bs F}(o)$ (i.e., using the distribution $\mu_{\bs P}$ defined above). Construct $\restrict{\bs F_2}{\oball{R}{o}}$ with the same distribution, but independently from $\restrict{\bs F_1}{\oball{R}{o}}$ (conditionally on $\bs P$). 
	
	For $i=1,2$, let $\nu_i$ be the distribution of $\owusf(G)$ conditioned on the {event} that its restriction to $\oball{R}{o}$ is identical to $\restrict{\bs F_i}{\oball{R}{o}}$, and that $\anc(o)=\bs P$ ($\bs P$ and $\restrict{\bs F_i}{\oball{R}{o}}$ are constructed in the previous step and are fixed here). Let $S$ be the set of edges in $\oball{R}{o}$. By \Cref{lem:usf-covering}, there exists a coupling of $\nu_1$ and $\nu_2$ such that their finite difference is at most $2\card{S}$ a.s. Construct the rest of $\bs F_1$ and $\bs F_2$ by this coupling.
	
	By construction, the conditions \ref{lem:usf-coupling-marginal}, \ref{lem:usf-coupling-anc} and \ref{lem:usf-coupling-delta} are satisfied. In the above coupling, note that the marginal distribution of ${\bs F}_1$ (given the previous steps; i.e., given $\anc(o)$) does not depend on $\restrict{\bs F_2}{\oball{R}{o}}$. This implies that, once $\anc(o)$ is given, $\bs F_1$ is independent from $\restrict{\bs F_2}{\oball{R}{o}}$. So, \ref{lem:usf-coupling-independent} holds and the claim is proved.
%
	%
\end{proof}

We now proceed towards proving \Cref{thm:usf-indistinguishability}. First, we provide a short proof for $\wusf(\mathbb Z^d)$ using the method of \Cref{sec:warmUp}.

\begin{proof}[Proof of \Cref{thm:usf-indistinguishability} (for the case $\mathbb Z^d$)]
	More general than $\mathbb Z^d$, we assume that $[\bs G, \bs o]$ is a unimodular random graph that satisfies the shift-coupling property a.s. (\Cref{lem:coupling}). The proof is similar to that of \Cref{prop:simpler}, and hence, we only sketch the proof. We may assume transience and ergodicity. Note that this implies a similar shift-coupling property for the loop-erased random walk. By \Cref{step1comp} (\Cref{thm:I in T 2}), it is enough to prove that every tail branch-property $A$ is trivial. Let $\bs F:=\owusf(\bs G)$. Similarly to the proof of \Cref{prop:simpler}, construct a copy $\bs F'$ of the $\bs F$ as follows: Given $n\in\mathbb N$ and a sample $(G,o)$, construct $\restrict{\bs F'}{\oball{n}{o}}$ independently from $\bs F$. Next, construct $\anc_{\bs F'}(o)$ coupled with $\bs F$ such that it shift-couples with $\anc_{\bs F}(o)$. Then, similarly to \Cref{lem:usf-coupling}, one can construct the rest of $\bs F'$ such that $\card{\bs F'\Delta \bs F}<\infty$. By this coupling, one can show that $A$ is independent from $[\bs G, \bs o; \restrict{\bs F}{\oball{n}{\bs o}}]$ for all $n$, and hence, $A$ is independent from itself conditionally on $[\bs G, \bs o]$. This proves that $h(G,o):=\probPalm{G}{[G,o,\bs F]\in A}\in\{0,1\}$ for almost every sample $(G,o)$ of $[\bs G, \bs o]$. It can be shown that either $h(G,\cdot)\equiv 0$ or $h(G,\cdot)\equiv 1$. Then, ergodicity implies that $A$ is trivial, and the claim is proved.
\end{proof}

For the general case, we prove \Cref{step2comp,step3comp} in the next lemmas. In what follows, since the components are one-ended a.s., the reader might replace $\sigfield{branch}{T}{}$ with $\sigfield{comp}{T}{}$. However, since some of the lemmas do not need unimodularity, we keep using $\sigfield{branch}{T}{}$ for having more general formulations.

\begin{lemma}[\Cref{step2comp-1}]
	\label{lem:usf-conditionalIndependence}
	Let $[\bs G, \bs o]$ be a transient random rooted graph 
	and $\bs F=\owusf(\bs G)$. Then, Every tail branch-property $A\in\sigfield{branch}{T}{}$ (resp. every tail level-set-property $A\in\sigfield{level}{T}{}$) is independent from itself conditionally on $[\bs G, \bs o; \anc_{\bs F}(\bs o)]$; i.e., $\probCond{A}{[\bs G, \bs o; \anc_{\bs F}(\bs o)]}\in\{0,1\}$ a.s.
\end{lemma}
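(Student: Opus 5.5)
We will show that, for every $R<\infty$, the event $A$ is independent of $\restrict{\bs F}{\oball{R}{\bs o}}$ conditionally on $[\bs G, \bs o; \anc_{\bs F}(\bs o)]$. Letting $R\to\infty$, this gives that $A$ is independent of $\bs F$, and hence of itself, given $[\bs G, \bs o; \anc_{\bs F}(\bs o)]$. The argument follows the second half of the proof of part~\ref{thm:tailDrainage-comp} of \Cref{thm:tailDrainage}, with the trivial resampling used for CMTs replaced by the coupling of \Cref{lem:usf-coupling}. We work on a fixed sample $(G,o)$. Since \Cref{lem:usf-anc} gives an explicit regular version of the conditional law of $\bs F$ given $\anc(o)$, all conditional statements can be made on this single sample and then integrated over $[\bs G, \bs o]$.

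Fix $R$ and take the coupling $(\bs F_1,\bs F_2)$ from \Cref{lem:usf-coupling}. For $i=1,2$, let $A_i$ be the event $[G,o;\bs F_i]\in A$, and let $B_2$ be any event determined by $\restrict{\bs F_2}{\oball{R}{o}}$. By \ref{lem:usf-coupling-anc}, both forests share $\bs P:=\anc(o)$. By \ref{lem:usf-coupling-independent}, $A_1$ and $B_2$ are conditionally independent given $\bs P$. The key claim is that $\identity{A_1}=\identity{A_2}$ a.s.

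For the branch case, note that $\bs F_1\Delta\bs F_2$ is finite by \ref{lem:usf-coupling-delta}. After orienting towards infinity along Wilson's algorithm, the oriented maps $f_1,f_2$ differ at finitely many vertices, since only the endpoints of finitely many differing edges can change their out-edges. Both forests contain the common infinite path $\bs P$. Remove the finite set $U$ of endpoints of differing edges from $C_{f_1}(o)$ and $C_{f_2}(o)$. The branch that contains the tail of $\bs P$ is then built from the same edges in both forests, because outside $U$ the edge sets agree and the components beyond $U$ are the same. So the two components share an ancestral branch, and the definition of $\sigfield{branch}{T}{}$ gives $\identity{A_1}=\identity{A_2}$.

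For the level-set case we must check that $\card{L_{f_1}(o)\Delta L_{f_2}(o)}<\infty$. Since $\anc_{f_1}(o)=\anc_{f_2}(o)$, the level-sets are indexed compatibly. A vertex $x$ whose ancestral line avoids $U$ has the same ancestral line in both forests, so it lies in $L(o)$ for both or for neither. The vertices whose ancestral lines meet $U$ lie in $\bigcup_{u\in U}D(u)$, and this set is finite by one-endedness of the WUSF in the transient case (\cite{Hu16cyclebreaking}), applied to each $\bs F_i$. This works in the unimodular case. In the general transient setting of the lemma, I would either add a one-endedness hypothesis or restrict to the a.s. event where it holds, which is where the level-set version needs care. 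Given this, $\identity{A_1}=\identity{A_2}$ a.s. again.

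It follows that $A_2$ is conditionally independent of $B_2$ given $\bs P$. Since $\bs F_2$ has the law of $\owusf(G)$ and $\anc_{\bs F_2}(o)=\bs P$, this is exactly the desired conditional independence for $\bs F$. By a monotone class argument over $R$, $A$ is conditionally independent of $\sigma(\bs F)\ni A$, so $\probCond{A}{[\bs G, \bs o; \anc(\bs o)]}\in\{0,1\}$ a.s. I expect the main obstacle to be the claim $\identity{A_1}=\identity{A_2}$: turning finite edge differences into finitely many point-map differences, showing a common ancestral branch, and, for level-sets, establishing finiteness of the descendant sets $D(u)$.
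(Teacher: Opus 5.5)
Your overall strategy is the paper's: fix $R$, take the coupling of \Cref{lem:usf-coupling}, use (iii) to get $A_1\perp B_2$ given $\bs P$, show $\identity{A_1}=\identity{A_2}$ a.s., transfer to $\bs F_2$ via (i), and let $R\to\infty$. The branch case is fine. Read (iv) as a bound on oriented edges; then $\{f_1\neq f_2\}$ is finite at once, and your removal of $U$ gives a common ancestral branch on which $f_1=f_2$.

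The gap is in the level-set case. You bound $L_{f_1}(o)\Delta L_{f_2}(o)$ by $\bigcup_{u\in U}D(u)$, which requires the full descendant sets to be finite, and you get this from one-endedness via \cite{Hu16cyclebreaking}. That result needs unimodularity and $\omid{\deg(\bs o)}<\infty$. The lemma assumes neither, and the paper explicitly says neither unimodularity nor one-endedness is needed. One-endedness genuinely fails on general transient graphs. For example, glue a copy of $\mathbb Z$ to $\mathbb Z^3$ at a vertex: the half-lines are a.s. oriented towards the junction, so the descendant set of the half-line vertex adjacent to the junction is the whole infinite half-line. So neither adding a hypothesis nor restricting to an a.s. event recovers the statement.

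The missing observation is that only one generation of each $D(u)$ matters. Let $U:=\{f_1\neq f_2\}$ and suppose $x\in L_{f_1}(o)\setminus L_{f_2}(o)$, with $f_1^{(n)}(x)=f_1^{(n)}(o)$. If $f_1^{(j)}(x)\notin U$ for all $j<n$, then
\[
f_2^{(n)}(x)=f_1^{(n)}(x)=f_1^{(n)}(o)=f_2^{(n)}(o),
\]
because $\anc(o)$ is common to both forests; this is a contradiction. Hence some $u=f_1^{(j)}(x)\in U$ with $j<n$. This $u$ lies in $C_{f_1}(o)$ and $x\in D_{k,f_1}(u)$, where $k=h_{f_1}(o)-h_{f_1}(u)$ depends only on $u$. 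Since the forest uses edges of the locally-finite graph $G$, $D_{k,f_1}(u)\subseteq\oball{k}{u}$ is finite. Summing over the finitely many $u\in U$, and repeating with $f_1,f_2$ swapped, gives $\card{L_{f_1}(o)\Delta L_{f_2}(o)}<\infty$ without any one-endedness. This is exactly what the paper asserts directly from parts (ii) and (iv) of \Cref{lem:usf-coupling}.
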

Here, we do neither require unimodularity nor one-endedness.
\begin{proof}
	To prove the claim, it is enough to prove that for every $R\geq 0$, $A$ is independent from $[\bs G, \bs o; \restrict{\bs F}{\oball{R}{\bs o}}]$ conditionally on $[\bs G, \bs o; \anc(\bs o)]$. Equivalently, for every event $B$ that depends only on $[\bs G, \bs o; \restrict{\bs F}{\oball{R}{\bs o}}]$, we should prove
	\begin{equation*}
		\probCond{A\cap B}{\bs G, \bs o; \anc(\bs o)} = \probCond{A}{\bs G, \bs o; \anc(\bs o)}\cdot \probCond{B}{\bs G, \bs o; \anc(\bs o)}, \text{ a.s.}
	\end{equation*}
	By \Cref{lem:usf-anc}, a version of the conditional distribution of $[\bs G, \bs o; \bs F]$ conditionally on $[\bs G, \bs o; \anc(\bs o)]$ can be given by a version of Wilson's algorithm. Denote the latter by $\mathbb P_{(G,o;P)}$ for every sample $(G,o; P)$ of $[\bs G, \bs o; \anc(\bs o)]$; i.e., $\mathbb P_{(G,o;P)}$ is the distribution of $\owusf(G)$ conditioned on $\anc(o)=P$. Hence, it is enough to prove that, for almost every sample $(G,o;P)$, 
	\begin{equation}
		\label{eq:lem:usf-conditionalIndependence-1}
		\probPalm{(G,o;P)}{A\cap B} = \probPalm{(G,o;P)}{A}\cdot \probPalm{(G,o;P)}{B}.
	\end{equation}
	\Cref{lem:usf-coupling} gives a coupling $(\bs F_1,\bs F_2)$ of two copies of $\owusf(G)$. Let $A_i$ (resp. $B_i$) be the event that $[G,o; \bs F_i]\in A$ (resp. $\in B$). By part~\ref{lem:usf-coupling-independent} of \Cref{lem:usf-coupling}, $A_1$ is independent from $B_2$ under $\mathbb P_{(G,o;P)}$. Parts~\ref{lem:usf-coupling-anc} and~\ref{lem:usf-coupling-delta} of \Cref{lem:usf-coupling} imply that $C_{\bs F_1}(\bs o)$ has a common ancestral branch with $C_{\bs F_2}(\bs o)$ and $\card{L_{\bs F_1}(\bs o)\Delta L_{\bs F_2}(\bs o)}<\infty$. Therefore, since $A$ is a tail branch-property (resp. tail level-set-property), $\probPalm{(G,o;P)}{A_1\Delta A_2}=0$. This implies that $A_2$ is independent from $B_2$. Finally,~\Cref{eq:lem:usf-conditionalIndependence-1} is implied by part~\ref{lem:usf-coupling-marginal} of \Cref{lem:usf-coupling}. So, the claim is proved.
\end{proof}

\begin{lemma}[\Cref{step2comp-2}]
	\label{lem:usf-measurable}
	In the setting of \Cref{lem:usf-conditionalIndependence},
	\begin{enumerate}[label=(\roman*)]
		\item \label{lem:usf-measurable-comp} If $A\in\sigfield{branch}{T}{}$ is a tail branch-property, then  $\probCond{A}{\bs G, \bs o; \anc_{\bs F}(\bs o)}$ is measurable with respect to the null-event-augmentation of $\sigfield{path}{I}{}$.
		\item \label{lem:usf-measurable-foil} If, in addition, the level-set of $\bs o$ in $\owusf(\bs G)$ is infinite a.s. and $A\in\sigfield{level}{T}{}$ is a tail level-set-property, then $\probCond{A}{\bs G, \bs o; \anc_{\bs F}(\bs o)}$ is measurable with respect to the null-event-augmentation of $\sigfield{path}{T}{}$.
	\end{enumerate}
\end{lemma}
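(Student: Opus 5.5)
We mirror the proof of Parts~\ref{thm:tailDrainage-comp} and~\ref{thm:tailDrainage-foil} of \Cref{thm:tailDrainage}, with Wilson's algorithm replacing the independent resampling of the jumps of a CMT. First, we fix an explicit regular version of the conditional probability. Let $M'$ be the set of $[G,x_0;(x_i)_{i\geq 0}]$ such that $G$ is transient and $(x_i)_{i\geq N}$ is a simple path for some $N$. This event lies in $\sigfield{path}{I}{}$ and has full probability, since $\anc(\bs o)$ is a simple path a.s. On $M'$, take the minimal such $N$ and let $P:=(x_i)_{i\geq N}$. Let $\bs F_1$ be the output of Wilson's algorithm on $G$ with initial configuration $P$, completed so that $\bs F_1(x_j)=x_{j+1}$ for $j\geq N$, and define
\[
g[G,x_0;(x_i)_i]:=\myprob{[G,x_0;\bs F_1]\in A}.
\]
Outside $M'$, set $g:=0$. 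When $(x_i)_i$ is itself simple, $N=0$, and by \Cref{lem:usf-anc} $g$ is a version of $\probCond{A}{\bs G, \bs o; \anc_{\bs F}(\bs o)}$.

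The main step is to check that $g$ is constant on classes of the countable Borel equivalence relation associated with $\sigfield{path}{I}{}$ (resp. $\sigfield{path}{T}{}$). Measurability with respect to the augmentation then follows from \Cref{lem:cber,lem:completion-I(R)}. So take two points of $M'$ whose sequences shift-couple (resp. eventually coincide), with associated simple tails $P$ and $P'$. These tails share a common infinite tail $Q$, and $P\setminus Q$, $P'\setminus Q$ are finite. Let $\bs F_1$ and $\bs F'_1$ be the forests conditioned on containing $P$, resp. $P'$. Both can be seen as $\owusf(G)$ conditioned on containing $Q$, further conditioned on a finite set of edge states, namely the edges of $P\setminus Q$, resp. $P'\setminus Q$, being present.

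We then apply \Cref{lem:usf-covering}\ref{lem:usf-covering-2} with the finite edge set $S:=(P\cup P')\setminus Q$ together with the path $Q$. This gives a coupling with $\card{\bs F_1\Delta\bs F'_1}<\infty$ in which both forests contain $Q$. In this coupling, $C_{\bs F_1}(x_0)$ and $C_{\bs F'_1}(x'_0)$ share an ancestral branch around $Q$. In case~\ref{lem:usf-measurable-foil}, the two ancestral lines also eventually coincide with equal indices. Since the level-sets are infinite, we can pick a far point $z$ with $\anc(z)\subseteq Q$ and lying outside the modified region, and the argument of \Cref{lem:basicInclusion0} gives $\card{L_{\bs F_1}(x_0)\Delta L_{\bs F'_1}(x'_0)}<\infty$. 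Hence $\identity{A}$ agrees on the two forests a.s., and so the two values of $g$ coincide. Infiniteness of level-sets must be included in the definition of $M''$ as a $\mathbb P_G$-a.s. property, exactly as in \Cref{thm:tailDrainage}.

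The main obstacle is matching Wilson's algorithm conditioned on $P$ with the conditioning on $Q$ plus finitely many edges. The point of care is that, in \Cref{lem:usf-covering}\ref{lem:usf-covering-2}, conditioning on $\anc(o)=Q$ is with respect to a root placed at a vertex of $Q$, not at $x_0$. I would handle this by rooting the lemma at the first vertex of $Q$, which works because the lemma holds for any base point. I would also verify that the resulting conditional law, restricted to forests containing $P$, is the law of Wilson's algorithm from $P$. This follows by running Wilson's algorithm first from the start of $Q$ and then conditioning on the finitely many extra edges, via the spatial Markov property in the finite-volume approximations $\hat G_n$ followed by a weak limit.
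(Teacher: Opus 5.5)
Your route is the paper's route: the same explicit version of the conditional probability via Wilson's algorithm from the eventual simple tail, the same coupling from part (ii) of \Cref{lem:usf-covering} rooted on the common tail (the paper takes $S$ to be the edges of the two initial segments and roots at $x_N=x'_N$), and the same conclusion via \Cref{lem:cber,lem:completion-I(R)}. However, your key step fails for points of $M'$ that are not simple paths, and such points cannot be avoided. Because $M'$ must be $\mathcal R$-invariant, together with a simple path $(x_0,x_1,\ldots)$ it also contains $(v,x_1,x_2,\ldots)$ for every vertex $v$. If $v$ is not adjacent to $x_1$, then $N=1$ and $x_0=v$ need not lie on $P$. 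Under Wilson's algorithm from $P$, the ancestral line of $v$ is a loop-erased walk. Since $G$ is transient, this walk may escape to infinity without meeting $P$. Even when it does meet $P$, its index offset relative to $(x_i)_i$ is random. So neither of your claims holds there: that $C_{\bs F_1}(x_0)$ and $C_{\bs F'_1}(x'_0)$ share an ancestral branch, and that the ancestral lines eventually coincide with equal indices. In fact your $g$ is not constant on classes. For a concrete instance, take the following.
\begin{itemize}
\item $G$ is two copies of $\mathbb Z^3$, marked red and blue, joined by one edge.
\item $A\in\sigfield{path}{I}{}\subseteq\sigfield{branch}{T}{}$ is the event that the ancestral line eventually stays in the red copy.
\item $(x_i)_i$ is a simple path to infinity in the red copy, and $v$ lies far inside the blue copy.
\end{itemize}
Then $g=1$ at $(x_0,x_1,\ldots)$ but $g<1$ at $(v,x_1,x_2,\ldots)$. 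The paper's written argument covers only the case where both sequences are simple paths (``the general case is similar''), and its $g$ has the same defect. You, however, assert the step for all of $M'$, so it must be repaired.

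The fix is to evaluate $A$ at a root determined by the tail instead of at $x_0$. In case (i), set $g:=\myprob{[G,x_N;\bs F_1]\in A}$. For shift-coupling sequences, $x_N$ and $x'_{N'}$ lie in components containing $Q$, so your coupling argument applies after moving both roots to the first vertex of $Q$ (using that $A$ is a component-property). In case (ii), evaluate $A$ at any $w$ in the tail-indexed level-set $\{w:\bs F_1^{(n)}(w)=x_n \text{ for some } n\geq N\}$; this is well defined because $A$ is a level-set-property. Then build infiniteness of this set, rather than of $L(x_0)$, into $M''$. Under your coupling these sets differ in only finitely many points, by local finiteness. This also yields the $\mathcal R$-invariance of $M''$, which you did not verify. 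Both modified functions agree with your $g$ on simple sequences starting at the root, which carry full measure. Hence they are still versions of $\probCond{A}{\bs G, \bs o; \anc_{\bs F}(\bs o)}$, and the rest of your argument goes through.
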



\begin{proof}
	Assume $(G,o;(x_i)_{i\geq 0})$ and $k\geq 0$ are given such that $G$ is transient, $x_0=o$ and $(x_i)_{i\geq k}$ is a simple path in $G$ (i.e., only traverses the edges and visits every vertex at most once).  Let $\mathbb P_{(G,o;(x_i)_i,k)}$ be the distribution of $\owusf(G)$ conditioned on $\forall i\geq k: (x_i,x_{i+1})\in \bs F$, as described in \Cref{lem:usf-anc} by a version of Wilson's algorithm.
	
	\ref{lem:usf-measurable-foil}.
	Define $g$ as follows. 
	Let $M'$ be the set of $[G,x_0;(x_i)_{i\geq 0}]$ such that $G$ is transient, $(x_i)_{i\geq k}$ is a simple path in $G$ for some $k$, and that $\mathbb P_{(G,x_0;(x_i)_i,k)}$-almost surely, the level-set of $x_0$ in the forest is infinite.
	By \Cref{lem:usf-coupling}, one can see that the last condition does not depend on $k$, and hence, $M'\in\sigfield{path}{T}{}$.
	Let $g$ be zero outside $M'$.
	Given $[G,x_0;(x_i)_i]\in M'$, let $k$ be the smallest number such that $(x_i)_{i\geq k}$ is a simple path and define $g[G,x_0;(x_i)_i]:=\probPalm{(G,x_0;(x_i)_i,k)}{A}$.
	The construction shows that $g$ is a version of $\probCond{A}{\bs G, \bs o; \anc_{\bs F}(\bs o)}$. We should prove that $g$ is $\sigfield{path}{\hat T}{}$-measurable (In fact, since $g$ is zero outside $M'$, the next arguments imply ghat $g$ is $\sigfield{path}{T}{}$-measurable).
	
	Let $G$ be deterministic and $(x_i)_i$ and $(x'_i)_i$ be two sequences in $G$ such that $[G,x_0;(x_i)_i]\in M'$ and $[G,x'_0;(x'_i)_i]\in M'$. Assume also that $\forall i\geq N: x_i=x'_i$, for some $N$ (but $x_0$ might be different from $x'_0$). By \Cref{lem:cber,lem:completion-I(R)}, it is enough to prove that $g[G,x_0;(x_i)_i] = g[G,x'_0;(x'_i)_i]$. We prove this assuming that $(x_i)_i$ and $(x'_i)_i$ are simple paths for simplicity. The general case is similar.
	Let $\bs F$ (resp. $\bs F'$) denote $\owusf(G)$ conditioned on $(x_i,x_{i+1})\in\bs F, \forall i$ (resp. $(x'_i,x'_{i+1})\in\bs F', \forall i$).
	We couple $\bs F$ and $\bs F'$ as follows. 
	Let $S$ be the set of edges $\{(x_i,x_{i+1}): 0\leq i<N\}\cup\{(x'_i,x'_{i+1}): 0\leq i<N\}$ and let $o:=x_N=x'_N$. By \Cref{lem:usf-coupling}, there exists a coupling of $\bs F$ and $\bs F'$ such that $\card{\bs F\Delta\bs F'}<\infty$ a.s. In this case, since $\anc_{\bs F}(x_0)$ and $\anc_{\bs F'}(x'_0)$ eventually coincide, one also has $L_{\bs F}(x_0)\Delta L_{\bs F'}(x'_0)<\infty$. Choose $z\in L_{\bs F}(x_0)\cap L_{\bs F'}(x'_0)$ far enough such that $\anc_{\bs F}(z)=\anc_{\bs F'}(z)$ (which exists a.s. since the level-set of $x_0$ is infinite a.s. by assumption). Since $A$ is a level-set-property, one has $\identity{A}[G,x_0;\bs F]=\identity{A}[G,z;\bs F]$ and $\identity{A}[G,x'_0,\bs F'] = \identity{A}[G,z;\bs F']$. Also, since $A$ is a tail level-set-property, one gets $\identity{A}[G,z;\bs F]=\identity{A}[G,z;\bs F']$. This implies that $\identity{A}[G,x_0;\bs F] = \identity{A}[G,x'_0;\bs F']$. Therefore, $g[G,x_0;(x_i)_i] = g[G,x'_0;(x'_i)_i]$ as desired. So, we have proved that $g$ is $\sigfield{path}{\hat T}{}$-measurable.
	
	\ref{lem:usf-measurable-comp}.
	Define $M'$ and $g$ similarly to the previous case, but without requiring that the level-set of $o$ is infinite.
	One can prove similarly that, in almost every sample $(G,o)$, if $(x_i)_i$ and $(x'_i)_i$ are simple paths that shift-couple (but maybe $x_0\neq x'_0$), then $g[G,x_0;(x_i)_i]=g[G,x'_0;(x'_i)_i]$. So, \Cref{lem:cber,lem:completion-I(R)} imply that $g$ is $\sigfield{path}{\hat I}{}$-measurable. 
\end{proof}

\begin{corollary}[\Cref{step2comp,step2foil}]
	\label{cor:usf-sigfield}
	In the setting of \Cref{lem:usf-conditionalIndependence}, one has
	$
		\sigfield{branch}{T}{} = \sigfield{path}{I}{} \text{ mod } \mathbb P.
	$
	If, in addition, the level-set of $\bs o$ in $\owusf(\bs G)$ is infinite a.s., then 
	$
		\sigfield{level}{T}{} = \sigfield{path}{T}{} \text{ mod } \mathbb P.
	$
\end{corollary}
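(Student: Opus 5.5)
The plan is to combine \Cref{lem:usf-conditionalIndependence} and \Cref{lem:usf-measurable} with \Cref{lem:completion-cond}, exactly as in the proof of \Cref{thm:tailDrainage} for CMTs. The reverse inclusions come for free. By \Cref{lem:basicInclusion}, $\sigfield{path}{I}{}\subseteq\sigfield{branch}{T}{}$ holds for any point-map. Under the additional assumption (infinite level-set of $\bs o$), $\sigfield{path}{T}{}\subseteq\sigfield{level}{T}{}$ mod $\mathbb P$ holds as well. Here we use that $\owusf(\bs G)$ is locally finite and cycle-free, being a spanning forest of a locally-finite graph. So only the forward inclusions need work.

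For the forward inclusion, fix $A\in\sigfield{branch}{T}{}$. Let $\mathcal F'$ be the sigma-field of events depending only on $[\bs G,\bs o;\anc_{\bs F}(\bs o)]$, and let $\mathcal F:=\sigfield{path}{I}{}\subseteq\mathcal F'$.

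\textbf{Step 1.} By \Cref{lem:usf-conditionalIndependence}, $\probCond{A}{\mathcal F'}\in\{0,1\}$ a.s. This is the conditional self-independence supplied by the coupling of \Cref{lem:usf-coupling}.

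\textbf{Step 2.} By part~\ref{lem:usf-measurable-comp} of \Cref{lem:usf-measurable}, the version $g$ of this conditional probability built from Wilson's algorithm is measurable with respect to the null-event-augmentation $\hat{\mathcal F}$.

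\textbf{Step 3.} \Cref{lem:completion-cond} states that if $\probCond{A}{\mathcal F'}$ is $\{0,1\}$-valued and $\hat{\mathcal F}$-measurable, then $A\in\hat{\mathcal F}$. This lemma is used in the same way in \Cref{thm:tailDrainage}, with the same pair of sigma-fields. Concretely, $A$ is equivalent to $\{g=1\}$: since $g\in\{0,1\}$ and $\omid{\identity{A}\,|\,\mathcal F'}=g$, we get $\myprob{A\Delta\{g=1\}}=0$. Moreover $\{g=1\}$ belongs to $\hat{\mathcal F}$. Hence $\sigfield{branch}{T}{}\subseteq\sigfield{path}{I}{}$ mod $\mathbb P$.

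The level-set statement is proved identically. Take $A\in\sigfield{level}{T}{}$ and use the level-set case of \Cref{lem:usf-conditionalIndependence} for Step 1. For Step 2, use part~\ref{lem:usf-measurable-foil} of \Cref{lem:usf-measurable}, which needs the extra hypothesis that $L(\bs o)$ is infinite, and set $\mathcal F:=\sigfield{path}{T}{}$.

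I expect no serious obstacle, since all the substance is already in the two preceding lemmas. The one point to check is that both lemmas refer to the same regular version of the conditional probability, namely $\mathbb P_{(G,o;P)}$ given by \Cref{lem:usf-anc}. That common version is what allows the $\{0,1\}$-valuedness and the measurability to be combined in \Cref{lem:completion-cond}. It is also worth noting that no unimodularity is used anywhere in this argument.
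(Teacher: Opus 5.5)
Your proposal is correct and is essentially the paper's own argument: the easy inclusions come from \Cref{lem:basicInclusion}, and the converses come from feeding \Cref{lem:usf-conditionalIndependence} and \Cref{lem:usf-measurable} into \Cref{lem:completion-cond}. The common-version check you flag is unnecessary, since any two versions of the conditional probability agree a.s. and $\hat{\mathcal F}$ contains all null events.
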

\begin{proof}
	The inclusions $\sigfield{path}{I}{}\subseteq\sigfield{comp}{T}{}$ and $\sigfield{path}{T}{}\subseteq\sigfield{level}{T}{}$ mod $\mathbb P$ are proved in \Cref{lem:basicInclusion}. By \Cref{lem:completion-cond}, the converses are implies by \Cref{lem:usf-conditionalIndependence,lem:usf-measurable}.
\end{proof}


\begin{lemma}[\Cref{step3comp}]
	\label{lem:usf-trivial I}
	Let $[\bs G, \bs o]$ be an ergodic transient unimodular graph and $(\bs X_n)_n$ be the loop-erased random walk starting from $\bs o$. If $\omid{\mathrm{deg}(\bs o)}<\infty$, then $[\bs G, \bs o; (\bs X_n)_n]$ has trivial invariant sigma-field; i.e., $\sigfield{path}{I}{}$ is trivial.
\end{lemma}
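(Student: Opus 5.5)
We reduce the claim to the ergodicity of the simple random walk on $[\bs G, \bs o]$, which is given by \Cref{thm:pathErgodic} (with $K$ the simple random walk kernel, $b(\bs o):=\deg(\bs o)$, so that the balance condition holds by reversibility, and weak irreducibility holds because $\bs G$ is connected). Let $(\bs Y_n)_n$ be the simple random walk from $\bs o$. By transience, the loop-erasure $(\bs X_n)_n=\mathrm{LE}((\bs Y_n)_n)$ is well defined a.s. The key observation is that loop-erasure respects shift-coupling. If two sequences $(y_i)_i$ and $(y'_i)_i$ in $G$ are transient (each point visited finitely often) and shift-couple, then their loop-erasures also shift-couple. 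Indeed, take $m$ larger than the last visit time of $y$ to every point in $\{y_0,\dots,y_N\}$, and likewise for $y'$. Then the loop-erasure of $(y_i)_i$ from time $m$ on is determined by the path $(y_i)_{i\geq m}$ alone, up to an eventual identification. More concretely, one can use the cut-time characterisation: $\mathrm{LE}(y)$ contains $y_t$ for every cut time $t$ (a time with $\{y_0..y_t\}\cap\{y_{t+1},\dots\}=\emptyset$), and the segments between cut times are the loop-erasures of the corresponding pieces. This gives a map $\Psi:\mathcal G_\infty\to\mathcal G_\infty$, $[G,y_0;(y_i)]\mapsto[G,y_0;\mathrm{LE}(y)]$, defined on the transient sequences.

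Given $A\in\sigfield{path}{I}{}$ for the loop-erased walk, consider $B:=\Psi^{-1}(A)$. If the claim above holds, $B$ is a shift-invariant event for $(\bs Y_n)_n$: shifting $y$ by one step gives a sequence shift-coupled with $y$, so $\mathrm{LE}$ of the two shift-couple and $\identity{A}$ agrees on them. The root is handled automatically, since $[G,y_1;\mathrm{LE}(\sigma y)]$ and $[G,y_0;\mathrm{LE}(y)]$ are shift-coupled sequences in the same $G$. Hence $B\in\sigfield{path}{I}{}$ for the simple random walk, and \Cref{thm:pathErgodic}\ref{thm:pathErgodic-trivial} (using $\omid{\deg(\bs o)}<\infty$ for the bias and ergodicity of $[\bs G,\bs o]$) gives $\myprob{B}\in\{0,1\}$. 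Since $\myprob{[\bs G,\bs o;(\bs X_n)_n]\in A}=\myprob{B}$, the claim follows. We also need the measurability of $\Psi$ on $\mathcal G_\infty$. This is routine, because loop-erasure on transient paths is a limit of finitely determined operations.

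The main obstacle is the first step. The loop-erasure is \emph{not} determined by a tail of the path in general: a late return of the walk near the early part can erase a long stretch. Transience is exactly what rules this out. Each vertex is visited finitely often, so after the last visits to $y_0,\dots,y_N$ the future never touches the initial segment. We will also need infinitely many cut times, or at least the property that the LE from a late time agrees with a tail of the full LE. To handle this, we plan to argue directly that $\mathrm{LE}(y)$ and $\mathrm{LE}((y_i)_{i\ge m})$ eventually coincide when $(y_i)_{i\geq m}$ avoids $\{y_0,\dots,y_{m-1}\}$. Such an $m$ exists a.s. by transience, provided one takes $m$ to be a time after which the walk avoids a finite set. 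We then check that the prefix only contributes a finite initial segment, via the standard "erase loops chronologically" description. If this direct argument fails, we would fall back on the a.s. existence of infinitely many cut times for transient walks on unimodular graphs, though that is a harder input.
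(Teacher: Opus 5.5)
Your route is the paper's route. You pull the shift-invariant event back along loop-erasure to an event of the simple random walk, check that it is shift-invariant, and apply \Cref{thm:pathErgodic}. The gap is the step you flag yourself, and it is not merely hard: as a deterministic statement it is false. Loop-erasure does not map shift-coupled transient sequences to shift-coupled sequences. Take the ladder with vertices $u_k,w_k$ ($k\ge 0$) and edges $u_ku_{k+1}$, $w_kw_{k+1}$, $u_kw_k$, $w_ku_{k+1}$, and the path
\[
x=(u_0,w_0,\ u_0,u_1,w_0,w_1,\ u_1,u_2,w_1,w_2,\ u_2,u_3,w_2,w_3,\ \dots).
\]
Every vertex is visited exactly twice and $d(x_0,x_n)\to\infty$. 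Yet chronological loop-erasure gives $\mathrm{LE}(x)=(u_0,u_1,u_2,\dots)$ and $\mathrm{LE}(\sigma x)=(w_0,w_1,w_2,\dots)$, which are disjoint. In $x$ every loop closes at a $u$-vertex and erases the $w$'s; in $\sigma x$ every loop closes at a $w$-vertex and erases the $u$'s.

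Your justification breaks exactly here. Going past the last visits to $y_0,\dots,y_N$ only keeps the future away from that initial segment. Loops closing on the middle segment $y_{N+1},\dots,y_{m-1}$ still decide what survives. Your ``direct argument'' needs a time $m$ such that $(y_i)_{i\ge m}$ avoids all of $\{y_0,\dots,y_{m-1}\}$, i.e.\ a cut time. ``Take $m$ after which the walk avoids a finite set'' is circular, because that set depends on $m$. Transience does not produce cut times: the path above has none.

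Note that the paper's proof makes the same assertion (``the reader can verify that $\mathrm{LE}(y_i)_i$ shift-couples with $\mathrm{LE}(y'_i)_i$''), so following it does not close the gap. What is actually needed is an almost-sure statement for the simple random walk $\bs Y$. For example, it would suffice that $\mathrm{LE}(\bs Y)$ and $\mathrm{LE}(\sigma\bs Y)$ shift-couple a.s.; one could then restrict to that shift-invariant full-measure set, and your argument would go through. That statement would follow from infinitely many cut times, but this is a genuine probabilistic input on arbitrary transient unimodular graphs, and your proposal does not supply it; your fallback is exactly this missing ingredient.

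One reduction is available. When $\wusf(\bs G)$ is connected a.s., every component-property is an invariant point-map property. Then $\sigfield{path}{I}{}\subseteq\sigfield{branch}{T}{}\subseteq\sigfield{comp}{I}{}$ (\Cref{lem:basicInclusion}) is already trivial by \Cref{thm:usf-ergodic}. So the cut-time (or merging) input is needed only in the disconnected regime, and that is where the real work lies.
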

It should be noted that, despite \Cref{lem:stationary}, $[\bs G, \bs o; (\bs X_n)_n]$ is not necessarily stationary here, even after any biasing. In fact, its distribution might be non-absolutely-continuous with respect to that of $[\bs G, \bs X_1; (\bs X_{n+1})_n]$ (see \Cref{rem:quasi}).
\begin{proof}
	Let $(\bs Y_n)_n$ be the simple random walk starting from $\bs o$. 
	Let $A\in\sigfield{path}{I}{}$ be arbitrary.
	Let $B_0$ be the set of $[G,o,(x_i)_i]$ such that $(x_i)_i$ is a sequence starting from $o$ and $d(o,x_i)\to\infty$. 
	Let $B\subseteq B_0$ be the event that $B_0$ holds and $\mathrm{LE}(x_i)_i\in A$, where $\mathrm{LE}(\cdot)$ denotes the loop-erasure procedure. One has $\myprob{[\bs G, \bs o; (\bs Y_i)_i]\in B} = \myprob{[\bs G, \bs o; (\bs X_n)_n]\in A}$. We claim that $B\in \sigfield{path}{I}{}$. 
	For this, since $B_0\in\sigfield{path}{I}{}$, it is enough the prove that if $(y_i)_i$ and $(y'_i)_i$ are sequences in $G$ that shift-couple and $d(y_0,y_i)\to\infty$, then $\identity{B}[G,y_0;(y_i)_i] = \identity{B}[G,y'_0;(y'_i)_i]$. The latter means $\identity{A}[G,y_0;\mathrm{LE}(y_i)_i] = \identity{A}[G,y'_0;\mathrm LE(y'_i)_i]$. 
	The reader can verify that $\mathrm{LE}(y_i)_i$ shift-couples with $\mathrm{LE}(y'_i)_i$, and so, the last claim follows from the assumption $A\in\sigfield{path}{I}{}$.
	So, we have shown that $B\in\sigfield{path}{I}{}$. By \Cref{thm:pathErgodic}, one obtains $\myprob{[\bs G, \bs o; (\bs Y_i)_i]\in B}\in\{0,1\}$. So, we have proved that $\myprob{[\bs G, \bs o; \anc(o)]\in A}\in \{0,1\}$ and the claim is proved.

\end{proof}

By combining what we have proved, one can complete the proofs of \Cref{thm:usf-indistinguishability} and the $\wusf$ case of \Cref{thm:fixed-comp} as follows.

\begin{proof}[Proof of \Cref{thm:usf-indistinguishability} (General Case)]
	We may assume that $[\bs G, \bs o]$ is ergodic.
	As mentioned, in the recurrent case, $\wusf(\bs G)$ is connected and there is nothing to prove. So, assume that $\bs G$ is transient a.s. In particular, $\anc(\bs o)$ is well defined and has the same distribution as the loop-erased random walk (given a realization of the rooted graph).
	\Cref{cor:usf-sigfield} and \Cref{lem:usf-trivial I} proved \Cref{step2comp,step3comp} respectively. So, the claim is implied by \Cref{thm:general}.

\end{proof}

\begin{proof}[Proof of \Cref{thm:fixed-comp} (WUSF Case)]
	Similarly to the case of CMTs (see the proof of \Cref{prop:tail-fixed}), the proof is the same as that of \Cref{step2comp} in the proof of \Cref{thm:usf-indistinguishability}. We only highlight the differences here (see the proof of \Cref{prop:tail-fixed} for more details). Given an event $A\in\sigfield{branch}{T}{G}$ and $o\in G$, the same proof as that of \Cref{cor:usf-sigfield} constructs an event $A'\in\sigfield{path}{I}{G}$ such that $\probPalm{(G,o)}{A\Delta A'}=0$, where $\mathbb P_{(G,o)}:=\delta_o\otimes \mathbb P_G$ and $\mathbb P_G$ is the distribution of $\wusf(G)$. It can be seen that the construction of $A'$ does not depend on $o$. This implies that $Q_G[A\Delta A']=0$, which proves the claim. The claim for $k$-tuples of components can also be proved similarly.
\end{proof}

\section{Models on Bernoulli and Poisson Point Processes}
\label{sec:BernoulliPoisson}


In this section, we generalize \Cref{model1,model2} to cover the examples based on the Bernoulli or Poisson point process, as mentioned in \Cref{main:cmt}. The main goal is to prove indistinguishability in these models (\Cref{thm:cmt-all}) and their one-endedness.
The main examples are provided first in \Cref{subsec:BernoulliExamples}. Then, indistinguishability is proved for two specific discrete models in \Cref{subsec:howard}. Based on the ideas in this proof, the generalizations of \Cref{model1,model2} are provided in \Cref{subsec:bernoulli,subsec:poisson}.

\subsection{Main Examples}
\label{subsec:BernoulliExamples}

As alluded to in \Cref{intro:models},
there are several point-maps studied in the literature, which are built on the points of a Bernoulli or Poisson point process. We generalize these models in the following examples, which are the main aims of this section.

\begin{example}[Generalized Howard's Model]
	\label{ex:howard}
	Let $[\bs G, \bs o]$ be an infinite unimodular graph such that $\bs G$‌ is a transitive graph a.s. Let $\Phi$ be a Bernoulli point process on $\bs G':=\mathbb Z\times \bs G$. For each point $(t,x)\in\Phi$, let $\bs F'(t,x)$ be the point of the form $(t+1,y)\in \Phi$ that is closest to $(t,x)$. If there are ties, choose one of them randomly and independently using i.i.d. marks on $\bs G'\times \bs G'$ (consider the mark of $((t,x),(t+1,y))$). This model was introduced in the case $\bs G=\mathbb Z^d$ in~\cite{GaRoSa04howardModel}, based on river models, with the goal of studying their empirical/scaling limit properties. It was also shown that $\bs F'$ is one-ended a.s. for every $d$, but connected only for $d=2,3$. Note that this model does not satisfy the weak aperiodicity condition.
\end{example}
\begin{example}[Discrete Strip Point-Map]
	\label{ex:strip-discrete}
	Let $[\bs G, \bs o]$‌ be an infinite unimodular graph such that $\omid{\mathrm{deg}(\bs o)}<\infty$. Let $\Phi$ be a Bernoulli point process on $\bs G':=\mathbb Z\times \bs G$. For $(t,x)\in \Phi$, let $\bs F'(t,x)$ be the point of $\Phi$ in the strip $[t+1,\infty)\times B_1(x)$ with the minimum first coordinate (choose among the ties randomly using i.i.d. marks on $\bs G'\times \bs G'$). As a variant, one may choose $\bs F'(x)$ as the first point in the cone $\{(t',x'): \norm{x'-x}<{t'-t}\}$. 
\end{example}

\begin{example}[Generalized Strip Point-Map]
	\label{ex:strip}
	Let $[\bs G, \bs o, \bs \mu]$ be either the Euclidean $\mathbb R^d$ or the hyperbolic space $\mathbb H^d$, equipped with the volume measure (or more generally, any \textit{unimodular random measured metric space} with some conditions, see \Cref{ex:strip2}). Let $\Phi$ be a Poisson point process in $\mathbb R\times \bs G$. For each $(t,x)\in \Phi$, let $\bs F_{\Phi}(t,x)$ be the point of $\Phi$ in the strip $(t,\infty)\times B_1(x)$ with the minimum first coordinate. One may also replace the strip by a cone as in \Cref{ex:strip-discrete}. This model is a space-time version of a particle process, which in the case $\bs G=\mathbb R$ was introduced in~\cite{FeLaTh04}. It was also shown that $\bs F_{\Phi}$ is connected and one-ended in this case.
\end{example}

The models mentioned above are point-maps on $\Phi$ which are clearly cycle-free. Hence, an oriented forest is obtained in each model and one can consider its connected components and level-sets. The one-endedness of these models will be proved in \Cref{subsec:howard} and the rest of the required conditions (see \Cref{model3,model4,model5,model6} below) will be verified in \Cref{ex:strip1.5,ex:strip2}.

%

In the last examples, the forest is constructed without additional randomness, except for breaking the ties (but we will allow additional randomness in the next models). Hence, given $\Phi$, the balance condition of \Cref{model1} is violated. Nevertheless, we will leverage the randomness of $\Phi$ in order to prove the three steps of \Cref{intro:method-comp,intro:method-foil} and to prove indistinguishability of components and level-sets.

\subsection{Indistinguishability in Howard's Model and the Discrete Strip Point-Map}
\label{subsec:howard}

Before introducing general models, we present the proof of indistinguishability in the generalized Howard's model and the discrete strip point-map (\Cref{ex:howard,ex:strip-discrete}). The proof shows the main ideas for defining the general models (\Cref{model3,model4,model5,model6}). The general Bernoulli case is proved with exactly the same proof.

The notion of indistinguishability is defined similarly to \Cref{def:indistinguishability} as follows:
Let $\tilde G_*$ be the space of all $[G,o; \varphi, m]$, where $(G,o)$ is a rooted graph or discrete space (possibly having marks if needed), $\varphi$ is a subset of $G$‌ and $m:G\times G\to [0,1]$ is a marking of pairs of points (one may consider more general types of marks in similar models). This space is Polish with a topology similar to that of $\mathcal G_*$‌ (see~\cite{Kh19generalization}). In the above examples, a random element $[\bs G, \bs o; \Phi, \bs m]$ of $\tilde G_*$ is given and a function $\bs F':\varphi\to\varphi$‌ is constructed on almost all samples $(G,o;\varphi,m)$ without additional randomness. The notion of \defstyle{component-properties} and \defstyle{level-set-properties} can be defined similarly to \Cref{def:invariantDrainage} for events $A\subseteq\tilde G_*$. This leads to the notion of indistinguishability as in \Cref{def:indistinguishability}.

\begin{proof}[Proof of \Cref{thm:cmt-all} (for Howard's Model and the Discrete Strip Point-Map)]
	The proof of indistinguishability in these models is similar to the analogous proof for CMTs (\Cref{thm:indistinguishability}) and we only highlight the differences. 
	Note that in these examples, the graph under study is $\bs G':=\mathbb Z\times \bs G$‌ rooted at $\bs o':=(0,\bs o)$. 
	
	First, we prove one-endedness. If the claim is false, one can prove similarly to \Cref{thm:one-ended-or-connected} (using update-tolerance) that, with positive probability, there is a single connected component and is two-ended. Let $P$ be the unique bi-infinite path in the graph of $\bs F'$. Now, considering the partition $\{\{t\}\times \bs G :t\in\mathbb Z\}$‌ of $\bs G'$, the path $P$ selects at most one point from each element of the partition. This contradicts the mass transport principle, see the \textit{no infinite/finite inclusion lemma} (Lemma~2.9) of~\cite{eft}. So, one-endedness is proved.
	
	We start by Steps~\Cref{step3comp} and~\ref{step3foil}. Consider a sample $G$ and $x':=(t,x)\in G':=\mathbb Z\times G$. Let $\mathbb P_{G}$ denote the distribution of $(\Phi,\bs m)$ when $G$‌ is fixed. Given that $x'\in \Phi$, we construct $\bs F'(x')$ as follows ‌by revealing $\Phi$ and $\bs m$ partially until a stopping time. For the discrete strip point-map, reveal $\Phi$ in the vertical sections $\{t+u\}\times B_1(x)$, $u=1,2,\ldots$ one by one until finding a point of $\Phi$. Then, reveal $\bs m(x',\cdot)$ on ties. For Howard's model, reveal $\Phi$ on $\{t+1\}\times B_n(x)$, $n=0,1,\ldots$ one by one until finding a point of $\Phi$ and then reveal $\bs m$ on the ties. The important property is that, conditionally on the revealed set $S(x')$, the distribution of the unrevealed data is unchanged ($S(x')$‌ can be regarded as a subset of the disjoint union of $G'$‌ and $G'\times G'$). This is a property of \textit{stopping sets}, which is an important notion in probability theory and is discussed in \Cref{ap:stopping} (see \Cref{thm:stopping}). In addition, $S(\bs F'(x'))$ (i.e., the data needed to be revealed for constructing $\bs F'^{(2)}(x')$) is completely disjoint from $S(x')$. Continuing by induction, one obtains that $\anc(x'):=\anc_{\bs F'}(x')$ is a Markov chain on $\mathbb Z\times G$ (under $\mathbb P_G$). The kernel of this Markov chain is 
	\[
		K(x',B):=K(G,x',B):=\probPalmC{G}{\bs F'(x')\in B}{x'\in{\Phi}},
	\]
	for $x'=(t,x)\in G'$‌ and $B\subseteq G'$. 
	In the discrete strip point-map, one can check that $K$ balances the function $b(t,x):=\mathrm{deg}_G(x)+1$, where $\mathrm{deg}_G(x)$ is the degree of $x$‌ in $G$ 
		(see~\Cref{ex:strip1.5} below for the proof of this claim). In the generalized Howard model, leveraging the fact that $\bs G$‌ is transitive a.s., $K$‌ balances the function $b\equiv 1$ (see~\Cref{ex:strip1.5} again for the proof).
	Therefore, in both models, \Cref{thm:pathErgodic} implies that $[\bs G', \bs o'; \anc(\bs o')]$‌ (conditionally on $\bs o'\in \Phi$) is ergodic. In addition, for the discrete strip point-map, $K$ satisfies the weak aperiodicity condition of \Cref{model2}. Therefore, \Cref{thm:pathTail} implies that $[\bs G', \bs o'; \anc(\bs o')]$‌ (conditionally on $\bs o'\in \Phi$) is tail-trivial. For Howard's model, the weak aperiodicity condition is not satisfied, but the trick in \Cref{ex:crw} proves the tail-triviality. This establishes Steps~\ref{step3comp} and~\ref{step3foil} in both models.
	
	For Steps~\ref{step1comp} and~\ref{step1foil}, we need to defined tail branch/level-set properties. This can be done similarly to \Cref{def:tailDrainage,def:super-tail-comp} leveraging finite modifications of $(\varphi,m)$ that result in a finite modification of $\bs F'$. 
	Indeed, by the mass transport principle, one can show that every data lies in at most finitely many sets $S(x)$ a.s. (this will be proved rigorously in \Cref{lem:shadow} below).
	This implies that a finite modification of $(\Phi,\bs m)$ results in a finite modification of $\bs F'$ a.s. 
	In the proof of \Cref{thm:I in T 2}, in the definitions of $M'$ and $M''$, add the condition that the sets $(S(x))_{x\in G}$ have no infinite overlap.
	Now, the proof of \Cref{thm:I in T 2} works word by word. 
	So, Steps~\ref{step1comp} and~\ref{step1foil} are established.
	
	For Steps~\ref{step2comp} and~\ref{step2foil}, we modify the proof of \Cref{thm:tailDrainage} as follows. Assume $A$ is a tail branch-property (resp. a tail level-set-property). To prove that $A$ is equivalent to a shift-invariant (resp. tail) path property (of $[\bs G', \bs o'; \anc(\bs o')]$), we will construct a version of the conditional distribution of $(\Phi,\bs m)$ given $[\bs G',\bs o';\anc(\bs o')]$. 
	Assume $(G',o';(x'_i)_i)$ is given, where $G'=\mathbb Z\times G$‌ and $o'=(0,o)$. 
	Let $N$ be the smallest integer such that $(x'_i)_{i\geq N}$ are distinct and $(S(x'_i))_{i\geq N}$ are pairwise disjoint conditionally on $\bs F'(x'_i)=x'_{i+1}$, $\mathbb P_G$-a.s., assuming in addition that $\probPalm{G}{\bs F'(x'_i)=x'_{i+1}}>0$ for all $i\geq N$.
	Assume $N<\infty$.
	Let $\Psi_i$ be an independent copy of $\restrict{(\Phi,\bs m)}{S(x'_i)}$ conditionally on $\bs F'(x'_i)=x'_{i+1}$. Do this construction such that $(\Phi,\bs m), \Psi_1,\Psi_2,\ldots$ are independent.
	Let $(\Phi_1, \bs m_1)$ be obtained from $(\Phi,\bs m)$ by replacing $(\Phi,\bs m)$ with $\Psi_i$ inside $S'_i$ for all $i\geq N$, where $S'_i:=S(x'_i, \Psi_i)$ is the analogue of $S(x'_i)$ when $\Phi$ is replaced by $\Psi_i$. The stopping set properties guarantee that $S(x'_i,(\Phi_1,\bs m_1)) = S'_i$ and $\bs F'(x'_i,(\Phi_1,\bs m_1))=x'_{i+1}$ for $i\geq N$ a.s. In addition, one can see that $(\Phi_1,\bs m_1)$ is a regular conditional version of $(\Phi,\bs m)$ conditioned on $\anc(o')=(x'_i)_i$ (for this, one should use \Cref{thm:stopping} for the nested sequence of stopping sets $\bigcup_{i=1}^n S(\bs F'^{(i)}(o'))$, $n=1,2,\ldots$ and take the limit). In addition, $(\Phi_1,\bs m_1)$ is coupled with $(\Phi,\bs m)$ in such a way that they are identical outside $\cup_i S'_i$.
	
	Let $M'$ be the set of all $[G',o';(x'_i)_i]$ that satisfy the conditions mentioned in the previous paragraph, plus the condition that the sets $(S(y'))_{y'\in G'}$ have no infinite overlap. Define $M''$ similarly by the additional assumption that both $\bs F'(\Phi,\bs m):=\bs F'$ and $\bs F'(\Phi_1,\bs m_1)$ are locally-finite and their level-sets containing $o'$ are infinite, $\mathbb P_{G'}$-a.s. Note that $M'\in\sigfield{path}{I}{}$, $M''\in\sigfield{path}{T}{}$ and $\myprob{[\bs G', \bs o'; \anc(\bs o')]\in M'}=\myprob{[\bs G', \bs o'; \anc(\bs o')]\in M''}=1$. 
	
	Now, assume $[G',o';(x'_i)_i]$‌ is in $M'$ (resp. $M''$) and $(\tilde x'_i)_i$ is another sequence in $G'$ that shift-couples (resp. eventually coincides) with $(x'_i)_i$. Define $(\tilde\Phi_1,\tilde{\bs m}_1)$ similarly to $(\Phi_1,\bs m_1)$ using the same source of randomness as that used in $\Psi_i$ (as soon as the sequences couple). In this construction, $(\tilde\Phi_1,\tilde{\bs m}_1)$ is a finite modification of $(\Phi_1,\bs m_1)$ a.s. and one can continue the proof of \Cref{thm:tailDrainage} to show that $g\in\sigfield{path}{\hat I}{}$ (resp. $g\in\sigfield{path}{\hat T}{}$), where 
	$g[G',x'_0;(x'_i)_i]:= \probPalm{G'}{[G',x'_0;\Phi_1,\bs m_1]\in A}$.

	Finally, to prove that $g\in\{0,1\}$ in the proof of \Cref{thm:tailDrainage}, we construct a copy $(\hat{\Phi}_1,\hat{\bs m}_1)$‌ of $(\Phi_1, \bs m_1)$, coupled with itself as follows: Given $n>0$, resample the first $n$‌ steps of the construction of $(\Phi_1,\bs m_1)$ independently from the previous construction, but leave the steps $n+1,n+2,\ldots$ intact. Call the resulting sets $(\hat S'_i)_i$, which satisfy $\hat S'_{n+i}=S'_{n+i}, \forall i$. Then, construct the rest of $(\hat{\Phi}_1,\hat{\bs m}_1)$ in $B_n(o')$ (i.e., except the already constructed parts of $(\hat{\Phi}_1,\hat{\bs m}_1)$ in $\cup_i \hat S'_i$) independently from $(\Phi_1,\bs m_1)$‌ and from the previous steps. Let the rest of $(\hat{\Phi},\hat{\bs m})$ (i.e., outside $B_n(o')$ and outside $\cup_i\hat S'_i$) be identical to $(\Phi_1,\bs m_1)$‌. In this coupling, $(\Phi_1,\bs m_1)$ is resampled in a large neighborhood of $o'$ (conditionally on $\anc(o')=(x'_i)_i$). Also, $(\hat{\Phi}_1,\hat{\bs m}_1)$ is a finite modification of $(\Phi_1, \bs m_1)$ a.s. One can use these facts to continue the proof of \Cref{thm:tailDrainage} to show that $A$ is independent from itself conditionally on $[\bs G', \bs o'; \anc(\bs o')]$. This completes Steps~\ref{step2comp} and~\ref{step2foil}. So, the claim is proved.
\end{proof}

\subsection{The General Bernoulli Case}
\label{subsec:bernoulli}

Now, we describe our general model on Bernoulli point process. In fact, everything is already mentioned in the proof given in the last subsection, and we only need to single out the minimum requirements of the proof.

Let $[\bs G, \bs o]$ be a unimodular graph or discrete space. Given $0< p\leq 1$, let $\Phi$ be the Bernoulli point process on $\bs G$ with parameter $p$. That is, given $\bs G$, the points are retained independently with probability $p$. In \Cref{ex:howard,ex:strip-discrete}, an additional i.i.d. marking was used to break the ties. In general, we may also use the i.i.d. marks as a source of randomness to define a point-map $\bs F'$ as follows (in particular, if some measurability conditions hold, then $\bs F'$‌ is a factor of i.i.d. here). It should be noted that $\bs G$‌ might be a marked graph from the beginning. For instance, in \Cref{ex:howard,ex:strip-discrete}, a marking is in fact needed to distinguish the time direction. In this case, given $\bs G$‌ (which might already has some marking), the new i.i.d. marks will be used as a source of randomness.

Assume that $\bs m$‌ is an i.i.d. marking of $\bs G^d$ for some $d\in\mathbb N$ (note that $d=2$ in \Cref{ex:howard,ex:strip-discrete}). More generally, we may assume that $k$ and $d_1,\ldots, d_k\in\mathbb N$ are given and $\bs m_i$ is an i.i.d. marking of $\bs G^{d_i}$, with mark space $\Xi_i$ and with distribution $\nu_i$, independently from each other. But for simplicity of notation, we assume $k=1$ in what follows. 

Recall the space $\tilde G_*$‌ of all $[G,o;\varphi, m]$ from \Cref{subsec:howard}. Assume that for all samples $(G,o;\varphi, m)$, a function $\bs F':\varphi\to\varphi$‌ is given ($\bs F'$ uses $m$ as a source of randomness and there is no other extra randomness). We may extend $\bs F'$‌ to $G$ by $\bs F'(x):=x$ if $x\not\in {\varphi}$. We no longer require $\bs F'$ be a CMT‌; i.e., given $(G,{\varphi})$, the jumps $\bs F'(x), x\in {\varphi}$, are not necessarily independent. Even if $\bs F'$ is a CMT, it does not necessarily satisfy the balance condition of \Cref{model1}; e.g., in \Cref{ex:howard,ex:strip-discrete}. 
Instead, roughly speaking, we assume that, to construct $\bs F'$, it is enough to reveal only a part $Z$ of $(\varphi,m)$ (see \Cref{as:stopping} below for the precise definition). This is based on the notion of \textit{stopping pair of sets}, see \Cref{def:stoppingPair}, which heuristically means that $Z$ is determined by the revealed information of $(\varphi,m)$ (this generalizes stopping times). In fact, in all of the examples of interest in this work, the stopping set condition is guaranteed by constructing $\bs F'(o)$ via a \defstyle{decision tree} (see~\cite{LaPeYo23stopping}):‌ Heuristically, an observer starts from $o$ and reveals the evaluation of $\identity{\Phi}(\cdot)$‌ or $\bs m(\cdot)$‌ at a sequence of points one by one. At each step, the observer decides whether to stop or not, and decides the next information to reveal, based measurably of the currently observed data. We will also assume that the sets $Z(\bs F'^{(i)}(o)), i\geq 0$ are disjoint a.s. This condition can usually be proved using a Lyapunov cocycle $h$ (see \Cref{rem:model1}) that is non-increasing on the order of the revealed points.

\begin{assumption}
	\label{as:stopping}
	Assume that, to every $(G,o; \varphi,m)$, an element $\bs F'(o):=\bs F'(G,o;\varphi,m)\in G$ and a stopping pair of sets $Z=(Z_1,Z_2)$ are assigned (see \Cref{def:stoppingPair} by letting $\mu$ be the counting measure on $G$). Assume that:
	\begin{enumerate}[label=(\roman*)]
		\item 
		$\bs F'(o)\in \varphi$ if $o\in \varphi$ and $\bs F'(o)=o$ if $o\not\in\varphi$. Also, $\bs F'(o)\in Z_1$.
		\item For every $(G,o)$, $\bs F'(o)$ is a measurable function of $(\varphi\cap Z_1, \restrict{m}{Z_2})$.
		\item The function $[G,o,x;\varphi,m]\mapsto (\identity{\{x\in Z_1\}},\identity{\{F'(o)=x\}})$ is well defined and measurable.
	\end{enumerate}
\end{assumption}
Note that, for $o\not\in\varphi$, one may assume $Z(o)=(\{o\},\emptyset)$ since $\bs F'(o)=o$.

Despite the fact that the jumps are not necessarily independent, the last assumptions and \Cref{thm:stopping} imply the following Markov property, as in the proof of \Cref{subsec:howard} (note that we forget $\Phi$ and $\bs m$ and keep only $\bs F'$): 
\begin{lemma}
	\label{lem:Markov}
	Given $(G,o)$, if for every $x\in \Phi$, the sets $Z(\bs F'^{(i)}(x)), i\geq 0$ are disjoint, then
	the ancestor line of $o$ is a Markov chain on $G$‌ with kernel defined by 
	\[
	K(x,B):=K(G,x,B):=\probPalmC{G}{\bs F'(x)\in B}{x\in{\Phi}},
	\]
	for $x\in G$ and $B\subseteq G$, where $\mathbb P_{G}$ is the distribution of $(\Phi,\bs m)$ given $G$. 
\end{lemma}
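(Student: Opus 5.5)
We prove that, for a fixed $(G,o)$ with $o\in\Phi$, the sequence $\bs X_i:=\bs F'^{(i)}(o)$ satisfies
\[
\probPalmC{G}{\bs X_{n+1}\in B}{\bs X_0,\ldots,\bs X_n}=K(\bs X_n,B)
\]
for every $n$, by induction on $n$. The tool is the stopping-set property \Cref{thm:stopping}: conditionally on the data revealed inside a stopping pair of sets, the data outside it keeps its original i.i.d. (Bernoulli/marks) law.

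The first step is to build a growing family of stopping pairs. Set $\bs W_n:=\bigcup_{i=0}^{n-1} Z(\bs X_i)$, understood coordinatewise on the pairs $(Z_1,Z_2)$. We show by induction that $\bs W_n$ is a stopping pair and that $(\bs X_0,\ldots,\bs X_n)$ is measurable with respect to the data revealed in $\bs W_n$. For $n=1$ this is \Cref{as:stopping}. For the inductive step, $\bs X_n$ is determined by the data in $\bs W_n$. Given that data, $Z(\bs X_n)$ is obtained by running the decision procedure from the known point $\bs X_n$. Concatenating two stopping sets, where the second is chosen as a measurable function of the first, again gives a stopping set. This is the standard composition property, which we take from \Cref{ap:stopping} or reprove by checking the defining identity $\{W\subseteq\cdot\}$-measurability event by event, using condition (iii) of \Cref{as:stopping} for measurability in $x$.

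The second step is the conditional law of the next jump. Condition on the data revealed in $\bs W_n$. By \Cref{thm:stopping}, the unrevealed part $(\Phi,\bs m)$ outside $\bs W_n$ is fresh i.i.d. By the disjointness hypothesis, $Z(\bs X_n)$ meets $\bs W_n$ only at its starting data. The only overlap is that $\bs X_n\in\varphi\cap Z_1(\bs X_{n-1})$, which is already known to lie in $\Phi$. Hence $\bs F'(\bs X_n)$ is computed from the known fact $\bs X_n\in\Phi$ together with fresh i.i.d. data, so its conditional law is $K(\bs X_n,\cdot)=\probPalmC{G}{\bs F'(x)\in\cdot}{x\in\Phi}$ evaluated at $x=\bs X_n$. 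The subtle point is that we must condition on all of $\bs W_n$, not only on $(\bs X_i)_{i\le n}$. The tower property then reduces the conditional law to one given $(\bs X_0,\ldots,\bs X_n)$, because the resulting expression depends only on $\bs X_n$.

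The main obstacle lies in this second step. We must justify that the computation of $\bs F'(x)$ from $x$ uses the point $x$ itself only through $x\in\Phi$. We also need the disjointness to hold on a set whose shape is random: since $\bs W_n$ is random, we apply disjointness almost surely and note that $\{Z(\bs X_n)\cap\bs W_n\subseteq\{\bs X_n\}\}$ is revealed-data-measurable. To handle the randomness of the point, we condition on $\bs X_n=x$ for each of the countably many $x$, and apply \Cref{thm:stopping} to the stopping pair $\bs W_n$ restricted to the event $\{\bs X_n=x\}$.
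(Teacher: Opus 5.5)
Your proposal is correct and follows the paper's own argument, which the paper gives as a sketch in the proof for Howard's model and the discrete strip point-map and refers to for \Cref{lem:Markov}. The steps are the same: reveal the nested stopping pairs $\bigcup_{i<n}Z(\bs X_i)$, use \Cref{thm:stopping} to see that the unrevealed data is fresh, use disjointness so that $\bs F'(\bs X_n)$ is computed from fresh data plus the already-known fact $\bs X_n\in\Phi$, and conclude by induction.

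Your allowance for an overlap at the single point $\bs X_n$ is slightly more careful than the literal hypothesis. In the paper's own discrete strip example, $S(t,x)=\{(t,x)\}\cup([t+1,s]\times B_1(x))$, so consecutive stopping sets do share the point $\bs F'(t,x)$.
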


We are now ready to describe the main models of this subsection.
\begin{model}
	\label{model3}
	Let $[\bs G, \bs o; \Phi, \bs m]$ be as above. Assume that $\bs F'$ and the stopping pair of sets $Z=(Z_1,Z_2)$ satisfy \Cref{as:stopping}, and also satisfy
	$\omid{\card{Z_1(\bs o)}}<\infty$. In addition, assume that almost all samples $(G,o)$ satisfy the following conditions:
	\begin{enumerate}[label=(\roman*)]
		\item (Cycle-Free) There is no cycle in the graph of $\bs F'$ on $\Phi$,  $\mathbb P_G$-a.s.
		\item (Disjoint Stopping Sets) The sets $Z(\bs F'^{(i)}(o)), i\geq 0$ are disjoint when $o\in \Phi$, $\mathbb P_G$-a.s.
		\item (Balance) One has {$b(G,\cdot)>0$} and the Markov kernel $K$ on $G$, defined above, balances the measure $b(G,\cdot)$.
		\item (Weak Irreducibility) The graph with vertex set $G$ and edge set $\{(x,y): K(x,y)+K(y,x)>0 \}$ is connected.
	\end{enumerate}
\end{model}

See \Cref{prob:directional} for an example which does not satisfy the condition of disjoint stopping sets.

\begin{model}
	\label{model4}
	In \Cref{model3}, assume in addition that almost all samples $(G,o)$ satisfy the following:
	\begin{enumerate}[label=(\roman*)]
		\setcounter{enumi}{4}
		\item (One-Endedness) All components of $\bs F'$ are one-ended with infinite level-sets, $\mathbb P_G$-a.s.
		\item (Weak Aperiodicity) The elements of the set 
		\[
		\{d\in\mathbb N: \exists n\in\mathbb N, \exists x\in G: K^n(x,\cdot)\wedge K^{n+d}(x,\cdot)\not\equiv 0\}
		\] 
		have no common divisor larger than 1, where $\cdot\wedge \cdot$ denotes the minimum of two measures.
	\end{enumerate}
\end{model}

Note that \Cref{model3,model4} generalize \Cref{model1,model2} respectively. This will be shown in \Cref{ex:model1VSmodel3} below.

The measurability assumption of $Z_1$ in \Cref{as:stopping} and the assumption $\omid{\norm{Z_1(\bs o)}}<\infty$ imply the following:

\begin{lemma}
	\label{lem:shadow}
	The set $\{y\in \bs G: x\in Z_1(y) \}$ is finite for all $x\in \bs G$, a.s. As a result, any finite modification of $(\Phi, \bs m)$ results in a finite modification of $\bs F'$,‌ a.s.
\end{lemma}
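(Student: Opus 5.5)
The plan is to apply the mass transport principle to the ``shadow'' set $\{y\in\bs G: x\in Z_1(y)\}$. I define the transport
\[
g(G,y,x;\varphi,m):=\identity{\{x\in Z_1(G,y;\varphi,m)\}},
\]
with the understanding that $Z_1$ is computed from the sample $(G,y;\varphi,m)$. By item (iii) of \Cref{as:stopping}, $g$ is a well-defined measurable function on the doubly-rooted space; this is what makes the MTP applicable. Since $[\bs G, \bs o;\Phi,\bs m]$ is unimodular (an i.i.d. marking together with a Bernoulli process is an equivariant marking, as in \Cref{subsec:marking}), the MTP gives
\[
\omid{\card{\{y: \bs o\in Z_1(y)\}}}=\omid{\sum_y g(y,\bs o)}=\omid{\sum_x g(\bs o,x)}=\omid{\card{Z_1(\bs o)}}<\infty .
\]
It follows that $\card{\{y:\bs o\in Z_1(y)\}}<\infty$ a.s. By \Cref{lem:happensAtRoot}, applied to the equivariant set of points $x$ whose shadow is infinite, this finiteness holds for every $x\in\bs G$ a.s. This settles the first claim.

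For the second claim, suppose $(\varphi',m')$ differs from $(\Phi,\bs m)$ on a finite set $D$ of points and of tuples in $\bs G^d$. Let $D_0\subseteq\bs G$ be the finite set of vertices appearing as points of $D$ or as coordinates of the tuples in $D$. I use the stopping property (\Cref{def:stoppingPair}, \Cref{thm:stopping}) together with item (ii) of \Cref{as:stopping}. If the revealed data of $y$, namely $(\varphi\cap Z_1(y),\restrict{m}{Z_2(y)})$, is unchanged, then the stopping pair $Z(y)$ is unchanged as well, and hence so is $\bs F'(y)$. Consequently $\bs F'(y)$ can change only when $Z_1(y)\cap D_0\neq\emptyset$ or $Z_2(y)\cap D\neq\emptyset$. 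Two further points need care. First, changes of $\varphi$ at $y$ itself are covered, since $y\in Z_1(y)$ whenever $y\in\varphi$ (one may assume $o\in Z_1(o)$ with the convention that $Z(o)=(\{o\},\emptyset)$ off $\varphi$). Second, I must make sure that $Z_2(y)\cap D\neq\emptyset$ forces $Z_1(y)$ to meet $D_0$.

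The second point is the step I expect to be the main obstacle: controlling the shadow of $Z_2$, which is a set of tuples, using only a bound on $Z_1$. I would try to argue directly from the definition of a stopping pair that the coordinates of the tuples in $Z_2(y)$ lie in $Z_1(y)$; in the decision-tree picture, one only reveals marks on pairs between points already explored. If this is not literally part of \Cref{def:stoppingPair}, I would repeat the same MTP argument with the transport $g'(y,x):=\identity{\{x \text{ is a coordinate of some tuple in } Z_2(y)\}}$, assuming an integrability condition on $Z_2$ analogous to $\omid{\card{Z_1(\bs o)}}<\infty$. Once this is in place, the set of $y$ whose $\bs F'(y)$ can change is contained in the finite union $\bigcup_{x\in D_0}\{y: x\in Z_1(y)\}$, so the modification of $\bs F'$ is finite a.s.
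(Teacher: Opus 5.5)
Your argument for the first claim is exactly the paper's: mass transport gives $\omid{\card{\{y:\bs o\in Z_1(y)\}}}=\omid{\card{Z_1(\bs o)}}<\infty$, and \Cref{lem:happensAtRoot} then extends finiteness to every point; your derivation of the second claim, which the paper only states ``as a result'', is also correct. The step you flag as the main obstacle is not one, because item (i) of \Cref{def:stoppingPair} requires $Z_2\subseteq Z_1^d$, so $Z_2(y)\cap D\neq\emptyset$ already forces $Z_1(y)\cap D_0\neq\emptyset$ and no integrability assumption on $Z_2$ is needed (likewise you need not assume $y\in Z_1(y)$, since the finitely many $y\in D_0$ can simply be added to the exceptional set).
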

\begin{proof}
	By the mass transport principle, one has 
	$\omid{\norm{\{y\in \bs G: \bs o\in  Z_1(y) \}}} = \omid{\norm{Z_1(\bs o)}}<\infty$.  Hence, $\{y: \bs o\in  Z_1(y) \}$ is finite a.s. 
	The claim is then implied by \Cref{lem:happensAtRoot}.
\end{proof}

\begin{proof}[Proof of \Cref{thm:cmt-all} (for \Cref{model3,model4})]
	Let $\bs X_n:=\bs F'^{(n)}(\bs o)$. \Cref{lem:Markov} shows that $[\bs G, \bs X_n; \anc(\bs X_n)]$ is a Markov chain. The measurability assumptions in \Cref{as:stopping} and the assumptions of \Cref{model3,model4} directly enable one to use the stationarity, ergodicity and tail triviality results given by \Cref{lem:stationary,thm:pathErgodic,thm:pathTail}. Also, the stopping set condition enables one to describe the conditional distribution of $\Phi$ conditionally to $\anc(o)$ as in the proof of \Cref{subsec:howard}. The rest of the proof is the same as that of \Cref{subsec:howard}.
\end{proof}

\begin{example}
	\label{ex:strip1.5}
	In this example, we verify the conditions of \Cref{model3,model4} for the discrete strip point-map and the generalized Howard model (\Cref{ex:strip-discrete,ex:howard}). 
	If $b(x):=\norm{B_1(x)}=\mathrm{deg}(x)+1$ and $q:=1-p$, then 
	\[K((t,x),(t',x'))=q^{b(x)(t'-t-1)}(1-q^{b(x)})/b(x),\] for $t'>t$‌ and $x'\in B_1(x)$. One can check that $K$ balances $b$. Denoting $(s,y):=\bs F'(t,x)$, one can let $S(t,x):=\{(t,x)\}\cup \left([t+1,s]\times B_1(x)\right)$ as discussed in \Cref{subsec:howard}. 
	Now, it is easy to see that all of the conditions of \Cref{model3,model4} are satisfied.\\
	For the generalized Howard model, the formula for $K$ is 
	\[
		K((t,x),(t+1,x')) = q^{\norm{B_{n-1}(x)}}(1-q^{\norm{\partial B_n(x)}})/\norm{\partial B_n(x)},
	\]
	where $n:=d(x,x')$‌ and $\partial B_n(x):= B_n(x)\setminus B_{n-1}(x)$. Using the fact that $\norm{\partial B_n(x)}$ does not depend on $x$ due to transitivity, one can check that $K$‌ balances the function $b\equiv 1$. One can check that all conditions of \Cref{model3,model4} are satisfied except weak aperiodicity (recall that indistinguishability is proved in \Cref{subsec:howard} even without weak aperiodicity).
\end{example}



\begin{example}[CMTs]
	\label{ex:model1VSmodel3}
	In this example, we show that \Cref{model1,model2} are special cases of \Cref{model3,model4} respectively. By letting $p=1$, one has $\Phi=\bs G$‌ and it remains only to check the stopping set conditions. Since $\mathcal G_{*}$ is Polish, one can find a Borel isomorphism $\iota:\mathcal G_{*}\to [0,1]$.
	Fix a typical sample $(G,o)$. Recall that we have assumed that the points of $G$‌ might have some marks from the beginning (other than those marks used to define $\bs F'$). So, we may assume from the beginning that $[G,x]\neq [G,y], \forall x\neq y$.
	This implies that $\iota[G,\cdot]$‌ is injective. In this case, we construct $\bs F(o)$ using the (new) mark $\bs m(o)$ of $o$, and hence, one can let $S(o):=\{o\}$. For this, sort the vertices of $G$‌ by increasing distance from $o$‌ and break the ties by $\iota[G,\cdot]$. If $\preceq$ denotes this order on $G$, one can let $\bs F(o)$ be the smallest $x\in G$‌ such that $\sum_{y\preceq x} K(o,y)\geq \bs m(o)$. This satisfies the desired conditions.
	%
	%
\end{example}

\begin{remark}
	\label{rem:extraRandomness}
	%
	Instead of the Bernoulli point process, one may assume that each point $x\in\bs G$ belongs to $\Phi$ with probability $0\leq \bs \mu(x)\leq 1$ independently from the other points, where $\bs \mu$ is a given equivariant random measure on $\bs G$. The arguments for indistinguishability are similar to those for the Bernoulli case $\bs \mu(x)\equiv p$.
\end{remark}

\subsection{The General Poisson Case}
\label{subsec:poisson}

In this subsection, we define models on the Poisson point process with ideas similar to those of the previous subsection. 
For this, we leverage the notion of unimodular random measured metric spaces \cite{Kh23unimodular} and modify \Cref{model3,model4} accordingly. The main example is the strip point-map (\Cref{ex:strip}).

\subsubsection{Defining the Model}

At first reading, the reader might assume that the base space $\bs G$ is $\mathbb R^d$, $\mathbb H^d$ or products of such spaces. More generally, in each sample, we let $G$ be a boundedly-compact metric space, $o\in G$ and $\mu$ is a boundedly-finite Borel measure on $G$ (in the discrete case, we usually let $\mu$ be the counting measure). We assume that $[\bs G, \bs o, \bs \mu]$ is a \defstyle{unimodular random measured metric space}, which is defined in~\cite{Kh23unimodular} by a natural generalization of the mass transport principle:
\begin{equation*}
	\omid{\int_{\bs G} g(\bs G, \bs o, x,\bs \mu) d\bs \mu(x)} = \omid{\int_{\bs G} g(\bs G, x, \bs o,\bs \mu)d\bs \mu(x)},
\end{equation*}
for all measurable and non negative functions $g$. More generally, $\bs G$‌ might have some other decorations; e.g., the direction of the axes when $\bs G=\mathbb R^d$ or the time direction when $\bs G=\bs G_0\times\mathbb R$ (see~\cite{Kh23unimodular}).

Unimodular random measured metric spaces share many ideas with unimodular graphs, but there are some differences. For instance, in \Cref{lem:happensAtRoot}, one should replace $\bs S\neq\emptyset$‌ and $\bs S=\bs G$ by $\bs \mu(\bs S)=0$ and $\bs\mu(\bs G\setminus\bs S)=0$ respectively (Lemma~4.3 of~\cite{Kh23unimodular}). This should be taken into account in every usage of \Cref{lem:happensAtRoot}. Also, one can relax invariant events to \defstyle{essentially invariant} events $A$ defined by the following condition: For $\mathbb P$-a.e. sample $(G,o,\mu)$, for $\mu$-a.e. $o'\in G$, one has $\identity{A}[G,o,\mu]=\identity{A}[G,o',\mu]$. Under unimodularity, every essentially invariant event is equivalent to some invariant event (Remark~4.28 of~\cite{Kh23unimodular}).

Let $\Phi$ be a Poisson point process on $\bs G$ with intensity measure $\lambda \bs \mu$, where $0<\lambda<\infty$ is fixed. For simplicity, we assume that $\bs \mu$ has no atoms a.s., and hence, $\Phi$ has no multiple points a.s. In most of the example, there is no tie a.s. in defining the point-map $\bs F'$, but we may still keep an i.i.d. marking $\bs m$ of $\Phi^d$ for some $d$ (or a tuple of such markings) as a source of randomness, similarly to \Cref{subsec:bernoulli}.

Let $\tilde G_*$ denote the set of all samples $[G,o,\mu;\varphi, m]$. Here, $\tilde G_*$ is a Borel subspace of some Polish space (obtained by letting $\varphi$‌ have multiplicities).
Similarly to the previous subsection, assume that for all samples $(G,o,\mu;\varphi, m)$, an element $\bs F'(o)\in G$‌ and a stopping pair $Z=(Z_1,Z_2)$ are assigned that satisfy \Cref{as:stopping}. 
Recall that ${\bs F'}(\varphi)\subseteq\varphi$, $m$ is regarded as a source of randomness for defining $\bs F'$ and we do not require the jumps $\bs F'(x)$, $x\in\varphi$ be independent (given $(G,\varphi)$). 
In fact, in most of the examples of interest, the stopping set condition is guaranteed by constructing $\bs F'(x)$ via a \textit{continuous-time decision tree}; see~\cite{LaPeYo23stopping}. 
Define the Markov kernel $K$‌ on $G$ by
\[
K(x,B):=K(G,\mu,x,B):=\probPalmC{(G,\mu)}{\bs F'(x)\in B}{x\in{\Phi}},
\]
for $x\in G$ and $B\subseteq G$, where $\mathbb P_{(G,\mu)}$ is the distribution of $(\Phi,\bs m)$ given $(G,\mu)$. In point process theory, conditioning on the zero probability event $x\in\Phi$ is formalized by Palm theory. However, in the special case of Poisson point processes, the situation is simpler and conditioning on $x\in\Phi$‌ is equivalent to replacing $\Phi$ by $\Phi\cup\{x\}$‌ (and extending $\bs m$ accordingly) by Slyvniak-Mecke theorem (Theorem~6.24 of~\cite{Kh23unimodular}). In what follows, we denote by $\mathbb P_{\Phi}$ the distribution obtained by conditioning on $\bs o\in\Phi$.

\begin{model}
	\label{model5}
	Let $[\bs G, \bs o; \Phi, \bs m]$ be as above. Assume that $\bs F'$ and the stopping pair of sets $Z=(Z_1,Z_2)$ satisfy \Cref{as:stopping}. Assume also that $\omidPalm{\Phi}{\card{\Phi\cap Z_1(\bs o)}}<\infty$ and $\omidPalm{\Phi}{\bs\mu(Z_1(\bs o))}<\infty$. Let $b(\bs o):=b[\bs G, \bs o,\bs \mu]\geq 0$ be a given measurable function of $[\bs G, \bs o, \bs \mu]$ such that $\omid{b(\bs o)}<\infty$.
	In addition, assume that	there exists a full-probability set $E$ of samples of $[\bs G, \bs o, \bs \mu]$ such that every $(G,o,\mu)\in E$ satisfies the following conditions:
	\begin{enumerate}[label=(\roman*)]
		\item (Cycle-Free) There is no cycle in the graph of $\bs F'$ on $G$,  $\mathbb P_{(G,\mu)}$-a.s.
		\item (Disjoint Stopping Sets) The subsets $S(\bs F'^{(i)}(o)), i\geq 0$ are disjoint a.s. conditionally on $o\in \Phi$.
		\item (Balance) One has $b(\cdot)=b[G,\cdot,\mu]>0$, $\mu$-a.e., and the Markov kernel $K$ on $G$, defined above, balances the measure $b\mu$ defined by $b\mu(dx):=b(x)\mu(dx)$; i.e., for all Borel sets $B\subseteq G$,
		\[
			\int_{G} b(x)K(x,B)d\mu(x)= \int_B b(x)d\mu(x).
		\]
		\item (Weak Irreducibility) 
		By letting $K'$ be the time-reversal of $K$ with respect to the invariant measure $b\mu$ and $\tilde K:=\frac 12(K+K')$, for every Borel $B\subseteq G$ in which $\mu(B)>0$, there exists $n$ in which $\tilde K^{(n)}(o,B)>0$; i.e., $\mu$ is mutually absolutely continuous with $\sum_n 2^{-n} \tilde K^{(n)}(o,\cdot)$. A simpler sufficient condition is $\left(\sum_n K^{(n)}(x,\cdot)\right)\wedge\left(\sum_n K^{(n)}(y,\cdot)\right)\neq 0$ for all $x,y\in G$.
	\end{enumerate}
\end{model}


\begin{model}
	\label{model6}
	In \Cref{model5}, assume in addition that every $(G,o,\mu)\in E$ satisfies the one-endedness conditions of \Cref{model4} and a similar weak aperiodicity condition obtained by replacing $\exists x\in G$ with \textit{for a $\mu$-non-null set of $x\in G$.}
\end{model}

\begin{example}[Generalized Strip Point-Map]
	\label{ex:strip2}
	In this example, we show that the generalized strip point-map (\Cref{ex:strip}) satisfies the conditions of \Cref{model5,model6}. We also generalize it beyond $\mathbb R^d$ and $\mathbb H^d$.\\
	One-endedness can be proved similarly to that of the discrete strip point-map (see \Cref{subsec:howard}) using the partition $\{[t+\bs u,t+1+\bs u]\times \bs G: t\in\mathbb Z\}$ of $\bs G'$, where $\bs u\in[0,1]$‌ is a uniform random number independent from everything else.
	Note that no i.i.d. marking is needed in this example. Denoting $(s,y):=\bs F'(t,x)$, one can let $S(t,x):=(t,s]\times B_1(x)$, which is a stopping set that determines $\bs F'(t,x)$. Also, if $l:=\bs \mu(B_1(x))$, then $K((t,x),d(t',x')) = \frac 1 l \exp(-l(t'-t)) d(t',x')$ for $t'>t$‌ and $x'\in B_1(x)$. One can see that $K$ balances the function $b(x):=\bs \mu(B_1(x))$. These definitions satisfy all conditions of \Cref{model5,model6}. \\
	Beyond $\mathbb R^d$ and $\mathbb H^d$, one can let $[\bs G, \bs o, \bs \mu]$ be an arbitrary unimodular random measured metric space with the following conditions. One needs that $\bs \mu$ has no atoms a.s., $\bs \mu(\bs G)=\infty$ a.s. and $\omid{\bs \mu(B_1(\bs o))}<\infty$. Also, for weak irreducibility, one needs that the Markov chain on $\bs G$ (not on $\bs G'$) with kernel $K_0(x,dy):=1/\bs \mu(B_1(x)) \identity{B_1(x)}(y) \bs \mu(dy)$ is irreducible; i.e., $\bs \mu$ is mutually absolutely continuous with respect to $\sum_n 2^{-n} K_0^{(n)}$. Under these conditions, the assumptions of \Cref{model5,model6} are satisfied.
\end{example}

\begin{example}[Continuous-time Voter Model]
	\label{ex:voter-continuous}
	Consider the multi-type version of the continuous-time voter model on a unimodular graph or discrete space $[\bs G, \bs o]$ with a given factor rate function $r:\bs G\times \bs G\to \mathbb R^+$. Similarly to \Cref{ex:voter-discrete}, at each time, we represent the vertices with the same opinion by a partition of $\bs G$ (the values of the opinions are irrelevant here). This model can be defined using a marked Poisson point process on $\bs G\times \mathbb R$. By Ligget's dual representation~\cite{bookLi85interacting}, there exists a finest stationary distribution, which is obtained by the horizontal sections of the dual coalescing Markov chain model. 
	This model is similar to \Cref{model6} except that the clusters are the level-sets of the dual \textit{real forest}, not those of the dual discrete forest. However, the proof given in the next subsection can be slightly modified to prove that the clusters are indistinguishable. For instance, the ancestry chain is a continuous-time Markov chain on $\bs G\times\mathbb R$ whose tail-triviality can be proved similarly to \Cref{thm:pathErgodic-poisson} below (and by using the trick of \Cref{prop:crw} for circumventing weak aperiodicity).
\end{example}

\subsubsection{Proving Indistinguishability}

To prove indistinguishability for \Cref{model5,model6}, we first establish Steps~\ref{step3comp} and~\ref{step3foil} in the following theorem, which extends \Cref{thm:pathIntro} and is of independent interest. The ergodicity result generalizes Theorem~5.6 of~\cite{Kh23unimodular}, which is basically the case where $\bs \mu$ and $K(x,\cdot)$ are mutually absolutely continuous.

\begin{theorem}[Ergodicity and Tail Triviality of Markov Chains on Unimodular Spaces]
	\label{thm:pathErgodic-poisson}
	Let $[\bs G, \bs o, \bs \mu]$ be an ergodic unimodular random measured metric space and let $K$ be a factor Markov kernel. Let $(\bs X_n)_n$ be the Markov chain on $\bs G$ with kernel $K$ started from $\bs o$.
	\begin{enumerate}[label=(\roman*)]
		\item If the balance and weak irreducibility conditions of \Cref{model5} hold, then the Markov chain $[\bs G, \bs X_n, \bs \mu; (\bs X_{n+i})_{i\geq 0}]$, $n\in\mathbb N$, is ergodic.
		\item If, in addition, the weak aperiodicity condition of \Cref{model6} holds, then the Markov chain $[\bs G, \bs X_n, \bs \mu; (\bs X_{n+i})_{i\geq 0}]$ is tail-trivial.
	\end{enumerate}
\end{theorem}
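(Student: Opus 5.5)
The plan is to transfer the proofs of \Cref{thm:pathErgodic}, \Cref{lem:pathTV}, \Cref{thm:pathI=T} and \Cref{thm:pathTail} to the measured setting, with sums over $\bs G$ replaced by integrals against $\bs\mu$. Each use of \Cref{lem:happensAtRoot} is replaced by its measured version, Lemma~4.3 of~\cite{Kh23unimodular}. First, stationarity: after biasing by $b(\bs o)$, $[\bs G, \bs X_n,\bs\mu;(\bs X_{n+i})_i]$ is stationary. The argument is that of \Cref{lem:stationary}. One applies the measured MTP to $g(\bs G,\bs o,x)=b(\bs o)K(\bs o,dx)$, written as a density against $\bs\mu$ or handled directly by a disintegration, and then uses the balance identity $\int b(x)K(x,B)\,d\mu(x)=\int_B b\,d\mu$. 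This also lets us extend the chain to negative times. The backward chain has kernel $K'$, the time-reversal with respect to $b\mu$, and it is conditionally independent of the forward chain given $(\bs G,\bs o,\bs\mu)$.

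For part~(i), I follow the first step of \Cref{thm:pathErgodic} verbatim. Given an invariant path event $A$, approximate it by $A'$ depending on the first $N$ steps, and shift $A'$ into the past using $\rho^N$. Conditional independence of past and future then gives $Y:=\probhatC{A}{\bs G,\bs o,\bs\mu}\in\{0,1\}$.

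The second step is where the real change occurs. In the discrete case, constancy of $Y$ along edges $K(u,u')>0$ came from a coupling of chains from $u$ and $u'$. Here single points carry no mass, so I argue with harmonicity instead. The function $y(x):=\mathbb P_x[A]$ is $K$-harmonic and $\{0,1\}$-valued for $b\mu$-a.e.\ $x$. A harmonic function with values in $\{0,1\}$ must satisfy $y(z)=y(x)$ for $K(x,\cdot)$-a.e.\ $z$. By stationarity and reversal, the same holds $K'(x,\cdot)$-a.e., for $b\mu$-a.e.\ $x$. Iterating, $y$ is constant $\tilde K^{(n)}(o,\cdot)$-a.e.\ for every $n$. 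Weak irreducibility of \Cref{model5} then makes $y$ constant $\mu$-a.e. The event $\{y=1\}$ is therefore essentially invariant. By Remark~4.28 of~\cite{Kh23unimodular} it is equivalent to an invariant event, so ergodicity gives $\myprob{A}\in\{0,1\}$.

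Part~(ii) comes next. \Cref{lem:aperiodicity} is replaced by its measure version, which is proved by the same argument. One uses the sets $\bs S_d=\{x:K^n(x,\cdot)\wedge K^{n+d}(x,\cdot)\not\equiv0\}$, which have positive $\mu$-mass, and applies part~(i) to ensure they are visited infinitely often. Then \Cref{lem:pathTV} goes through unchanged: the coupling built from overlapping laws $K^l(x,\cdot)\wedge K^{l+k}(x,\cdot)$ uses only total-variation overlaps, never atoms. This gives $\tv{K^{n+k}(v,\cdot)-K^n(v,\cdot)}\to0$. From there, the space-time harmonic function argument in the proof of \Cref{thm:pathI=T} and \Cref{thm:pathTail} reduces every tail function to an invariant one. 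Part~(i) then gives triviality.

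I expect the main obstacle to be measurability and null-set bookkeeping. In particular, statements like ``for every $v$'' must become ``for $\mu$-a.e.\ $v$'', and the starting point $\bs o$ itself is $\mu$-null. The root is therefore not automatically in the good set. I would handle this by observing that the chain started at $\bs o$ has $\bs X_1\sim K(\bs o,\cdot)$. By stationarity together with the balance condition, this law is absolutely continuous with respect to $b\mu$ for $b\mu$-a.e.\ root, so after one step the chain lands in the good set almost surely.
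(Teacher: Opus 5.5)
Your proposal is correct and follows essentially the same route as the paper. For (i), the paper likewise shows $\identity{E_j}(\bs o)=\identity{E_j}(\bs X_1)$ and, by stationarity, $\identity{E_j}(\bs X_{-1})=\identity{E_j}(\bs o)$, then uses the weak irreducibility of \Cref{model5} to get essential invariance; your $\{0,1\}$-valued harmonic $y$ is the same device. For (ii), the paper passes to measured versions of \Cref{lem:aperiodicity} and \Cref{lem:pathTV}, using overlaps of mass larger than $\epsilon$, and then reruns \Cref{thm:pathTail}, as you do.

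One minor point: balance and stationarity alone do not give $K(\bs o,\cdot)\ll b\bs\mu$; they only give $K(x,N)=0$ for $\bs\mu$-a.e.\ $x$, for each fixed $\bs\mu$-null $N$. You do not need the stronger claim, because the measured version of \Cref{lem:happensAtRoot} already places the root in any equivariant set of full $\bs\mu$-measure a.s., and stationarity does the same for every $\bs X_n$.
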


\begin{proof}
	The first part can be proved similarly to \Cref{thm:pathErgodic} (see also the proof of \Cref{subsec:howard}) with the following modification. 
	Consider the events $E_0$ and $E_1$ defined in the proof. 
	The proof shows that $\myprob{E_0\cup E_1}=1$ and $\identity{E_j}(\bs o) = \identity{E_j}(\bs X_1)$ a.s. Stationarity implies that $\identity{E_j}(\bs X_{-1})=\identity{E_j}(\bs o)$ a.s. Now, by the weak irreducibility condition of \Cref{model5}, one can deduce that $E_0$ and $E_1$ are essentially invariant events. The rest of the proof of \Cref{thm:pathErgodic} is then valid.
	
	
	The second part can also be proved similarly to \Cref{thm:pathTail} with the following modifications. First, similarly to \Cref{lem:aperiodicity}, one can show that almost surely, for $\bs \mu$-a.e. $x\in \bs G$ and every $d_0\in \mathbb N$, there exists $n\in\mathbb N$‌ such that $K^n(x,\cdot)\wedge K^{n+d_0}(x,\cdot)\not\equiv 0$.
	Then, to modify \Cref{lem:pathTV}, the assumption should be $K^l(o,\cdot)\wedge K^{l+k}(o,\cdot)\not\equiv 0$. Also, in the coupling mentioned in the proof of \Cref{lem:pathTV}, one should use the set of $x$ in which $K^l(x,\cdot)\wedge K^{l+k}(x,\cdot)$ has total mass larger than $\epsilon$. The rest of the proof of \Cref{thm:pathTail} remains the same.
\end{proof}

%


\begin{proof}[Proof of \Cref{thm:cmt-all} (for \Cref{model5,model6})]
	The proof of indistinguishability of these models is similar to that of \Cref{thm:indistinguishability} and that of the Bernoulli case (given in \Cref{subsec:howard}), and we only highlight the differences.
	Steps~\ref{step3comp} and~\ref{step3foil} are proved in \Cref{thm:pathErgodic-poisson}. 
	By the assumption $\omidPalm{\Phi}{\card{\Phi\cap Z_1(\bs o)}}<\infty$ and similarly to \Cref{lem:shadow}, the set $\{y\in \Phi: x\in Z_1(y) \}$ is finite for all $x\in \Phi$, a.s. This implies that deleting finitely many points from $\Phi$, or changing their marks, results in a finite modification of $\bs F'$. Similarly, by the assumption $\omidPalm{\Phi}{\bs\mu(Z_1(\bs o))}<\infty$ and the \textit{exchange formula} between $\Phi$ and $\bs \mu$ (see Proposition~5.18 of~\cite{Kh23unimodular}), one can show that, almost surely, for $\bs \mu$-a.e. $x\in \bs G$, the set $\{y\in \Phi: x\in Z_1(y) \}$ is finite. This implies that, for every $k\in\mathbb N$ and for $\bs \mu^{k}$-a.e. $x_1,\ldots, x_k\in \bs G$, adding $\{x_1,\ldots,x_k\}$‌ to $\Phi$ (with arbitrary marks) results in a finite modification of $\bs F'$. 
	
	By the above observation, we redefine the tail sigma-fields as follows. An invariant point-map property $A\subseteq\tilde{\mathcal G}_*$ is called a \defstyle{tail point-map property} if the following holds for all samples $[G,o,\mu;\varphi, m]$: For every $k\in\mathbb N$, for $\mu^{k}$-a.e. $x_1,\ldots,x_k\in G$ and for every finite set $B\subseteq \Phi$, updating $\varphi$ and $m$ by adding $x_1,\ldots,x_k$ (with arbitrary marks) and deleting $B$‌ (or changing the marks of the points of $B$) does not change $\identity{A}[G,o,\mu;\varphi,m]$. 
	In what follows, such updates are meant whenever the term \textit{almost all updates of $(\varphi,m)$} is used.
	We also fix the set $E'$ of all $[G,o,\mu;\varphi,m]$ in which almost all updates of $(\varphi,m)$ result in a finite modification of $\bs F'$. Note that $E'$‌ is an (invariant and) tail point-map-property, and $\myprob{E'}=1$ by the previous paragraph.
	Define \defstyle{tail level-set-properties} similarly to \Cref{def:tailDrainage} by considering only those updates (defined above) that result in a finite modification of both $\bs F'$ and the level-set of the origin. Define \defstyle{tail branch-properties} similarly.
	
	Now, \Cref{step1comp,step1foil} can be proved similarly to \Cref{thm:I in T 2} and the Bernoulli case, by just replacing $[G,o;f]$ by $[G,o,\mu;\varphi,m]$ everywhere and adding the condition $[G,o,\mu;\varphi,m]\in E'$ to the definitions of $M'$‌ and $M''$. The rest of the proof remains unchanged and \Cref{step1comp,step1foil} are proved.
	
	\Cref{step2comp,step2foil} can also be proved similarly to the Bernoulli case. For this, it is enough to replace conditioning on $\bs F'(x'_i)=(x'_{i+1})$ in \Cref{subsec:howard} by a regular conditional version of $(\Phi,\bs m)$‌ conditionally on $\bs F'(x'_i)$ (the condition $\probPalm{G}{\bs F'(x'_i)=x'_{i+1}}>0$ is no longer necessary). Also, in the definitions of the sets $M'$‌ and $M''$, one adds the condition $[G,o,\mu;\varphi,m]\in E'$. The rest of the proof remains unchanged and \Cref{step2comp,step2foil} are proved. So, the proof is completed.
\end{proof}

%
%

\section{Problems}
\label{sec:problems}

\begin{problem}
	\label{prob:lyapunov}
	In \Cref{model1}, does there necessarily exist a Lyapunov cocycle? See \Cref{rem:model1}. Or weaker, does there necessarily exit a factor preorder in such a way that $\bs F$ is decreasing with respect to the preorder?
\end{problem}

Recall that the stationarity lemma (\Cref{lem:stationary}) requires $\omid{b(\bs o)}<\infty$.
\begin{problem}
	\label{prob:bias}
	Are the indistinguishability results in \Cref{thm:indistinguishability,thm:usf-indistinguishability} valid without the assumptions $\omid{b(\bs o)}<\infty$ and $\omid{\deg(\bs o)}<\infty$ respectively?
\end{problem}

If the weak aperiodicity condition of \Cref{model2} is removed, then the indistinguishability of level-sets might (or might not) fail as shown in \Cref{ex:crw}. 
\Cref{rem:periodic} studies the periodic case; i.e., when the largest common divisor in \Cref{eq:aperiodic} is $d_0<\infty$. So, it remains to consider when $d_0=\infty$; i.e., when the set \Cref{eq:aperiodic} is equal to $\{0\}$. In this case, \Cref{lem:aperiodicity}, which is the basis of the proof of \Cref{thm:pathTail}, fails completely. Recall also that another technique was used in \Cref{ex:crw}.

\begin{problem}
	\label{prob:aperiodic}
	Without the weak aperiodicity condition of \Cref{model2}, are there other simple necessary/sufficient conditions for the tail-triviality of the Markov chain (as in \Cref{thm:pathIntro}) and the indistinguishability of level-sets?
\end{problem}

\begin{remark}
	\label{rem:periodic2}
	Without the weak aperiodicity condition, the indistinguishability of level-sets is equivalent to the tail-triviality of the Markov chain with kernel $K$. This is implied by \Cref{thm:tailDrainage}.
\end{remark}

\begin{problem}
	\label{prob:aperiodic2}
	In \Cref{prob:aperiodic}, are either of the following conditions sufficient?
	\begin{enumerate}[label=(\roman*)]
		\item For all cycles in the graph with edge set $K(\cdot,\cdot)>0$, consider their \textit{directed length}; i.e., the difference of the number of edges in the forward or backward direction. Is it enough that the largest common divisor of these directed lengths is 1? This would solve the problem in \Cref{ex:dl}.
		\item Assume there is a Lyapunov cocycle $h$ such that $h(\bs F(x))=h(x)-1$ a.s. Can one say that the level-sets included in a common level-set of $h$ are indistinguishable (which holds in \Cref{ex:crw} for non-bipartite graphs)? If not, is it enough in \Cref{prob:aperiodic} that the level-sets of $h$ are indistinguishable (e.g., in \Cref{ex:crw,ex:free})? This would solve the problem in \Cref{ex:free}.
	\end{enumerate}
\end{problem}

\begin{problem}
	\label{prob:crw}
	For coalescing random walks (\Cref{ex:crw}), when the degrees are unbounded, can $\bs F$ be two-ended? As an instance of the analogous problem for coalescing Markov chains (\Cref{ex:cmc}), assume $(\bs b(i))_{i\in\mathbb Z}$ is a stationary process with $\bs b(\cdot)> 1$ a.s. For all $i\in\mathbb Z$, let $p(i,i\pm 1):= 1/(2b(i))$ and $p(i,i):=1-1/(b(i))$. If $\bs b(0)$ has a sufficiently heavy tail under the condition $\omid{\bs b(0)}<\infty$, will the resulting coalescing Markov chain model (on $\mathbb Z^2$) be two-ended a.s.?
\end{problem}

\begin{problem}[LERW]
	\label{prob:lerw}
	On a unimodular non-bipartite random graph $[\bs G, \bs o]$,
	\begin{enumerate}[label=(\roman*)]
		\item If $(\bs X_n)_n$ is the loop-erased random walk from $\bs o$, is $[\bs G, \bs X_n; (\bs X_{n+i})_{i\geq 0}]$ tail-trivial (while it is not Markovian nor stationary)?
		\item Does $\wusf(\bs G)$ have indistinguishable level-sets? 
	\end{enumerate}
\end{problem}

In fact, the first statement is \Cref{step3foil} for proving the indistinguishability of level-sets (see \Cref{lem:usf-trivial I} for \Cref{step3comp}). In the transient case, all other steps are established in \Cref{sec:wusf}. So, in the transient case, the first statement implies the second. In the recurrent  case, \Cref{step2foil} should also be proved.

For the next problem, let $\Phi$ be a Poisson point process in $\mathbb R^d$. For each $x=(t,y)\in \Phi$, where $t\in \mathbb R$ and $y\in\mathbb R^{d-1}$, let $\bs F(x)$ be the closest point of $\Phi\cap (t,\infty)\times\mathbb R^{d-1}$ to $x$. This is called the \defstyle{directional point-map} \cite{BaBo07radial}.

\begin{problem}[Directional Point-Map]
	\label{prob:directional}
	Are the connected components (resp. level-sets) of the directional point-map indistinguishable?
\end{problem}
Note that this model does not satisfy the condition of disjoint stopping sets of \Cref{model5}. Indeed, $\anc(x)$ is not a Markov chain.

\appendix

\section{Measurability Lemmas}
\label{ap:measurability}

In this appendix, we provide some measure-theoretic lemmas regarding null-event-augmentation (\Cref{def:augmentation}), which are essential in the proofs of indistinguishability in this paper.
We need the concept of Borel equivalence relations. Let $M$ be a Polish space or a Borel subset of some Polish space. An equivalence relation $\mathcal R$ on $M$ is Borel if it is a Borel subset of $M\times M$. If so, the \defstyle{$\mathcal R$-invariant sigma-field} $\mathcal I(\mathcal R)$ is the set of Borel subsets $A$ of $M$ such that $\identity{A}(x)=\identity{A}(y)$ for all $(x,y)\in \mathcal R$. Also, $\mathcal R$ is called countable if every equivalence class in $\mathcal R$ is countable. 

\begin{lemma}
	\label{lem:cber}
	Each of the sigma-fields $\sigfield{pm}{I}{}$, $\sigfield{pm}{T}{}$, $\sigfield{comp}{I}{}$, $\sigfield{branch}{T}{}$, $\sigfield{level}{I}{}$ and $\sigfield{level}{T}{}$ is of the form $\mathcal I(\mathcal R)$ for some countable Borel equivalence relation $\mathcal R$ on $\mathcal G'_*$. The same holds for the sigma-fields $\sigfield{path}{I}{}$ and $\sigfield{path}{T}{}$ on $\mathcal G_{\infty}$.
\end{lemma}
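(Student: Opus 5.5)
For each of the eight sigma-fields I will write down an explicit equivalence relation $\mathcal R$, chosen so that $\mathcal I(\mathcal R)$ is exactly that sigma-field. I will then check two things: that $\mathcal R$ is Borel, and that its classes are countable. The relations are the natural ones read off from the definitions.

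\begin{itemize}
\item For $\sigfield{pm}{I}{}$, declare $[G,o;f]\sim[G,o';f]$ for every $o'\in G$.
\item For $\sigfield{comp}{I}{}$, restrict this to $o'\in C_f(o)$.
\item For $\sigfield{level}{I}{}$, restrict it to $o'\in L_f(o)$.
\item For $\sigfield{pm}{T}{}$, take the equivalence relation generated by root changes together with finite modifications $f\mapsto f'$ satisfying $\card{\{f\neq f'\}}<\infty$.
\item For $\sigfield{branch}{T}{}$, take the relation generated by root changes inside the component and by finite modifications preserving a common ancestral branch.
\item For $\sigfield{level}{T}{}$, take the relation generated by root changes inside the level-set and by finite modifications with $\card{L_f(o)\Delta L_{f'}(o)}<\infty$.
\item On $\mathcal G_\infty$, $\sigfield{path}{I}{}$ corresponds to shift-coupling (root moved along the sequence and the tails identified up to an index shift), and $\sigfield{path}{T}{}$ to eventual coincidence of the sequences in a common $G$, with the roots $x_0$ and $x'_0$ allowed to differ.
\end{itemize}

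The identity $\mathcal I(\mathcal R)=$ (the sigma-field) is immediate in each case, because each definition says precisely that $\identity{A}$ is constant on the generating moves. For the generated relations this includes compositions of moves, and a function constant on each move is constant along finite chains of moves.

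\textbf{Countability.} A point of $\mathcal G'_*$ is an isomorphism class, so equivalent points can be represented on the same countable set $G$. The class of $[G,o;f]$ is then contained in the image of the set of pairs $(o',f')$ with $o'\in G$ and $f'$ a finite modification of $f$. This set is countable, since finitely supported changes of a map on a countable set form a countable set. The same argument works on $\mathcal G_\infty$: sequences in $G$ that eventually coincide with, or shift-couple with, a fixed sequence form a countable family. Passing to isomorphism classes can only decrease cardinality.

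\textbf{Borel measurability.} This is the main obstacle. The plan is to express $\mathcal R$ as a countable union of graphs of Borel maps, via the Feldman--Moore / Lusin--Novikov viewpoint. I would enumerate the "moves" in a measurable way.
\begin{itemize}
\item Root changes: use a Borel enumeration of the points of $G$ relative to the root, for example by distance from the root with ties broken using a Borel isomorphism $\iota$ as in \Cref{ex:model1VSmodel3}. When nontrivial automorphisms are present, first add i.i.d.-free deterministic labels, or work on the space of labelled rooted graphs and project.
\item Finite modifications: enumerate them by finite lists of pairs (point index, new image index).
\end{itemize}
Each move $\phi_k$ is then a Borel partial map on $\mathcal G'_*$. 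The conditions singling out allowed moves, namely $o'\in C_f(o)$, $o'\in L_f(o)$, sharing an ancestral branch, and $\card{L_f\Delta L_{f'}}<\infty$, are Borel conditions, since each is a countable Boolean combination of events depending on finite balls of $[G,o;f]$. For instance, $o'\in L_f(o)$ holds if and only if there exists $n$ with $f^{(n)}(o')=f^{(n)}(o)$.

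Hence $\mathcal R=\bigcup_k\operatorname{graph}(\phi_k|_{D_k})$ with Borel domains $D_k$, and is therefore Borel. For the generated relations I take the countable union over finite compositions of moves, which is again Borel. The subtle point I expect to fight with is the well-definedness of the moves on isomorphism classes: the chosen enumeration must be canonical up to the automorphisms of $(G,o)$. Here I would use the fact that $\mathcal G_*$ is a standard Borel quotient and apply Lusin--Novikov uniformization to the Borel set of pairs, rather than insisting on a canonical enumeration.
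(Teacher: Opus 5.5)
Your proposal is correct and follows the paper's proof. You use the same relations, with the generated ones in the $\sigfield{branch}{T}{}$ and $\sigfield{level}{T}{}$ cases. Your countability argument (every class sits inside the countable set of root changes and finite modifications on one countable vertex set) is exactly the paper's remark that all these relations are finer than the $\sigfield{pm}{T}{}$ one.

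The paper skips Borelness altogether, so your sketch goes beyond it. The cleanest way to make that sketch rigorous, without any canonical enumeration, is to realize each relation as the image of a Borel set of doubly-rooted objects such as $[G,o,x;f,f']$ under the countable-to-one Borel map to pairs of rooted classes. Lusin--Novikov then applies to that map directly, rather than to $\mathcal R$ itself, which would be circular.
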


In fact, similar claims hold for the sigma-field $\mathcal I$ on $\mathcal G_*$ and for analogous sigma-fields when a base graph is fixed.

\begin{proof}
	For $\sigfield{path}{I}{}$ (resp. $\sigfield{path}{T}{}$), let $([G,x_0;(x_i)_{i\geq 0}], [G, x'_0;(x'_i)_{i\geq 0}])\in\mathcal R$ if $(x_i)_i$ shift-couples (resp. successfully couples) with $(x'_i)_i$. For the other sigma-fields, in each of the following cases, we say $([G,o;f],[G,o';f'])\in \mathcal R$ if:
	\begin{itemize}
		\item Case $\sigfield{pm}{I}{}$: If $f=f'$.
		\item Case $\sigfield{comp}{I}{}$: If $f=f'$ and $o'\in C_f(o)$.
		\item Case $\sigfield{level}{I}{}$: If $f=f'$ and $o'\in L_f(o)$. 
		\item Case $\sigfield{pm}{T}{}$: If $\card{\{f\neq f'\}}<\infty$.
		\item Case $\sigfield{branch}{T}{}$: If either $f=f'$ and $o'\in C_f(o)$ or $o=o'$, $\card{\{f\neq f'\}}<\infty$ and $C_f(o)$ has a common ancestral branch with $C_{f'}(o)$.
		\item Case $\sigfield{level}{T}{}$: If either $f=f'$ and $o'\in L_f(o)$ or $o=o'$, $\card{\{f\neq f'\}}<\infty$ and $\card{L_f(o)\Delta L_{f'}(o)}<\infty$.
	\end{itemize}
	In all cases, $\mathcal R$ is an equivalence relation, except in the cases of $\sigfield{branch}{T}{}$ and $\sigfield{level}{T}{}$. In these cases, we let $\mathcal R$ be the smallest equivalence relation that contains the mentioned relation. Then $\mathcal I(R)$ is the desired sigma-field in each case. 
	

All these equivalence relations are finer than that of $\sigfield{pm}{T}{}$. So, all of them are countable. Checking that they are Borel is skipped for brevity.
\end{proof}



\begin{lemma}
	\label{lem:completion-I(R)}
	Let $(M,\mathbb P)$ be a probability space, $\mathcal R$ be a measurable equivalence relation on $M$,  $A\subseteq M$ be an event and $g$ be a measurable function on $M$. In order to show that $A$ and $g$ are measurable with respect to the null-event-augmentation of $\mathcal I(\mathcal R)$, it is enough to find a measurable subset $M'\in \mathcal I(\mathcal R)$ of full measure, such that $A\cap M'\in\mathcal I(\mathcal R)$ and $\restrict{g}{M'}\in\mathcal I(\mathcal R)$, or equivalently,
	\begin{equation*}
		\forall (x,y)\in (M'\times M')\cap \mathcal R:\  \identity{A}(x)=\identity{A}(y), \ g(x)=g(y).
	\end{equation*}
\end{lemma}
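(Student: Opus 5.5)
The plan is to exhibit, for $A$ and for $g$, a representative in $\mathcal I(\mathcal R)$ that differs from the original only on the null set $M\setminus M'$. For the event, I set $A_0:=A\cap M'$. The hypothesis says directly that $A_0\in\mathcal I(\mathcal R)$. In addition, $A\Delta A_0\subseteq M\setminus M'$, so $\myprob{A\Delta A_0}=0$ because $M'$ has full measure. By \Cref{def:augmentation}, this places $A$ in the null-event-augmentation of $\mathcal I(\mathcal R)$.

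For the function, I define $g_0:=g\identity{M'}$, so $g_0=g$ on $M'$ and $g_0=0$ off $M'$. I then check $\mathcal I(\mathcal R)$-measurability level set by level set. For a Borel set $B\subseteq\mathbb R$:
\begin{itemize}
\item if $0\notin B$, then $g_0^{-1}(B)=g^{-1}(B)\cap M'$;
\item if $0\in B$, then $g_0^{-1}(B)=(g^{-1}(B)\cap M')\cup (M\setminus M')$.
\end{itemize}
The set $g^{-1}(B)\cap M'$ is Borel. It is also $\mathcal R$-invariant: take $(x,y)\in\mathcal R$ with $x$ in the set. Since $M'\in\mathcal I(\mathcal R)$, we get $y\in M'$, and then the displayed condition gives $g(y)=g(x)\in B$. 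The complement $M\setminus M'$ is invariant because $\mathcal I(\mathcal R)$ is closed under complements. Hence $g_0$ is $\mathcal I(\mathcal R)$-measurable. Since $\{g\neq g_0\}\subseteq M\setminus M'$ is null, $g$ is measurable with respect to the augmented sigma-field.

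It remains to check that the set-theoretic phrasing ($A\cap M'\in\mathcal I(\mathcal R)$, $\restrict{g}{M'}\in\mathcal I(\mathcal R)$) matches the pointwise phrasing. If $(x,y)\in(M'\times M')\cap\mathcal R$ always forces $\identity{A}(x)=\identity{A}(y)$ and $g(x)=g(y)$, then $A\cap M'$ is $\mathcal R$-invariant. The reason is that any $\mathcal R$-relative $y$ of a point $x\in A\cap M'$ lies in $M'$ by invariance of $M'$, and so $y\in A$. The same argument applies to level sets of $\restrict{g}{M'}$. Borel measurability of these sets comes for free, since $A$, $g$ and $M'$ are Borel.

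I expect no real obstacle. The only point needing care is that the invariance of $M'$ itself is what allows us to restrict to pairs inside $M'\times M'$.
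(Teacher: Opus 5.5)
Your proof is correct and follows the same route as the paper. Both replace $A$ by $A\cap M'$ and $g$ by its restriction to $M'$ (your $g\identity{M'}$), use the $\mathcal R$-invariance of $M'$ to get $\mathcal I(\mathcal R)$-measurability, and note that the modified objects differ from the originals only on the null set $M\setminus M'$; you simply spell out the level-set check for $g\identity{M'}$ that the paper leaves implicit.
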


\begin{proof}
	The assumption $M'\in \mathcal I(\mathcal R)$ implies that the last formula is indeed equivalent to $A\cap M'\in\mathcal I(\mathcal R)$ and $\restrict{g}{M'}\in\mathcal I(\mathcal R)$. Also, since $\myprob{M'}=1$, $A$ is equivalent to $A\cap M'$ and $g=\restrict{g}{M'}$ a.s. This implies the claim.
\end{proof}

\begin{lemma}
	\label{lem:completion-g}
	In a probability space $(M,\mathcal F_0,\mathbb P)$, let $\mathcal F\subseteq\mathcal F_0$ be a sub-sigma-field. In order to prove that an event $A$ is in the null-event-augmentation of $\mathcal F$, it is enough to show that for every $\epsilon>0$, there exists some $\mathcal F$-measurable function $g$ such that $\omid{\norm{\identity{A}-g}}<\epsilon$.
\end{lemma}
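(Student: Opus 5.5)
The natural route is an $L^2$ approximation argument. Let $\hat{\mathcal F}$ be the null-event-augmentation of $\mathcal F$, and recall (right after \Cref{def:augmentation}) that $\hat{\mathcal F}$ is generated by $\mathcal F$ together with all null events of $\mathcal F_0$. We will show that $\identity{A}$ is a limit of $\hat{\mathcal F}$-measurable functions, and then that a limit of such functions is again $\hat{\mathcal F}$-measurable up to a null set.

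\emph{Step 1: a convergent approximating sequence.} For each $n$, choose an $\mathcal F$-measurable $g_n$ with $\omid{\norm{\identity{A}-g_n}}<2^{-n}$. Then $g_n\to\identity{A}$ in $L^1$. Since $\sum_n \omid{\norm{\identity{A}-g_n}}<\infty$, Borel--Cantelli (or monotone convergence applied to $\sum_n\norm{\identity{A}-g_n}$) gives $g_n\to\identity{A}$ almost surely.

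\emph{Step 2: pass to an $\mathcal F$-measurable limit.} Set $\tilde g:=\limsup_n g_n$, which is $\mathcal F$-measurable, possibly taking values in $[-\infty,\infty]$. Let $A_1:=\{\tilde g\geq 1/2\}\in\mathcal F$. On the full-measure set where $g_n\to\identity{A}$, the value $\tilde g$ equals $\identity{A}$, so $\identity{A_1}=\identity{A}$ there. Hence $\myprob{A\Delta A_1}=0$ with $A_1\in\mathcal F$, which is exactly the defining condition for $A\in\hat{\mathcal F}$.

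\emph{Alternative route.} One could instead use $\omidCond{\identity{A}}{\mathcal F}$. Conditional expectation is an $L^1$-contraction, so
\[
\omid{\norm{\identity{A}-\omidCond{\identity{A}}{\mathcal F}}}\leq 2\,\omid{\norm{\identity{A}-g}}<2\epsilon .
\]
Letting $\epsilon\to0$ gives $\identity{A}=\omidCond{\identity{A}}{\mathcal F}$ a.s., and the argument concludes as in Step 2.

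There is no serious obstacle. The only point needing care is that $\mathcal F$-measurability of the limit is a statement about a fixed version of the limit; taking $\limsup$ of $\mathcal F$-measurable functions resolves this without any completeness assumption on $\mathcal F$.
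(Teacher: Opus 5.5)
Your proof is correct and is essentially the paper's argument. The paper also takes $L^1$ approximants $g_n$, passes to an a.s.\ convergent subsequence, and sets $B=\{\lim_n g_n=1\}\in\mathcal F$; your summable errors $2^{-n}$ (which avoid the subsequence), the set $\{\limsup_n g_n\geq 1/2\}$, and calling the argument ``$L^2$'' at the outset (everything you actually do is in $L^1$) are only cosmetic differences.
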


\begin{proof}
	Let $g_n$ be such a function for $\epsilon:=1/n$. Since $(g_n)_n$ converges in $L_1$, there exists a subsequence which converges a.s. We may assume $g_n$ is convergent a.s. from the beginning. Let $g:=\lim_n g_n$ when the limit exists and $g=0$ otherwise. 
	Then, $g$ is $\mathcal F$-measurable and $g=\identity{A}$ a.s. Now, let $B$ be the event $g=1$. So, $B\in \mathcal F$ and $B$ is equivalent to $A$.
\end{proof}

\begin{lemma}
	\label{lem:completion-cond}
	In \Cref{lem:completion-g}, let $\hat{\mathcal F}$ be the null-event-augmentation of $\mathcal F$. Assume $\mathcal F'$ is another sigma-field such that $\mathcal F\subseteq\mathcal F'\subseteq\mathcal F_0$. In order to prove $A\in\hat{\mathcal F}$, it is enough to prove that $\probCond{A}{\mathcal F'}\in\{0,1\}$ a.s. and that $\probCond{A}{\mathcal F'}$ is $\hat{\mathcal F}$-measurable; i.e., there is a version of the conditional probability $\probCond{A}{\mathcal F'}$ that is $\mathcal F$-measurable.
\end{lemma}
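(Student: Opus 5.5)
The plan is to produce an explicit event $B\in\mathcal F$ with $\myprob{A\Delta B}=0$.

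Let $h$ be a version of $\probCond{A}{\mathcal F'}$ that is $\mathcal F$-measurable; such a version exists by hypothesis. To see this, note that an $\hat{\mathcal F}$-measurable function agrees a.s.\ with an $\mathcal F$-measurable one. Indeed, every $\hat{\mathcal F}$-measurable indicator differs from an $\mathcal F$-measurable indicator on a null set, and one can pass to simple functions and then to limits. Since $h\in\{0,1\}$ a.s., set $B:=\{h=1\}$. Then $B\in\mathcal F\subseteq\mathcal F'$, and $h=\identity{B}$ a.s.

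Next, compute $\myprob{A\cap B^c}$ and $\myprob{A^c\cap B}$ using the defining property of conditional expectation with respect to $\mathcal F'$. Because $B^c\in\mathcal F'$,
\[
\myprob{A\cap B^c}=\omid{h\,\identity{B^c}}=\omid{\identity{B}\identity{B^c}}=0 .
\]
Similarly,
\[
\myprob{A^c\cap B}=\omid{(1-h)\identity{B}}=\omid{(1-\identity{B})\identity{B}}=0 .
\]
Hence $\myprob{A\Delta B}=0$ with $B\in\mathcal F$, which means $A\in\hat{\mathcal F}$ by \Cref{def:augmentation}.

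The only delicate point is the first step, replacing an $\hat{\mathcal F}$-measurable version by an $\mathcal F$-measurable one. If needed, this can be handled directly. The event $\{h=1\}$ lies in $\hat{\mathcal F}$, so by definition there is some $B\in\mathcal F$ differing from it by a null set, and it suffices to work with that $B$. Everything after this is routine.
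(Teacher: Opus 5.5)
Your proof is correct and is essentially the paper's argument: set $B=\{h=1\}$ for a version $h$ of $\probCond{A}{\mathcal F'}$, then apply the defining property of conditional expectation on $B$ and $B^c$ to get $\myprob{A\Delta B}=0$. The only cosmetic difference is that you pick an $\mathcal F$-measurable version, so $B\in\mathcal F$ directly, whereas the paper takes an arbitrary version and obtains $B\in\hat{\mathcal F}$.
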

\begin{proof}
	Let $g$ be any version of $\probCond{A}{\mathcal F'}$. So, $g(\cdot)\in\{0,1\}$ a.s. By letting $B$ be the event $g=1$, one has $B\in\hat{\mathcal F}$ and $\identity{B}$ is also a version of $\probCond{A}{\mathcal F'}$. One gets
	\begin{equation*}
		\myprob{A\cap B} = \omid{\omidCond{\identity{A}\identity{B}}{\mathcal F'}}= \omid{\identity{B}\probCond{A}{\mathcal F'}} = \omid{\identity{B}^2}=\myprob{B}.
	\end{equation*}
	Similarly, one gets $\myprob{A\cap B^c} = \omid{\identity{B}\identity{B^c}} = 0$. This implies that $\myprob{A\Delta B}=0$. So, $A$ is in the null-event-augmentation of $\mathcal F$.
\end{proof}

\section{Stopping Sets for Marked Poisson Point Processes}
\label{ap:stopping}

Fix $d\in\mathbb N$ and $(G,o,\mu)$, where $G$ is a boundedly-compact metric space, $o\in G$ and $\mu$ is a boundedly-finite measure on $G$. We assume that $\mu$ is diffuse and $\Phi$ is a Poisson point process with intensity measure $\lambda \mu$ (where $\lambda>0$ is fixed), and hence $\Phi$ is a simple point process a.s. Similarly, we may allow $G$ to be discrete and $\Phi$ to be a Bernoulli point process on $G$, where each point $x\in G$ belongs to $\Phi$ with probability $p\mu(x)$ (where $p$ is fixed, and assuming $p\mu(\cdot)\in [0,1]$).

In this appendix, we study stopping sets for the pair $(\Phi,\bs m)$, where, conditionally on $\Phi$, $\bs m$ is an i.i.d. marking of $\Phi^d$ with an arbitrary compact mark space $\Xi$. So, $\bs m$ is a marking of $d$-tuples of points of $\Phi$. This notion is used in \Cref{subsec:bernoulli,subsec:poisson}.

Let $\mathcal N$ be the set of discrete subsets of $G$. Let also $\mathcal N'$ be the set of pairs $(\varphi,m)$, where $\varphi\in\mathcal N$ and $m:\varphi^d\to\Xi$. These spaces are Borel subsets of some Polish spaces.

\begin{definition}
	\label{def:stoppingPair}
	A \defstyle{stopping pair of sets} for $(\Phi, \bs m)$ is a pair $Z=(Z_1,Z_2)$ which are functions of $(\Phi,\bs m)$ such that: 
	\begin{enumerate}[label=(\roman*)]
		\item $Z_1\subseteq G$ and $Z_2\subseteq Z_1^d$.
		\item If $(\varphi,m)\in\mathcal N'$ and $(\varphi',m')\in\mathcal N'$ satisfy
		$\varphi'\cap Z_1(\varphi,m) = \varphi \cap Z_1(\varphi,m)$ and $\restrict{m'}{Z_2(\varphi,m)^d}=\restrict{m}{Z_2(\varphi,m)^d}$,
		then $Z(\varphi',m')=Z(\varphi,m)$.
		\item The maps $(\varphi,m,y)\mapsto \identity{Z_1(\varphi,m)}(y)$ (where $y\in \varphi$) and $(\varphi,m,z)\mapsto \identity{Z_2(\varphi,m)}(z)$ (where $z\in \varphi^d$) are measurable.
	\end{enumerate}
	In fact, if $G$ is discrete, one can also let $\bs m$ be a marking of $G^d$ and assume $Z_2\subseteq G^d$. 
\end{definition}
As mentioned in \Cref{subsec:bernoulli,subsec:poisson}, in all of the examples of interest in this work, stopping sets are constructed by \textit{decision trees} (similarly to~\cite{LaPeYo23stopping}). In this case, $Z_1$ denotes the subset of $G$ revealed by the decision tree and $Z_2$ denotes the revealed information from $m$.

\begin{theorem}[Marked Poisson Outside a Stopping Pair of Set]
	\label{thm:stopping}
	Assume $Z_1=(Z_1,Z_2)$ is a stopping pair of sets for $(\Phi,\bs m)$. Then, conditionally on $(\Phi\cap Z_1, \restrict{\bs m}{Z_2})$ and on the event that $\Phi\cap Z_1$ is finite, the rest of $(\Phi,\bs m)$ is distributed as follows: $\Phi\setminus Z_1$ is a Poisson point process with distribution $\lambda \restrict{\mu}{G\setminus Z_1}$ and $\restrict{\bs m}{\Phi^d\setminus Z_2}$ is an i.i.d. marking of $\Phi^d\setminus Z_2$ with the same mark distribution.
\end{theorem}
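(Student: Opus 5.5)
The plan is to reduce the statement to the independence properties of the Poisson process and the i.i.d. marks by approximating $Z$ with stopping pairs that take only countably many values; this is the classical route for the strong Markov property of stopping sets. First consider the case where $Z$ takes values in a countable family of pairs $(z_1,z_2)$ with $z_1$ a Borel subset of $G$ and $z_2\subseteq z_1^d$. On the event $\{Z=(z_1,z_2)\}$, condition (ii) of \Cref{def:stoppingPair} says that this event is measurable with respect to $(\Phi\cap z_1, \restrict{\bs m}{z_2})$. Here $\restrict{\bs m}{z_2}$ is the marking restricted to tuples of points of $\Phi\cap z_1$ lying in $z_2$.

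By the independence of $\Phi\cap z_1$ and $\Phi\setminus z_1$ (complete independence of the Poisson/Bernoulli process), and because the marks are i.i.d. given $\Phi$, the conditional law of $(\Phi\setminus z_1,\restrict{\bs m}{\Phi^d\setminus z_2})$ given $(\Phi\cap z_1,\restrict{\bs m}{z_2})$ is the one described in the statement. On $\{Z=(z_1,z_2)\}$ the conditioning data coincide with $(\Phi\cap Z_1,\restrict{\bs m}{Z_2})$. Summing over the countably many values and testing against bounded functionals $h_1(\Phi\cap Z_1,\restrict{\bs m}{Z_2})\,h_2(\text{rest})$ gives the claim in this case. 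The restriction to finite $\Phi\cap Z_1$ ensures that the mark data revealed are finite tuples, so the measurability is clean.

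For general $Z$, fix an increasing sequence of finite Borel partitions of $G$ into sets of small diameter, using bounded compactness. Define $Z_1^{(n)}$ as the union of the cells of level $n$ that meet $Z_1$, intersected with the ball $B_n(o)$, with an appropriate convention outside the ball, and define $Z_2^{(n)}$ correspondingly. Then check that $Z^{(n)}$ is again a stopping pair with countably many values and $Z^{(n)}\downarrow Z$, in the sense that $\Phi\cap Z^{(n)}_1\to\Phi\cap Z_1$ eventually on bounded sets when $\Phi\cap Z_1$ is finite. The identity for $Z^{(n)}$ then passes to the limit by bounded convergence applied to test functionals that are continuous and local (depending on the configuration in a bounded window), together with a monotone class argument.

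I expect the approximation step to be the main obstacle. Outer discretizations of a stopping set need not be stopping sets in general, so I would first try the standard trick from stochastic geometry of using the partition to define $Z^{(n)}$ via a decision-tree-like ordering. Failing that, I would use the measurability condition (iii) to prove the statement directly: I would compute $\mathbb E[h_1 h_2]$ via the Mecke/Slivnyak formula for finite configurations in $Z_1$, writing $\Phi\cap Z_1=\{x_1,\ldots,x_k\}$ and integrating over $\mu^k$ with condition (ii) fixing $Z$. For the Bernoulli (discrete) case this complication disappears, since $Z$ automatically takes countably many values once $\Phi\cap Z_1$ is finite.
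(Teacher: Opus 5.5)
Your main line (discretize $Z$, prove the countably-valued case, pass to the limit) has a genuine gap, though not the one you flag. Under condition (ii) of \Cref{def:stoppingPair}, a deterministic enlargement $Z^{(n)}\supseteq Z$ (cells meeting $Z_1$, product cells meeting $Z_2$) is automatically a stopping pair. If $(\varphi',m')$ agrees with $(\varphi,m)$ on $Z^{(n)}(\varphi,m)$, it agrees on $Z(\varphi,m)$. Hence $Z(\varphi',m')=Z(\varphi,m)$, and so $Z^{(n)}(\varphi',m')=Z^{(n)}(\varphi,m)$. What breaks is everything around this.

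First, the argument needs $Z\subseteq Z^{(n)}$, and your intersection with $B_n(o)$ destroys that. Localizing instead to the event $\{Z_1\subseteq B_r(o)\}$ is legitimate but only recovers bounded $Z_1$. On $\{\Phi\cap Z_1\text{ finite}\}$, the set $Z_1$ need only have finite $\mu$-mass and may be unbounded. For such sets the untruncated enlargement is in general uncountably-valued, and it can even have infinite mass. This happens already for the deterministic $Z_1=\{(s,y)\in\mathbb R^2: s>0,\ |y|<e^{-s}\}$. Then $\{\Phi\cap Z^{(n)}_1\text{ finite}\}$ is null, and the approximating identities say nothing on $E$.

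Second, and more seriously, the definition does not require $Z_1$ to be closed, and then $\Phi\cap Z_1^{(n)}\not\to\Phi\cap Z_1$. Take $G=\mathbb R$, $\mu$ Lebesgue, and deterministic $Z=(U,\emptyset)$ with $U\subseteq[0,1]$ open, dense, of measure $1/2$. Every dyadic outer approximation contains $[0,1)$. So your limit describes $\Phi$ off $[0,1)$ given $\Phi\cap[0,1)$, not $\Phi$ off $U$ given $\Phi\cap U$. No bounded-convergence or monotone-class step repairs this. The result for the enlargement also does not imply the one for $Z$, since it conditions on more. This route therefore only reaches essentially bounded closed $Z_1$, the classical random-closed-set setting.

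Likewise, in the discrete case $Z$ need not be countably-valued. With marks on $G^d$, or with $\sum_x p\mu(x)<\infty$, $Z$ can be infinite and depend on a continuous mark while $\Phi\cap Z_1$ is finite.

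Your fallback is exactly the paper's proof and should be the main argument. What you must make explicit is the identity that drives it.

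Write $\Psi=\Phi\cap Z_1$, $\bs m_1=\restrict{\bs m}{\Psi^d}$, $\Psi'=\Phi\setminus Z_1$ and $\bs m_2=\restrict{\bs m}{\Phi^d\setminus\Psi^d}$. For $\bar x=\{x_1,\dots,x_k\}$ with a marking $m_1$ of $\bar x^d$, let $Z(\bar x,m_1)$ be $Z$ evaluated at that finite marked configuration. Apply (ii) in both directions on $\{\Phi\supseteq\bar x,\ \restrict{\bs m}{\bar x^d}=m_1\}$. This gives $\{\Psi=\bar x\}=\{\bar x\subseteq Z_1(\bar x,m_1)\}\cap\{(\Phi\setminus\bar x)\cap Z_1(\bar x,m_1)=\emptyset\}$, and $Z=Z(\bar x,m_1)$ on this event.

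Now expand $\omid{g(\Psi,\bs m_1,\Psi',\bs m_2)\identity{\{\card{\Psi}=k\}}}$ with the multivariate Mecke formula. Under the Palm law, Slivnyak makes $\Phi\setminus\bar x$ Poisson with intensity $\lambda\mu$ and its remaining marks i.i.d. Meanwhile $Z_1(\bar x,m_1)$ is deterministic. By complete independence, the avoidance event is independent of the points and marks off $Z_1(\bar x,m_1)$. These can therefore be replaced by an independent Poisson process on $G\setminus Z_1$ with fresh i.i.d. marks. Undoing the Mecke step and summing over $k$ gives the claim on $E$.

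The unrevealed marks in $\Psi^d\setminus Z_2$ are handled by noting that, for fixed $\Phi$, $Z_2$ is a stopping set for the i.i.d. field $\bs m$. This route needs no topology, closedness, boundedness or countable-valuedness, which is why it covers the full generality of \Cref{def:stoppingPair}.
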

This generalizes Theorem~A.3 of~\cite{LaPeYo23stopping}, which is an analogous result for the unmarked Poisson point process. In fact, when $G$ is discrete, the proof of~\cite{LaPeYo23stopping} works for i.i.d. markings of $G^d$ as well (where there is no $\Phi$). We use this in the following proof. We omit the measurability discussions for brevity.
\begin{proof}
	Let $\Psi:=\Phi\cap Z_1$, $\Psi':=\Phi\setminus Z_1$, $\bs m_1:=\restrict{\bs m}{\Psi^d}$ and $\bs m_2:= \restrict{\bs m}{\Phi^d\setminus \Psi^d}$.
	Note that $Z_1$ and $Z_2$ are determined by $(\Phi\cap Z_1, \restrict{\bs m}{Z_2})$, and hence, are determined by $(\Psi,\bs m_1)$. 
	Let $E$ be the event that $\Psi$ is finite.
	Conditionally on $\Phi$, the set $Z_2$ is a stopping set for $\bs m$. This implies that, conditionally on $(\Phi,\restrict{\bs m}{Z_2})$ and on the event $E$, the rest of $\bs m$ is an i.i.d. marking of $\Phi^d\setminus Z_2$. 
	So, it is enough to prove that, conditionally on $(\Psi,\bs m_1)$ and on $E$, $\Psi'$ is a Poisson point process with intensity measure $\lambda\restrict{\mu}{G\setminus Z_1}$ and $\bs m_2$ is an i.i.d. marking of $(\Psi\cup\Psi')^d\setminus\Psi^d$. This will be proved by the same method as Theorem~A.3 of~\cite{LaPeYo23stopping}. 
	
	In the next formula, $x:=(x_1,\ldots,x_k)\in G^k$ and $\Phi^{(k)}$ is the $k$-th factorial measure of $\Phi$; i.e., $\Phi^k$ restricted to the set of $x\in G^k$ with distinct coordinates. Also, $\bar x$ denotes the set $\{x_1,\ldots,x_k\}$, and $\mathbb E_x$ denotes conditional expectation conditionally on $\Phi\supseteq \bar x$. In addition, for a deterministic marking $m_1$ of $\bar x^d$, $\mathbb E_{(x,m_1)}$ denotes conditional expectation conditionally on $\Phi\supseteq \bar x$ and $\restrict{\bs m}{\bar x^d}=m_1$.
	
	Let $g$ be any nonnegative measurable function on $\mathcal N'\times\mathcal N'$. One can show that $(\Psi,\bs m_1,\Psi',\bs m_2)$ is a measurable function of $(\Phi,\bs m)$ (see Lemma~A.1 of~\cite{LaPeYo23stopping}). By the multivariate Mecke formula, one obtains
	\begin{eqnarray*}
		I&:=& \omid{g(\Psi,\bs m_1,\Psi',\bs m_2)\identity{\{\norm{\Psi}=k\}}}\\
		&=& \frac 1{k!} \omid{\int g(\bar x,\bs m_1 \Psi',\bs m_2)\identity{\{\Psi = \bar x\}}\Phi^{(k)}(dx)}\\
		&=& \frac 1{k!} \int \omidPalm{x}{g(\bar x,\bs m_1, \Psi',\bs m_2)\identity{\{\Psi = \bar x\}}}\mu^{(k)}(dx)\\
		&=& \frac 1{k!} \int\int_{\bar x^d} \omidPalm{(x,m_1)}{g(\bar x,m_1, \Psi',\bs m_2)\identity{\{\Psi = \bar x\}}}dm_1 \mu^{(k)}(dx).
	\end{eqnarray*}
	By Slivnyak's theorem, conditionally on $\Phi\supseteq\bar x$ and $\bs m_1=m_1$, $\Phi\setminus \bar x$ is a Poisson point process with intensity measure $\lambda\mu$ and the rest of $\bs m$ is an i.i.d. marking of $\Phi^d\setminus\bar x^d$. Then, by the stopping set condition, one can show that $\Psi = \bar x$ is equivalent to $\bar x\in Z_1(\bar x, m_1)$ and $(\Phi\setminus \bar x)\cap Z_1(\bar x, m_1)=\emptyset$. This implies that, if $\Psi''$ is a Poisson point process with intensity measure $\lambda\restrict{\mu}{G\setminus Z_1}$ and $\bs m_2'$ is an i.i.d. marking of $(\Psi\cup\Psi'')^d\setminus \Psi^d$ (given $\Psi$ and $\bs m_1$), then $\omidPalm{(x,m_1)}{g(\bar x,m_1, \Psi',\bs m_2)\identity{\{\Psi = \bar x\}}} = \omidPalm{(x,m_1)}{g(\Psi,\bs m_1, \Psi'',\bs m'_2)}$ (similarly to Lemma~A.1 of~\cite{LaPeYo23stopping}, one can show that $(\Psi,\bs m_1,\Psi'',\bs m_1)$ makes sense as a random element of $\mathcal N'\times\mathcal N'$). By reversing the procedure mentioned above, one gets $I= \omid{g(\Psi,\bs m_1,\Psi'',\bs m'_2)\identity{\{\norm{\Psi}=k\}}}$. Summation over $k$ gives $\omid{g(\Psi,\bs m_1,\Psi',\bs m_2)\identity{E}} = \omid{g(\Psi,\bs m_1,\Psi'',\bs m'_2)\identity{E}}$. This implies the claim.
\end{proof}

\begin{remark}
	If $\bs m_1,\ldots,\bs m_k$ are independent i.i.d. markings of $\Phi^{d_1},\ldots,\Phi^{d_k}$, one can similarly define \textit{stopping tuples of sets} for $(\Phi,\bs m_1,\ldots,\bs m_k)$ and extend \Cref{thm:stopping}. We omit this generalization for brevity.
\end{remark}

\section*{Acknowledgements}
We thank Russell Lyons for valuable comments on improving the presentation of the article.
We also thank Milad Barzegar for introducing to us the stochastic covering property of $\wusf$, which was essential in the proofs of \Cref{sec:wusf}.

\bibliography{bib} 
\bibliographystyle{plain}

\end{document}